\newcommand{\nc}{\newcommand}
\newcommand{\CO}{\mathcal{O}}
\nc{\on}{\operatorname}
\nc{\ra}{\rightarrow}
\nc\iso{\,\vphantom{j^{X^2}}\smash{\overset{\sim}{\vphantom{\rule{0pt}{0.20em}}\smash{\longrightarrow}}}\,}
\nc{\al}{\alpha}
\nc{\la}{\lambda}
\nc{\CA}{\mathcal{A}}
\nc{\BZ}{\mathbb{Z}}
\nc{\CU}{\mathcal{U}}
\title[Hikita conjecture for nilpotent orbits and parabolic Slodowy varieties]{Around Hikita-Nakajima conjecture for nilpotent orbits and parabolic Slodowy varieties}
\begin{document}

\begin{abstract}
Let $G$ be a complex reductive algebraic group. In \cite{LMBM} Ivan Losev, Lucas Mason-Brown and the third-named author suggested a symplectic duality between nilpotent Slodowy slices in $\fg^\vee$ and affinizations of certain $G$-equivariant covers of special nilpotent orbits. In this paper, we study the various versions of Hikita conjecture for this pair. We show that the original statement of the conjecture does not hold for the pairs in question and propose a refined version. We discuss the general approach towards the proof of the refined Hikita conjecture 
and prove this refined version for the parabolic Slodowy varieties, which includes many of the cases considered in \cite{LMBM} and more. Applied to the setting of \cite{LMBM}, the refined Hikita conjecture explains the importance of special unipotent ideals from the symplectic duality point of view.

We also discuss applications of our results. 
In the appendices, we discuss some classical questions in Lie theory that relate the refined version and the original version. We also explain how one can use our results to simplify some proofs of known results in the literature. As a combinatorial application of our results we observe an interesting relation between the geometry of Springer fibers and left Kazhdan-Lusztig cells in the corresponding Weyl group.
\end{abstract}

\maketitle

\section{Introduction}
\subsection{Symplectic duality}
    The main and most general objects of interest in this paper are conical symplectic singularities. We give the exact definition and the most important properties in \cref{conical sympl sing}, but for now we want to focus on an important special class of examples. For a connected reductive group $G$ and a finite-dimensional representation ${\bf{N}}$ of $G$ Braverman, Finkelberg and Nakajima in \cite{nakajima_coulombI}, \cite{BFNII} gave a mathematical definition of the Coulomb branch of cotangent type $\cM_C(G, {\bf{N}})$ of a $3$-dimensional $\cN=4$ supersymmetric gauge theory attached to the pair $(G,{\bf{N}})$. In \cite{bellamy} Bellamy proved that $\cM_C(G, {\bf{N}})$ has symplectic singularities (see also \cite{weekes_sympl_sing}). In \cite{Coulomb_noncotangent} the mathematical definition of a Coulomb branch was extended to the representations of non-cotangent type (under some ``anomaly cancelation'' condition), and the authors conjectured that the resulting varieties have symplectic singularities. In many cases it is known to be true, which leads to a big class of conical symplectic singularities coming as Coulomb branches of $3$-dimensional $\cN=4$ supersymmetric gauge theories. 

    A $3$-dimensional $\cN=4$ supersymmetric gauge theory also comes with its Higgs branch, in many cases coinciding with the hyper-K{\"a}hler quotient $N\ham G$. In \cite{Intriligator1996} Intrilligator and Seiberg suggested the ``mirror duality" between $3$-dimensional $\cN=4$ supersymmetric gauge theories that exchange Higgs and Coulomb branches. Thus, in many cases, the Higgs and the Coulomb branches of a $3$-dimensional $\cN=4$ supersymmetric gauge theory are conical symplectic singularities, and it is natural to expect that their (different) structural properties are governed by the same properties of the gauge theory, and thus are interchanged between each other. 

    This idea leads to the notion of a symplectic duality, formulated in \cite{BPW}, \cite{BPWII} (interestingly, the authors of these papers came up with the idea of symplectic duality independently of the works of physicists). Namely, we expect to have many pairs of conical symplectic singularities $(X^\vee, X)$ that have some of their structural properties interchanged, see \cref{conical sympl sing} for details. Roughly, one of the expectations is that the Lie algebra of a maximal torus $T_X$ acting on  $X$ is interchanged with the space of deformations of $X^\vee$ and vice versa. One also expects that in cases when both $X$ and $X^\vee$ have symplectic resolutions $Y$, $Y^\vee$, there is a (canonical) bijection $Y^{T_X} \simeq (Y^\vee)^{T_{Y^\vee}}$ and both of these sets are {\emph{finite}}.

    It is important to make two remarks. First, the Higgs branch of a $3$-dimensional $\cN=4$ supersymmetric gauge theory may not be normal (and therefore cannot be a conical symplectic singularity), and the Coulomb branch may not be conical. Moreover, even in the case of quiver gauge theories where we can define and compute Higgs and Coulomb branches, assuming they are both conical symplectic singularities we cannot prove that the properties of symplectic duality hold, although it is expected to be true.

    Second, the definition of symplectic duality makes sense without mentioning $3$-dimensional $\cN=4$ supersymmetric gauge theories. Many examples of expected symplectic dual $(X^\vee, X)$ do not have a known gauge theory that produces $X$ and $X^\vee$ as Higgs and Coulomb branches respectively. Crucially, this includes most of the examples considered in this text. Constructing such a theory is an extremely interesting problem that is, however, not discussed in this paper. In work in progress, Finkelberg, Hanany, and Nakajima realize some but not all of the examples considered in our paper as Higgs and Coulomb branches of orthosymplectic quiver gauge theories, and we are grateful to the authors for explaining their results to us. 
    
\subsection{Parabolic Slodowy slices}
    In this paper we focus on some specific cases of $(X^\vee, X)$ that are expected to be symplectic dual. Let $G$ be a complex semisimple Lie group, $G^\vee$ be the Langlands dual of $G$, and $\fg$, $\fg^\vee$ be the corresponding Lie algebras. Let $e^\vee\in \fg^\vee$ be a nilpotent element, and $\chi^\vee=(e^\vee, \bullet)\in (\fg^\vee)^*$. Let $S(e^\vee) \subset \mathfrak{g}$ be the Slodowy slice to $e^\vee$, and $S(\chi^\vee)\subset (\fg^\vee)^*$ be its image under the identification $\fg^\vee\simeq (\fg^\vee)^*$ given by the Killing form. Consider the intersection $X^\vee=S(\chi^\vee)\cap \cN^\vee$ of $S(\chi^\vee)$ with the nilpotent cone. This is a conical symplectic singularity that admits a symplectic resolution. In \cite[Section 9.3]{LMBM} Losev, Mason-Brown and the third-named author suggest a candidate for a symplectic dual $X$, we recall it in \cref{sect_refined_BVLS}. It generalizes a special case that appeared in the literature before, see, e.g., \cite[Section 10.2.2]{BPWII}. Let $\fl^\vee\subset \fg^\vee$ be a Levi subalgebra containing $e^\vee$, and assume that $e^\vee$ is a regular nilpotent element in $\fl^\vee$. Let $\fl\subset \fg$ be the Langlands dual of $\fl^\vee$, $L\subset G$ the corresponding subgroup, and choose a parabolic $P\subset G$ with Levi factor $L$. Then $X=\Spec(\CC[T^*(G/P)])$. We note that it admits a symplectic resolution by $T^* (G/P)$. Then $X^\vee$ and $X$ are symplectic dual to each other. 

    An important generalization of the last example is as follows. Let $\fp, \fq \subset \fg$ be two different parabolic subalgebras, and $\fl, \fm \subset \fg$ be the corresponding Levi subalgebras. Consider the generalized Springer resolution map $\rho\colon T^*(G/Q)\to \fg^*$. Pick a regular nilpotent element $e\in \fl$. We define $\widetilde{S}(\fp, \fq)=\rho^{-1}(S(\chi))$, and let $X=\Spec(\CC[\widetilde{S}(\fp, \fq)])$ to be also denoted by $S(\mathfrak{p},\mathfrak{q})$.
    We have the natural bijection between roots of $\mathfrak{g}$ and roots of $\mathfrak{g}^\vee$. It follows that parabolic subalgebras $\mathfrak{p}, \mathfrak{q} \subset \mathfrak{g}$ determine the parabolic subalgebras $\mathfrak{p}^\vee, \mathfrak{q}^\vee \subset \mathfrak{g}^\vee$ with Levi subalgebras $\mathfrak{l}^\vee$ and $\mathfrak{m}^\vee$ respectively.
    We expect that $X$ is symplectic dual to $X^\vee=S(\fq^\vee, \fp^\vee)$. 
    The pair $(X^\vee, X)$ is the main object of our interest. 

    Let $\mathfrak{X}(\mathfrak{l})$ be the space of characters of $\mathfrak{l}$. Let us mention that the morphism $\widetilde{S}(\mathfrak{p},\mathfrak{q}) \rightarrow S(\mathfrak{p},\mathfrak{q})$ deforms naturally over $\mathfrak{X}(\mathfrak{l})$, we will denote this deformation by $\widetilde{S}_{\mathfrak{X}(\mathfrak{l})}(\mathfrak{p},\mathfrak{q}) \rightarrow S_{\mathfrak{X}(\mathfrak{l})}(\mathfrak{p},\mathfrak{q})$.

\subsection{Hikita-Nakajima conjectures}    We are interested in (different versions of) the Hikita-Nakajima conjecture for $(X^\vee, X)$ as above. We explore the general setting in \cref{section Hikita general}, here we focus on the main object of interest. Let $T\subset G$ be a maximal torus. The Hikita conjecture \cite{Hikita} claims the following.
    \begin{conj}\label{Hikita_intro}
        There is a graded algebra isomorphism $H^*(\widetilde{S}(\fp^\vee, \fq^\vee))\simeq \CC[S(\mathfrak{q},\mathfrak{p})^T]$,
    \end{conj}
    where $S(\mathfrak{q},\mathfrak{p})^T$ are \emph{schematic} fixed points.
    In this paper, we consider a more general version of the conjecture, that we call the equivariant Hikita-Nakajima conjecture. Let $Z_{L^\vee}$ be the center of $L^\vee$, and $\CC^\times$ be the contracting torus acting on $X^\vee$. Then we can consider a $\CC[\fX(\fl), \hbar]$-algebra $H_{Z_{L^\vee}\times \CC^\times}^* (\widetilde{S}(\fp^\vee, \fq^\vee))$. On the dual side $X$ admits a deformation $S_{\mathfrak{X}(\mathfrak{l})}(\mathfrak{p},\mathfrak{q})$ over $\fX(\fl)$, and let $\cA_{\fX(\fl), \hbar}(X)$ be its  quantization over $\mathfrak{X}(\mathfrak{l})$. Pick a generic cocharacter $\nu\colon \CC^\times \to T$, and let $\cC_{\nu}(\cA_{\fX(\fl), \hbar}(X))$ be the Cartan subquotient. Note that it is also a $\CC[\fX(\fl), \hbar]$-algebra. 
    \begin{conj}\label{equiv_HN_intro}
        There is a graded isomorphism of $\CC[\fX(\fl), \hbar]$-algebras $H_{Z_{L^\vee}\times \CC^\times}^* (\widetilde{S}(\fp^\vee, \fq^\vee))\simeq \cC_{\nu}(\cA_{\fX(\fl, \hbar)}(X))$.
    \end{conj}
    Note that specializing to the point $(0,0)\in \mathfrak{X}(\fl)\oplus \CC$ \cref{equiv_HN_intro} recovers \cref{Hikita_intro}. Specializing to $\hbar=0$,  \cref{Hikita_intro} recovers the following conjecture (to be called Hikita-Nakajima conjecture).
    \begin{conj}\label{HN_intro}
    There is a graded isomorphism of $\mathbb{C}[\mathfrak{X}(\mathfrak{l})]$-algebras $H^*_{Z_{L^\vee}}(\widetilde{S}(\mathfrak{p}^\vee,\mathfrak{q}^\vee)) \simeq \mathbb{C}[S(\mathfrak{q},\mathfrak{p})^{\nu(\mathbb{C}^\times)}]$.
    \end{conj}
    
    We finish this section by listing the known results of the Hikita and (equivariant) Hikita-Nakajima conjectures. 
    
    The Hikita conjecture was proven for the case of Hilbert scheme of points on $\mathbb{A}^2$, parabolic type $A$ Slodowy slices and hypertoric varieties in \cite[Theorem 1.1, Theorem A.1, Theorem B.1]{Hikita}. In \cite{Shlykov2020}, Shlykov has proven the case of $\mathbb{A}^2/\Gamma$ where $\Gamma$ is a finite subgroup of $\on{SL}_2(\mathbb{C})$.  
    In \cite[Theorem 1.5]{KamnitzerTingleyWebsterWeeksYacobi}, Kamnitzer, Tingley, Webster, Weekes and Yacobi have proven Hikita conjecture for the ADE slices in the affine Grassmanian. 
    They have also proven the equivariant version for type A quivers. Moreover,  the equivariant Hikita-Nakajima conjecture in the case of ADE slices in the affine Grassmanian that admit a symplectic resolution can be deduced from the results of Kamnitzer, Tingley, Webster, Weekes and Yacobi (see \cite[Section 8.3]{KamnitzerTingleyWebsterWeeksYacobi}, \cite[Section 1.3 and Theorem 1.5]{catO_yang}, see also \cite[Section 6.6]{Kamnitzer_symplectic}). The results of \cite{Shlykov2020} were recently generalized to the equivariant and quantum case in \cite{chen2024}. In \cite{linus_hikita} Setiabrata studies Hikita conjecture for $\mathcal{N}_{\mathfrak{sl}_n}/\!/\!/T$. In \cite{ksh}, Shlykov and the second author proved the Hikita-Nakajima conjecture for ADHM spaces (also known as Gieseker varieties) and initiated the approach towards the proof of Hikita-Nakajima conjecture that we will be using in this paper.

\subsection{Main results}
    Below, we list the main results obtained in this paper and discuss the structure of the text. The first important new result is obtained in \cref{sect_counter}. Namely, we claim the following.
    \begin{prop}\label{counterexample_intro}
        The Conjectures \ref{Hikita_intro} and \ref{equiv_HN_intro} do not hold as stated for the pair 
        \begin{equation*}
        X^\vee=\Spec(\CC[\widetilde{S}(\fp^\vee, \fq^\vee)]), ~X=\Spec(\CC[\widetilde{S}(\fq, \fp)]).
        \end{equation*}
    \end{prop}
    In fact we construct explicit counterexamples when $\fq=\fb$, i.e. $X=\Spec(\CC[T^*(G/P)])$. For $e^\vee\in \fl^\vee$ a regular  nilpotent let $\widetilde{S}(\chi^\vee)=\widetilde{S}(\fp^\vee, \fb^\vee)$. The key issues with Conjectures \ref{Hikita_intro}, \ref{equiv_HN_intro} are as follows, see \cref{sect_counter} for details.
    \begin{itemize}
        \item The algebra $H_{Z_{L^\vee}\times \CC^\times}^* (\widetilde{S}(\chi^\vee))$ is free over $\CC[\fX(\fl), \hbar]$, while the algebra $\cC_{\nu}(\cA_{\fX(\fl), \hbar}(X))$ may have $\CC[\fX(\fl), \hbar]$-torsion.
        \item In many cases the algebra $\CC[X^T]$ is cyclic indecomposable over $\CC[\fh^*]$ while $H^*(\widetilde{S}(\chi^\vee))$ is not indecomposable in general.
    \end{itemize}

    To remedy these issues in \cref{section Hikita general}, we propose \cref{conj_most_general_weak_Hikita} as a replacement of the equivariant Hikita-Nakajima conjecture in a general setting. For the pair 
    \begin{equation*}    X^\vee=\Spec(\CC[\widetilde{S}(\fp^\vee, \fq^\vee)]),~X=\Spec(\CC[\widetilde{S}(\fq, \fp)])
    \end{equation*}
    it leads to \cref{main_prop_weak_hikita} and \cref{main_th_weak_hikita}, that are the main results of the paper. Let us start with the case $\hbar=0$.

\subsubsection{Refined Hikita-Nakajima for $\hbar=0$}    For $\hbar=0$, we are comparing the following algebras:
    \begin{equation*}
    H^*_{Z_{L^\vee}}(\widetilde{S}(\mathfrak{p}^\vee,\mathfrak{q}^\vee))~\text{vs}~\mathbb{C}[S_{\mathfrak{X}(\mathfrak{l})}(\mathfrak{q},\mathfrak{p})^{\nu(\mathbb{C}^\times)}].
    \end{equation*}
    Note that we have the natural embedding \begin{equation}\label{equation_restr_to_fixed_parabolic}
    H^*_{Z_{L^\vee}}(\widetilde{S}(\mathfrak{p}^\vee,\mathfrak{q}^\vee)) \hookrightarrow H^*_{Z_{L^\vee}}(\widetilde{S}(\mathfrak{p}^\vee,\mathfrak{q}^\vee)^{Z_{L^\vee}})
    \end{equation}
    and the latter algebra is just the direct sum of polynomial algebras labeled by the (finite) set of fixed points $\widetilde{S}(\mathfrak{p}^\vee_-,\mathfrak{q}^\vee_-)^{Z_{L^\vee}}$ that we describe explicitly in Appendix \ref{app_torus_fixed}. By localization theorem, the embedding (\ref{equation_restr_to_fixed_parabolic}) is an {\emph{isomorphism generically}} over $H^*_{Z_{L^\vee}}(\on{pt})$. Moreover, we have a natural restriction homomorphism 
    \begin{equation}\label{eq_restr_partial_to_slodowy}
    H^*_{Z_{L^\vee}}(T^*\mathcal{Q}^\vee) \rightarrow  H^*_{Z_{L^\vee}}(\widetilde{S}(\mathfrak{p}^\vee,\mathfrak{q}^\vee))
    \end{equation}
    which is  {\emph{generically surjective}}. We propose to replace the algebra $H^*_{Z_{L^\vee}}(\widetilde{S}(\mathfrak{p}^\vee,\mathfrak{q}^\vee))$ appearing in the Hikita-Nakajima conjecture by the {\emph{image}} 
    \begin{equation}\label{eq_prop_replacement_LHS_hbar_zero}
    \on{Im}(H^*_{Z_{L^\vee}}(T^*\mathcal{Q}^\vee) 
    \rightarrow H^*_{Z_{L^\vee}}(\widetilde{S}(\mathfrak{p}^\vee,\mathfrak{q}^\vee)).
    \end{equation}
    Note that the fiber of the image (\ref{eq_prop_replacement_LHS_hbar_zero}) over a {\emph{generic}} point $\la \in \on{Spec}H^*_{Z_{L^\vee}}(\on{pt})$ is {\emph{equal}} to the fiber of $H^*_{Z_{L^\vee}}(\widetilde{S}(\mathfrak{p}^\vee,\mathfrak{q}^\vee))$ at $\lambda$ (use generic surjectivity of (\ref{eq_restr_partial_to_slodowy})). So, considered as a coherent sheaf on $\on{Spec}H^*_{Z_{L^\vee}}(\on{pt})$, it has rank equal to the number of fixed points $\widetilde{S}(\mathfrak{p}^\vee,\mathfrak{q}^\vee)^{Z_{L^\vee}}$. It then follows from Nakayama lemma that the fibers of the image (\ref{eq_prop_replacement_LHS_hbar_zero}) over arbitrary $\la$ have dimension {\emph{at least}} $|\widetilde{S}(\mathfrak{p}^\vee,\mathfrak{q}^\vee)^{Z_{L^\vee}}|$. So, our construction does not replace $H^*_{Z_{L^\vee}}(\widetilde{S}(\mathfrak{p}^\vee,\mathfrak{q}^\vee))$ by some  ``smaller'' sheaf of algebras, it rather modifies the sheaf $H^*_{Z_{L^\vee}}(\widetilde{S}(\mathfrak{p}^\vee,\mathfrak{q}^\vee))$ at the special points $\lambda \in \on{Spec}H^*_{Z_{L^\vee}}(\on{pt})$.
    Note, in particular, it is {\emph{not}} true in general that the fiber of the image (\ref{eq_prop_replacement_LHS_hbar_zero}) at $\lambda$ is equal to the image $H^*_{Z_{L^\vee}}(T^*\mathcal{Q}^\vee)_\la 
    \rightarrow H^*_{Z_{L^\vee}}(\widetilde{S}(\mathfrak{p}^\vee,\mathfrak{q}^\vee)_{\la}$. Of course, we have a surjective map in one direction, but it might {\emph{not}} be an isomorphism. In other words, there are elements in the fiber of (\ref{eq_prop_replacement_LHS_hbar_zero}) that can  {\emph{not}} be obtained as images of the elements of $H^*_{Z_{L^\vee}}(T^*\mathcal{Q}^\vee)_{\la}$. These elements should rather  be thought as ``limits'' of the images of the elements of $H^*_{Z_{L^\vee}}(T^*\mathcal{Q}^\vee)_{\la'}$ for generic $\la'$  as $\la'$ goes to $\la$.

    So, we replace the LHS of the Hikita-Nakajima conjecture with the image (\ref{eq_prop_replacement_LHS_hbar_zero}). Let us now describe what we do with the RHS. 

    The analog of the map (\ref{eq_restr_partial_to_slodowy}) for the RHS is the pull-back homomorphism
    \begin{equation}\label{eq_restr_slod_to_part_slod}
    \mathbb{C}[S_{\mathfrak{X}(\mathfrak{l})}(\mathfrak{b},\mathfrak{p})^{\nu(\mathbb{C}^\times)}] \rightarrow \mathbb{C}[S_{\mathfrak{X}(\mathfrak{l})}(\mathfrak{q},\mathfrak{p})^{\nu(\mathbb{C}^\times)}]
    \end{equation}
    induced by the natural morphism 
   $
    S_{\mathfrak{X}(\mathfrak{l})}(\mathfrak{q},\mathfrak{p}) \rightarrow  S_{\mathfrak{X}(\mathfrak{l})}(\mathfrak{b},\mathfrak{p})$
    (recall that $S_{\mathfrak{X}(\mathfrak{l})}(\mathfrak{q},\mathfrak{p})$ is the affinization of $\widetilde{S}_{\mathfrak{X}(\mathfrak{l})}(\mathfrak{q},\mathfrak{p})$, which, by the definition, is the preimage of $S_{\mathfrak{X}(\mathfrak{l})}(\mathfrak{b},\mathfrak{p})$ under the natural morphism $T^*\mathcal{P} \rightarrow \mathfrak{g}^* \times_{\mathfrak{h}^*/W} \mathfrak{X}(\mathfrak{l})$). One can show that the sources in (\ref{eq_restr_partial_to_slodowy}), (\ref{eq_restr_slod_to_part_slod}) are isomorphic, i.e., the Hikita-Nakajima conjecture holds for the pair $(X^\vee=T^*\mathcal{Q}^\vee,\,X=\widetilde{S}(\chi))$. So, (\ref{eq_restr_slod_to_part_slod}) is indeed a reasonable analog of (\ref{eq_restr_partial_to_slodowy}). 
    
 One can show that the morphism  (\ref{eq_restr_slod_to_part_slod}) is generically surjective, so the natural guess  for the replacement of the RHS of the Hikita-Nakajima conjecture would be taking the image of (\ref{eq_restr_slod_to_part_slod}). This is {\emph{almost}} the case. Namely, the algebra $\mathbb{C}[S_{\mathfrak{X}(\mathfrak{l})}(\mathfrak{q},\mathfrak{p})^{\nu(\mathbb{C}^\times)}]$ is {\emph{not}} torsion free over $\mathbb{C}[\mathfrak{X}(\mathfrak{l})]$ in general but the algebra $H^*_{Z_{L^\vee}}(\widetilde{S}(\mathfrak{p}^\vee,\mathfrak{q}^\vee))$ is. To remedy this inconsistency, we replace $\mathbb{C}[S_{\mathfrak{X}(\mathfrak{l})}(\mathfrak{q},\mathfrak{p})^{\nu(\mathbb{C}^\times)}]$  by its image under the natural pullback morphism 
    \begin{equation}\label{equation_pullback_to_resol}
     \mathbb{C}[S_{\mathfrak{X}(\mathfrak{l})}(\mathfrak{q},\mathfrak{p})^{\nu(\mathbb{C}^\times)}] \rightarrow  \mathbb{C}[\widetilde{S}_{\mathfrak{X}(\mathfrak{l})}(\mathfrak{q},\mathfrak{p})^{\nu(\mathbb{C}^\times)}].
    \end{equation}
    This morphism should be compared with the (injective) morphism (\ref{equation_restr_to_fixed_parabolic}). Note that the morphism (\ref{equation_restr_to_fixed_parabolic}) is an {\emph{isomorphism generically}}. Summarizing, our replacement for the RHS of the Hikita-Nakajima conjecture is the image:
    \begin{equation*}
    \on{Im}( \mathbb{C}[S_{\mathfrak{X}(\mathfrak{l})}(\mathfrak{b},\mathfrak{p})^{\nu(\mathbb{C}^\times)}]  \rightarrow \mathbb{C}[S_{\mathfrak{X}(\mathfrak{l})}(\mathfrak{q},\mathfrak{p})^{\nu(\mathbb{C}^\times)}] \rightarrow  \mathbb{C}[\widetilde{S}_{\mathfrak{X}(\mathfrak{l})}(\mathfrak{q},\mathfrak{p})^{\nu(\mathbb{C}^\times)}]).
    \end{equation*}
    As before, this image is a natural modification of the original algebra $\mathbb{C}[S_{\mathfrak{X}(\mathfrak{l})}(\mathfrak{q},\mathfrak{p})^{\nu(\mathbb{C}^\times)}]$ coinciding with it at the generic point.

    So, for $\hbar=0$, our main result is the following.
    \begin{theorem}\label{intro_thm_classical}
    There is an isomorphism of graded $\mathbb{C}[\mathfrak{X}(\mathfrak{l})]$-algebras:
    \begin{multline*}
        \on{Im}(H^*_{Z_{L^\vee}}(T^*\mathcal{Q}^\vee) 
    \rightarrow H^*_{Z_{L^\vee}}(\widetilde{S}(\mathfrak{p}^\vee,\mathfrak{q}^\vee))) \simeq \\
    \simeq   \on{Im}( \mathbb{C}[S_{\mathfrak{X}(\mathfrak{l})}(\mathfrak{b},\mathfrak{p})^{\nu(\mathbb{C}^\times)}]  \rightarrow \mathbb{C}[S_{\mathfrak{X}(\mathfrak{l})}(\mathfrak{q},\mathfrak{p})^{\nu(\mathbb{C}^\times)}] \rightarrow  \mathbb{C}[\widetilde{S}_{\mathfrak{X}(\mathfrak{l})}(\mathfrak{q},\mathfrak{p})^{\nu(\mathbb{C}^\times)}]).
    \end{multline*}
    \end{theorem}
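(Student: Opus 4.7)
The plan is to combine the already-known Hikita--Nakajima isomorphism for the ``ambient'' pair $(T^*\mathcal{Q}^\vee,\widetilde{S}(\chi))$ with a careful identification of the kernels of the two maps appearing in the statement. Write $r_1\colon H^*_{Z_{L^\vee}}(T^*\mathcal{Q}^\vee) \to H^*_{Z_{L^\vee}}(\widetilde{S}(\mathfrak{p}^\vee,\mathfrak{q}^\vee))$ for the restriction map on the left-hand side and $r_2\colon \mathbb{C}[S_{\mathfrak{X}(\mathfrak{l})}(\mathfrak{b},\mathfrak{p})^{\nu(\mathbb{C}^\times)}] \to \mathbb{C}[\widetilde{S}_{\mathfrak{X}(\mathfrak{l})}(\mathfrak{q},\mathfrak{p})^{\nu(\mathbb{C}^\times)}]$ for the composite on the right-hand side. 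The theorem is then equivalent to the assertion that the isomorphism $\Phi$ of sources, supplied by the Hikita--Nakajima statement for the pair $(T^*\mathcal{Q}^\vee,\widetilde{S}(\chi))$ cited as known in the paragraph before the theorem, descends to an isomorphism of images, or equivalently that $\Phi$ identifies $\ker r_1$ with $\Phi^{-1}(\ker r_2)$.

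To describe both kernels I would apply torus localization. Since $\widetilde{S}(\mathfrak{p}^\vee,\mathfrak{q}^\vee)$ is smooth with a finite discrete $Z_{L^\vee}$-fixed locus (described explicitly in Appendix \ref{app_torus_fixed}), the Atiyah--Bott localization theorem embeds $H^*_{Z_{L^\vee}}(\widetilde{S}(\mathfrak{p}^\vee,\mathfrak{q}^\vee))$ into a product of copies of $\on{Frac}H^*_{Z_{L^\vee}}(\on{pt})$ indexed by these fixed points, and likewise for $T^*\mathcal{Q}^\vee$. Consequently $\ker r_1$ is the ideal of classes in $H^*_{Z_{L^\vee}}(T^*\mathcal{Q}^\vee)$ whose restrictions to every $Z_{L^\vee}$-fixed point of $\widetilde{S}(\mathfrak{p}^\vee,\mathfrak{q}^\vee)$ vanish. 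Dually, $\widetilde{S}_{\mathfrak{X}(\mathfrak{l})}(\mathfrak{q},\mathfrak{p})^{\nu(\mathbb{C}^\times)}$ is a finite subscheme mapping to $S_{\mathfrak{X}(\mathfrak{l})}(\mathfrak{b},\mathfrak{p})^{\nu(\mathbb{C}^\times)}$, and $\ker r_2$ is the ideal of functions on the target vanishing on the image of this finite set.

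The core of the argument is then to produce a bijection $\widetilde{S}(\mathfrak{p}^\vee,\mathfrak{q}^\vee)^{Z_{L^\vee}} \iso \widetilde{S}_{\mathfrak{X}(\mathfrak{l})}(\mathfrak{q},\mathfrak{p})^{\nu(\mathbb{C}^\times)}$ and to verify that $\Phi$ intertwines the two ``restriction to fixed points'' maps with respect to this bijection. Both sets should be indexed by the same Weyl-group-theoretic data, appearing in Appendix \ref{app_torus_fixed} and related to the Kazhdan--Lusztig cells mentioned in the abstract; this provides the candidate bijection, and the intertwining property then reduces to tracking the images of fixed-point classes under $\Phi$. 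Granted this compatibility, the two kernels match under $\Phi$ and the isomorphism of images follows.

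The hardest step is precisely this last compatibility. It requires a fixed-point-sensitive description of $\Phi$, which must be extracted either from the construction used to establish the Hikita isomorphism for the ``ambient'' pair (along the general approach of \cite{ksh} invoked in the introduction) or from an independent combinatorial model of both sides. A secondary technical issue is that the match of images must hold integrally over $\mathbb{C}[\mathfrak{X}(\mathfrak{l})]$, not only after inverting the equivariant parameters; this is handled by observing that both images are torsion-free $\mathbb{C}[\mathfrak{X}(\mathfrak{l})]$-modules with the same generic rank, so an isomorphism at the generic point extends by a Nakayama-type argument.
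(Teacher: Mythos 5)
Your outline matches the paper's strategy closely, and you have correctly isolated the hard step. Both the paper and your proposal embed both sides into the direct sum of polynomial rings indexed by the (identified) torus fixed points, and then compare. What you have not supplied is the ``fixed-point-sensitive description of $\Phi$'' that you flag as the crux, and that is precisely where the paper does the real work.

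The paper avoids constructing $\Phi$ as a map between two a priori distinct rings by presenting \emph{both} sources as quotients of the common algebra $B=\mathbb{C}[\mathfrak{h}^*]^{W_M}\otimes\mathbb{C}[\mathfrak{h}^*]$ (this is the ``Hikita isomorphism for the ambient pair'' made concrete: Proposition~\ref{general equicoh} identifies $H^*_{T^\vee}(T^*\mathcal{Q}^\vee)\cong\mathbb{C}[\mathfrak{h}^*]^{W_M}\otimes_{\mathbb{C}[\mathfrak{h}^*]^W}\mathbb{C}[\mathfrak{h}^*]$, and the Cartan-subquotient computation for the $W$-algebra gives the same quotient of $B$). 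The question then becomes whether the two composites $B\rightrightarrows\bigoplus_{[w]\in(W_M\backslash W/W_L)^{\mathrm{free}}}\mathbb{C}[\mathfrak{X}(\mathfrak{l})]$ are literally equal, and this is what Corollary~\ref{cor_descr_comp_for_slodowy_side} and Lemma~\ref{lemma_hw_compute_parabolicW} verify by explicit formula: the cohomology map sends $s\otimes g$ to $\bigl((w\cdot s)\,g\bigr)_w$ evaluated at the appropriate $\rho$-shift (computed by restricting $G^\vee$-equivariant line bundles on $\mathcal{Q}^\vee$ to $T^\vee$-fixed points), and the $B$-algebra map on the quantization side gives the identical formula, read off from the highest weights of simples in the regular integral block of category $\mathcal{O}$ for parabolic $W$-algebras (via Barbasch--Vogan/Brundan--Goodwin--Kleshchev highest weight theory, transported through the parabolic Skryabin equivalence of Webster). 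The bijection of fixed-point sets is $[w]\mapsto[w^{-1}]$ between $(W_L\backslash W/W_M)^{\mathrm{free}}$ and $(W_M\backslash W/W_L)^{\mathrm{free}}$, established in Appendix~\ref{app_descr_fixed_points_parabolic}.

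One small imprecision in your last paragraph: no Nakayama argument is needed. Once you know the two ring maps $B\to\bigoplus\mathbb{C}[\mathfrak{X}(\mathfrak{l})]$ agree at a dense set of points, they agree everywhere because the target is reduced and a polynomial is determined by its values on a dense set (this is Proposition~\ref{hw_geom_vs_hw_alg}(c) in the paper, which reduces to evaluating along lines through the ample chamber). Equality of maps out of the common source then gives equality of images as subalgebras, with no descent or flatness lemma required.
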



  For the statement of the main theorem and details we refer to the main text, here we limit ourselves to expressing the main idea and illustrating it for $\hbar=0$. 
    
\subsubsection{Refined equivariant Hikita-Nakakjima}
The general case is similar to the $\hbar=0$ situation. The LHS of \cref{intro_thm_classical} is replaced by $\on{Im}(H^*_{Z_{L^\vee} \times \mathbb{C}^{\times}}(T^*\mathcal{Q}^\vee) \rightarrow H^*_{Z_{L^\vee} \times \mathbb{C}^\times}(\widetilde{S}(\mathfrak{p}^\vee,\mathfrak{q}^\vee)))$. The replacement of the RHS can be described as follows. First of all, the map $S_{\mathfrak{X}(\mathfrak{l})}(\mathfrak{q},\mathfrak{p}) \rightarrow  S_{\mathfrak{X}(\mathfrak{l})}(\mathfrak{b},\mathfrak{p})$ quantizes to the homomorphism $\mathcal{A}_{\mathfrak{h}^*,\hbar}(S(\mathfrak{b},\mathfrak{p})) \rightarrow \mathcal{A}_{\mathfrak{X}(\mathfrak{l}),\hbar}(S(\mathfrak{q},\mathfrak{p}))$ inducing the homomorphism $\cC_\nu(\mathcal{A}_{\mathfrak{h}^*,\hbar}(S(\mathfrak{b},\mathfrak{p}))) \rightarrow \cC_\nu(\mathcal{A}_{\mathfrak{X}(\mathfrak{l}),\hbar}(S(\mathfrak{q},\mathfrak{p})))$ quantizing (\ref{eq_restr_slod_to_part_slod}). The pull-back map (\ref{equation_pullback_to_resol}) is quantized as follows. We consider the {\emph{sheaf}} quantization $\mathcal{D}_{\mathfrak{X}(\mathfrak{l}),\hbar}(\widetilde{S}(\mathfrak{q},\mathfrak{p}))$ of $\mathcal{O}_{\widetilde{S}(\mathfrak{q},\mathfrak{p})}$ and then (\ref{equation_pullback_to_resol}) is quantized by the natural map 
\begin{equation*}
\cC_\nu(\mathcal{A}_{\mathfrak{X}(\mathfrak{l}),\hbar}(S(\mathfrak{q},\mathfrak{p}))) \rightarrow \Gamma(\widetilde{S}(\mathfrak{q},\mathfrak{p})^{\nu(\mathbb{C}^\times)},\cC_\nu(\mathcal{D}_{\mathfrak{X}(\mathfrak{l}),\hbar}(\widetilde{S}(\mathfrak{q},\mathfrak{p})))).
\end{equation*}
So, the main result of the paper is the following theorem.
\begin{theorem}\label{intro_thm_refined_hn}
There is an isomorphism of graded $\mathbb{C}[\mathfrak{X}(\mathfrak{l}),\hbar]$-algebras:
\begin{multline}\label{eq_our_refined_iso}
\on{Im}(H^*_{Z_{L^\vee} \times \mathbb{C}^{\times}}(T^*\mathcal{Q}^\vee) \rightarrow H^*_{Z_{L^\vee} \times \mathbb{C}^\times}(\widetilde{S}(\mathfrak{p}^\vee,\mathfrak{q}^\vee))) \simeq \\ \simeq \on{Im}(\cC_\nu(\mathcal{A}_{\mathfrak{h}^*,\hbar}(S(\mathfrak{b},\mathfrak{p}))) \rightarrow \cC_\nu(\mathcal{A}_{\mathfrak{X}(\mathfrak{l}),\hbar}(S(\mathfrak{q},\mathfrak{p}))) \rightarrow \Gamma(\widetilde{S}(\mathfrak{q},\mathfrak{p})^{\nu(\mathbb{C}^\times)},\cC_\nu(\mathcal{D}_{\mathfrak{X}(\mathfrak{l}),\hbar}(\widetilde{S}(\mathfrak{q},\mathfrak{p}))))).
\end{multline}
\end{theorem}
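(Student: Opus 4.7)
The plan is to prove the isomorphism \eqref{eq_our_refined_iso} via equivariant localization, describing both sides as algebras attached to torus fixed points and matching them through the bijection between the $Z_{L^\vee}$-fixed points of $\widetilde{S}(\mathfrak{p}^\vee,\mathfrak{q}^\vee)$ and the $\nu(\mathbb{C}^\times)$-fixed points of $\widetilde{S}(\mathfrak{q},\mathfrak{p})$ predicted by symplectic duality. This continues the strategy initiated in \cite{ksh}.

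A first step is to reduce to the Borel case $\mathfrak{q}=\mathfrak{b}$. On the LHS, the pullback $H^*(T^*\mathcal{Q}^\vee)\hookrightarrow H^*(T^*\mathcal{B}^\vee)$ identifies $H^*(T^*\mathcal{Q}^\vee)$ with the $W_{M^\vee}$-invariants, and one verifies that the image inside $H^*_{Z_{L^\vee}\times\mathbb{C}^\times}(\widetilde{S}(\mathfrak{p}^\vee,\mathfrak{q}^\vee))$ is controlled by the Borel-case image after taking $W_{M^\vee}$-invariants. On the RHS, the composition in the statement already factors through the Borel-case algebra $\cC_\nu(\mathcal{A}_{\mathfrak{h}^*,\hbar}(S(\mathfrak{b},\mathfrak{p})))$, and its further composition with \eqref{equation_pullback_to_resol} intertwines with the $W_{M^\vee}$-action in a compatible way. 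It therefore suffices to prove the isomorphism when $\mathfrak{q}=\mathfrak{b}$, with the general case following by taking $W_{M^\vee}$-invariants.

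Assuming $\mathfrak{q}=\mathfrak{b}$, I would apply localization to both sides. On the LHS, the embedding \eqref{equation_restr_to_fixed_parabolic} places $H^*_{Z_{L^\vee}\times\mathbb{C}^\times}(\widetilde{S}(\mathfrak{p}^\vee,\mathfrak{b}^\vee))$ inside an explicit direct sum of polynomial algebras indexed by the fixed points classified in Appendix \ref{app_torus_fixed}, with the image of $H^*_{Z_{L^\vee}\times\mathbb{C}^\times}(T^*\mathcal{B}^\vee)$ generated by restrictions of the tautological Chern classes on the flag variety. On the RHS, the $\nu(\mathbb{C}^\times)$-fixed locus of $\widetilde{S}(\mathfrak{b},\mathfrak{p})$ is a disjoint union of smooth components and sections of $\cC_\nu(\mathcal{D}_{\mathfrak{X}(\mathfrak{l}),\hbar}(\widetilde{S}(\mathfrak{b},\mathfrak{p})))$ decompose accordingly, while $\cC_\nu(\mathcal{A}_{\mathfrak{h}^*,\hbar}(S(\mathfrak{b},\mathfrak{p})))$ is accessible through the finite $W$-algebra/Soergel bimodule framework. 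The symplectic-duality bijection between the two fixed-point sets yields the candidate isomorphism, which one matches on generators using the explicit formulae produced by localization on each side.

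The main obstacle will be showing that the relations cutting out the image inside the fixed-point algebra coincide, especially in the quantum regime $\hbar\neq 0$. These relations encode gluing data along codimension-one strata --- wall-crossing of equivariant Chern classes on the LHS, quantized commutation relations for $\mathcal{D}_{\mathfrak{X}(\mathfrak{l}),\hbar}$ on the RHS --- and reducing each to a rank-one $\mathfrak{sl}_2$ computation should make the match explicit. The subtlest point is handling the special values $\lambda\in\on{Spec}\mathbb{C}[\mathfrak{X}(\mathfrak{l}),\hbar]$ where, as noted in the introduction, the Nakayama-lemma bound becomes strict: one must verify that the extra ``limit'' elements of the image produced on each side correspond under the bijection rather than differing by a spurious torsion phenomenon, and this is where the refinement of the original Hikita-Nakajima statement is genuinely used.
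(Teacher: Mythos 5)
Your overall strategy --- localize both sides to torus fixed points, exploit the symplectic-duality bijection between the fixed-point sets, and match explicit formulae --- is the right starting point and is indeed the approach the paper takes (following \cite{ksh}). However, there are two genuine problems with your plan as written.

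First, the reduction to the Borel case $\mathfrak{q}=\mathfrak{b}$ does not go through. On the cohomological side, you can invoke the Borho--MacPherson isomorphism $H^*(\widetilde{S}(\mathfrak{p}^\vee,\mathfrak{q}^\vee))\simeq H^*(\widetilde{S}(\mathfrak{p}^\vee,\mathfrak{b}^\vee))^{W_{M^\vee}}$, but you offer no analogous statement on the Cartan-subquotient side, and there is no reason to expect one: the map $\mathcal{A}_{\mathfrak{h}^*,\hbar}(\widetilde{S}(\mathfrak{q},\mathfrak{b}))\to\mathcal{A}_{\mathfrak{X}(\mathfrak{l}),\hbar}(\widetilde{S}(\mathfrak{q},\mathfrak{p}))$ arises from quantum Hamiltonian reduction, not from taking $W_M$-invariants of anything, and the target Cartan subquotient is not the $W_M$-invariants of a Borel-case object. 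Moreover, the fixed-point sets are $W/W_L$ in the Borel case but only the \emph{free} double cosets $(W_M\backslash W/W_L)^{\mathrm{free}}$ in general, so even the combinatorics of ``taking invariants'' fails when not all $W_M\times W_L$-orbits on $W$ are free. The paper instead proves the general case directly (Lemma \ref{lemma_hw_compute_parabolicW}), computing highest weights of parabolic $W$-algebras via the equivalence with the parabolic BGG category $\mathcal{O}$ and the characterization of $\mathfrak{p}$-integrable simples.

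Second, your final paragraph about matching ``relations'' and treating the special-parameter locus as a delicate point is a misreading of what needs to be proved. The key structural observation of the paper is that both candidate subalgebras are images of the \emph{same} polynomial algebra $\mathbb{C}[\mathfrak{h}^*,\hbar]^{W_M}\otimes\mathbb{C}[\mathfrak{h}^*,\hbar]$ under two maps into the common target $\bigoplus_{[w]\in(W_M\backslash W/W_L)^{\mathrm{free}}}\mathbb{C}[\mathfrak{X}(\mathfrak{l}),\hbar]$ (Proposition \ref{main_prop_weak_hikita}). Once one shows these two maps are equal, the images coincide automatically --- there is no separate ``relations'' step and nothing special happens at non-generic $(\lambda,\hbar_0)$. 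Equality of two graded polynomial maps is checked at generic parameters (Proposition \ref{hw_geom_vs_hw_alg}(c)), where the cohomological side is a Chern-class restriction computation (Lemma \ref{restriction_fixed_point_parabolic}, Corollary \ref{cor_descr_comp_for_slodowy_side}) and the algebraic side is a highest-weight computation. The ``limit'' elements at special $\lambda$ that you worry about are then handled automatically by the polynomiality of the maps; they are not an obstacle one must match by hand. The wall-crossing and $\mathfrak{sl}_2$-rank-one arguments you sketch do not appear in and are not needed for the paper's proof.
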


One possible way to think about Theorem \ref{intro_thm_refined_hn} is the following. 
Both algebras $H^*_{Z_{L^*\vee} \times \CC^\times}(\widetilde{S}(\mathfrak{p}^\vee,\mathfrak{q}^\vee))$ and $\cC_\nu(\mathcal{A}_{\hbar,\mathfrak{X}(\mathfrak{l})}({S}(\mathfrak{q},\mathfrak{p})))$ have a natural action of the ``big" (polynomial) algebra $\CC[\mathfrak{h}^*,\hbar]^{W_M} \otimes \CC[\mathfrak{h}^*,\hbar]$ coming from the embedding $\widetilde{S}(\mathfrak{p}^\vee,\mathfrak{q}^\vee)\subset T^*\cQ^\vee$ and the quantization of the map $S(\fq, \fp)\to S(\fb,\fp)$. Assuming torsion freeness of $\cC_\nu(\mathcal{A}_{\hbar,\mathfrak{X}(\mathfrak{l})}(S(\mathfrak{q},\mathfrak{b})))$ over $\mathbb{C}[\mathfrak{X}(\mathfrak{l}),\hbar]$, our refined equivariant Hikita-Nakajima conjecture claims that the subalgebras of $H^*_{Z_{L^\vee} \times \CC^\times}(\widetilde{S}(\mathfrak{p}^\vee,\mathfrak{q}^\vee))$ and $\cC_\nu(\mathcal{A}_{\hbar,\mathfrak{X}(\mathfrak{l})}(S(\mathfrak{q},\mathfrak{b})))$ generated by $1$ under the action of the algebra $\CC[\mathfrak{h}^*,\hbar]^{W_M} \otimes \CC[\mathfrak{h}^*,\hbar]$ are isomorphic. 

Let us also briefly mention the idea of the proof of \cref{intro_thm_refined_hn} (cf. \cite{ksh}). Recall that symplectic duality predicts the existence of (canonical) bijection 
\begin{equation}\label{intro_bij_fixed_pts}
Y^{T_X} \simeq (Y^{\vee})^{T_{X^\vee}}.
\end{equation}
The existence of the  bijection (\ref{intro_bij_fixed_pts}) can be checked in our example $Y=\widetilde{S}(\mathfrak{q},\mathfrak{p})$, $Y^\vee=\widetilde{S}(\mathfrak{p}^\vee,\mathfrak{q}^\vee)$ and follows from the results of Appendix \ref{app_descr_fixed_points_parabolic}. Now, the idea is to realize both of the algebras appearing in \cref{intro_thm_refined_hn} as {\emph{subalgebras}} of the direct sum $\mathbb{C}[\mathfrak{X}(\mathfrak{l}),\hbar]^{\oplus |Y^{T_X}|}$. The desired embedding of the LHS of (\ref{eq_our_refined_iso}) is given by composing $H^*_{Z_{L^\vee}}(T^*\mathcal{Q}^\vee) 
    \rightarrow H^*_{Z_{L^\vee}}(\widetilde{S}(\mathfrak{p}^\vee,\mathfrak{q}^\vee))$
with the pull-back (\ref{equation_restr_to_fixed_parabolic}) to the torus fixed points  $ H^*_{Z_{L^\vee}}(\widetilde{S}(\mathfrak{p}^\vee,\mathfrak{q}^\vee)) \hookrightarrow H^*_{Z_{L^\vee}}(\widetilde{S}(\mathfrak{p}^\vee,\mathfrak{q}^\vee)^{Z_{L^\vee}})$. The algebra in the RHS of (\ref{eq_our_refined_iso}) is obviously a subalgebra of $\Gamma(\widetilde{S}(\mathfrak{q},\mathfrak{p})^{\nu(\mathbb{C}^\times)},\cC_\nu(\mathcal{D}_{\mathfrak{X}(\mathfrak{l}),\hbar}(\widetilde{S}(\mathfrak{q},\mathfrak{p}))))$. It them follows from the results of \cite[Section 5]{BPWII} (see also \cite[Section 5.2]{LosevCatO}) that we have an isomorphism 
\begin{equation*}
\Gamma(\widetilde{S}(\mathfrak{q},\mathfrak{p})^{\nu(\mathbb{C}^\times)},\cC_\nu(\mathcal{D}_{\mathfrak{X}(\mathfrak{l}),\hbar}(\widetilde{S}(\mathfrak{q},\mathfrak{p})))) \simeq \mathbb{C}[\mathfrak{X}(\mathfrak{l}),\hbar]^{|Y^{T_X}|}.
\end{equation*}
Now, to prove \cref{intro_thm_refined_hn} we need to identify the images of 
\begin{equation}\label{eq_alg_targets}
H^*_{Z_{L^\vee} \times \mathbb{C}^\times}(T^*\mathcal{Q}^\vee),\,\cC_\nu(\mathcal{A}_{\mathfrak{h}^*,\hbar}(S(\mathfrak{b},\mathfrak{p})))
\end{equation}
inside the direct sum of polynomial algebras. We then note that both of the algebras in (\ref{eq_alg_targets})  are naturally quotients of the same algebra $\mathbb{C}[\mathfrak{h}^*,\hbar]^{W_M} \otimes \mathbb{C}[\mathfrak{h}^*,\hbar]$. So, now we just have two maps $\mathbb{C}[\mathfrak{h}^*,\hbar]^{W_M} \otimes \mathbb{C}[\mathfrak{h}^*,\hbar]  \rightrightarrows \mathbb{C}[\mathfrak{X}(\mathfrak{l}),\hbar]^{\oplus |Y^{T_X}|}$ and our goal is to prove that these maps are equal. The key point is that this can already be done at generic $(\lambda,\hbar_0) \in \mathfrak{X}(\mathfrak{l}) \oplus \mathbb{C}$ and reduces to the standard computations. Namely, on the LHS of the conjecture, this is just the computation of weights of $G$-equivariant bundles on $\mathcal{Q}^\vee$ at the torus fixed points. On the RHS, this is the computation of the highest weights of irreducible modules in the regular integral block of the category $\mathcal{O}$ for parabolic $W$-algebras.

\subsubsection{Comparison of refined Hikita-Nakajima and Hikita-Nakajima conjectures}    The connection between our theorem and the original Hikita conjecture is worth elaborating on. First of all, for $\mathfrak{p}=\mathfrak{b}$, i.e., for the pair $(X^\vee=T^*\mathcal{Q}^\vee, X=\widetilde{S}(\chi))$, our Theorem \ref{main_th_weak_hikita} implies the Hikita-Nakajima conjecture without any modifications. The situation is much more complicated in general.
    Specifically, \cref{main_th_weak_hikita} along with certain additional conditions (see Section \ref{weak_vs_original}) would imply that \cref{equiv_HN_intro} and \cref{Hikita_intro}. These conditions are briefly discussed in Section \ref{sec_cor_conj_further} and thoroughly examined in \cite{hoang2}. In fact, these conditions are necessary conditions for \cref{Hikita_intro} to hold (\cite[Propositions 1.1, 1.5]{hoang2}). Therefore, our main result can essentially be viewed as a precise replacement, rather than a weaker version, of the Hikita-Nakajima conjecture for nilpotent orbits. For $\fg$ of type A, \cref{main_th_weak_hikita} uniformly reproduces all type A theorems on the Hikita conjecture (Appendix \ref{app_HN_type_A}). This includes recent results from \cite{Brundan2008}, \cite{KamnitzerTingleyWebsterWeeksYacobi}, as well as classical results from \cite{deConcini1981}, \cite{Tanisaki}. Our theorem also uniformly generalizes other results for general $\fg$ obtained in \cite{carell},\cite{Kumar2012}, and \cite{Carrell_2017}.

    

    

\subsection{Motivations and applications}
  \subsubsection{Connection to unipotent characters}\label{connection to unipotent characters}
        An important motivation for the Hikita conjecture in the case $\fq=\fb$ comes from \cite[Section 9.3]{LMBM}. In the loc.cit. it is shown how a slightly stronger version of \cref{Hikita_intro} explains the appearance of special unipotent central characters in the study of canonical quantizations of orbit covers. Namely, the following diagram was expected to be commutative. 

        \begin{equation*}
\xymatrix{ H^*_{Z(L^\vee) \times \CC^\times}(T^*\mathcal{B}^\vee) \ar[r]^{res} & H^*_{Z_{L^\vee} \times \CC^\times}(\widetilde{S}(\chi^\vee)) \ar[dd]_{\sim} \\
\CC[\mathfrak{h}^*,\hbar] \otimes_{\CC[\mathfrak{h}^*,\hbar]^W} \CC[\mathfrak{h}^*,\hbar] \ar[u]^{\sim} \ar[d]_{\sim} & \\
\cC_\nu(\mathcal{A}_{\hbar,\mathfrak{X}(\mathfrak{l})}(T^*\cB)) \ar[r] & \cC_\nu(\mathcal{A}_{\hbar,\mathfrak{X}(\mathfrak{l})}(\widetilde{S}(\mathfrak{b},\mathfrak{p}))) 
}
\end{equation*}

As mentioned already, there is no isomorphism $H^*_{Z_{L^\vee} \times \CC^\times}(\widetilde{S}(\chi^\vee))\to \cC_\nu(\mathcal{A}_{\hbar,\mathfrak{X}(\mathfrak{l})}(\widetilde{S}(\mathfrak{b},\mathfrak{p})))$ in general. However, the commutative diagram of \cref{main_prop_weak_hikita} is sufficient for the argument in the paper. Similarly, in the case when $e^\vee\in \fl^\vee$ is not regular, \cref{conj_most_general_weak_Hikita} is sufficient for the argument in \cite[Section 9.3]{LMBM}. Understanding the conjecture for non-regular in Levi $e^\vee$ is a topic of an ongoing work in progress of the authors. We refer to \cref{sec_more_gen_settings} for the discussion.

\subsubsection{Geometry of Springer fibers and Kazhdan-Lusztig cells}  
\cref{main_th_weak_hikita} offers a bridge between representation theory and geometry. In particular, the set-theoretic support of the specialization of the image of the ``big" algebra action on $1\in \cC_\nu(\mathcal{A}_{\hbar,\mathfrak{X}(\mathfrak{l})}(\widetilde{S}(\mathfrak{b},\mathfrak{p}))) $ at a parameter $(\lambda, t)\in \fX(\fl)\oplus \CC \hbar$ records the highest weight of certain irreducible $\cU(\fg)$-modules annihilated by the same primitive ideal. Compared with the cohomological side, we obtain a new, highly nontrivial relation between the geometry of Springer fibers and the theory of Kazhdan-Lusztig cells. This relation is mentioned in \cref{combinatorics} and is part of a work in progress.

 \subsection{Structure of the paper}   We finish the section by explaining the structure of the paper. First, we recall the already existing in the literature results that are important for our paper. Section \ref{sec_prelim} focuses on conical symplectic singularities and their deformation. Section \ref{sec_equiv} recalls the basics of equivariant cohomology and provides explicit computations of fixed points of Slodowy varieties. In Section \ref{sec_sympldual} we fix the settings of the symplectic duality adopted in the paper. Section \ref{sect_refined_BVLS} introduces the main symplectic dual pairs considered in the paper following \cite{LMBM} and \cite{BPWII}. The first new results appear in Section \ref{sect_counter}, where we analyze the Hikita conjecture for Slodowy slices and nilpotent orbits and provide explicit counterexamples. Section \ref{sec_hw_parab} is quite technical and provides the necessary background for understanding the highest weights of parabolic $W$-algebras used in the proof of the main result. In Section \ref{equivariant conjecture section}, we state the main conjecture (\cref{conj_most_general_weak_Hikita}) that serves as our replacement of the equivariant Hikita-Nakajima conjecture in the literature and discuss the differences between the two. Section \ref{Section_weak_HN_for_parab_lodowy_proof} states and proves the main result of the paper (\cref{main_th_weak_hikita}). In Section \ref{sec_cor_conj_further}, we discuss the corollaries of the results obtained and potential directions for further study. Finally, we have 6 appendices that fix notations and provide explicit computations used in the paper.
\subsection{Acknowledgements}
    This paper was made possible thanks to the feedback and constant sharing of the ideas from many mathematicians, their help is extremely appreciated. We are especially grateful to Roman Bezrukavnikov, Alexander Braverman, Pavel Etingof, Andrei Ionov, Joel Kamnitzer, Ivan Losev, Hiraku Nakajima, Peng Shan, Ben Webster, and Alex Weekes.


\tableofcontents
\section{Preliminaries}\label{sec_prelim}

 \subsection{Conical symplectic singularities}\label{conical sympl sing}
        Let $X$ be a normal Poisson scheme. Following Beauville \cite{Beauville2000}, we say that $X$ has \emph{symplectic singularities} if
        \begin{itemize}
            \item[(i)] The regular (smooth) locus $X^{\mathrm{reg}}$ admits a symplectic form $\omega$.
            \item[(ii)] There is a resolution of singularities $\pi\colon Y\to X$, such that $\pi^*\omega$ extends to a regular (not necessarily symplectic) $2$-form on $Y$.
        \end{itemize}

        We say that a graded normal Poisson scheme $X$ is a \emph{conical symplectic singularities} if $X$ has symplectic singularities and admits a $\CC^\times$-action, contracting $X$ onto a point. Note that a conical symplectic singularity is automatically affine. We will call $Y \rightarrow X$ a {\emph{symplectic resolution}} of $X$ if the extension of $\pi^*\omega$ to $Y$ is the {\emph{symplectic}} form. Let us list some examples relevant to this paper, see \cref{sect_nilp_orbits_covers_and_bir_ind} and \cref{sec_parabolic_slodowy} for more details.

        \begin{example*}
            \begin{itemize}
                \item[(a)] 
                The Springer resolution $T^*\mathcal{B} \to \cN$ is a symplectic resolution and thus satisfies (ii). More generally, for arbitrary Parabolic subgroup $P \subset G$, the affinization morphism $\pi_{\mathfrak{p}}\colon Y:=T^*\mathcal{P} \rightarrow X=\operatorname{Spec}(T^*\mathcal{P})$ is a conical symplectic resolution.
                \item[(b)] Let $G$ and $\cN$ be as above, and let $\OO\subset \cN$ be a nilpotent orbit. Then $\Spec(\CC[\OO])$ is a conical symplectic singularity, \cite{Panyushev1991}. Moreover, if $\widetilde{\OO}$ is a nilpotent cover, $\Spec(\CC[\widetilde{\OO}])$ is a conical symplectic singularity, \cite[Lemma 2.5]{LosevHC}. Note that it, in general, may not possess a symplectic resolution. We refer interested readers to \cite{Fu_2003} and \cite{Fu2003}.  
                \item[(c)] Let $X=S(\chi) \cap \cN$ be the intersection of the Slodowy slice to $\chi$ with the nilpotent cone. Then $Y=\widetilde{S}(\chi) \rightarrow S(\chi) \cap \cN=X$ is a conical symplectic resolution, where the map $\pi\colon \widetilde{S}(\chi) \to S(\chi) \cap \cN$ is given by restricting the Springer resolution to $X\subset \cN$. 
                \item[(d)] Generalizing $(a)$ and $(c)$, we can take $Y=\widetilde{S}(\chi,\mathfrak{p})$ to be the parabolc Slodowy variety, and $X:=\on{Spec}\CC[\widetilde{S}(\chi,\mathfrak{p})]$.
            \end{itemize}
        \end{example*}

        Suppose that $X$ has symplectic singularities. In the remainder of the section we introduce some of the key objects associated with $X$. 
        
   \subsubsection{$\mathbb{Q}$-factorial terminalizations}     The following was proved in \cite[Prop 2.1]{LosevSRA} following \cite{BCHM}, see also \cite[A.7]{Namikawa3}. 
        \begin{prop}
            There is a birational projective Poisson morphism $\pi\colon \widetilde{X}\to X$ such that
            \begin{itemize}
                \item[(i)] $\widetilde{X}$ has symplectic singularities;
                \item[(ii)] Every Weil divisor on $\widetilde{X}$ has a (nonzero) integer multiple that is Cartier, i.e. $\widetilde{X}$ is $\QQ$-factorial;
                \item[(iii)] $\codim_{\widetilde{X}}(\widetilde{X}\setminus \widetilde{X}^{\mathrm{reg}})\ge 4$;
                \item[(iv)] If $X$ is conical symplectic singularity, then $\widetilde{X}$ admits a $\CC^\times$-action, such that $\pi$ is $\CC^\times$-equivariant. 
            \end{itemize}

             We say that $\widetilde{X}$ is a \emph{$\QQ$-factorial terminalization} of $X$.
        \end{prop}

        It is important to note that $X$ usually admits multiple $\QQ$-factorial terminalizations. In what follows, we fix a choice of $\widetilde{X}$. 
        
\subsubsection{Namikawa space}        
        Define the \emph{Namikawa space} $\mathfrak{h}_X=H^2(\widetilde{X}^{\mathrm{reg}}, \CC)$. The key property of $\fh_X$ is that it classifies Poisson deformations of $\widetilde{X}$. Namely, we have the following result.
        \begin{prop}\label{universal Poisson deformation}\cite[Proposition 2.6]{Losev4}
            There is a graded Poisson deformation $\widetilde{X}_{\fh_X}$ of $\widetilde{X}$ over $\fh_X$, such that for any graded Poisson deformation $\widetilde{X}_B$ over $\Spec(B)$ there is a unique $\CC^\times$-equivariant morphism  $\Spec(B)\to \fh_X$, such that $\widetilde{X}_B\simeq \widetilde{X}_{\fh_X}\times_{\fh_X}\Spec(B)$.
        \end{prop}
        We call the deformation $\widetilde{X}_{\fh_X}\to \fh_X$ the \emph{universal Poisson deformation} of $\widetilde{X}$. The variety $X_{\fh_X}=\Spec(\CC[\widetilde{X}_{\fh_X}])$ is a graded Poisson deformation of $X$ over $\fh_X$. We note that $X_{\fh_X}\to \fh_X$ is not universal in the sense of \cref{universal Poisson deformation}, but it admits a surjective finite map to the universal Poisson deformation of $X$. For the purposes of this paper, we prefer working with $X_{\fh_X}$ over the universal Poisson deformation of $X$.

 \subsubsection{Automorphisms of $X$}        We finish this section with a short discussion of the automorphisms of $X$. Consider the group of Poisson automorphisms of $X$ commuting with the contracting $\mathbb{C}^\times$-action, and let $Z_X$ be its maximal reductive quotient.

    \begin{warning}\label{warning disconnected group}
    Note that $Z_X$ may be disconnected.     
    \end{warning}        

    Let $T_X\subset Z_X$ be the maximal torus. The action of $Z_X$ on $X$ extends to the action of $Z_X$ on $X_{\fh_X}$, and similarly the action of $T_X$ on $X$ lifts to an action on $\widetilde{X}$ and extends to an action on $\widetilde{X}_{\fh_X}$. 
We denote the Lie algebra of $T_X$ by $\ft_X$. In \cref{sec_parabolic_slodowy}, we describe the Namikawa space $\fh_X$ and the Lie algebra $\ft_X$ for the nilpotent Slodowy slice $X=S_\chi$ and the affinization of the cotangent bundle of partial flag variety $X=T^*\cP$. These spaces play an important role in what follows.

\subsubsection{Chambers}\label{subsub_chambers} Let us now discuss stratifications of the spaces $\mathfrak{h}_X$, $\mathfrak{t}_X$ into the union of certain {\emph{chambers}} separated by certain {\emph{walls}}. These stratifications will play an important role in the paper. 

Recall the morphism $\widetilde{X}_{\mathfrak{h}_X} \rightarrow X_{\mathfrak{h}_X}$.  Consider the set $\fh_X^{\mathrm{sing}}$ of all $\chi\in \fh_X$ such that $\widetilde{X}_\chi$ is not affine, i.e. the map $\widetilde{X}_\chi \to X_\chi$ is not an isomorphism. Irreducible components of $\fh_X^{\mathrm{sing}}$ define walls, and thus, we obtain a division of $\fh_X$ into chambers.

The following proposition holds by the results of Namikawa (see  \cite[Main Theorem]{Namikawa_bir} and footnote $2$ in loc.cit.). Let $W$ be the Namikawa-Weyl group (it was introduced in \cite{Namikawa2} 
see also \cite[Section 2.3]{Losev4}). For a $\mathbb{Q}$-factorial terminalization $\widetilde{X} \rightarrow X$ we will denote by $\on{Amp}(\widetilde{X})$ the ample cone of $X$.  

\begin{prop}\label{namikawa's result_descr_of_h_sing}
The set $\mathfrak{h}_X \setminus \mathfrak{h}^{\mathrm{sing}}_X$ is equal to the union $\bigsqcup_{\widetilde{X} \rightarrow X,\, w\in W_{X}} w(\on{Amp}(\widetilde{X}))$. Every irreducible component of $\mathfrak{h}^{\mathrm{sing}}$ is a codimension one linear subspace of $\mathfrak{h}_X$.
\end{prop}

Let us now describe a natural stratification of $\mathfrak{t}_X$ into chambers separated by certain walls. 
We assume that $X$ admits a symplectic resolution $Y$ such that the set $Y^{T_X}$ is finite.

Let $\mathfrak{t}_X^{\mathrm{sing}} \subset \mathfrak{t}_X$ be the subset of  $x \in \mathfrak{t}_X$ such that $Y^{x}=Y^{T_X}$, where by $Y^x$ we mean zeroes of the vector field on $Y$ defined by $x \in \mathfrak{t}_X = \on{Lie}T_X$. Clearly, $\mathfrak{t}_X^{\mathrm{sing}}$ can be explicitly described as follows. Let $\Delta_X \subset \mathfrak{t}_{X}^*$ be the set of all $\mathfrak{t}_X$-weights that appear in $T_{p}Y$ for $y \in Y^{T_X}$. For $\alpha \in \Delta_X$ let $H_{\alpha} \subset \mathfrak{t}_X$ be the hyperplane given by the equation $\langle \alpha,-\rangle = 0$. It is easy to see that 
\begin{equation*}
\mathfrak{t}_{X}^{\mathrm{sing}} = \bigcup_{\alpha \in \Delta_X}H_{\alpha}.
\end{equation*}

\begin{rmk}
There should be an internal definition of $\mathfrak{t}_{X}^{\mathrm{sing}}$ that does not involve a choice of $Y$ and works for arbitrary conical symplectic singularity $X$ (not necessarily having a symplectic resolution). One candidate is the set $\Delta'_X$ defined as follows (see \cite[Section 3.1]{kmp}). Let $\mathbb{C}[X]_+ \subset \mathbb{C}[X]$ be the ideal generated by the elements of positive degree w.r.t. the contracting action. Then $\Delta'_X$ is the set of $T_X$-weights of $\mathbb{C}[X]/(\mathbb{C}[X]_+)^2$. In other words, $\Delta'_X$ is the set of $T_X$-weights of the cotangent space of $X$ at the unique $T_X \times \mathbb{C}^\times$-fixed point.  It is expected that $\Delta_X=\Delta'_X$ when $X$ has a symplectic resolution. 
\end{rmk}

\subsubsection{Equivariant formality}
Fix a symplectic singularity $X$ and assume that it admits a symplectic resolution $Y$. It in principle follows from the results of Kaledin that $H^*_{T_X \times \mathbb{C}^\times}(Y)$ is free over $H^*_{T_X \times \mathbb{C}^\times}(\on{pt})$. On the other hand, if $Y^{T_X}$ is finite, then this claim becomes extremely easy. Since all of the varieties $Y$ considered in this paper will have isolated $T_X$-fixed points we have decided to include the following lemma.

\begin{lemma}\label{lemma_equiv_cohom_resol_free}
Assume that $Y \rightarrow X$ is a conical symplectic resolution s.t. $Y^{T_X}$ is finite. Then $H^*_{T_X \times \mathbb{C}^\times}(Y)$ is free over $H^*_{T_X \times \mathbb{C}^\times}(\on{pt})$.
\end{lemma}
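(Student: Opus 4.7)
The plan is to produce a $(T_X \times \mathbb{C}^\times)$-equivariant affine paving of $Y$ via a Bia\l ynicki-Birula decomposition and deduce freeness from the resulting filtration. First I would choose a cocharacter $\nu\colon \mathbb{C}^\times \to T_X \times \mathbb{C}^\times$ whose $\mathbb{C}^\times$-component is sufficiently dominant so that the composed action on $Y$ still contracts everything to the (preimage of) the cone point, and which is generic in the sense that $Y^{\nu(\mathbb{C}^\times)} = Y^{T_X \times \mathbb{C}^\times}$. Since the contracting $\mathbb{C}^\times$ fixes only the cone point of $X$, any $T_X$-fixed point of $Y$ lies over that cone point and is therefore automatically fixed by the full $T_X \times \mathbb{C}^\times$; hence $Y^{T_X} = Y^{T_X \times \mathbb{C}^\times} = Y^{\nu(\mathbb{C}^\times)}$, and this set is finite by hypothesis.

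Next I would apply the Bia\l ynicki-Birula theorem to the smooth variety $Y$ with its $\nu$-action. Although $Y$ is not projective, the contracting $\mathbb{C}^\times$-action coming from the conical structure guarantees that for every $p \in Y$ the limit $\lim_{t \to 0} \nu(t) \cdot p$ exists in $Y$. One then obtains a decomposition $Y = \bigsqcup_{y \in Y^{T_X}} Y_y^+$ into $(T_X \times \mathbb{C}^\times)$-stable locally closed attracting cells, each equivariantly isomorphic to the positive weight subspace $T_y^+ Y$ of the tangent space at $y$, i.e.\ to an affine space $\mathbb{A}^{n_y}$. Ordering the cells by dimension produces an equivariant filtration of $Y$ by closed subvarieties whose successive strata are affine spaces sitting over single fixed points.

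To conclude freeness I would run the standard induction on this filtration using the long exact sequences for the pairs. At each step, the relevant equivariant cohomology of an attracting cell is $H^*_{T_X \times \mathbb{C}^\times}(Y_y^+) \cong H^*_{T_X \times \mathbb{C}^\times}(\mathrm{pt})$, which is a free $H^*_{T_X \times \mathbb{C}^\times}(\mathrm{pt})$-module concentrated in even degrees. In particular, all connecting homomorphisms vanish on degree-parity grounds, the long exact sequences split into short exact sequences of free modules, and by induction $H^*_{T_X \times \mathbb{C}^\times}(Y)$ is itself free over $H^*_{T_X \times \mathbb{C}^\times}(\mathrm{pt})$, with basis given by the equivariant fundamental classes of the cell closures, indexed by $Y^{T_X}$.

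The main technical obstacle is justifying that Bia\l ynicki-Birula decomposition applies in this non-projective setting and produces affine cells; this is precisely where one needs to invoke the contracting $\mathbb{C}^\times$ from the conical structure to guarantee existence of limits, after which the classical arguments go through verbatim. The parity-induction step is then routine.
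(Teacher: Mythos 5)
Your proposal follows essentially the same route as the paper: pick a combined cocharacter into $T_X \times \mathbb{C}^\times$ with a large contracting component, apply Bia\l ynicki-Birula to get a finite equivariant affine paving of $Y$ indexed by $Y^{T_X}$, and deduce freeness by the standard parity induction. The paper simply takes $t \mapsto (\nu(t), t^N)$ with $\nu\colon \mathbb{C}^\times \to T_X$ generic and $N \gg 0$, uses properness of $\pi$ over the contracting base to guarantee existence of limits, and stops once the paving is obtained.

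One step in your justification, however, would not survive scrutiny. You assert that a $T_X$-fixed point of $Y$ "lies over the cone point and is \emph{therefore} automatically fixed by the full $T_X \times \mathbb{C}^\times$." The implication in the "therefore" is false: the central fiber $\pi^{-1}(\text{cone point})$ is typically positive-dimensional (e.g.\ a Springer fiber), and the contracting $\mathbb{C}^\times$ fixes only a subvariety of it, not all of it. The correct argument is softer: the $\mathbb{C}^\times$-action commutes with $T_X$, hence preserves the finite set $Y^{T_X}$, and a connected group acting on a finite set acts trivially; so $Y^{T_X} \subset Y^{\mathbb{C}^\times}$ and therefore $Y^{T_X} = Y^{T_X \times \mathbb{C}^\times}$. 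With that replacement the proof closes, and the rest (genericity of the $T_X$-component, BB cells, parity induction) is exactly the paper's argument.
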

\begin{proof}
It is enough to construct a finite $T_X \times \mathbb{C}^\times$-equivariant paving of $Y$. Let $\nu\colon \mathbb{C}^\times \rightarrow T_X$ be a generic cocharacter of $T_X$.
Consider a cocharacter $\mathbb{C}^\times \rightarrow T_X \times \mathbb{C}^\times$ given by $t \mapsto (\nu(t),t^{N})$ for $N \gg 0$. Then, the induced action of $\mathbb{C}^\times$ contracts $Y$ to the finite set $Y^{T_X}$. Taking attractors to fixed points and using Bialynicki-Birula Theorem (see \cite[Section 4]{Bialynicki-Birula}), we obtain the desired affine paving.
\end{proof}





\subsection{Nilpotent orbits}\label{subsec_saturation}
    Let $G$ be a complex simple group with a fixed maximal torus $T \subset G$. Let $\fg=\on{Lie}G$, $\mathfrak{h}=\on{Lie}T$ be the corresponding Lie algebras. The variety $\fg^*$ admits a natural Poisson structure induced by the commutator on $\fg$, and the symplectic leaves of $\fg^*$ are coadjoint orbits. The coadjoint orbit $\OO\subset \fg^*$ is \emph{nilpotent} if it is stable under the scaling action of $\CC^\times$. We write $\Orb(G)$ for the set of nilpotent coadjoint $G$-orbits. Let $\cN\subset \fg^*$ denote the union of all nilpotent coadjoint orbits. In classical types, we have a combinatorial parameterization of nilpotent orbits. Namely, for a positive integer $n$ let $\cP(n)$ be the set of partitions of $n$, i.e. non-increasing sequences of positive integers $\mathbf{p}=(\mathbf{p}_1, \mathbf{p}_2, \ldots, \mathbf{p}_k)$, such that $n=\sum_i \mathbf{p}_i$. For an odd number $n$, we say that a partition $\mathbf{p}\in \cP(n)$ is of type $B$ if every even member of $\mathbf{p}$ comes with an even multiplicity, and denote the set of type $B$ partitions of $n$ by $\cP_B(n)$. Similarly, for even $n$, we say that a partition $\mathbf{p}$ is of type $C$ (resp. of type $D$) if every odd (resp. even) member of $\mathbf{p}$ comes with an even multiplicity and denote the corresponding set of partitions by $\cP_C(n)$ (resp. $\cP_D(n)$). A partition $\mathbf{p}\in \cP_D(n)$ is called very even if all members of $\mathbf{p}$ are even.

    \begin{prop}[Section 5.1, \cite{CM}]\label{prop:orbitstopartitions}
Suppose $G$ is a simple group of classical type. Then the following are true:
\begin{enumerate}[label=(\alph*)]
    \item If $\fg = \mathfrak{sl}_{n}$, then there is a bijection:
    $$\Orb(G) \xrightarrow{\sim} \mathcal{P}(n) $$
    \item If $\fg = \mathfrak{so}_{2n+1}$, then there is a bijection:
    $$\Orb(G)\xrightarrow{\sim} \mathcal{P}_{B}(2n+1) $$
    \item If $\fg = \mathfrak{sp}_{2n}$, then there is a bijection:
    $$ \Orb(G)\xrightarrow{\sim} \mathcal{P}_C(2n)$$
    \item If $\fg = \mathfrak{so}_{2n}$, then there is a surjection:
    $$\Orb(G) \twoheadrightarrow \mathcal{P}_{D}(2n) $$
    Over very even partitions, this map is two-to-one, and over all other partitions, it is a bijection.
\end{enumerate}
\end{prop}

    In what follows, we write $\OO_\mathbf{p}$ for the partition corresponding to $\mathbf{p}$ under the map above. For $\mathbf{p}$ very even we denote the corresponding preimages by $\OO_\mathbf{p}^I$ and $\OO_\mathbf{p}^{II}$. If we claim a statement that is true for both $\OO_\mathbf{p}^I$ and $\OO_\mathbf{p}^{II}$, we often omit the numerals and refer to the orbit as $\OO_\mathbf{p}$.

    To classify nilpotent orbits of exceptional simple groups we use the Bala-Carter classification. These ideas are also incredibly useful for the classical groups, we recall them below. 

    Let $L\subset G$ be a Levi subgroup containing the torus $T$, and $\OO_L\subset \fl^*$ be a nilpotent orbit. The nilpotent orbit $\OO=G\cdot \OO_L\subset \fg^*$ is called \emph{saturated} from the pair $(L, \OO_L)$ and is denoted by $\Sat_L^G \OO_L$. An orbit that cannot be saturated from any proper Levi $L\subset G$ is called \emph{distinguished}. 
    \begin{prop}\label{BC saturation} \cite{BalaCarter1976}, \cite[Thm. 8.1.1]{CM}
        Let $\OO\subset \fg^*$ be a nilpotent orbit. There is a unique up to $G$-conjugation pair $(L, \OO_L)$ of a Levi subgroup $L\subset G$, and a distinguished orbit $\OO_L\subset \fl^*$, such that $\OO=\Sat_L^G \OO_L$.
    \end{prop}

    \subsection{Nilpotent orbit covers and birational induction}\label{sect_nilp_orbits_covers_and_bir_ind}

    Recall that $L\subset G$ is a Levi subgroup, and $\OO_L\subset \fl^*$ is a nilpotent orbit. Pick a parabolic group $P\subset G$ with a Levi factor $L$ and Lie algebra $\mathfrak{p}$. Let $\mathfrak{p}^{\perp}=(\mathfrak{g}/\mathfrak{p})^* \subset \mathfrak{g}^*$. Form a $G$-equivariant fiber bundle $G\times^P (\overline{\OO}_L\times \fp^\perp)$ over $\mathcal{P}=G/P$ and note that the left action of $G$ induces a $G$-equivariant moment map 
    \begin{equation}\label{G_equiv_moment_ind_P}
    G\times^P(\overline{\OO}_L\times \fp^\perp)\to \fg^* \qquad (g, \xi) \mapsto g\xi.
    \end{equation}

    By \cite{LS}, the image of (\ref{G_equiv_moment_ind_P}) is the closure of a single nilpotent $G$-orbit $\OO$, and the orbit $\OO$ is independent of the choice of parabolic $P$. The correspondence associating to an orbit $\OO_L$ the orbit $\OO$ is called \emph{Lusztig-Spaltenstein induction}. We say that $\OO$ is induced from $(L, \OO_L)$ and denote it by $\Ind_L^G \OO_L$. We note that the map $G\times^P(\overline{\OO}_L\times \fp^\perp)\to \overline{\OO}$ is finite, but may not be birational. We also remark that an analog of \cref{BC saturation} doesn't hold for the Lusztig-Spaltenstein induction. To fix these issues we work with birational induction of nilpotent orbit covers.

    A \emph{nilpotent cover} is a finite \'{e}tale $G$-equivariant connected cover of a nilpotent coadjoint orbit. Naturally, if $\OO\subset \fg^*$ is a nilpotent orbit, and $\chi\in \OO$, the nilpotent covers of $\OO$ are in a natural correspondence with the subgroups of $\pi_1^G(\OO)=Z_G(\chi)/Z_G(\chi)^\circ$. 

    Recall that $L\subset G$ is a Levi, and let $\widetilde{\OO}_L$ be a nilpotent cover of $\OO_L\subset \fg^*$. Similarly to the above, we form a fiber bundle $G\times^P(\Spec(\CC[\widetilde{\OO}_L])\times \fp^\perp)$, and it admits a moment map $\pi\colon G\times^P(\Spec(\CC[\widetilde{\OO}_L])\times \fp^\perp)\to \fg^*$ with the image of $\pi$ being the closure of the orbit $\OO=\Ind_L^G \OO_L$. Let $\widetilde{\OO}=\pi^{-1}(\OO)$. Then $\widetilde{\OO}$ is connected, and therefore a nilpotent cover of $\OO$. We say that $\widetilde{\OO}$ is \emph{birationally induced} from $(L, \widetilde{\OO}_L)$ and denote it by $\Bind_L^G \widetilde{\OO}_L$. By \cite[Proposition 2.4.1(i)]{LMBM}, $\widetilde{\OO}$ is independent of the choice of the parabolic $P$. A nilpotent cover $\widetilde{\OO}$ that cannot be induced from a proper Levi subgroup $L$ is called \emph{birationally rigid}. 
    \begin{prop}\label{bir ind to bir rigid}\cite[Proposition 2.4.1(iii)]{LMBM}
        For any nilpotent cover $\widetilde{\OO}$ there is a unique up to $G$-conjugation pair $(L,\widetilde{\OO}_L)$ of a Levi subgroup $L\subset G$ and a birationally rigid nilpotent cover $\widetilde{\OO}_L$, such that $\widetilde{\OO}=\Bind_L^G \widetilde{\OO}_L$.
    \end{prop}

    \subsection{Parabolic Slodowy varieties}\label{sec_parabolic_slodowy}
        \subsubsection{Partial flag varieties and their deformations}
    Recall that $P \subset G$ is a parabolic subgroup containing $T$ with a Levi factor $L$, and $\mathcal{P}=G/P$ is the partial flag variety (we will sometimes identify it with the space of parabolic subgroups $P' \subset G$ that are conjugate to $P$). Consider the symplectic variety $T^*\mathcal{P}$ that can be canonically identified with $G \times^P \mathfrak{p}^{\perp}$. We have the natural morphism  $\pi_{\mathfrak{p}}\colon T^*\mathcal{P} \rightarrow \overline{\mathbb{O}}$ (c.f. (\ref{G_equiv_moment_ind_P}) for $\mathbb{O}_{L}=\{0\}$).

    \begin{rmk}
    As we have mentioned in Section \ref{sect_nilp_orbits_covers_and_bir_ind}, the morphism $\pi_{\mathfrak{p}}$ is finite but not birational in general. On the other hand, it factors as $T^*\mathcal{P} \rightarrow \on{Spec}(\CC[T^*\mathcal{P}]) \rightarrow \overline{\mathbb{O}}$, and the first morphism is birational. Moreover, $\on{Spec}(\CC[T^*\mathcal{P}])$ is nothing else but $\on{Spec}\CC[\widetilde{\mathbb{O}}]$ for the cover $\widetilde{\mathbb{O}}=\Bind_L^G \{0\}$ of the orbit $\mathbb{O}$. 
    \end{rmk}
    
    Let $X=\Spec(\CC[\widetilde{\mathbb{O}}])$. The Namikawa space $\fh_X$ by definition equals to $H^2(T^*\cP, \CC)\simeq H^2(\cP, \CC)$. The latter is identified with $\mathfrak{X}(\mathfrak{l})=(\mathfrak{l}/[\mathfrak{l},\mathfrak{l}])^*=(\mathfrak{p}/[\mathfrak{p},\mathfrak{p}])^*$. 
    The universal deformation of $T^*\mathcal{P}$ over the base $\mathfrak{X}(\mathfrak{l})$ is given as follows:
    \begin{equation*}
    T^*_{\mathfrak{X}(\mathfrak{l})}\mathcal{P} = G \times^P [\mathfrak{p},\mathfrak{p}]^{\perp} \rightarrow \mathfrak{X}(\mathfrak{l}),~(g,\xi) \mapsto \xi|_{\mathfrak{p}}.
    \end{equation*}
    For $P=B$, $\mathfrak{X}(\mathfrak{h})=\mathfrak{h}^*$,  $T^*_{\mathfrak{h}^*}\mathcal{B}$ is the well-known {\emph{Grothendieck-Springer resolution}} that will be denoted by $\widetilde{\mathfrak{g}}^*$.

    For future applications, we will need a description of ample line bundles on $T^*\mathcal{P}$. Let $\Delta^\vee$ be the set of coroots of $\mathfrak{g}$, equivalently, the set of roots of $\mathfrak{g}^\vee$.
    Let  $\Delta^\vee_{\mathfrak{p}} \subset \Delta^\vee$ be the subset consisting of $\beta^\vee$ such that $\mathfrak{g}^\vee_{\beta^\vee} \subset \mathfrak{p}^\vee$.
    We will say that $\la \in \mathfrak{h}^*$ is {\it  $\mathfrak{p}$-antidominant} if $\langle \al,\beta^{\vee} \rangle \notin \BZ_{> 0}$ for any $\beta^\vee \in \Delta_{\mathfrak{p}}^\vee$.
	We say that  $\la \in \mathfrak{h}^*$ is $\mathfrak{l}$-{\em{regular}} if $\langle \al,\beta^{\vee} \rangle \neq 0$ for any $\beta^\vee \in \Delta^\vee \setminus \Delta_{\mathfrak{l}}^\vee$. 
	We say that $\la \in \mathfrak{h}^*$ is {\em{integral}} if $\langle\la,\beta^{\vee}\rangle \in \BZ$ for any $\beta^\vee \in \Delta^\vee$. 

    To every {\emph{integral}} $\la \in \mathfrak{X}(\mathfrak{l})$ we can associate the corresponding $P$-character to be denoted by the same symbol $\la\colon P \rightarrow \mathbb{C}^\times$. Let $\mathbb{C}_\la$ be the corresponding one-dimensional representation of $P$. Let $\mathcal{O}_{\mathcal{P}}(\la):= G \times^P \mathbb{C}_\la$ be the induced line bundle.

    The following lemma is standard. 
    \begin{lemma}
    Line bundle $\mathcal{O}_{\mathcal{P}}(\la)$ is ample iff $\la$ is $\mathfrak{p}$-antidominant and $\mathfrak{l}$-regular.
    \end{lemma}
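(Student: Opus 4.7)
This is a standard statement about the ample cone of the partial flag variety $\mathcal{P}=G/P$, and the plan is to reduce it to the classical criterion by unpacking the two numerical conditions and then applying standard results in two directions.

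First I would translate the conditions into familiar form. Since $\la\in\mathfrak{X}(\mathfrak{l})=(\mathfrak{l}/[\mathfrak{l},\mathfrak{l}])^*$, one automatically has $\langle\la,\beta^\vee\rangle=0$ for every $\beta^\vee\in\Delta_\mathfrak{l}^\vee$. Using the bijection $\Delta\leftrightarrow\Delta^\vee$ and its compatibility with the chosen Borel/parabolic, one obtains $\Delta_\mathfrak{p}^\vee=\Delta^{+,\vee}\cup\Delta_\mathfrak{l}^\vee$. For $\la$ integral, as must be assumed for $\mathcal{O}_\mathcal{P}(\la)$ to exist as a line bundle, the two hypotheses then collapse to the single condition that $\langle\la,\beta^\vee\rangle\in\mathbb{Z}_{<0}$ for every positive coroot $\beta^\vee\notin\Delta_\mathfrak{l}^\vee$; equivalently, $-\la$ is strictly $P$-dominant in the usual sense.

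For the direction ``conditions $\Rightarrow$ ample'' I would invoke Borel--Weil. Under the reformulation above, $-\la$ is a strictly dominant integral weight of $G$, so the irreducible representation $V(-\la)$ has a highest weight line that is stabilized by $P$. The orbit map gives a $G$-equivariant morphism $\mathcal{P}\to\mathbb{P}(V(-\la)^*)$ which is a closed embedding precisely because $-\la$ is $\mathfrak{l}$-regular, and pulling back $\mathcal{O}(1)$ recovers $\mathcal{O}_\mathcal{P}(\la)$. Hence $\mathcal{O}_\mathcal{P}(\la)$ is very ample, and in particular ample.

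For the converse I would apply the Nakai--Moishezon criterion to the finitely many $T$-invariant irreducible curves in $\mathcal{P}$. By the Bruhat decomposition these curves are $\mathbb{P}^1$'s labeled, up to $W/W_L$-translation, by positive coroots $\beta^\vee\notin\Delta_\mathfrak{l}^\vee$, and the degree of $\mathcal{O}_\mathcal{P}(\la)$ on the curve through $wP$ and $ws_\beta P$ is, up to the universal sign fixed by $\mathcal{O}_\mathcal{P}(\la)=G\times^P\mathbb{C}_\la$, equal to $\langle w\la,\beta^\vee\rangle$. Ampleness forces these pairings to be strictly positive, and by $W$-invariance of the root system this is exactly the reformulated condition from the first step. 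The only genuine obstacle is bookkeeping: keeping track of the sign conventions in the Langlands identification of positive coroots of $\mathfrak{g}$ with positive roots of $\mathfrak{g}^\vee$ and in the convention $\mathcal{O}_\mathcal{P}(\la)=G\times^P\mathbb{C}_\la$ versus the Borel--Weil convention. This is not a substantive difficulty, and one expects the paper simply to invoke a standard reference such as Demazure or Jantzen for the underlying ampleness criterion.
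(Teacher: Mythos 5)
The paper does not actually prove this lemma: it is stated under the single sentence ``The following lemma is standard'' with no argument given, so there is no authorial proof to compare against. Your proposal is a correct version of the standard argument, and it fills the gap. Briefly: for integral $\la\in\mathfrak{X}(\mathfrak{l})$ the condition $\langle\la,\beta^\vee\rangle=0$ for $\beta^\vee\in\Delta^\vee_{\mathfrak{l}}$ is automatic, so the two hypotheses do collapse to saying $-\la$ is a strictly $P$-dominant integral weight; sufficiency then follows from Borel--Weil since $\mathcal{O}_{\mathcal{P}}(\la)=G\times^P\mathbb{C}_\la$ is the pullback of $\mathcal{O}(1)$ along the orbit map to projective space on the highest weight line of $V(-\la)$, and $\mathfrak{l}$-regularity is exactly what makes the stabilizer of that line equal to $P$ rather than a larger parabolic (so that the orbit map is an embedding of $\mathcal{P}$ itself, not just a factorization through some $G/P'$); and necessity follows by pairing with the $T$-invariant rational curves $C_{w,\beta}\simeq\mathbb{P}^1$ indexed by $\beta^\vee\in\Delta^{+,\vee}\setminus\Delta^\vee_{\mathfrak{l}}$ and $w\in W/W_L$, on which $\mathcal{O}_{\mathcal{P}}(\la)$ has degree $-\langle w\la,\beta^\vee\rangle$.

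Two small points worth flagging so the write-up is airtight. First, when $\la$ is antidominant but not $\mathfrak{l}$-regular the orbit map is still a closed immersion, but of the larger partial flag variety $G/P'$, so $\mathcal{O}_{\mathcal{P}}(\la)$ is semiample/nef but not ample; it is worth phrasing the ``precisely because'' clause this way rather than leaving it implicit. Second, the sign conventions you defer to bookkeeping are resolved by the paper's choice $\mathcal{O}_{\mathcal{P}}(\la)=G\times^P\mathbb{C}_{\la}$: with this convention $\mathcal{O}_{\mathbb{P}^1}(n)=G\times^B\mathbb{C}_{-n\omega}$ in the $\mathfrak{sl}_2$ test case, which is why the ample cone sits on the antidominant side. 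Your argument matches what one would extract from the standard references (Demazure, Jantzen II.8) that the authors presumably have in mind.
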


    For the future applications in this paper, it is useful to know the fiber $T^*_{\xi}\cP$ of the universal deformation of $T^*\cP$ over the point $\xi\in \fX(\fl)$. Let $M=Z_G(\xi)$ be a Levi subgroup of $G$ containing $L$, and let $P_M=P\cap M$ and $\cP_M$ be the corresponding parabolic subgroup and partial flag variety for $M$ respectively. We claim the following.
    \begin{lemma}\label{fiber of deformation of parabolic}
        There is an isomorphism $T^*_{\xi}\cP\simeq G\times^M (\xi\times T^*\cP_M)$. In particular, for $\mathfrak{l}$-regular $\xi\in \fX(\fl)$ we have $T^*_{\xi}\cP\simeq G/L=G\xi$.
    \end{lemma}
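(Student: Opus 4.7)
The plan is to reformulate both sides via the Killing form identification $\mathfrak{g}\simeq\mathfrak{g}^*$ and match them using the parabolic $Q\subset G$ containing $P$ with Levi $M$. Under the Killing identification one checks $[\mathfrak{p},\mathfrak{p}]^\perp\simeq \mathfrak{z}(\mathfrak{l})\oplus\mathfrak{n}_P$ as $P$-modules (where $\mathfrak{n}_P$ is the nilradical of $\mathfrak{p}$), with the structure map to $\mathfrak{X}(\mathfrak{l})\simeq\mathfrak{z}(\mathfrak{l})$ being projection onto the first summand, so $T^*_\xi\mathcal{P}\simeq G\times^P(\xi+\mathfrak{n}_P)$. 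Applying the same device inside $M$, and using that $\xi\in\mathfrak{z}(\mathfrak{m})$ is central in $M$ (so translation by $\xi$ is $P_M$-equivariant), gives $T^*\mathcal{P}_M\simeq M\times^{P_M}(\xi+\mathfrak{n}_{P_M})$. Combined with the standard identity $G\times^M(M\times^{P_M}(-))\simeq G\times^{P_M}(-)$, the claim reduces to the existence of a natural isomorphism $G\times^{P_M}(\xi+\mathfrak{n}_{P_M})\simeq G\times^P(\xi+\mathfrak{n}_P)$.

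Choose $Q\supset P$ with Levi $M$ (explicitly $\mathfrak{q}=\mathfrak{m}+\mathfrak{n}_P$). This yields decompositions $\mathfrak{n}_P=\mathfrak{n}_{P_M}\oplus\mathfrak{n}_Q$ and $P=P_M\ltimes N_Q$. The approach is to produce the isomorphism above by inducing up from $P$ the following claim: the map
\begin{equation*}
N_Q\times(\xi+\mathfrak{n}_{P_M})\longrightarrow \xi+\mathfrak{n}_P,\qquad (n,\xi+X)\longmapsto \mathrm{Ad}(n)(\xi+X),
\end{equation*}
is an isomorphism of varieties. For its differential: since $M=Z_G(\xi)$, the operator $\mathrm{ad}(\xi)|_{\mathfrak{n}_Q}$ is semisimple and invertible (each root space of $\mathfrak{n}_Q$ has weight $\alpha(\xi)\neq 0$), and for $X\in\mathfrak{n}_{P_M}\subset\mathfrak{m}$ the operator $\mathrm{ad}(X)|_{\mathfrak{n}_Q}$ is nilpotent and commutes with $\mathrm{ad}(\xi)|_{\mathfrak{n}_Q}$ because $[\xi,X]=0$; hence $\mathrm{ad}(\xi+X)|_{\mathfrak{n}_Q}$ is invertible, making the $N_Q$-derivative an isomorphism, while the $\mathfrak{n}_{P_M}$-derivative is the identity modulo $\mathfrak{n}_Q$ (since $N_Q$ acts trivially on $\mathfrak{q}/\mathfrak{n}_Q\simeq\mathfrak{m}$). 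Injectivity comes from projecting to $\mathfrak{q}/\mathfrak{n}_Q$ to recover $(X,\xi)$ together with $Z_{N_Q}(\xi)=N_Q\cap M=\{1\}$; an \'etale bijective morphism between smooth varieties of the same dimension is an isomorphism.

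In the $\mathfrak{l}$-regular case the condition $\langle \xi,\beta^\vee\rangle\neq 0$ for all $\beta^\vee\in\Delta^\vee\setminus\Delta_\mathfrak{l}^\vee$ forces $M=Z_G(\xi)=L$, so $\mathcal{P}_M=L/L$ is a single point and the general formula collapses to $T^*_\xi\mathcal{P}\simeq G/L=G\cdot\xi$. The main obstacle is precisely the invertibility of $\mathrm{ad}(\xi+X)|_{\mathfrak{n}_Q}$: this uses both the $\mathfrak{m}$-regularity of $\xi$ on $\mathfrak{n}_Q$ (for semisimple invertibility) and the commutation $[\xi,X]=0$ coming from $\xi\in\mathfrak{z}(\mathfrak{m})$ and $X\in\mathfrak{m}$ (so that the Jordan-type decomposition $\mathrm{ad}(\xi)+\mathrm{ad}(X)$ is genuinely semisimple-plus-commuting-nilpotent). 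Once this is in hand, the remaining steps are formal bookkeeping with induced bundles and the semidirect decomposition $P=P_M\ltimes N_Q$.
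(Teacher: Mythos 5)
Your proof is correct and follows the same strategy as the paper's: both pass through the intermediate parabolic $Q\supset P$ with Levi $M$ and reduce the statement to the simple transitivity of the unipotent radical $U=N_Q$ acting on a slice/fiber, which ultimately rests on $\mathrm{ad}(\xi)$ being invertible on $\mathfrak{n}_Q$. The paper's treatment is terser — it works in cotangent-bundle coordinates and simply asserts that the $U$-action map $U\times\mathfrak{X}(\mathfrak{l})^\circ\times T^*\mathcal{P}_M\to \mathfrak{X}(\mathfrak{l})^\circ\times T^*\mathcal{P}_M\times\mathfrak{q}^\perp$ is an isomorphism — whereas you supply the differential/injectivity/\'etale-bijection verification that justifies precisely that step in the Killing-form picture.
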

    \begin{proof}
        Let $Q\supset P$ be a parabolic subgroup of $G$ with Levi factor $\fm$, and let $U\subset Q$ be the unipotent radical. Let $\fX(\fl)^\circ\subset \fX(\fl)$ be the subset of all $\eta\in \fX(\fl)$ such that $Z_G(\eta)\subset M$. We have 
        $$T^*_{\fX(\fl)^\circ}\cP=G\times^P(\fX(\fl)^\circ\times \fp^\perp)\simeq G\times^Q(\fX(\fl)^\circ\times T^*\cP_M\times \fq^\perp)$$
        We claim that $G\times^Q(\fX(\fl)^\circ\times T^*\cP_M\times \fq^\perp)\simeq G\times^M(\fX(\fl)^\circ\times T^*\cP_M)$. Consider the tautological map $\tau: G\times^M(\fX(\fl)^\circ\times T^*\cP_M)\to G\times^Q(\fX(\fl)^\circ\times T^*\cP_M\times \fq^\perp)$ given by $(g, \eta, x)\mapsto (g, \eta, x)$. We have $G\times^M(\fX(\fl)^\circ\times T^*\cP_M)\simeq G\times^Q(U\times \fX(\fl)^\circ\times T^*\cP_M)$. The isomorphism $U\times \fX(\fl)^\circ\times T^*\cP_M\simeq \fX(\fl)^\circ\times T^*\cP_M\times \fq^\perp$ given by $(u,\eta,x)\mapsto u.(\eta, x)$ implies that $\tau$ is an isomorphism. Specializing to the point $\xi\in \fX(\fl)^\circ$, we get the desired statement.
    \end{proof}
    

    Note that we have the closed embedding 
    $T^*_{\mathfrak{X}(\mathfrak{l})}\mathcal{P} \hookrightarrow \mathcal{P} \times \mathfrak{g}^*$ given by $(g,\xi) \mapsto (gPg^{-1},g\xi)$ and inducing the identification: 
    \begin{equation*}
    T^*_{\mathfrak{X}(\mathfrak{l})}\mathcal{P}  \iso \{(\mathfrak{p}',\xi')\,|\, \mathfrak{p}' \in \mathcal{P},\, \xi' \in \mathfrak{g}^*,\,\xi'|_{[\mathfrak{p}',\mathfrak{p}']}=0\}.
    \end{equation*}
    The map $T^*_{\mathfrak{X}(\mathfrak{l})}\mathcal{P} \to \fX(\fl)$ constructed above, and the composition $T^*_{\mathfrak{X}(\mathfrak{l})}\mathcal{P} \hookrightarrow \mathcal{P} \times \mathfrak{g}^* \to \fg^*$ give rise to the map $\widetilde{\pi}_\fp: T^*_{\mathfrak{X}(\mathfrak{l})}\mathcal{P}\to \fg^*\times_{\fh^*/W} \fX(\fl)$ that will be important in what follows.
    
    We finish the discussion of partial flag varieties with the discussion of the group $Z_X$. As mentioned in \cref{warning disconnected group}, the group $Z_X$ may be disconnected. For our purposes, it is enough to consider the Lie algebra $\fz_X$ of $Z_X$. Recall that $X=\Spec(\CC[\widetilde{\OO}])$ for a $G$-equivariant cover $\widetilde{\OO}$ of a nilpotent orbit $\OO\subset \cN$. The Lie algebra $\fz_X$ in this case was studied by Brylinski and Kostant in \cite{BrylinskiKostant1994}, which led to the concept of \emph{shared orbits}, corresponding to the case when $\fz_X$ is not equal to $\fg$. All shared orbits were classified in recent work \cite{FJLS2023} of Fu, Juteau, Levy, and Sommers. We summarize the results of these two papers in our context as follows.

    \begin{prop}\label{automotphisms of partial flag}
        Let $P\subset G$ be a parabolic subgroup, and $X=\Spec(\CC[T^*\cP])$. Let $L$ be the Levi factor of $P$. Then $\fz_X\not \simeq \fg$ if and only if $\fg\simeq \fs\fo(2n+1)$ and $\fl\simeq \fg\fl(n)$. In this case $\fz_X\simeq \fs\fo(2n+2)$. 
    \end{prop}

    \subsubsection{Slodowy varieties} Let $G$, $\cN$ and $\OO$ be as above. Pick a Borel subgroup $B \subset G$ and let $\mathcal{B}:=G/B$. Fix a nondegenerate $\mathfrak{g}$-invariant form $(-,-)$ on $\fg$. Pick a point $\chi\in \OO$, and let $e\in \fg$ be the nilpotent element such that $\chi=(e, -)$. Choose an $\fs\fl_2$-triple $(e,h,f)$, and let $S(e):=e+\Ker(\ad f)$. Let $\fg=\bigoplus_{i\in \ZZ} \fg(i)$ be the decomposition into the weight spaces of $[h,-] \curvearrowright \mathfrak{g}$, and consider the Kazhdan $\CC^\times$-action on $\fg$ given on $x\in \fg(i)$ by $t\cdot x=t^{-2+i}x$. This action restricts to the action on $S(e)$, contracting it onto the point $e$ as $t \rightarrow \infty$. We will denote by $S(\chi)$ the image of $S(e)$ under the isomorphism $\fg\simeq \fg^*$ given by the form $(-,-)$ above. 
    Variety $S(\chi)$ is called the {\em{Slodowy slice}} to the nilpotent orbit $\mathbb{O}$ at point $\chi$.
    
    \begin{rmk}
    After the identification $\mathfrak{g} \simeq \mathfrak{g}^*$, the action above is given by $t \cdot \eta = t^{-2-i}\eta$ for $\eta \in \left(\mathfrak{g}(i)\right )^*\subset \fg^*$. 
    \end{rmk}

    We have the natural morphism $\mathfrak{g}^* \rightarrow \mathfrak{g}^*/\!/G$ induced by the embedding $\CC[\mathfrak{g}^*]^{G} \subset \CC[\mathfrak{g}^*]$. Recall also that by the Chevalley restriction theorem, $\mathfrak{g}^*/\!/G \simeq \mathfrak{h}^*/W$, where $W=N(T)/T$ is the Weyl group of $(G,T)$. So, we obtain the morphism $\mathfrak{g}^* \rightarrow \mathfrak{h}^*/W$ inducing the morphism $S(\chi) \rightarrow \mathfrak{h}^*/W$.

    Set $S_{\mathfrak{h}^*}(\chi) := S(\chi) \times_{\mathfrak{h}^*/W} \mathfrak{h}^* \subset \mathfrak{g}^* \times_{\mathfrak{h}^*/W} \mathfrak{h}^*$ and let $\widetilde{S}_{\mathfrak{h}^*}(\chi) \subset \widetilde{\mathfrak{g}}^*$ be the preimage of $S(\chi) \times_{\mathfrak{h}^*/W} \mathfrak{h}^*$ under the Grothendieck-Springer morphism $\widetilde{\mathfrak{g}}^* \rightarrow \mathfrak{g}^* \times_{\mathfrak{h}^*/W} \mathfrak{h}^*$. We will also consider the fiber of $S_{\mathfrak{h}^*}(\chi) \rightarrow \mathfrak{h}^*$ over zero to be denoted by $\widetilde{S}(\chi)$ and called {\emph{Slodowy variety}}. 
    Note that  $\widetilde{S}(\chi)$ is nothing else but the preimage of $S(\chi) \cap \mathcal{N}$ under the Springer morphism $\pi_{\mathfrak{b}}\colon T^*\mathcal{B} \rightarrow \mathcal{N}$. In particular, $\widetilde{S}(\chi)\to S(\chi)\cap \cN$ is a symplectic resolution. Let $\mathcal{B}_\chi \subset \widetilde{S}(\chi)$ be the preimage  $\pi_{\mathfrak{b}}^{-1}(\chi)$, it will be called the {\emph{Springer fiber}} (corresponding to $\chi$). The natural projection map $\widetilde{S}_{\mathfrak{h}^*}(\chi) \rightarrow \mathfrak{h}^*$ is the graded Poisson deformation of $\widetilde{S}(\chi)$ over $\mathfrak{h}^*$. 
   
\begin{rmk}\label{universal deformation of Slodowy slice}
    In most cases $H^2(\widetilde{S}(\chi), \CC)\simeq \fh^*$, and  $\widetilde{S}_{\mathfrak{h}^*}(\chi) \rightarrow \mathfrak{h}^*$ is in fact the universal Poisson deformation of $\widetilde{S}(\chi)$. When the isomorphism above does not hold, either $\chi$ is a regular nilpotent element, or the Slodowy variety $\widetilde{S}(\chi)$ can be realized as the Slodowy variety $\widetilde{S}(\chi')$ for a nilpotent element $\chi'\in \fg'^*$ for some simple Lie algebra $\fg'^*$, such that $\rk \fg'^* > \rk \fg$, and $H^2(\widetilde{S}(\chi'), \CC)\simeq \fh'^*$. We refer interested readers to \cite{LehnNamikawaSorger} for more details.
\end{rmk}

    We will need an alternative definition of $S(\chi)$. Namely, we will realize it as a Hamiltonian reduction of $\mathfrak{g}^*$ by a certain unipotent subgroup $U_\ell \subset G$ depending on some vector space $\ell \subset \mathfrak{g}(-1)$ (main reference is \cite{GanGinzburg}, note that our $U_\ell$ is their $M_\ell$).

    There is a a nondegenerate skew-symmetric bilinear form  on $\mathfrak{g}(-1)$ sending a pair $(x,y) \in \mathfrak{g}(-1) \times \mathfrak{g}(-1)$ to $\chi([x,y])$. Fix an isotropic subspace $\ell \subset \mathfrak{g}(-1)$, and let 
    \begin{equation*}
    \mathfrak{u}_\ell=\ell \oplus \bigoplus_{i \leqslant -2}  \mathfrak{g}(i),
    ~\mathfrak{n}_{\ell}=\ell^{\perp} \oplus \bigoplus_{i \leqslant -1}  \mathfrak{g}(i).
    \end{equation*}
    \begin{rmk}
     Note that for $\ell=0$ we have $u_{0}=\mathfrak{u}=\bigoplus_{i \leqslant -2}  \mathfrak{g}(i)$, and $\mathfrak{n}_{0}=\mathfrak{n}=\bigoplus_{i \leqslant -1}  \mathfrak{g}(i)$. For a Lagrangian $\ell$, we have $\mathfrak{u}_{\ell}=\mathfrak{n}_{\ell}$.   
    \end{rmk}

    Let $U_{\ell}, N_{\ell} \subset G$ be subgroups of $G$ with Lie algebras $\mathfrak{u}_{\ell}, \mathfrak{n}_{\ell}$ respectively; $U_\ell$ naturally acts on $\mathfrak{g}^*$, $\widetilde{\mathfrak{g}}^*$ and we can consider moment maps
    $\mu_{\mathfrak{g}^*}\colon \mathfrak{g}^* \rightarrow \mathfrak{u}_{\ell}^*,\, \mu_{\widetilde{\mathfrak{g}}^*}\colon \widetilde{\mathfrak{g}}^* \rightarrow  \mathfrak{u}_{\ell}^*,\, \mu_{T^*\mathcal{B}}\colon T^*\mathcal{B} \rightarrow \mathfrak{u}_\ell^*$.

    \begin{prop}\label{prop_realiz_S_as_Ham_red}
    The coadjoint action maps 
    \begin{equation*}
    N_\ell \times S(\chi) \iso \mu^{-1}_{\mathfrak{g}^*}(\chi),\,  N_\ell \times \widetilde{S}_{\mathfrak{h}}(\chi) \iso \mu^{-1}_{\widetilde{\mathfrak{g}}^*}(\chi),\, N_\ell \times \widetilde{S}(\chi) \iso \mu_{T^*\mathcal{B}}^{-1}(\chi)
    \end{equation*}
    are isomorphisms inducing identifications:
    \begin{equation*}
    S(\chi) \iso \mu_{\mathfrak{g}^*}^{-1}(\chi)/N_\ell,\, \widetilde{S}_{\mathfrak{h}^*}(\chi) \iso \mu_{\widetilde{\mathfrak{g}}^*}^{-1}(\chi)/N_\ell,\, \widetilde{S}(\chi) \iso \mu_{T^*\mathcal{B}}^{-1}(\chi)/N_\ell. 
    \end{equation*}
    \end{prop}
    \begin{proof}
    The same argument as in \cite[Lemma 2.1]{GanGinzburg} works, see also  \cite{Losev_isofquant}.
    \end{proof}

\begin{rmk}\label{rem_ham_real_of_parab_S}
   For a Lagrangian $\ell$, Proposition \ref{prop_realiz_S_as_Ham_red} realizes $S(\chi)$, $\widetilde{S}_{\mathfrak{h}^*}(\chi)$ as Hamiltonian reductions of $\mathfrak{g}^*$, $\widetilde{\mathfrak{g}}^*$ by $U_\ell$ at $\chi|_{\mathfrak{u}_\ell} \in \mathfrak{u}_\ell^*$, i.e., $S(\chi)=\mathfrak{g}/\!/\!/_{\chi}U_{\ell}$, $\widetilde{S}_{\mathfrak{h}^*}(\chi)=\widetilde{\mathfrak{g}}^*/\!/\!/\!_{\chi} U_{\ell}$, $\widetilde{S}(\chi)=T^*\mathcal{B}/\!/\!/_\chi U_{\ell}$. 
\end{rmk}

    \subsubsection{Parabolic Slodowy varieties}\label{parabolic Slodowy subsection}
    Let us now define {\emph{parabolic}} Slodowy varieties. They will depend on a choice of parabolic $T \subset P \subset G$. Recall  morphisms 
    $\pi_{\mathfrak{p}} \colon T^*\mathcal{P} \rightarrow \mathcal{N}$, $\widetilde{\pi}_{\mathfrak{p}}\colon T^*_{\mathfrak{X}(\mathfrak{l})}\mathcal{P} \rightarrow \mathfrak{g}^* \times_{\mathfrak{h}^*/W} \mathfrak{X}(\mathfrak{l})$.

    We define 
    \begin{equation*}
    \widetilde{S}(\chi,\mathfrak{p})=\pi_{\mathfrak{p}}^{-1}(S(\chi) \cap \mathcal{N}), \quad \widetilde{S}_{\mathfrak{X}(\mathfrak{l})}(\chi,\mathfrak{p}):=\widetilde{\pi}_{\mathfrak{p}}^{-1}(S_{\mathfrak{h}^*}(\chi) \times_{\mathfrak{h}^*} \mathfrak{X}(\mathfrak{p})).
    \end{equation*}
    We will call $\widetilde{S}(\chi,\mathfrak{p})$ a {\em{parabolic}} Slodowy variety (called $S3$ variety in \cite{w}).

    The following proposition is completely analogous to Proposition \ref{prop_realiz_S_as_Ham_red} above.
    Consider the natural action of $U_{\ell}$ on $T^*\cP$, $T^*_{\mathfrak{X}(\mathfrak{l})}\mathcal{P}$ and the corresponding moment maps $\mu_{T^*\mathcal{P}}\colon T^*\mathcal{P} \rightarrow \mathfrak{u}_{\ell}^*,\, \mu_{T^*_{\mathfrak{X}(\mathfrak{l})}\mathcal{P}}\colon T^*_{\mathfrak{X}(\mathfrak{l})}\mathcal{P} \rightarrow \mathfrak{u}_{\ell}^*$.

    \begin{prop}\label{prop_coadj_N_iso}
    The coadjoint action maps
    \begin{equation*}
    N_\ell \times \widetilde{S}_{\mathfrak{X}(\mathfrak{l})}(\chi,\mathfrak{p}) \iso \mu_{T^*_{\mathfrak{X}(\mathfrak{l})}\mathcal{P}}^{-1}(\chi), \quad N_\ell \times \widetilde{S}(\chi,\mathfrak{p}) \iso \mu^{-1}_{T^*\mathcal{P}}(\chi)
    \end{equation*}
    are isomorphisms inducing identifications:
    \begin{equation*}
    \widetilde{S}_{\mathfrak{X}(\mathfrak{l})}(\chi,\mathfrak{p}) \iso \mu_{T^*_{\mathfrak{X}(\mathfrak{l})}\mathcal{P}}^{-1}(\chi)/N_\ell, \quad \widetilde{S}(\chi,\mathfrak{p}) \iso \mu^{-1}_{T^*\mathcal{P}}(\chi)/N_\ell.
    \end{equation*}
    \end{prop}

\begin{lemma}\label{lem_fiber_parab_slod_gen}
For $\la \in \mathfrak{X}(\mathfrak{l})$ such that $Z_{\mathfrak{g}}(\la)=\mathfrak{l}$, the natural morphism $\widetilde{S}_{\la}(\chi,\mathfrak{p}) \ra S(\chi) \cap \mathbb{O}_{\la}$ is an isomorphism.
\end{lemma}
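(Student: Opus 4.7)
The plan is to use the Hamiltonian reduction description of $\widetilde{S}_{\mathfrak{X}(\mathfrak{l})}(\chi,\mathfrak{p})$ from Proposition \ref{prop_coadj_N_iso} and combine it with the fiber description of $T^*_{\mathfrak{X}(\mathfrak{l})}\mathcal{P}$ provided by Lemma \ref{fiber of deformation of parabolic}. Fix an isotropic (in fact, Lagrangian) subspace $\ell \subset \mathfrak{g}(-1)$ so the Hamiltonian reduction viewpoint applies. Proposition \ref{prop_coadj_N_iso} gives an isomorphism
\begin{equation*}
\widetilde{S}_{\mathfrak{X}(\mathfrak{l})}(\chi,\mathfrak{p}) \iso \mu^{-1}_{T^*_{\mathfrak{X}(\mathfrak{l})}\mathcal{P}}(\chi)/N_\ell,
\end{equation*}
and this identification is compatible with the structure map to $\mathfrak{X}(\mathfrak{l})$ (the moment map $\mu_{T^*_{\mathfrak{X}(\mathfrak{l})}\mathcal{P}}$ for the $U_\ell$-action is the composition of the $G$-moment map with restriction to $\mathfrak{u}_\ell^*$, and $U_\ell$ acts trivially along the $\mathfrak{X}(\mathfrak{l})$-direction). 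Passing to the fiber over $\la \in \mathfrak{X}(\mathfrak{l})$ yields
\begin{equation*}
\widetilde{S}_\la(\chi,\mathfrak{p}) \iso \mu^{-1}_{T^*_\la \mathcal{P}}(\chi)/N_\ell,
\end{equation*}
where $\mu_{T^*_\la \mathcal{P}}$ is the moment map for $U_\ell$ acting on the fiber $T^*_\la\mathcal{P}$.

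Next I invoke Lemma \ref{fiber of deformation of parabolic} under the hypothesis $Z_{\mathfrak{g}}(\la)=\mathfrak{l}$. It identifies $T^*_\la\mathcal{P}$ with $G/L = G\la = \mathbb{O}_\la$ as a $G$-variety, and under this identification the map $T^*_\la\mathcal{P} \to \mathfrak{g}^*$ coming from the embedding $T^*_{\mathfrak{X}(\mathfrak{l})}\mathcal{P} \hookrightarrow \mathcal{P} \times \mathfrak{g}^*$ is literally the inclusion $\mathbb{O}_\la \hookrightarrow \mathfrak{g}^*$. Consequently the $U_\ell$-moment map on $T^*_\la\mathcal{P}$ becomes the composition of this inclusion with the restriction $\mathfrak{g}^* \to \mathfrak{u}_\ell^*$, so
\begin{equation*}
\mu^{-1}_{T^*_\la \mathcal{P}}(\chi) \;=\; \mu^{-1}_{\mathfrak{g}^*}(\chi) \cap \mathbb{O}_\la.
\end{equation*}

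Finally, I apply the $\mathfrak{g}^*$-statement of Proposition \ref{prop_realiz_S_as_Ham_red}: the coadjoint action gives an isomorphism $N_\ell \times S(\chi) \iso \mu^{-1}_{\mathfrak{g}^*}(\chi)$. Because $\mathbb{O}_\la$ is $G$-stable (and in particular $N_\ell$-stable), intersecting both sides with $\mathbb{O}_\la$ yields $N_\ell \times (S(\chi) \cap \mathbb{O}_\la) \iso \mu^{-1}_{\mathfrak{g}^*}(\chi) \cap \mathbb{O}_\la$. Quotienting by $N_\ell$ and assembling the three isomorphisms above produces the asserted identification $\widetilde{S}_\la(\chi,\mathfrak{p}) \iso S(\chi) \cap \mathbb{O}_\la$.

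The only nontrivial point — and the one I would check most carefully — is the compatibility of the two $G$-equivariant identifications of $T^*_\la\mathcal{P}$: the one coming from Lemma \ref{fiber of deformation of parabolic} (as $G/L$) and the one coming from the embedding into $\mathcal{P} \times \mathfrak{g}^*$ (with second projection valued in $\mathfrak{g}^*$). Both arise from the general formula $T^*_{\mathfrak{X}(\mathfrak{l})}\mathcal{P}=G\times^P[\mathfrak{p},\mathfrak{p}]^\perp$, and the hypothesis $Z_{\mathfrak{g}}(\la)=\mathfrak{l}$ is exactly what is needed for the fiber map $G/L \to \mathfrak{g}^*$, $gL \mapsto g\la$, to be a locally closed immersion onto $\mathbb{O}_\la$. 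Once this is verified, the remainder of the argument is formal.
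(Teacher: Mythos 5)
Your proof is correct, but it takes a different route from the paper's own one-line argument. The paper simply observes that, by definition, $\widetilde{S}_\la(\chi,\mathfrak{p})$ is the preimage of $S(\chi)$ under $T^*_\la\mathcal{P} \to \mathfrak{g}^*$, and that Lemma \ref{fiber of deformation of parabolic} shows this morphism factors through the isomorphism $T^*_\la\mathcal{P} \iso \mathbb{O}_\la$, so the preimage is precisely $S(\chi)\cap\mathbb{O}_\la$. You instead route everything through the Hamiltonian-reduction descriptions (Propositions \ref{prop_realiz_S_as_Ham_red} and \ref{prop_coadj_N_iso}), passing through $N_\ell\times(\cdot)\iso\mu^{-1}(\chi)$ on both $\mathfrak{g}^*$ and $T^*_\la\mathcal{P}$ and then intersecting with the $N_\ell$-stable orbit $\mathbb{O}_\la$. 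Both arguments hinge on the same input, Lemma \ref{fiber of deformation of parabolic}, but yours buys nothing extra here and incurs the additional burden of checking that the Hamiltonian-reduction identifications specialize to fibers over $\mathfrak{X}(\mathfrak{l})$ (which you correctly note requires $G$-invariance of the structure map) and that the resulting composite isomorphism equals the natural morphism in the lemma statement — a point you leave implicit, as does the paper, but which is more transparent in the paper's direct preimage argument. Your final paragraph flags the right compatibility to verify (the two identifications of $T^*_\la\mathcal{P}$, as $G/L$ and as sitting inside $\mathcal{P}\times\mathfrak{g}^*$), and your resolution via $G\times^P[\mathfrak{p},\mathfrak{p}]^\perp$ is sound. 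Net assessment: a valid alternative, somewhat heavier than the one-liner the definitions allow.
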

\begin{proof}
Recall that $\widetilde{S}_{\la}(\chi,\mathfrak{p})$ is the preimage of $S(\chi)$ under the morphism $T^*_\la\mathcal{P} \rightarrow \mathfrak{g}^*$. It follows from Lemma \ref{fiber of deformation of parabolic} that this morphism factors through the isomorphism $T^*_\la \mathcal{P} \iso {\mathbb{O}}_\la$. The desired follows.
\end{proof}

Set $S(\chi,\mathfrak{p}):=\on{Spec}\mathbb{C}[\widetilde{S}(\chi,\mathfrak{p})]$. The natural morphism $\widetilde{S}(\chi,\mathfrak{p}) \rightarrow S(\chi,\mathfrak{p})$ is a symplectic resolution. The contracting $\mathbb{C}^\times$-action on $S(\chi)$ induces the action on $\widetilde{S}(\chi,\mathfrak{p})=\pi_{\mathfrak{p}}^{-1}(S(\chi) \cap \mathcal{N})$ which, in turn, induces the action of $\mathbb{C}^\times$ on $S(\chi,\mathfrak{p})$.

\begin{lemma}
The $\mathbb{C}^\times$-action as above contracts every connected component of $S(\chi,\mathfrak{p})$ to the unique fixed point. 
\end{lemma}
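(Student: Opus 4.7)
The plan is to reduce the statement to a computation of the weight-zero part of the coordinate ring $A := \mathbb{C}[S(\chi,\mathfrak{p})] = \mathbb{C}[\widetilde{S}(\chi,\mathfrak{p})]$, together with a contracting property for the induced $\mathbb{C}^\times$-action.

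First, I would show that every $\mathbb{C}^\times$-orbit on $\widetilde{S}(\chi,\mathfrak{p})$ admits a limit point, which moreover lies in the projective fiber $\mathcal{P}_\chi := \pi_{\mathfrak{p}}^{-1}(\chi)$. The key input here is that $\pi_{\mathfrak{p}}\colon T^*\mathcal{P} \to \mathfrak{g}^*$ is proper, since $T^*\mathcal{P}$ embeds as a closed subvariety of the projective-over-$\mathfrak{g}^*$ scheme $\mathfrak{g}^*\times\mathcal{P}$. Its restriction $\widetilde{S}(\chi,\mathfrak{p}) \to S(\chi)\cap \mathcal{N}$ is therefore proper and $\mathbb{C}^\times$-equivariant for the Kazhdan action lifted to $T^*\mathcal{P}$. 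Since the Kazhdan action contracts $S(\chi)\cap \mathcal{N}$ to $\chi$, the valuative criterion gives the desired existence of limits in $\pi_{\mathfrak{p}}^{-1}(\chi) = \mathcal{P}_\chi$. Equivalently, $A$ is non-negatively graded with respect to the $\mathbb{C}^\times$-action.

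Next, I would identify $A_0 := A^{\mathbb{C}^\times}$. Any $\mathbb{C}^\times$-invariant $f \in A$ restricts to a regular function on the projective variety $\mathcal{P}_\chi$, hence is locally constant there and takes only finitely many values. Because $f$ is invariant and every orbit contracts to $\mathcal{P}_\chi$, one has $f(y) = f(\lim_t t\cdot y)$ for every $y \in \widetilde{S}(\chi,\mathfrak{p})$, so $f$ itself has finite image. A regular function with finite image on a finite-type scheme is constant on each irreducible component, and hence on each connected component. Thus $A_0 \simeq \mathbb{C}^{\pi_0(\widetilde{S}(\chi,\mathfrak{p}))}$, and the product decomposition $\mathbb{C}[\widetilde{S}(\chi,\mathfrak{p})] \simeq \prod_i \mathbb{C}[\widetilde{S}(\chi,\mathfrak{p})_i]$ over connected components gives $\pi_0(\widetilde{S}(\chi,\mathfrak{p})) = \pi_0(S(\chi,\mathfrak{p}))$.

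To conclude, the $\mathbb{C}^\times$-fixed closed points of $\operatorname{Spec}(A)$ correspond bijectively to the maximal ideals of $A_0$, of which there is exactly one per connected component of $S(\chi,\mathfrak{p})$. Combined with the non-negativity of the grading, every point of $S(\chi,\mathfrak{p})$ then contracts to a fixed point lying in its own connected component, which contains a unique such fixed point. The main technical obstacle is the first step: one must carefully write down a lift of the Kazhdan $\mathbb{C}^\times$-action to $T^*\mathcal{P}$ that preserves $\widetilde{S}(\chi,\mathfrak{p})$ and renders $\pi_{\mathfrak{p}}$ equivariant; once this is in place, the remainder is a standard application of properness and of the fact that regular functions on projective varieties are locally constant.
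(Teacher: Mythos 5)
Your proposal is correct and follows essentially the same route as the paper's own proof: both use the properness of $\pi_{\mathfrak{p}}^{-1}(\chi)$ together with the conicality of $S(\chi)$ to establish non-negativity of the grading on $\mathbb{C}[\widetilde{S}(\chi,\mathfrak{p})]$, and then both argue that any degree-zero function has finite image because it is locally constant on a proper/projective attracting set, whence it is constant on each (irreducible, hence connected) component. The only cosmetic difference is that you work with the full fiber $\mathcal{P}_\chi$ where the paper works with the fixed locus $\widetilde{S}(\chi,\mathfrak{p})^{\mathbb{C}^\times}\subset\mathcal{P}_\chi$, but this changes nothing in the argument.
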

\begin{proof}
Let us first of all note that the $\mathbb{C}^\times$-action contracts $\widetilde{S}(\chi,\mathfrak{p})$ to $\widetilde{S}(\chi,\mathfrak{p})^{\mathbb{C}^\times}$ (use that $\pi_{\mathfrak{p}}^{-1}(\chi)$ is proper and that we already know that $S(\chi)$ is conical). It follows that the algebra $\mathbb{C}[\widetilde{S}(\chi,\mathfrak{p}]$ is non-negatively graded. It remains to check that $\on{dim}\mathbb{C}[\widetilde{S}(\chi,\mathfrak{p}]_0 = |\on{Comp}\widetilde{S}(\chi,\mathfrak{p})|$. Pick  a degree $0$ function $f\colon \widetilde{S}(\chi,\mathfrak{p}) \rightarrow \mathbb{A}^1$. Consider the proper variety $\widetilde{S}(\chi,\mathfrak{p})^{\mathbb{C}^\times}$. Function $f$ is constant on every connected component of $\widetilde{S}(\chi,\mathfrak{p})^{\mathbb{C}^\times}$. Moreover, $f$ has degree $0$, so it must be constant on the attractor (with respect to the $\mathbb{C}^\times$-action) to every connected component of $\widetilde{S}(\chi,\mathfrak{p})^{\mathbb{C}^\times}$. We conclude that the image of $f$ is a finite set of points of $\mathbb{A}^1$. Every connected component of $\widetilde{S}(\chi,\mathfrak{p})$ is irreducible, so its image under $f$ is also irreducible, hence, a point.
\end{proof}

\begin{rmk}\label{disconnected}
    We note that $S(\chi, \fp)$ may have several connected components. Let us list two examples. The relation between them is explained in \cref{disconnected together}.
    \begin{itemize}
        \item[1)] Let $\fg=\fs\fp(6)$, pick $\chi$ corresponding to the partition $(3,3)$, and let $\fp$ be a parabolic with Levi factor $\fl=\fg\fl(1)^2\times \fs\fp(2)\subset \fs\fp(6)$. Then $S(\chi, \fp)$ is a disjoint union of two Kleinian singularities of type $A_1$.
        \item[2)] Let $\fg=\fs\fo(7)$, pick $\chi$ corresponding to the partition $(3,1,1,1,1)$, and let $\fp$ be a parabolic with Levi factor $\fl=\fg\fl(3)\subset \fs\fo(7)$. Then $S(\chi, \fp)$ is a disjoint union of two Kleinian singularities of type $A_1$.
    \end{itemize}
\end{rmk}

Let us now pick a pair of parabolic subalgebras $\mathfrak{p}, \mathfrak{q} \subset \mathfrak{g}$ sharing the same Borel $\mathfrak{b}$. containing the Cartan subalgebra $\mathfrak{h}$. Let $\mathfrak{m} \subset \mathfrak{q}$ be the standard Levi subalgebra of $\mathfrak{q}$. Let $\chi_{\mathfrak{m}} \in \mathfrak{m}^*$ be a regular nilpotent in $\mathfrak{m}^*$ that lies in $\mathfrak{b}^*$. For future use, we introduce a new notation for the parabolic Slodowy variety to $\chi_{\fm}$: 
\begin{equation*}
\widetilde{S}(\mathfrak{q},\mathfrak{p})=\widetilde{S}(\chi_{\mathfrak{m}},\mathfrak{p}).
\end{equation*}

\subsection{Quantizations of $X$ and $Y$} 
For a conical Poisson scheme $Z$ a {\it graded quantization} of the structure sheaf  $\mathcal{O}_{Z}$ is (see for example \cite[Section 2.2]{Losevwallcrossing}): 
 
 \begin{itemize}

 \item  a sheaf of $\CC[\hbar]$-algebras $\mathcal{D}_{\hbar}$ in conical topology on $Z$ equipped with a $\CC^\times$-action by algebra automorphisms such that $t \cdot \hbar = t^2 \hbar$,

 \item an isomorphism $\iota \colon \mathcal{D}_\hbar/(\hbar) \iso \CO_Z$ of sheaves

 \end{itemize} 
 such that:

	\begin{itemize}
		
		\item  $\mathcal{D}_\hbar$ is flat over $\CC[\hbar]$,
		
        \item $[\mathcal{D}_\hbar,\mathcal{D}_\hbar] \subset \hbar \mathcal{D}_\hbar$. This produces a Poisson bracket on $\mathcal{D}_\hbar/(\hbar)$ given by $\{a+\hbar\calD_\hbar, b+\hbar\calD_\hbar \}=\frac{[a,b]}{\hbar}+\hbar\calD_\hbar$.

        \item $\iota$ is a graded Poisson isomorphism.
		
	\end{itemize}
    We note that for affine scheme $Z$ taking the global sections of $\calD_\hbar$ we get the definition of a graded quantization of $A=\CC[Z]$. Let $X$ be a conical symplectic singularity, and assume that it admits a symplectic resolution $Y\to X$. In this section, we discuss graded quantizations of $\CC[X]$ and $\cO_{Y}$.
    
   The graded quantizations of $Y$ are parametrized by the vector space $\mathfrak{h}_X$ via the so called {\em{period map}} $\on{Per}$ from the isomorphism classes of quantizations to $\mathfrak{h}_X$ (see~\cite{BK},~\cite[Section~2.3]{Losev_isofquant}). We will denote the quantization that corresponds to $\la \in \mathfrak{h}_X$ by $\mathcal{D}_{\hbar, \la}$.
   Let $Y_{\mathfrak{h}_X} \ra \mathfrak{h}_X$ be the universal Poisson deformation of $Y$, see \cref{universal Poisson deformation}. We can talk about graded formal quantizations of $Y_{\mathfrak{h}_X}$ that are now required to be sheaves of $\CC^\times$-graded $\CC[\mathfrak{h}_X][\hbar]$-algebras, where the $\CC^\times$-action on $\mathfrak{h}_X$ is given by $t \cdot p = t^{-2}p$. It was shown in \cite[Section 6.2]{BK} that $\CO_{Y_{\mathfrak{h}_X}}$ admits a canonical graded quantization to be denoted by $\mathcal{D}_{\hbar, \mathfrak{h}_X}$. It satisfies the following property: its specialization to $\la \in \mathfrak{h}_X$ coincides with $\mathcal{D}_{\hbar, \la}$.

   \begin{warning}
       In this section, we talk about the graded quantizations, while all the provided references consider graded \emph{formal} quantizations. To get a graded quantization from a graded formal quantization, we consider the subsheaf of $\CC^\times$-finite elements. The inverse map is given by considering the completion along the ideal generated by $\hbar$. 
   \end{warning}
 We set $\mathcal{A}_{\hbar, \la}=\on{\Gamma}(Y,\mathcal{D}_{\hbar, \la})$. By the Grauert-Riemenschneider theorem, $R\Gamma(\mathcal{O}_{Y})=\mathbb{C}[X]$. From here one deduces that the algebra $\CA_{\hbar, \la}$ is a graded quantization of $\Gamma(Y,\CO_{Y})=\CC[X]$. Similarly, the algebra $\CA_{\hbar, \mathfrak{h}_X}=\Gamma(Y_{\mathfrak{h}_X},\mathcal{A}_{\hbar, \mathfrak{h}_X})$ is a graded quantization of $\CC[X_{\mathfrak{h}_X}]$ over $\CC[\fh_X]$.

In what follows, we often specialize graded quantizations to the parameter $\hbar=1$. To describe the resulting object, for a conical Poisson scheme $Z$, we can similarly define a \emph{filtered quantzation} of the sheaf $\cO_Z$ to be:

 \begin{itemize}

 \item  a sheaf of $\ZZ$-filtered algebras $\mathcal{D}$ in conical topology on $Z$,

 \item an isomorphism $\iota \colon \gr \mathcal{D} \iso \CO_Z$ of sheaves

 \end{itemize} 
 such that:

	\begin{itemize}
		
		\item the filtration $\calD_{\le i}$ on $\mathcal{D}$ is complete and separated, 
		
        \item $[\mathcal{D}_{\le i},\mathcal{D}_{\le j}] \subset \mathcal{D}_{\le i+j-2}$. This produces a Poisson bracket on $\gr \mathcal{D}$ given by
        \begin{equation*}
        \{a+\calD_{\le i-1}, b+\calD_{\le j-1}\}=[a,b]+\calD_{\le i+j-3}.
        \end{equation*}

        \item $\iota$ is a graded Poisson isomorphism.
		
	\end{itemize}

For a conical Poisson algebra $A$, we recover the definition of a filtered quantization of $A$ by taking the global sections of a filtered quantization of $\cO_{\Spec(A)}$. The filtered quantizations $\mathcal{D}_\la, \mathcal{D}_{\mathfrak{h}_X}, \CA_\la, \CA_{\mathfrak{h}_X}$ of our interest are obtained as the specializations of the graded quantizations $\mathcal{D}_{\hbar, \la}, \mathcal{D}_{\hbar, \mathfrak{h}_X}, \CA_{\hbar, \la}, \CA_{\hbar, \mathfrak{h}_X}$ at $\hbar=1$.


Let us also mention that for a vector space $\mathfrak{a}$ mapping linearly to $\mathfrak{h}_{X}$, we can consider the corresponding base changes $X_{\mathfrak{a}}$, $Y_{\mathfrak{a}}$ and the corresponding quantizations $\mathcal{D}_{\hbar,\mathfrak{a}}$, $\mathcal{A}_{\hbar,\mathfrak{a}}$. We will only deal with $\mathfrak{a}$ such that the following assumption holds.

\begin{assumption}\label{ass_resol_a}
There exists a nonempty open subset $U \subset \mathfrak{a}$ such that the morphism $Y_{U} \rightarrow X_{U}$ becomes an isomorphism. 
\end{assumption}

Recall (see Proposition \ref{namikawa's result_descr_of_h_sing} above) that in \cite[Main Theorem]{Namikawa_bir} Namikawa proved that the set $\mathfrak{h}_X^{\mathrm{sing}}$ of $\lambda \in \mathfrak{h}_X$ for which the map $Y_\la \rightarrow X_\la$ fails to be an isomorphism is the union of certain hyperplanes. He also proves that the complement to these hyperplanes contains the ample cone of $Y$ (again see Proposition \ref{namikawa's result_descr_of_h_sing} above, also compare with \cite[Lemma 2.5]{Kaledin2008}). In particular, we see that Assumption \ref{ass_resol_a} holds if the following assumption holds. 

\begin{assumption}\label{ass_ample_chern_a}
The image of $\mathfrak{a}$ in $\mathfrak{h}_X$ contains the Chern class $\chi \in \mathfrak{h}_{X,\mathbb{Z}}$ of an ample line bundle $\mathcal{L} \in \on{Pic}(Y)$.
\end{assumption}

It follows from Lemma \ref{lem_fiber_parab_slod_gen} that the Assumption \ref{ass_ample_chern_a} (hence, the Assumption \ref{ass_resol_a}) is satisfied for the family $\widetilde{S}_{\mathfrak{X}(\mathfrak{l})}(\chi,\mathfrak{p}) \rightarrow S_{\mathfrak{X}(\mathfrak{l})}(\chi,\mathfrak{p})$. 

\begin{lemma}
The Assumption \ref{ass_ample_chern_a} holds for $X=\widetilde{S}(\chi,\mathfrak{p})$, $Y=S(\chi,\mathfrak{p})$, and $\mathfrak{a}=\mathfrak{X}(\mathfrak{l})=H^2(T^*\mathcal{P},\mathbb{C})$ mapping naturally to $\mathfrak{h}_X=H^2(\widetilde{S}(\chi,\mathfrak{p}),\mathbb{C})$. 
\end{lemma}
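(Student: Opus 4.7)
The plan is to exhibit an explicit ample line bundle on $\widetilde{S}(\chi,\mathfrak{p})$ whose Chern class lies in the image of $\mathfrak{X}(\mathfrak{l}) \to \mathfrak{h}_X$, using the standard ample line bundles pulled back from $\mathcal{P}$. Concretely, I would first pick $\lambda \in \mathfrak{X}(\mathfrak{l})$ integral, $\mathfrak{p}$-antidominant, and $\mathfrak{l}$-regular; by the lemma recalled just before \cref{fiber of deformation of parabolic}, the associated line bundle $\mathcal{O}_{\mathcal{P}}(\lambda) = G \times^P \mathbb{C}_\lambda$ is ample on $\mathcal{P}$. Pulling back along the vector bundle projection $T^*\mathcal{P} \to \mathcal{P}$, we obtain a line bundle on $T^*\mathcal{P}$ whose first Chern class in $H^2(T^*\mathcal{P},\mathbb{C}) = H^2(\mathcal{P},\mathbb{C}) = \mathfrak{X}(\mathfrak{l})$ equals $\lambda$.

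Next I would check that restricting this pull-back to the closed subvariety $\widetilde{S}(\chi,\mathfrak{p}) \hookrightarrow T^*\mathcal{P}$ gives an ample line bundle on the resolution $\widetilde{S}(\chi,\mathfrak{p}) \to S(\chi,\mathfrak{p})$. The natural map $\mathfrak{X}(\mathfrak{l}) = H^2(T^*\mathcal{P},\mathbb{C}) \to H^2(\widetilde{S}(\chi,\mathfrak{p}),\mathbb{C}) = \mathfrak{h}_X$ is precisely the pull-back on degree-two cohomology induced by this closed embedding, so the Chern class of the restricted bundle is the image of $\lambda$ in $\mathfrak{h}_X$. Hence exhibiting ampleness of the restriction will verify the assumption.

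For the ampleness step, the morphism $T^*\mathcal{P} \to \on{Spec}\mathbb{C}[T^*\mathcal{P}]$ is projective and polarized by the pull-back of $\mathcal{O}_{\mathcal{P}}(\lambda)$, so this line bundle is relatively ample. Relative ampleness is preserved under pull-back to the closed subscheme $\widetilde{S}(\chi,\mathfrak{p})$, so the restriction is relatively ample for the composition $\widetilde{S}(\chi,\mathfrak{p}) \to S(\chi,\mathfrak{p}) \to \on{Spec}\mathbb{C}[T^*\mathcal{P}]$, where the second arrow is affine (coming from the algebra map $\mathbb{C}[T^*\mathcal{P}] \to \mathbb{C}[\widetilde{S}(\chi,\mathfrak{p})] = \mathbb{C}[S(\chi,\mathfrak{p})]$). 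Since relative ampleness over a base is equivalent to relative ampleness over any intermediate affine base, the restricted bundle is also relatively ample for the symplectic resolution $\widetilde{S}(\chi,\mathfrak{p}) \to S(\chi,\mathfrak{p})$, which is the notion of ampleness relevant for Assumption \ref{ass_ample_chern_a}.

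There is no serious obstacle: the only mild care is in matching the identifications $\mathfrak{X}(\mathfrak{l}) \cong H^2(T^*\mathcal{P},\mathbb{C})$ and $\mathfrak{h}_X \cong H^2(\widetilde{S}(\chi,\mathfrak{p}),\mathbb{C})$ used in the paper with the Chern class map of the closed embedding $\widetilde{S}(\chi,\mathfrak{p}) \hookrightarrow T^*\mathcal{P}$, and in recalling that ampleness is preserved under restriction to closed subvarieties in the relative (projective-over-affine) setting.
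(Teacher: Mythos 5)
Your proof is correct and takes essentially the same route as the paper's: restrict an ample line bundle from $T^*\mathcal{P}$ along the closed embedding $\widetilde{S}(\chi,\mathfrak{p}) \hookrightarrow T^*\mathcal{P}$ and observe that its Chern class lies in the image of $\mathfrak{X}(\mathfrak{l}) \to \mathfrak{h}_X$. The paper's proof is just a terser version, asserting directly that ample line bundles restrict to ample line bundles on closed subvarieties, which is precisely what you spell out in the relative (projective-over-affine) setting.
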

\begin{proof}
Recall that $\widetilde{S}(\chi,\mathfrak{p})$ is a closed subvariety of $T^*\mathcal{P}$ so every ample line bundle on $T^*\mathcal{P}$ restricts to an ample line bundle on $\widetilde{S}(\chi,\mathfrak{p})$. The claim follows. 
\end{proof}

\subsubsection{Quantizations of $T^*\mathcal{P}$}\label{quant_G_mod_P}
Fix a parabolic subgroup $T \subset B \subset P \subset G$ and recall $\mathcal{P}=G/P$.
In this subsection, we describe quantizations of $T^*\mathcal{P}$. 

Let $\mathfrak{p}=\operatorname{Lie}P$, and let $\mathfrak{u}_{\mathfrak{p}}$ be the unipotent radical, $\mathfrak{l}=\mathfrak{p}/\mathfrak{u}_{\mathfrak{p}}$ be the Levi quotient. There is a natural isomorphism $H^2(T^*\mathcal{P},\CC) \simeq H^2(\mathcal{P},\CC) \simeq \mathfrak{X}(\mathfrak{l})$, where $\mathfrak{X}(\mathfrak{l})=(\mathfrak{l}/[\mathfrak{l},\mathfrak{l}])^*$. 
  Note that the map $\fh\to \fl\to \fl/[\fl,\fl]$ induces a natural embedding $\mathfrak{X}(\mathfrak{l}) \subset \mathfrak{h}^*$.

Recall that by construction $\mathcal{D}_{\hbar,\mathfrak{X}(\mathfrak{l})}(T^*\mathcal{P})$ is the sheaf on $T^*_{\mathfrak{X}(\mathfrak{l})}\mathcal{P}= G \times_P [\mathfrak{p},\mathfrak{p}]^{\perp}$. The cotangent bundle $T^*(G/[P,P])\simeq G\times^{[P,P]} [\fp,\fp]^\perp$ admits a tautological map $p$ to $ T^*_{\mathfrak{X}(\mathfrak{l})}\mathcal{P}$ given by $[(g, \xi)] \mapsto [(g, \xi)]$. Moreover, $p\colon T^*(G/[P,P]) \rightarrow T^*_{\mathfrak{X}(\mathfrak{l})}\mathcal{P}$ is a principal $P/[P,P]$-bundle. Let $\mathcal{D}_{\hbar}(G/[P,P])$ be the sheaf of homogeneous differential operators on $G/[P,P]$, and $\underline{\mathcal{D}}_{\hbar}(G/[P,P])$ be the microlocalization of $\mathcal{D}_{\hbar}(G/[P,P])$ to a quantization of $T^*(G/[P,P])$ (see, for example, \cite{Ginzburg1986} for the details on the microlocalization). Now, 
\begin{equation*}
\mathcal{D}_{\hbar,\mathfrak{X}(\mathfrak{l})}(T^*\mathcal{P}) = p_*(\underline{\mathcal{D}}_{\hbar}(G/[P,P]))^{P/[P,P]}.
\end{equation*}

\begin{rmk}
For $P=B$, the definition above tells us that     $\mathcal{D}_{\hbar,\mathfrak{h}^*}(T^*\mathcal{B})$ is the sheaf of $T$-invariants in $\underline{\mathcal{D}}_{\hbar}(T^*(G/U))$.
\end{rmk}

Algebra $\mathcal{A}_{\hbar,\mathfrak{X}(\mathfrak{l})}(T^*\mathcal{P})$ is by definition equal to global sections of $\mathcal{D}_{\hbar,\mathfrak{X}(\mathfrak{l})}(T^*\mathcal{P})$. By the construction above,  $\mathcal{A}_{\hbar,\mathfrak{X}(\mathfrak{l})}(T^*\mathcal{P})\simeq \Gamma(\mathcal{P},\mathcal{D}_\hbar(G/[P,P]))^{P/[P,P]}$. The map $\CC[\mathfrak{X}(\mathfrak{l}),\hbar] \rightarrow \mathcal{A}_{\hbar,\mathfrak{X}(\mathfrak{l})}(T^*\mathcal{P})$ is the quantum comoment map for the {\emph{right}} action of $P/[P,P]$ on $G/[P,P]$.

Consider the quantum comoment map for the left action of $G$ on $G/[P,P]$. It induces the homomorphism $\mathcal{U}_\hbar(\mathfrak{g}) \rightarrow \mathcal{A}_{\hbar,\mathfrak{X}(\mathfrak{l})}(T^*\mathcal{P})$ from the homogenized universal enveloping algebra of $\mathfrak{g}$. Let $Z_\hbar \subset \mathcal{U}_\hbar(\mathfrak{g})$ be the center. Let $\HC_\hbar\colon Z_\hbar \iso \CC[\mathfrak{h}^*,\hbar]^{W}$ be the homogeneous version of the Harish-Chandra isomorphism, where the action of $W$ on $\CC[\mathfrak{h}^*,\hbar]$ is given by: 
\begin{equation}\label{rho_shifted_action_hbar}
w.s(\la,\hbar_0)=s(w^{-1}(\la+\hbar_0\rho_{\mathfrak{g}} )-\hbar_0\rho_{\mathfrak{g}},\hbar_0)
\end{equation}
for $\la \in \mathfrak{h}^*$, $\hbar_0\in \CC$, $s\in \CC[\mathfrak{h}^*,\hbar]$ and $\rho_{\mathfrak{g}}$ being the half sum of positive roots w.r.t. the fixed Borel $B$. Using the identification $HC_\hbar$, we obtain the homomorphism $Z_\hbar \rightarrow \CC[\mathfrak{X}(\mathfrak{l}),\hbar]$ given by the composition $Z_\hbar \iso \CC[\mathfrak{h}^*,\hbar]^{W_G} \hookrightarrow \CC[\mathfrak{h}^*,\hbar] \twoheadrightarrow \CC[\mathfrak{X}(\mathfrak{l}),\hbar]$, where the latter map is induced by the embedding $\mathfrak{X}(\mathfrak{l}) \hookrightarrow \mathfrak{h}^*$. 

\begin{lemma}\label{compat_two_comoment}
$(a)$ The following diagram commutes
\begin{equation*}
\xymatrix{ Z_\hbar \ar[d] \ar[r] & \CC[\mathfrak{X}(\mathfrak{l}),\hbar] \ar[d] \\
 \mathcal{U}_\hbar(\mathfrak{g}) \ar[r] & \CA_{\hbar,\mathfrak{X}(\mathfrak{p})}(T^*\mathcal{P}) }
\end{equation*}


$(b)$ For $P=B$, the induced homomorphism $\mathcal{U}_\hbar(\mathfrak{g}) \otimes_{Z_\hbar} \CC[\mathfrak{h}^*,\hbar] \rightarrow \mathcal{A}_{\hbar,\mathfrak{h}^*}(T^*\mathcal{B})$ is an isomorphism. 
\end{lemma}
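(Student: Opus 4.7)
The plan is to prove part (b) first, from which part (a) for $P=B$ follows immediately, and then to reduce the case of general $P$ to that of $P=B$. By the construction recalled in the excerpt, $\mathcal{A}_{\hbar,\mathfrak{h}^*}(T^*\mathcal{B})$ is the $T$-invariant part of $\Gamma(G/U,\mathcal{D}_\hbar(G/U))$ (the passage from the microlocalized sheaf to the original is harmless since $G/U$ is quasi-affine and $G/U\to\mathcal{B}$ is affine). The desired identification
\[
\Gamma(G/U,\mathcal{D}_\hbar(G/U))^T \;\simeq\; \mathcal{U}_\hbar(\mathfrak{g})\otimes_{Z_\hbar}\mathbb{C}[\mathfrak{h}^*,\hbar]
\]
is the homogenized form of the classical Beilinson--Bernstein / Hotta--Kashiwara description of the global sections of twisted differential operators on $\mathcal{B}$. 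I would prove it by specializing $\hbar=1$, where it reduces to the standard statement $\Gamma(\mathcal{B},\mathcal{D}_\lambda)=\mathcal{U}(\mathfrak{g})/\ker\chi_\lambda$ together with flatness in $\lambda$, and then recovering the graded $\hbar$-version via the Rees construction. Flatness of $\mathcal{A}_{\hbar,\mathfrak{h}^*}(T^*\mathcal{B})$ over $\mathbb{C}[\mathfrak{h}^*,\hbar]$, needed here and again below, is provided by the Bialynicki--Birula paving of $\mathcal{B}$ by Schubert cells (compare Lemma~\ref{lemma_equiv_cohom_resol_free}).

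Part (a) for $P=B$ is then encoded in the shape of the isomorphism in (b): the $\mathbb{C}[\mathfrak{h}^*,\hbar]$-algebra structure on the tensor product is defined via $\HC_\hbar$, so the commutativity of the diagram in (a) is built into the statement. For general $P$, I would use the inclusion $U\subset[P,P]$ to identify $\mathcal{D}_\hbar(G/[P,P])$ with the $[P,P]/U$-invariants inside (the pushforward of) $\mathcal{D}_\hbar(G/U)$, and check that the right $T$-action on $\mathcal{D}_\hbar(G/U)$ preserves this subalgebra and factors on it through the quotient $T\twoheadrightarrow P/[P,P]\simeq L/[L,L]$. Passing to $T$-invariants yields a natural $\mathbb{C}[\hbar]$-algebra homomorphism
\[
\mathcal{A}_{\hbar,\mathfrak{X}(\mathfrak{l})}(T^*\mathcal{P})\;\longrightarrow\;\mathcal{A}_{\hbar,\mathfrak{h}^*}(T^*\mathcal{B})\otimes_{\mathbb{C}[\mathfrak{h}^*,\hbar]}\mathbb{C}[\mathfrak{X}(\mathfrak{l}),\hbar]
\]
that intertwines both the left $\mathcal{U}_\hbar(\mathfrak{g})$-action and the right comoments (compatibly with the surjection $\mathbb{C}[\mathfrak{h}^*,\hbar]\twoheadrightarrow\mathbb{C}[\mathfrak{X}(\mathfrak{l}),\hbar]$). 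Granting injectivity of this map, the compatibility for $P=B$ established above is transported to the compatibility for $P$, proving (a).

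The main obstacle is justifying the naturality and injectivity of this comparison map, and pinning down that the right $T$-action on $\mathcal{D}_\hbar(G/[P,P])$ does factor through $P/[P,P]$. As a backup argument that bypasses this, one can proceed pointwise: by Lemma~\ref{fiber of deformation of parabolic}, for generic $\lambda\in\mathfrak{X}(\mathfrak{l})$ with $Z_\mathfrak{g}(\lambda)=\mathfrak{l}$ the fiber $T^*_\lambda\mathcal{P}$ is the single semisimple coadjoint orbit $G\cdot\lambda\simeq G/L$, so $\mathcal{A}_{\hbar,\lambda}(T^*\mathcal{P})$ has $G$-invariants equal to $\mathbb{C}[\hbar]$ (verified on the associated graded via $\mathbb{C}[G/L]^G=\mathbb{C}$). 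Consequently every central element of $\mathcal{U}_\hbar(\mathfrak{g})$ acts by a scalar on $\mathcal{A}_{\hbar,\lambda}(T^*\mathcal{P})$, whose leading term is fixed by Chevalley restriction and whose $\rho$-shift correction is determined by comparison with the $P=B$ case. Combined with torsion-freeness of $\mathcal{A}_{\hbar,\mathfrak{X}(\mathfrak{l})}(T^*\mathcal{P})$ over $\mathbb{C}[\mathfrak{X}(\mathfrak{l}),\hbar]$ (again via a Bialynicki--Birula paving of $T^*_{\mathfrak{X}(\mathfrak{l})}\mathcal{P}\to\mathfrak{X}(\mathfrak{l})$), this pointwise matching yields (a) globally.
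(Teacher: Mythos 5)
The paper's own proof of this lemma is simply a citation to Losev's work (\cite[Lemma 5.2.1 (3), (4)]{Losev_isofquant}), so there is no internal argument in the paper to compare against. Your proposal is genuinely more ambitious: it tries to give a self-contained proof. For part (b), the homogenized Beilinson--Bernstein/Hotta--Kashiwara identification is indeed the right tool, and the route via the $\hbar=1$ specialization and the Rees construction is sound. So part (b) is fine modulo the (standard) details.

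For part (a) with general $P$, however, there is a real gap. Your primary approach — constructing a comparison map
\[
\mathcal{A}_{\hbar,\mathfrak{X}(\mathfrak{l})}(T^*\mathcal{P})\;\longrightarrow\;\mathcal{A}_{\hbar,\mathfrak{h}^*}(T^*\mathcal{B})\otimes_{\mathbb{C}[\mathfrak{h}^*,\hbar]}\mathbb{C}[\mathfrak{X}(\mathfrak{l}),\hbar]
\]
from the surjection $G/U\to G/[P,P]$ — does not work as stated, because $U$ is not normal in $[P,P]$ (the fiber $[P,P]/U\simeq[L,L]/U_L$ is a homogeneous space, not a group), so there is no $[P,P]/U$-invariants functor relating $\mathcal{D}_\hbar(G/[P,P])$ and $\mathcal{D}_\hbar(G/U)$. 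You do flag this as problematic, so let me focus on the backup. The pointwise reduction to a generic fiber is a legitimate strategy, and the claim that $\mathcal{A}_{\hbar,\lambda}(T^*\mathcal{P})^G=\mathbb{C}[\hbar]$ for $\mathfrak{l}$-regular $\lambda$, hence that $Z_\hbar$ acts by a scalar, is correct. But you never actually identify that scalar: the assertion that ``the $\rho$-shift correction is determined by comparison with the $P=B$ case'' is circular, because the only comparison map in the paper between the $B$-theory and the $P$-theory is $\Phi_{\mathfrak{p}}$, which is constructed \emph{from} the lemma you are proving. The Chevalley restriction only pins down the scalar modulo $\hbar$, and the entire content of part (a) — that the quantum comoment for the right $P/[P,P]$-action, post-composed with the $\rho$-shifted Harish--Chandra identification, reproduces the left comoment on the center — lives precisely in that missing $\hbar$-correction. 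To close the gap one would need an independent computation of the infinitesimal character of (modules over) $\mathcal{A}_{\hbar,\lambda}(T^*\mathcal{P})$, e.g.\ by letting $\mathcal{U}_\hbar(\mathfrak{g})$ act on $\Gamma(\mathcal{P},\mathcal{O}_{\mathcal{P}}(\lambda))$ and reading off the central character, or by an explicit local computation of the left comoment on a big cell of $G/[P,P]$ — neither of which appears in your sketch.
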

\begin{proof}
Part $(a)$ follows from \cite[proof of Lemma 5.2.1 (3)]{Losev_isofquant}. Part $(b)$ follows from \cite[Lemma 5.2.1 (4)]{Losev_isofquant}.
\end{proof}

Setting $\CA_{\hbar,\mathfrak{X}(\mathfrak{l})}(T^*\mathcal{B})=\CA_{\hbar,\mathfrak{h}^*}(T^*\mathcal{B}) \otimes_{\CC[\mathfrak{h}^*,\hbar]} \CC[\mathfrak{X}(\mathfrak{l}),\hbar]$ and using Lemma \ref{compat_two_comoment}, we obtain the homomorphism of graded $\CC[\mathfrak{X}(\mathfrak{l}),\hbar]$-algebras: 
\begin{equation*}
\Phi_{\mathfrak{p}}\colon \CA_{\hbar,\mathfrak{X}(\mathfrak{l})}(T^*\mathcal{B}) \rightarrow \CA_{\hbar,\mathfrak{X}(\mathfrak{l})}(T^*\mathcal{P}).
\end{equation*}
For $\hbar=0$, the homomorphism above identifies with the pull back homomorphism for the natural morphism $G \times^{P} [\mathfrak{p},\mathfrak{p}]^{\perp} \rightarrow \mathfrak{g}^* \times_{\fh^*/{W}} \mathfrak{X}(\mathfrak{l})$.

 \begin{rmk}\label{surj_Phi_hbar_nonzero}
In type $A$, the homomorphism  $\Phi_{\mathfrak{p}}$ is surjective (see \cite[Lemma 5.2.1 (5)]{Losev_isofquant}).
The homomorphism $\Phi_{\mathfrak{p}}$ is not surjective in general, but it is surjective for Zariski generic $\la \in \mathfrak{X}(\mathfrak{l})$. 
It will also be surjective after setting $\hbar=1$ and restricting to {\emph{integral}} $\la$ such that $\widetilde{\lambda}$ is $\mathfrak{p}$-antidominant and $\mathfrak{l}$-regular 
(see~\cite[Theorem 3.8]{BoBr}).

Similar statements about surjectivity of $\Phi_{\mathfrak{p}}$ has appeared in \cite[Corollary 3.9]{BoBr} and \cite[Remark 5.2.2]{Losev_isofquant}. Unfortunately, the results stated may not be precise. In particular, one should not expect $\Phi_{\mathfrak{p}}$ is surjective for all integral $\lambda\in \mathfrak{X}(\mathfrak{l})$. A counterexample is the following.

Consider $\fg= \fso_5$ and $\fl= \fgl_2\subset \fg$. Then $\fl/[\fl,\fl]$ is one-dimensional. Let $\lambda$ be the linear function on $\fl$ given by $\lambda(X)= \on{tr}(X)$. This choice corresponds to the canonical quantization in \cite[Section 5.1]{LMBM}, in particular $\lambda=\rho_\fg- \rho_\fl$. Now, for a parabolic  $P$ that contains $L$, consider the map $\pi_{\mathfrak{p}}\colon T^*\mathcal{P} \rightarrow \fg^*$. The image of $\pi_{\mathfrak{p}}$ is the closure of $\OO\subset \fg^*$, the subregular nilpotent orbit of $\fso_5$. On the other hand, $\widetilde{\OO}:= \pi_{\mathfrak{p}}^{-1}(\OO)$ is a double cover of $\OO$. Let $\Pi$ be $\Aut_\OO(\widetilde{\OO})$. From \cite[Corollary 6.5.6]{LMBM}, the image of $\Phi_{\fp}$ is $(\cA_{\hbar, \lambda}(T^*\cP))^{\Pi}\subset \cA_{\hbar, \lambda}(T^*\cP)$. The action of $\Pi$ on $\cA_{\hbar, \lambda}(T^*\cP)$ is lifted from the non-trivial action of $\Spec(\CC[T^*\cP])=\Spec(\CC[\widetilde{\OO}])$, and $\gr (\cA_{\hbar, \lambda}(T^*\cP))^\Pi\simeq \Spec(\CC[\OO]).$ In particular, $\Phi_\fp$ is not surjective. 
 \end{rmk}

We have described the quantizations $\mathcal{D}_{\hbar,\mathfrak{X}(\mathfrak{l})}(T^*\mathcal{P})$,  $\mathcal{A}_{\hbar,\mathfrak{X}(\mathfrak{l})}(T^*\mathcal{P})$. It is {\emph{not}} true that the specialization of $\mathcal{D}_{\hbar,\mathfrak{X}(\mathfrak{l})}(T^*\mathcal{P})$ at $\la \in \mathfrak{X}(\mathfrak{l})$ has period $\la$. For that to be true, we have to {\em{modify}} the $\CC[\mathfrak{X}(\mathfrak{l}),\hbar]$-algebra structure on $\mathcal{D}_{\hbar,\mathfrak{X}(\mathfrak{l})}(T^*\mathcal{P})$ as follows.
Let $\rho_{\mathfrak{u}_{\mathfrak{p}}}$ be the half sum of all $T$-weights of $\mathfrak{u}_{\mathfrak{p}}$. Recall that we can identify $\mathfrak{X}(\mathfrak{l})^*=\mathfrak{l}/[\mathfrak{l},\mathfrak{l}]$ with the center $\mathfrak{z}(\mathfrak{l})$.
Let $\iota\colon \mathbb{C}[\mathfrak{X}(\mathfrak{l}),\hbar]=S^{\bullet}(\mathfrak{z}(\mathfrak{l}))[\hbar] \rightarrow \mathcal{A}_{\hbar,\mathfrak{X}(\mathfrak{l})}(T^*\mathcal{P})$ be the quantum comoment map for the right action of $P/[P,P]$ on $G/[P,P]$).
Consider the map $\iota_{\mathfrak{p}} \colon \mathbb{C}[\mathfrak{X}(\mathfrak{l}),\hbar] \rightarrow \mathcal{A}_{\hbar,\mathfrak{X}(\mathfrak{l})}(T^*\mathcal{P})$ defined as the composition of  $\iota$ with the automorphism of $\mathbb{C}[\mathfrak{X}(\mathfrak{l}),\hbar]$ induced by the isomorphism $\mathfrak{X}(\mathfrak{l}) \oplus \mathbb{C} \iso \mathfrak{X}(\mathfrak{l}) \oplus \mathbb{C}$ given by $(\lambda,\hbar_0) \mapsto (\lambda-\hbar_0\rho_{\mathfrak{u}_{\mathfrak{p}}},\hbar_0)$.
From now on, we will consider the $\CC[\mathfrak{X}(\mathfrak{l}),\hbar]$-module structure on $\mathcal{D}_{\hbar,\mathfrak{X}(\mathfrak{l})}(T^*\mathcal{P})$, $\mathcal{A}_{\hbar,\mathfrak{X}(\mathfrak{l})}(T^*\mathcal{P})$ induced by $\iota_{\mathfrak{p}}$.

\begin{rmk}
Note that $\rho_{\mathfrak{u}_{\mathfrak{p}}}=\rho_{\mathfrak{g}}-\rho_{{\mathfrak{l}}}$, where $\rho_{\mathfrak{l}}$ is the half sum of positive roots for $\mathfrak{l}$.
\end{rmk}

\begin{warning}\label{warning_shift}
Note that we have shifted the $\CC[\mathfrak{X}(\mathfrak{l}),\hbar]$-, $\CC[\mathfrak{h}^*,\hbar]$-module structures on $\mathcal{A}_{\hbar,\mathfrak{X}(\mathfrak{l})}(T^*\mathcal{P})$, $\mathcal{A}_{\hbar,\mathfrak{h}^*}(T^*\mathcal{B})$, so it is {\emph{not}} true anymore that the homomorphism $\Phi_{\mathfrak{p}}$ is $\CC[\mathfrak{X}(\mathfrak{l}),\hbar]$-linear. Consider the homomorphism $\mathbb{C}[\mathfrak{h}^*,\hbar] \rightarrow \mathbb{C}[\mathfrak{X}(\mathfrak{l}),\hbar]$ induced by the embedding $\mathfrak{X}(\mathfrak{l}) \oplus \mathbb{C} \hookrightarrow \mathfrak{h}^* \oplus \mathbb{C}$ given by $(\lambda,\hbar_0) \mapsto (\lambda-\rho_{\mathfrak{l}}\hbar_0,\hbar_0)$.  
After composing this homomorphism with $\iota_{\mathfrak{p}}$, the induced $\mathbb{C}[\mathfrak{h}^*,\hbar]$-action on $\mathcal{A}_{\hbar,\mathfrak{X}(\mathfrak{l})}(T^*\mathcal{P})$ makes the homomorphism $\Phi_{\mathfrak{p}}$ $\mathbb{C}[\mathfrak{h}^*,\hbar]$-linear.
In particular, for $\la \in \mathfrak{X}(\mathfrak{l})$, specialization of $\Phi_{\mathfrak{p}}$ at $(\la, \hbar_0)$ induces the homomorphism  $\mathcal{A}_{\hbar_0,\la-\hbar_0\rho_{{\mathfrak{l}}}}(T^*\mathcal{B}) \rightarrow \mathcal{A}_{\hbar_0,\la}(T^*\mathcal{P})$.
\end{warning}


For $\la \in \mathfrak{X}(\mathfrak{l})$ let $\mathcal{D}_{\la}(\cP)$ be the sheaf of $\la+\rho_{\mathfrak{u}_{\mathfrak{p}}}$-twisted differential operators on $\cP$, and let $\mathcal{D}_{\hbar, \la}(\cP)$ be its homogeneous version. We denote by $D_\la(\mathcal{P})$, $D_{\hbar,\la}(\mathcal{P})$ the global sections of $\mathcal{D}_\la(\mathcal{P})$, $\mathcal{D}_{\hbar,\la}(\mathcal{P})$ respectively.
\begin{prop}\label{quant_flag}
	Fix $\la \in \mathfrak{X}(\mathfrak{l})$. Then the microlocalization $\underline{\mathcal{D}}_\la(\mathcal{P})$ (see~\cite[Section~4.1]{BPW} and references therein) of
	$\mathcal{D}_\la(\mathcal{P})$ to $T^*\mathcal{P}$ coincides with $\mathcal{D}_{1,\la}(T^*\mathcal{P})$ and is the filtered quantization of $X=T^*\mathcal{P}$ 
	with period $\la$. In particular, $\mathcal{A}_{\la}(T^*\mathcal{P})=D_\la(\mathcal{P})$. 
	\end{prop}
	\begin{proof}
 It follows from the definition that $\mathcal{D}_{1,\la}(T^*\mathcal{P})$ is the microlocalization of $\mathcal{D}_\la(\mathcal{P})$ to $T^*\mathcal{P}$.
	It follows from~\cite[Proposition~4.4]{BPW} that the period of $\mathcal{D}_\la(T^*\mathcal{P})$ equals  $\la+\rho_{\mathfrak{u}_{\mathfrak{p}}}-\frac{c_1(T^*\mathcal{P})}{2}$. Since $T^*\mathcal{P}$ is isomorphic to the induced vector bundle $G \times^{P} \mathfrak{p}^{\perp}$, and $\mathfrak{p}^{\perp}=(\mathfrak{g}/\mathfrak{p})^* \simeq \mathfrak{u}_{\mathfrak{p}}$ (as $T$-modules), we have $c_1(T^*(\mathcal{P}))=2\rho_{\mathfrak{u}_{\mathfrak{p}}}$. 
	\end{proof}

\begin{rmk}
Proposition \ref{quant_flag} also follows from \cite[Theorem 5.4.1]{Losev_isofquant}.    
\end{rmk}

 For $\la \in \mathfrak{X}(\mathfrak{l})$, let $\widetilde{\la}=\la-\rho_{\mathfrak{l}}$. We fix the notations
 \begin{equation*}
     \mathcal{U}_{\widetilde{\la}}=\mathcal{U}_{\widetilde{\la}}(\mathfrak{g})=D_{\widetilde{\la}}(\mathcal{B}), \quad	\mathcal{U}_{\hbar,\widetilde{\la}}=\mathcal{U}_{\hbar,\widetilde{\la}}(\mathfrak{g})=D_{\hbar,\widetilde{\la}}(\mathcal{B}).
 \end{equation*} 

	\begin{rmk}\label{rmk twisting}
	{\emph{The algebra $\mathcal{U}_{\hbar,\widetilde{\la}}$ can be described as follows. Recall that $\mathcal{U}_{\hbar}(\mathfrak{g})$ is the homogenized universal enveloping algebra of $\mathfrak{g}$, and $Z_\hbar \subset \mathcal{U}_{\hbar}(\mathfrak{g})$ is the center of $\mathcal{U}_{\hbar}(\mathfrak{g})$.
	Consider the {\emph{twisted}} Harish-Chandra isomorphism $\widetilde{HC}_{\hbar}\colon Z_\hbar \iso \CC[\mathfrak{h}^*]^{W} \otimes \CC[\hbar]$, where the action of $W$ is {\emph{standard}}.
	For $\la \in \mathfrak{h}^*$
	we define a central character $\mu_{\widetilde{\la}}\colon Z_{\hbar} \ra \CC[\hbar]$ by $\mu_{\widetilde{\la}}(z)=\widetilde{HC}_{\hbar}(z)(\widetilde{\la})$.
	Then $\mathcal{U}_{\hbar,\widetilde{\la}}$ is isomorphic to the central reduction of $\mathcal{U}_\hbar$ via $\mu_{\widetilde{\la}}$.
	Note also that for any $w \in W$, $\mathcal{U}_{\hbar,\la}=\mathcal{U}_{\hbar, w(\la)}$}}. 
	\end{rmk}


	\subsubsection{Quantizations of parabolic Slodowy varieties}\label{quant_slod_var}
    We fix a nilpotent element $e \in \mathfrak{g}$.
	Consider the corresponding parabolic Slodowy variety $\widetilde{S}(\chi,\mathfrak{p})$ (see Section \ref{sec_parabolic_slodowy}).
	Recall that $\widetilde{S}(\chi,\mathfrak{p})$ can be obtained as a Hamiltonian reduction of $T^*\mathcal{P}/\!/\!/_\chi U_\ell$ (see Proposition \ref{prop_realiz_S_as_Ham_red} and Remark \ref{rem_ham_real_of_parab_S}).
    Recall also that $\widetilde{S}_{\mathfrak{X}(\mathfrak{l})}(\chi,\mathfrak{p})=T^*_{\mathfrak{X}(\mathfrak{l})}\mathcal{P}/\!/\!/_\chi U_\ell$ is the base change of the universal deformation of $\widetilde{S}(\chi,\mathfrak{p})$ via the pullback map $p\colon \mathfrak{X}(\mathfrak{l})=H^2(T^*\mathcal{P},\CC) \rightarrow H^2(\widetilde{S}(\chi,\mathfrak{p}),\CC)$.

 \begin{rmk}\label{map_cohom}
	{\em{Note that in type $A$ the map $p$ is surjective ({\em{see}}~\cite[Section 2]{bo}). In general, we do not know when the morphism $p$ is surjective.}}   
	\end{rmk}

    We then consider quantum Hamiltonian reductions of the natural quantizations of $T^*_{\mathfrak{X}(\mathfrak{l})}\mathcal{P}$ and $\CC[T^*_{\mathfrak{X}(\mathfrak{l})}\mathcal{P}]$, see \cite[Section 3.3, 3.4]{Losev_isofquant} for generalities on quantum Hamiltonian reduction. Define
   \begin{equation*}
\mathcal{D}_{\hbar,\mathfrak{X}(\mathfrak{l})}(\widetilde{S}(\chi,\mathfrak{p}))=\mathcal{D}_{\hbar,\mathfrak{X}(\mathfrak{l})}(T^*\mathcal{P})/\!/\!/_{\chi} U_\ell, \quad  \mathcal{D}_{\mathfrak{X}(\mathfrak{l})}(\widetilde{S}(\chi,\mathfrak{p}))=\mathcal{D}_{\mathfrak{X}(\mathfrak{l})}(T^*\mathcal{P})/\!/\!/_{\chi} U_\ell,
   \end{equation*}
\begin{equation*}
\mathcal{A}_{\hbar,\mathfrak{X}(\mathfrak{l})}(\widetilde{S}(\chi,\mathfrak{p}))=\mathcal{A}_{\hbar,\mathfrak{X}(\mathfrak{l})}(T^*\mathcal{P})/\!/\!/_{\chi} U_\ell, \quad  \mathcal{A}_{\mathfrak{X}(\mathfrak{l})}(\widetilde{S}(e,\mathfrak{p}))=\mathcal{A}_{\mathfrak{X}(\mathfrak{l})}(T^*\mathcal{P})/\!/\!/_{\chi} U_\ell,
\end{equation*}
\begin{equation*}
\cW_{\hbar,\la}(\chi,\mathfrak{p}) = D_{\hbar,\lambda}(\mathcal{P})/\!/\!/_\chi U_{\ell}, \quad \cW_{\la}(\chi,\mathfrak{p}) = D_\lambda(\mathcal{P})/\!/\!/_\chi U_{\ell}.
\end{equation*}

\begin{rmk}
It is straightforward to check (using that the assumptions of \cite[Lemma 3.1.1]{Losev_isofquant} are satisfied in our situation) that we have the canonical identification: 
\begin{equation*}
\mathcal{D}_{\hbar,\mathfrak{X}(\mathfrak{l})}(\widetilde{S}(\chi,\mathfrak{p}))/(\hbar-1) \iso \mathcal{D}_{\mathfrak{X}(\mathfrak{l})}(\widetilde{S}(\chi,\mathfrak{p}))
\end{equation*}
and similar identifications  for $\CA_{\mathfrak{X}(\mathfrak{l})}(\widetilde{S}(\chi,\mathfrak{p}))$,  and $\mathcal{W}_{\la}(\chi,\mathfrak{p})$.
\end{rmk}

The grading on the algebras above comes from the $\CC^\times$-action on $T^*\mathcal{P}$ corresponding to the cocharacter $\CC^\times \rightarrow T \times \CC^\times$ given by $t \mapsto (h(t),t)$ (recall that $h \in \mathfrak{g}$ is the Cartan element of the $\mathfrak{sl}_2$-triple for $e$, abusing the notation we denote by $h\colon \CC^\times \rightarrow T$ the cocharacter corresponding to $h \in \mathfrak{h}$).

   The following proposition is standard.  
   \begin{prop}\label{prop_quant_W_alg}
   $(a)$ The sheaf $\mathcal{D}_{\hbar,\mathfrak{X}(\mathfrak{l})}(\widetilde{S}(\chi,\mathfrak{p}))$ quantizes $\widetilde{S}_{\mathfrak{X}(\mathfrak{l})}(\chi,\mathfrak{p})$ over $\mathfrak{X}(\mathfrak{l})$.
   
   $(b)$ We have:
   \begin{equation*}
\mathcal{A}_{\hbar,\mathfrak{X}(\mathfrak{l})}(\widetilde{S}(\chi,\mathfrak{p}))=\Gamma(\widetilde{S}(\chi,\mathfrak{p}),\mathcal{D}_{\hbar,\mathfrak{X}(\mathfrak{l})}(\widetilde{S}_{\mathfrak{X}(\mathfrak{l})}(\chi,\mathfrak{p}))),
\end{equation*} 
\begin{equation*}
\mathcal{A}_{\hbar,\la}(\widetilde{S}(\chi,\mathfrak{p})) = \mathcal{W}_{\hbar,\la}(\chi,\mathfrak{p}), \quad \mathcal{A}_{\la}(\widetilde{S}(\chi,\mathfrak{p})) = \mathcal{W}_{\la}(\chi,\mathfrak{p}).
\end{equation*}

   $(c)$ For $\la \in \mathfrak{X}(\mathfrak{l})$, the specialization $\mathcal{D}_{\hbar,\la}(\widetilde{S}(\chi,\mathfrak{p}))$ is the quantization of $\widetilde{S}(\chi,\mathfrak{p})$ with period $p(\la)$.
   \end{prop}
\begin{proof}
Part $(a)$ is standard (see, for example, \cite[Section 3.3]{Losev_isofquant}) and follows from the freeness of the action of $U_\ell$ on the fiber of the moment map over $\chi$ (see Proposition \ref{prop_coadj_N_iso} above). 
First equality in part $(b)$ follows from the fact that the quantum Hamiltonian reduction functors intertwine the global section functors, compare with \cite[Section 3.3.4]{Losev_cacticells}. 

The identifications $\cA_{\hbar,\la}(\widetilde{S}(\chi,\mathfrak{p})) = \mathcal{W}_{\hbar,\la}(\chi,\mathfrak{p})$, $\cA_{\la}(\widetilde{S}(\chi,\mathfrak{p})) = \mathcal{W}_{\la}(\chi,\mathfrak{p})$ are also standard (see \cite[Section 3.3]{Losev_isofquant}) and follow from the fact that the assumptions of Lemma 3.3.1 in loc. cit. are satisfied in our situation (see Proposition \ref{prop_coadj_N_iso} above).

	Part $(c)$ follows from the general results on periods of quantum Hamiltonian reductions (see 
	\cite[Section 5]{Losev_isofquant}) together with Proposition \ref{quant_flag}. 
	\end{proof}

Let us give an alternative description of the algebra $\CA_{\hbar,\mathfrak{h}^*}(\widetilde{S}(\chi))$. 
Consider the finite $W$-algebra: 
\begin{equation*}
\mathcal{W}_\hbar(\chi)=\mathcal{U}_\hbar(\mathfrak{g})/\!/\!/_{\chi}U_{\ell}=(\mathcal{U}_\hbar(\mathfrak{g})/\mathcal{U}_\hbar(\mathfrak{g})\{x-\chi(x)\,|\, x \in \mathfrak{u}_{\ell}\})^{U_\ell}.
\end{equation*} 
It is well-known that the center of $\mathcal{W}_\hbar(\chi)$ identifies with $Z_\hbar$ via the natural embedding $Z_{\hbar} \hookrightarrow \mathcal{W}_\hbar(\chi)$ (see, for example,  the footnote on page 35 of \cite{Premetstabilizer} attributing it to Ginzburg). It then follows from the definitions (c.f. \cite[Section 5.1]{Losev_isofquant}) together with Lemma \ref{compat_two_comoment} that the natural maps $\mathcal{W}_\hbar(\chi) \rightarrow \mathcal{A}_{\hbar,\mathfrak{h}^*}(\widetilde{S}(\chi))$, $\CC[\mathfrak{h}^*,\hbar] \rightarrow \mathcal{A}_{\hbar,\mathfrak{h}^*}(\widetilde{S}(\chi))$ induce the isomorphism:
\begin{equation*}
\mathcal{W}_{\hbar}(\chi) \otimes_{Z_\hbar} \CC[\mathfrak{h}^*,\hbar] \iso \mathcal{A}_{\hbar,\mathfrak{h}^*}(\widetilde{S}(\chi)).
\end{equation*}

\begin{warning}
Let us emphasize that the grading on     $\mathcal{W}_{\hbar}(\chi)$ is {\emph{not}} induced by the standard grading on $\mathcal{U}_\hbar(\mathfrak{g})$, it is induced by the {\emph{Kazhdan}} grading on $\mathfrak{g}$ defined as follows. Recall the $\ad h$-eigenspace decomposition $\mathfrak{g}=\bigoplus_{i}\mathfrak{g}(i)$, then the degree of $x \in \mathfrak{g}(i)$ is $i+2$ (the degree of $\hbar$ is still $2$).
\end{warning}

Applying the quantum Hamiltonian reduction by $U_\ell$ to the homomorphism 
\begin{equation*}
\Phi_{\mathfrak{p}}\colon \CA_{\hbar,\mathfrak{X}(\mathfrak{p})}(T^*\mathcal{B}) \rightarrow \CA_{\hbar,\mathfrak{X}(\mathfrak{p})}(T^*\mathcal{P}),
\end{equation*}
we obtain the homomorphism of $\CC[\hbar]$-algebras (c.f. \cite[Lemma 5.2.1 (3)]{Losev_isofquant}):
\begin{equation*}
\Phi_{\mathfrak{p},\chi}\colon \mathcal{W}_{\hbar}(\chi) \otimes_{Z_\hbar} \CC[\mathfrak{X}(\mathfrak{l}),\hbar]=\mathcal{A}_{\hbar,\mathfrak{X}(\mathfrak{l})}(\widetilde{S}(\chi)) \rightarrow \mathcal{A}_{\hbar,\mathfrak{X}(\mathfrak{l})}(\widetilde{S}(\chi,\mathfrak{p})).
\end{equation*}

It follows from Remark \ref{surj_Phi_hbar_nonzero} that the homomorphism $\Phi_{\mathfrak{p},\chi}$ becomes surjective after setting $\hbar=1$ and specializing at a regular integral antidominant $\la \in \mathfrak{X}(\mathfrak{p})$.

\subsection{Categories $\mathcal{O}$ and localizations}
	Recall that $T_X$ is a torus acting on $Y$
	in a Hamiltonian way. From now on, we assume that the set $Y^{T_X}$ is finite. 
	Let us fix a co-character $\nu\colon \CC^\times \ra T_X$.
	Recall that a co-character $\nu$ is {\em{regular}}
	if the set $Y^{\nu}$ is finite (in other words, coincides with $Y^{T_X}$).
	We obtain the decomposition of the vector space $\mathfrak{t}_{X,\mathbb{R}}:=\on{Hom}(\CC^\times,T_X)\otimes_{\BZ} \mathbb{R}$ into the union of {\em{open chambers}} separated by the walls corresponding to non-regular co-characters $\nu$ (see Section \ref{subsub_chambers}).
	
	\subsubsection{Categories $\CO$}\label{catOcat} We fix a parameter $\la \in \mathfrak{h}_{X}$ and a regular co-character $\nu \in \on{Hom}(\CC^{\times},T_X)$. Consider the corresponding filtered quantization $\mathcal{D}_\la$ of $\CO_{Y}$. Recall that $\CA_{\la}=\Gamma(Y,\mathcal{D}_\la)$.  The action of $T_X \curvearrowright \CO_{Y}$ lifts canonically to the action $T_X \curvearrowright \mathcal{D}_\la$ and hence to the action $T_X \curvearrowright \CA_{\la}$.

	The co-character $\nu$ induces a grading $\CA_{\la}=\bigoplus_{i\in \BZ}\CA_{\la,i}$ on $\CA_\la$.
	Let 
	$
	\CA^{>0}_{\la}:=\bigoplus_{i> 0}\CA_{\la,i}.
	$
	It follows from~\cite[Proposition~3.11]{BPW} that the action of $\nu$ 
	on $\CA_\la$ is Hamiltonian, so we have a comoment map $\CC \ra \CA_{\la,0}$. Let $h \in \CA_{\la,0}$ be the image of $1 \in \CC$.
	Let $\CA_\la$-$\on{mod}$ be the category of finitely generated $\CA_\la$-modules and let $\CO_{\nu}(\CA_\la) \subset \CA_\la$-$\on{mod}$ be the full subcategory, consisting of all modules where the action of $\CA^{>0}_\la$ is locally nilpotent and the action of $\CA_{\la,0}$ is locally finite.
	
	Let $\on{Coh}(\mathcal{D}_\la)$ be the category of all coherent $\mathcal{D}_\la$-modules and $\CO_\nu(\mathcal{D}_\la) \subset \on{Coh}(\mathcal{D}_\la)$ be the  full subcategory of modules that come with a good filtration stable under $h$ and  are supported on the contracting locus 
	$Y_+$ of $\nu$, i.e., $Y_+:=\{x\in Y\,|\,\on{lim}_{t\ra 0}\nu(t)\cdot x~\on{exists}\}$.
 The set
	$\on{Irr}(\CO_\nu(\mathcal{D}_\la))$ of irreducible objects
	can be naturally identified with $Y^{T_X}$ (see~\cite[Proposition~5.17]{BPWII} and Section \ref{subsec_cartan_sub_and_irred} below). 

\subsubsection{Localizations}\label{sec_Localizations} We have the global section functor $\Gamma\colon \on{Coh}(\mathcal{D}_\la) \ra \CA_\la$-$\on{mod}$.
	We denote by $\on{Loc}\colon \CA_\la$-$\on{mod} \ra \on{Coh}(\mathcal{D}_\la)$ the left adjoint functor given by $N \mapsto \mathcal{D}_\la \otimes_{\CA_\la} N$ (see~\cite[Section~4.2]{BPW}).
	For $\la \in \mathfrak{h}_{X}$, we say that abelian localization holds for $(\la,Y)$ when the functors
	$\Gamma$ and $\on{Loc}$ are quasi-inverse equivalences. Let $\eta\in H^2(Y,\CC)=\mathfrak{h}_{X}$ be the Chern class of an ample line bundle on $Y$. It is known (see~\cite[Corollary B.1]{BPW}) that for a sufficiently large integer $M$ (depending on $\la \in \mathfrak{h}_X$) the abelian localization holds for $\la + m\chi$ for all integers $m\ge M$.
	In particular, the irreducible objects of
	$\CO_\nu(\CA_{\la+m\chi})$ are parametrized by $Y^{T_X}$. 

\subsection{Cartan subquotients ($B$-algebras)}
The main references to this section are \cite[Sections 4,5]{LosevCatO} and \cite[Section 5.1]{BPWII}. For any $\mathbb{Z}$-graded ring $\mathcal{A}$, set 
\begin{equation*}
\cC(\CA):=\CA_{0}/\sum_{i>0}\CA_{-i}\CA_{i},
\end{equation*}
ring $\cC(\CA)$ will be called the {\em{Cartan subquotient}} or a {\em{$B$-algebra}} of $\CA$.
We will apply this construction to the quantizations (polynomial, and filtered) of $\mathbb{C}[X]$, $\CC[X_{\mathfrak{h}_X}]$ and more generally $\CC[X_{\mathfrak{a}}]$ for some vector space $\mathfrak{a} \rightarrow \mathfrak{h}_X$ with $\mathbb{Z}$-grading induced by a (regular) cocharacter $\nu\colon \mathbb{C}^\times \ra T_X$ as above. If $\mathcal{A}$ is one of these quantizations, then the corresponding Cartan subquotient will be denoted by $\cC_\nu(\CA)$. We will also use the notation $\mathcal{A}^{\geqslant 0}:=\bigoplus_{i \geqslant 0}\mathcal{A}_i$. Note that there exists the natural surjection $\mathcal{A}^{\geqslant 0} \twoheadrightarrow \cC(\mathcal{A})$ induced by the surjection $\mathcal{A}^{\geqslant 0} \twoheadrightarrow \mathcal{A}_0$.

\subsubsection{Cartan subquotient as schematic fixed points}\label{sec_schem_fixed}
Let $A$ be a graded commutative algebra, and let $X:=\on{Spec}A$. The $\mathbb{Z}$-grading on $A$ defines the action $\CC^\times \curvearrowright X$. Let $X^{\CC^\times}$ be the {\emph{functor}}:
\begin{equation*}
X^{\CC^\times}\colon {\bf{Schemes}}_{\CC} \rightarrow {\bf{Sets}}
\end{equation*}
from the category ${\bf{Schemes}}_{\CC}$ of schemes over $\CC$ to the category of {\bf{Sets}} defined as follows (see \cite{fogarty}, \cite[Section 1.2]{drinfeld}):
\begin{equation*}
X^{\CC^\times}(S):=\on{Maps}^{\CC^\times}(S,Y),~S \in {\bf{Schemes}}_{\CC},
\end{equation*}
where the action of $\CC^\times$ on $S$ is trivial and $\on{Maps}^{\CC^\times}(S,Y)$ is the set of $\CC^\times$-equivariant morphisms from $S$ to $Y$.

The following proposition is well-known (see, for example, \cite[Proposition 1.4]{ksh}).

\begin{prop}
Functor $X^{\CC^\times}$ is represented by an affine scheme whose ring of functions is exactly $\cC(A)$.  
\end{prop}

Note that the functor $X^{\CC^\times}$ is well-defined even if $X$ is not affine, one can similarly define the functor $X^{T}$ for an arbitrary variety with an action of some torus $T$. The following proposition holds by \cite{iversen}.

\begin{prop}\label{prop_fixed_smooth}
Let $X$ be a smooth algebraic $T$-variety. Then  $X^{T}$ is smooth.
\end{prop}

\subsubsection{Sheaf version of Cartan subquotient}
In this section, let $T=T_X$, and let $Y$ be a symplectic resolution of $X$. Let $X_{\mathfrak{a}}$ be a deformation of $X$ over some vector space $\mathfrak{a}$ mapping linearly to $\mathfrak{h}_X$. Let $Y_{\mathfrak{a}}$ be the corresponding deformation of $Y$. Recall that we assume that the natural morphism $Y_{\mathfrak{a}} \rightarrow X_{\mathfrak{a}}$ becomes an isomorphism after the restriction to some nonempty open subset $U \subset \mathfrak{a}$ (see Assumption \ref{ass_resol_a} above).

Following \cite[Section 5.2]{LosevCatO} one can talk about the Cartan subquotients of  {\emph{sheaves}} $\mathcal{D}$ on $Y_{\mathfrak{a}}$ (see  \cite[Proposition 5.2]{LosevCatO}), the resulting object will be a {\emph{sheaf}} of algebras on the $T$-fixed points of $Y_{\mathfrak{a}}$ (recall that $Y_{\mathfrak{a}}^T$ is smooth by Proposition \ref{prop_fixed_smooth} above).

\begin{lemma}\label{fixed_pts_deform}
There exists the unique isomorphism $\alpha: Y_{\mathfrak{a}}^{T} \simeq Y^{T} \times \mathfrak{a}$ of varieties over $\mathfrak{a}$, such that the restriction of $\alpha$ to the fibers over $0 \in \mathfrak{a}$ is the identity map.
\end{lemma}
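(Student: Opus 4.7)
The plan is to establish the isomorphism in three steps: show that the projection $\pi\colon Y_{\mathfrak{a}}^T \to \mathfrak{a}$ is étale, trivialize it using the contracting $\mathbb{C}^\times$-action, and argue uniqueness from the pointed structure. The key inputs are smoothness of $Y_{\mathfrak{a}}^T$ (Proposition \ref{prop_fixed_smooth}), finiteness of $Y^T$, and the conical $\mathbb{C}^\times$-action on $Y_{\mathfrak{a}}$ compatible with the negative-weight action on $\mathfrak{a}$ inherited from the graded Poisson structure.

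For étaleness I would first work pointwise on the central fiber. At $p \in Y^T$ the $T$-equivariant short exact sequence
\[ 0 \to T_p Y \to T_p Y_{\mathfrak{a}} \to T_0 \mathfrak{a} \to 0 \]
splits $T$-equivariantly since $T$ is reductive and acts trivially on $\mathfrak{a}$. Taking $T$-invariants yields $T_p(Y_{\mathfrak{a}}^T) = (T_p Y)^T \oplus T_0\mathfrak{a} = T_0\mathfrak{a}$, as $Y^T$ being finite forces $(T_p Y)^T = 0$. Hence $d_p\pi$ is an isomorphism; combined with the smoothness from Proposition \ref{prop_fixed_smooth}, this yields étaleness of $\pi$ at every $p \in Y^T$. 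To propagate étaleness everywhere, observe that the étale locus $U\subset Y_{\mathfrak{a}}^T$ is open and $\mathbb{C}^\times$-stable, while the contracting $\mathbb{C}^\times$-flow (as $t\to \infty$) sends any point of $Y_{\mathfrak{a}}^T$ into any prescribed neighborhood of $Y^T$. Choosing a $\mathbb{C}^\times$-stable open neighborhood of $Y^T$ inside $U$, one concludes $U = Y_{\mathfrak{a}}^T$.

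I would then trivialize $\pi$ by constructing, for each $p \in Y^T$, a $\mathbb{C}^\times$-equivariant algebraic section $s_p\colon \mathfrak{a}\to Y_{\mathfrak{a}}^T$ through $p$. A local section through $p$ exists by étaleness, and using the negative-weight $\mathbb{C}^\times$-action on $\mathfrak{a}$ the formula $s_p(\lambda) := t^{-1}\cdot s_p(t\lambda)$, valid for all $t$ large enough that $t\lambda$ lies in the local domain, extends it to a global algebraic $\mathbb{C}^\times$-equivariant section. The images of the $s_p$ are disjoint, as their $t\to \infty$ limits recover the distinct fixed points $p$, and they cover $Y_{\mathfrak{a}}^T$ since each $q\in Y_{\mathfrak{a}}^T$ contracts to some $p\in Y^T$ and then must agree with $s_p$ near $p$. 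Assembling them yields the required isomorphism $\alpha\colon Y^T\times \mathfrak{a}\iso Y_{\mathfrak{a}}^T$ restricting to the identity on the central fiber.

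Uniqueness is built in: any such $\alpha$ decomposes into sections of $\pi$ through the points of $Y^T$, and a section of an étale morphism is determined by its value at any single point of the connected base $\mathfrak{a}$. The main technical issue I anticipate lies in the section-extension step — ensuring that the $\mathbb{C}^\times$-equivariant extension of the local étale section is a genuinely algebraic (rather than merely formal or analytic) section. This uses crucially that $Y_{\mathfrak{a}}$ carries a contracting $\mathbb{C}^\times$-action compatible with the graded Poisson deformation structure, and not just flatness over $\mathfrak{a}$; alternatively, one could realize $\pi$ as a finite étale cover of $\mathfrak{a}\simeq \mathbb{A}^{\dim\mathfrak{a}}$ and invoke triviality of the étale fundamental group of affine space.
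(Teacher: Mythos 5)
Your route is genuinely different from the paper's, which invokes Namikawa's $C^\infty$-trivialization $Y_{\mathfrak{a}}\iso Y\times\mathfrak{a}$ as a black box, passes to $T$-fixed points, counts connected components, and checks algebraicity of the resulting identification component by component (invoking Zariski's main theorem at the end). You instead construct the trivialization internally from étaleness of $\pi\colon Y_{\mathfrak{a}}^T\to\mathfrak{a}$ and the conical $\mathbb{C}^\times$-action. Your étaleness-at-$Y^T$ computation is correct, and the $\mathbb{C}^\times$-flow propagation is sound, though you should state explicitly that the limit $\lim_{t\to\infty}t\cdot q$ exists in $Y_{\mathfrak{a}}^T$ for every $q$: this requires properness of $Y_{\mathfrak{a}}\to X_{\mathfrak{a}}$ together with the contraction of the affine $X_{\mathfrak{a}}$ to its unique fixed point, so that the limit lifts by the valuative criterion. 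What you gain over the paper is an argument not resting on Namikawa's differential-geometric citation; what the paper's argument gains is brevity.

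The argument as written, however, has exactly the gap you flag: extending a local analytic section by the $\mathbb{C}^\times$-equivariance formula $s_p(\lambda)=t^{-1}\cdot s_p(t\lambda)$ produces an analytic section, but analytic $\mathbb{C}^\times$-equivariant maps into quasi-projective targets are not automatically algebraic, and you supply no argument that this one is. The remedy you sketch as a fallback should be the main line, and it is available here with no extra cost. Since $Y_{\mathfrak{a}}^T$ is closed in $Y_{\mathfrak{a}}$ and $Y_{\mathfrak{a}}\to X_{\mathfrak{a}}$ is proper, the map $Y_{\mathfrak{a}}^T\to X_{\mathfrak{a}}^T$ is proper; and $X_{\mathfrak{a}}^T\to\mathfrak{a}$ is finite by graded Nakayama, exactly as used in the proof of Lemma \ref{finite fixed points}. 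Thus $\pi$ is proper, and being proper and étale it is finite étale over $\mathfrak{a}\simeq\mathbb{A}^{\dim\mathfrak{a}}$, which has trivial étale fundamental group; therefore $\pi$ is the trivial $|Y^T|$-sheeted cover, giving the isomorphism directly. This eliminates the section-gluing machinery altogether, and your uniqueness observation then applies verbatim.
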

\begin{proof}
It follows from  \cite[Section 1.2 and references therein]{Namikawa2} that there exists a $T$-equivariant identification of  $C^\infty$-manifolds over $\mathfrak{a}$ that is equal to identity being restricted to the fibers over $0$: 
\begin{equation}\label{triv_iso_fixed}
Y_{\mathfrak{a}} \iso Y \times \mathfrak{a}
\end{equation}
(the action of $T$ on the right-hand side is induced by the action of $T$ on $Y$). 

Passing to $T$-fixed points, we obtain the identification $\alpha: Y_{\mathfrak{a}}^T \iso Y^T \times \mathfrak{a}$ of $C^\infty$-manifolds over $\mathfrak{a}$. The fact that the restriction of (\ref{triv_iso_fixed}) to the fibers over $0$ is equal to identity implies that the restriction of $\alpha$ to the fibers of $0$ is identity.

It remains to check that this identification is a morphism of algebraic varieties (then the fact that it is an isomorphism follows from Zariski's main theorem). 

Indeed, note that for an arbitrary smooth variety $Z$ over $\mathbb{C}$ its connected components considered as $C^\infty$-variety coincide with its connected components considered as an algebraic variety (let $Z_i$ be a connected component of the algebraic variety $Z$, then $Z_i$ is connected as an analytic manifold by \cite[Chapter VII, § 2.2, Theorem 1]{shafarevich2}, so it is also path-connected and this implies that $Z_i$ is connected as a $C^\infty$-manifold). In particular, we see that $Y_{\mathfrak{a}}^T$ has $|Y^T|$ connected components. Let $F$ be one of them. The identification (\ref{triv_iso_fixed}) induces the isomorphism  $F \iso \{p\} \times \mathfrak{a}$ for some $p \in Y^T$. It remains to check that the identification above is algebraic. This is clear since the composition $F \hookrightarrow Y_{\mathfrak{a}}^T \rightarrow {\mathfrak{a}} \iso \{p\} \times \mathfrak{a}$ is algebraic.

\end{proof}

\begin{lemma}\label{cartan_subq_univ_deform_sh}
There are canonical isomorphisms of sheaves of algebras 
\begin{equation*}
\cC_\nu(\mathcal{D}_{\hbar,\mathfrak{a}}) \simeq \CO_{Y_{\mathfrak{a}}^{T}}[\hbar], \quad \cC_\nu(\mathcal{D}_{\hbar,\la}) \simeq \CO_{Y^{T}}[\hbar], \quad \cC_\nu(\mathcal{D}_\la) \simeq \CO_{Y^{T}}.
\end{equation*}
\end{lemma}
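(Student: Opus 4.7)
The plan is to establish the universal isomorphism $\cC_\nu(\mathcal{D}_{\hbar,\mathfrak{a}}) \simeq \CO_{Y_{\mathfrak{a}}^{T}}[\hbar]$ first, and then deduce the other two by specialization. Since $\nu$ is regular, the fixed-point locus $Y_{\mathfrak{a}}^{\nu(\mathbb{C}^\times)}$ coincides with $Y_{\mathfrak{a}}^T$, which by Lemma \ref{fixed_pts_deform} is canonically identified with $Y^T \times \mathfrak{a}$. The sheaf $\cC_\nu(\mathcal{D}_{\hbar,\mathfrak{a}})$ is supported on this locus by construction, and because the Cartan subquotient is a local operation on the fixed-point scheme, it suffices to produce the asserted isomorphism in the formal neighborhood of an arbitrary point $(p,a) \in Y^T \times \mathfrak{a}$.

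The next step is to exhibit a local model for $\mathcal{D}_{\hbar,\mathfrak{a}}$ near $(p,a)$. By the $T$-equivariant symplectic Darboux theorem applied at the isolated fixed point $p \in Y$, the formal completion $\widehat{Y}_p$ is $T$-equivariantly symplectomorphic to the completion of the tangent space $T_pY$ at the origin, equipped with its linear symplectic form. A formal symplectic polydisc admits no nontrivial Poisson deformations compatible with a torus action whose only fixed point is the origin (equivalently, the relevant Hochschild--de Rham cohomology vanishes), so the family $Y_{\mathfrak{a}} \to \mathfrak{a}$ trivializes after completion at $(p,a)$. Finally, formal graded quantizations of a symplectic vector space equipped with a linear torus action are unique up to isomorphism; this is the standard rigidity underlying the arguments of \cite[Section 5]{LosevCatO}. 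Combining these three facts, the restriction of $\mathcal{D}_{\hbar,\mathfrak{a}}$ to the formal neighborhood of $(p,a)$ identifies, as a $T$-equivariant $\mathbb{C}[[\mathfrak{a}]][\hbar]$-algebra, with the completed Weyl algebra $\widehat{A}_\hbar(T_pY)$ with coefficients in $\mathbb{C}[[\mathfrak{a}]]$.

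The core computation is then the purely local claim that, for a symplectic vector space $V$ on which $\nu$ acts linearly and symplectically with no zero weights, $\cC_\nu(\widehat{A}_\hbar(V)) = \mathbb{C}[\hbar]$. Writing $V = V_+ \oplus V_-$ for the decomposition into positive- and negative-weight subspaces (both Lagrangian, since the symplectic form pairs weight $k$ with weight $-k$), any weight-zero monomial in the generators can be rewritten, via iterated applications of the commutation relation $[v_-,v_+] = \hbar\,\omega(v_-,v_+)$, as a linear combination of products of a strictly negative-degree element with a strictly positive-degree element, plus a term in $\mathbb{C}[\hbar]$; an induction on generator length closes the argument. Convergence in the $\hbar$-adic topology is automatic because each commutation strictly decreases total generator degree. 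After base change this yields $\mathbb{C}[[\mathfrak{a}]][\hbar]$, which is precisely the formal stalk of $\CO_{Y_{\mathfrak{a}}^T}[\hbar]$ at $(p,a)$. I expect this combinatorial reduction to be the main technical point, although it is by now standard.

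Assembling the local isomorphisms yields the first assertion. The remaining two follow by specialization: since $\mathcal{D}_{\hbar,\mathfrak{a}}$ is flat over $\mathbb{C}[\mathfrak{a}]$, the defining exact sequence for the Cartan subquotient is preserved under base change along $\{\la\} \hookrightarrow \mathfrak{a}$, giving $\cC_\nu(\mathcal{D}_{\hbar,\la}) \simeq \CO_{Y^T}[\hbar]$. Finally, $\mathcal{D}_\la$ is recovered from $\mathcal{D}_{\hbar,\la}$ by passing to $\mathbb{C}^\times$-finite sections and setting $\hbar = 1$; this operation commutes with the formation of Cartan subquotients, producing the third isomorphism $\cC_\nu(\mathcal{D}_\la) \simeq \CO_{Y^T}$.
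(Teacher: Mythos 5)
Your proposal is correct and follows the same underlying strategy as the paper, which simply cites \cite[Lemma 5.2]{BPWII}; that lemma's proof is precisely the local computation at isolated fixed points using the formal Weyl algebra model that you carry out from scratch. The key step — that $\cC_\nu(\widehat{A}_\hbar(V)) = \mathbb{C}[\hbar]$ when $\nu$ acts on the symplectic vector space $V$ with no zero weights, proved by the commutator reduction on weight-zero monomials — is exactly what lies behind the cited result, so your argument is a useful unpacking rather than a different route.

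Two small points worth flagging. First, your phrase ``a formal symplectic polydisc admits no nontrivial Poisson deformations'' should be read in the up-to-equivalence sense (formal Darboux/Poincar\'e lemma), not as a literal rigidity statement; what you actually need and use is that $H^2_{\mathrm{DR}}$ of the formal polydisc vanishes. Second, the reduction ``it suffices to produce the isomorphism in the formal neighborhood of each $(p,a)$'' deserves a sentence: the Cartan subquotient is a quotient of a coherent sheaf supported set-theoretically on $Y_{\mathfrak{a}}^T$, and since $Y_{\mathfrak{a}}^T \simeq Y^T \times \mathfrak{a}$ is a disjoint union of affine spaces, comparing formal stalks is enough to pin down the coherent sheaf once one knows the candidate map exists globally (which it does, as both sides are quotients of $\mathcal{D}_{\hbar,\mathfrak{a},0}$). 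Neither is a gap, just places where the exposition compresses justification.
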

\begin{proof}
Since fixed $T$-points are isolated in $Y$ by our assumption, lemma follows from \cite[Lemma 5.2]{BPWII}.
\end{proof}

Using Lemmas \ref{fixed_pts_deform}, \ref{cartan_subq_univ_deform_sh} we obtain the natural homomorphism of $\mathbb{C}[\mathfrak{a},\hbar]$-algebras
\begin{equation*}
\cC_\nu(\CA_{\hbar,\mathfrak{a}}) \ra \Gamma(Y_{\mathfrak{a}}^{T},\cC_\nu(\mathcal{D}_{\hbar,\mathfrak{a}}))=\CC[Y^T] \otimes \CC[\mathfrak{a},\hbar]
\end{equation*}
that at $\la \in \mathfrak{a}$ specializes to the  natural homomorphism
\begin{equation}\label{general_comp_cart_to_sheaf}
\cC_\nu(\CA_{\hbar,\la}) \ra \Gamma(Y^T,\cC_\nu(\mathcal{D}_{\hbar,\la}))=\CC[Y^T] \otimes \CC[\hbar].
\end{equation}

\begin{rmk}\label{iso_cart_generic}
It follows from \cite[Proposition 5.3]{BPW} (see also \cite[Proposition 5.3]{LosevCatO}) that the composition (\ref{general_comp_cart_to_sheaf}) becomes $\CC[\hbar]$-linear isomorphism for Zariski generic values of $\la$. 
\end{rmk}

In what follows we are mostly interested in the case when $Y$ has finitely many $T$-fixed points. To track that, we have the following lemma.
\begin{lemma}\label{finite fixed points}
 The set $Y^{T}$ is finite if and only if $X^T$ consists of one point.
\end{lemma}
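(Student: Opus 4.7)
The plan is to prove the two directions separately. The forward direction is essentially formal, whereas the backward direction will follow from upper semicontinuity of fiber dimension applied to the universal Poisson deformation of $X$.

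For the forward direction: $\pi\colon Y \to X$ is proper, surjective and $T$-equivariant, so for each $x \in X^T$ the fiber $\pi^{-1}(x)$ is a non-empty proper $T$-variety and therefore contains a $T$-fixed point by Borel's fixed point theorem. Hence $\pi$ induces a surjection $Y^T \twoheadrightarrow X^T$, and finiteness of $Y^T$ forces $X^T$ to be finite. Writing $x_0$ for the unique fixed point of the contracting $\mathbb{C}^\times$-action on $X$ and using that $T=T_X$ commutes with this $\mathbb{C}^\times$, the set $X^T$ is $\mathbb{C}^\times$-stable and each of its points is contracted to $x_0$; a finite $\mathbb{C}^\times$-stable subset of $X$ with this property must equal $\{x_0\}$.

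For the backward direction: I will consider the closed subscheme $X_{\mathfrak{h}_X}^T \subset X_{\mathfrak{h}_X}$ of schematic $T$-fixed points in the universal Poisson deformation and apply upper semicontinuity of fiber dimension to its natural morphism to $\mathfrak{h}_X$. By the functor-of-points definition recalled in \cref{sec_schem_fixed}, the fiber of this morphism over $\lambda \in \mathfrak{h}_X$ is canonically the scheme $X_\lambda^T$. For Zariski generic $\lambda$, \cref{namikawa's result_descr_of_h_sing} ensures that $Y_\lambda \to X_\lambda$ is an isomorphism, and \cref{fixed_pts_deform} gives $Y_\lambda^T \simeq Y^T$; combining, $X_\lambda^T \simeq Y^T$ for such $\lambda$. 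Thus the generic fiber of $X_{\mathfrak{h}_X}^T \to \mathfrak{h}_X$ has dimension $\dim Y^T$, while the fiber over $0$ is the single point $X^T=\{x_0\}$, of dimension $0$. Upper semicontinuity of fiber dimension then forces $\dim Y^T \leq 0$; combined with $Y^T \subseteq \pi^{-1}(x_0)$ (which is proper), this yields finiteness of $Y^T$.

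The main point I will need to verify carefully is the compatibility of the schematic fixed-points construction with base change, i.e.\ that the fiber of $X_{\mathfrak{h}_X}^T \to \mathfrak{h}_X$ at $\lambda$ is indeed $X_\lambda^T$; this is immediate from the representable functor description. Beyond this I do not foresee any serious obstacle, as the remaining inputs---generic triviality of the family $Y_{\mathfrak{h}_X} \to X_{\mathfrak{h}_X}$ and trivialization of the $T$-fixed locus in the $Y$-family---have already been established earlier in the section.
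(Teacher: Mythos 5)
Your overall strategy mirrors the paper's: the forward direction is handled by proper + surjective + the contracting $\mathbb{C}^\times$-action, and the backward direction reduces finiteness of $Y^T$ to finiteness of the schematic $T$-fixed locus in the universal Poisson deformation, using the generic isomorphism $Y_\lambda \to X_\lambda$ and the $T$-equivariant trivialization $Y_{\mathfrak{h}_X}^T \simeq Y^T \times \mathfrak{h}_X$ from earlier in the section. The forward direction is correct and essentially identical to the paper's.

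There is, however, a gap in the backward direction, precisely at the sentence \emph{``Upper semicontinuity of fiber dimension then forces $\dim Y^T\leq 0$.''} The upper semicontinuity theorem (Chevalley; see Stacks, Tag 0D4H) states that for a morphism of finite type $f\colon Z\to S$, the function $z\mapsto \dim_z Z_{f(z)}$ is upper semicontinuous \emph{on the source $Z$}. It does \emph{not} assert that the fiber dimension is upper semicontinuous as a function on the target $S$ — that stronger statement requires additional hypotheses such as properness of $f$. Your morphism $X_{\mathfrak{h}_X}^T \to \mathfrak{h}_X$ is not a priori proper, so knowing that the fiber over $0$ is zero-dimensional does not, by upper semicontinuity alone, bound the dimension of the generic fiber (think of a disconnected $X$ with one tiny fiber and large generic ones). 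To repair the argument you would need to use the conical structure: the locus $U\subset X_{\mathfrak{h}_X}^T$ where the local fiber dimension vanishes is open (by source-upper-semicontinuity), contains the cone point $x_0$, and is $\mathbb{C}^\times$-stable; its complement is a closed $\mathbb{C}^\times$-stable subset of the affine cone $X_{\mathfrak{h}_X}^T$ not containing $x_0$, hence must be empty. That argument does work, but you did not make it.

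The paper avoids this entirely by a graded Nakayama argument: once $X^T=\{x_0\}$ is known to be a single closed point, the ring $\mathbb{C}[X^T]=\mathbb{C}[X_{\mathfrak{h}_X}^T]/\mathfrak{m}_0\mathbb{C}[X_{\mathfrak{h}_X}^T]$ is finite-dimensional (Artinian local, finitely generated over $\mathbb{C}$), so graded Nakayama yields finiteness of $\mathbb{C}[X_{\mathfrak{h}_X}^T]$ over $\mathbb{C}[\mathfrak{h}_X]$, hence all fibers are finite. This is a cleaner route to the same conclusion, since it directly establishes properness of $X_{\mathfrak{h}_X}^T\to\mathfrak{h}_X$ rather than trying to propagate dimension bounds from the special fiber.
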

\begin{proof}
If $X^T$ is infinite, then $Y^T$ is also infinite because the map $\pi$ is proper and surjective. Assume now that $X^T$ is finite. Since $T$ commutes with the contracting action of $\CC^\times$ on $X$, $\CC^\times$ acts on $X^T$ contracting it to the unique $\CC^\times$-fixed point $e$. This implies that $X^T=\{e\}$. It remains to check that $Y^T$ is also finite. Consider the deformation $\pi_{\mathfrak{h}_X}\colon Y_{\mathfrak{h}_X} \rightarrow X_{\mathfrak{h}_X}$ over $\mathfrak{h}_X$. It follows from the graded Nakayama lemma that the algebra $\CC[X_{\mathfrak{h}_X}^{T}]$ is finitely generated over $\CC[\mathfrak{h}_X]$. Morphism $\pi_{\mathfrak{h}_X}$ is an isomorphism over generic $\la \in \mathfrak{h}_X$ so $\CC[Y_{\la}^{T}]$ is a finite-dimensional vector space for generic $\la$, hence, the set $Y_{\la}^{T}$ is finite. It remains to recall that the spaces $Y_\la$, $Y$ are diffeomorphic as $C^\infty$-manifolds with a $T$-action (cf.  \cite[Section 1, Step 2 and references therein]{Namikawa2}, so we must have $Y^T=Y_\la^T$ is finite. 
\end{proof}

\subsubsection{Cartan subquotients and irreducible objects in category $\mathcal{O}$}\label{subsec_cartan_sub_and_irred}
Let us now recall the relation between $\cC_\nu(\mathcal{A}_{\la})$ and irreducible objects in the category $\CO_\nu(\mathcal{A}_\la)$. Pick $\la$ such that the abelian localization holds and the  morphism (\ref{general_comp_cart_to_sheaf}) specialized at $\hbar=1$ is an isomorphism:
\begin{equation*}
\cC_\nu(\mathcal{A}_{\la}) \iso \Gamma(Y^T,\cC_\nu(\mathcal{D}_\la))=\bigoplus_{p \in Y^T} \CC.
\end{equation*}
 We see that the algebra $\cC_\nu(\mathcal{A}_{\la})$ is finite-dimensional semisimple and has $|Y^T|$ irreducible (one dimensional) representations labeled by the fixed points of $Y$. Let $\CC_p$ be the irreducible representation of $\cC_\nu(\mathcal{A}_{\la})$ corresponding to the point $p \in Y^T$. We can then form the {\emph{standard}} module (see \cite[Section 5.2]{BPWII}) 
\begin{equation*}
\Delta_\la(p)=\Delta(p)=\mathcal{A}_\la \otimes_{\mathcal{A}_\la^{\geqslant 0}} \CC_p \in \CO_\nu(\mathcal{A}_\la),
\end{equation*}
where the action of $\mathcal{A}_\la^{\geqslant 0} \curvearrowright \CC_p$ is induced by the natural surjection $\mathcal{A}_\la^{\geqslant 0} \twoheadrightarrow \cC_\nu(\mathcal{A}_\lambda)$.

\begin{rmk}
Similarly, one can define the {\emph{costandard}} module $\nabla(p)=\nabla_\la(p)$ as follows (see \cite[Section 5.2]{BPWII}): $\nabla(p)= (\CC_p^* \otimes_{\mathcal{A}_\la^{\geqslant 0}} \mathcal{A})^\star$, here $\star$ denotes the restricted duality.
\end{rmk}

By \cite[Proposition 5.17]{BPWII}, the category $\cO_{\nu}(\cA_\lambda)$ is highest weight, and therefore every $\Delta(p)$ has {\emph{unique}} irreducible quotient denoted by $L(p)$. Moreover, modules $\{L(p)\,|\, p \in Y^T\}$ are pairwise non-isomorphic and form the complete list of simples in $\CO_\nu(\mathcal{A}_\la)$. Recall that we have the equivalence of categories $\on{Loc}\colon \CO_\nu(\mathcal{A}_\la) \iso \CO_\nu(\mathcal{D}_\la)$. Applying $\on{Loc}$ to $L(p)$, we obtain irreducible objects in $\CO_\nu(\mathcal{D}_\la)$. In this way we obtain the  labeling $Y^T \iso \on{Irr}(\CO_\nu(\mathcal{D}_\la))$. This is exactly the labeling mentioned in Section \ref{sec_Localizations}.

Let us finally mention that every object $V \in \mathcal{O}_\nu(\mathcal{A}_\la)$ is naturally graded via the action by the commutator with  $h$ (recall that $h$ is the image of $1$ under the comoment map induced by $\nu$). 
It follows from the definitions that $\mathcal{A}_{\la,0}$ acts on each graded component. Now, the {\emph{highest weight}} component of $L(p)$ is one-dimensional and the action of $\mathcal{A}_{\la,0}$ on the corresponding one-dimensional space factors through the quotient $\cC_\nu(\mathcal{A}_\la)$ and is precisely the character of $\cC_\nu(\mathcal{A}_\la)$, corresponding to $\CC_p$. In other words, we can read off the character $\cC_\nu(\mathcal{A}_\la) \rightarrow \CC_p$ from the irreducible representation $L(p)$ as the ``highest weight'' of $L(p)$ considered as $\mathcal{A}_{\la,0}$-module. We note that we have a surjective specialization map $\cA_{h, \fa, 0}\to \cA_{1, \lambda, 0}=\cA_{\lambda, 0}$, and therefore each element $x\in \cA_{h, \fa, 0}$ acts on $L(p)\in \CO_\nu(\CA_{\la})$ by a scalar.

\begin{rmk}
Note that highest weights of $\Delta(p)$, $\nabla(p)$, $L(p)$ are the same.    
\end{rmk}

\subsubsection{From algebraic to sheaf Cartan subquotients}
Recall that the algebra $\CA_{\hbar,\mathfrak{a}}$ is $\mathbb{Z}_{\geqslant 0}$-graded with $\mathfrak{a}$ and $\CC \hbar$ sitting in degree $2$. We denote the $i$-th graded piece by $\CA_{\hbar,\mathfrak{a}}^{i}$. Pick $\la \in \mathfrak{a}$ and $\hbar_0 \in \mathbb{C}$, we can consider the specialization $\CA_{(\hbar_0, \la)}$ of $\CA_{\hbar,\mathfrak{a}}$ to the point $(\hbar_0, \la) \in \CC\oplus \mathfrak{a}$. We see that for $\hbar_0 \neq 0$ the action of $\mathbb{C}^\times$ (corresponding to the $\mathbb{Z}_{\geqslant 0}$-grading above) identifies $\CA_{(\hbar_0, \la)}$ and $\CA_{(1, \hbar_0^{-1}\la)}=\CA_{\hbar_0^{-1}\la}$. Consider the composition
\begin{equation}\label{comp_cart_alg_to_sh}
\CA_{\hbar,\mathfrak{a},0} \twoheadrightarrow \cC_\nu(\CA_{\hbar,\mathfrak{a}}) \ra \Gamma(Y_{\mathfrak{a}}^{T},\cC_\nu(\mathcal{D}_{\hbar,\mathfrak{a}}))=\CC[Y^T] \otimes \CC[\mathfrak{a},\hbar].
\end{equation}
Pick $x \in \CA_{\hbar,\mathfrak{a},0}^{i}$ for an even $i$. 

\begin{prop}\label{hw_geom_vs_hw_alg} 
(a) For $\la \in \mathfrak{a}$ such that the abelian localization holds and $\hbar_0 \neq 0$, the image of $x_{(\hbar_0, \la)} \in \CA_{(\hbar_0, \la),0}$ under 
\begin{equation*}
\CA_{(\hbar_0, \la),0} \twoheadrightarrow \cC_\nu(\CA_{(\hbar_0, \la)}) \ra \Gamma(Y^T,\cC_\nu(\mathcal{D}_{(\hbar_0, \la)}))=\CC[Y^T]
\end{equation*}
is equal to the collection of scalars by which $\hbar_0^{{{i/2}}}x$ acts on the highest components of the irreducible modules in $\CO_\nu(\CA_{\hbar_0^{-2}\la})$.

(b) For $\hbar_0=0$ the composition (\ref{comp_cart_alg_to_sh}) identifies with the pull back homomorphism 
\begin{equation*}
\mathbb{C}[X_{\mathfrak{a}}]_0 \twoheadrightarrow \mathbb{C}[X_{\mathfrak{a}}^{T}] \ra \mathbb{C}[Y_{\mathfrak{a}}^T]=\mathbb{C}[Y^T] \otimes \CC[\mathfrak{a}].
\end{equation*}

(c) If $\mathfrak{a} \rightarrow \mathfrak{h}_X$ is such that the Assumption \ref{ass_ample_chern_a} 
holds, then  $(a)$ determines the image of $x$ uniquely. 
\end{prop}
\begin{proof}
Since the homomorphism (\ref{comp_cart_alg_to_sh}) is $\mathbb{C}^\times$-equivariant, it is enough to prove $(a)$ for $\hbar_0=1$. For $\hbar_0=1$ the claim follows from the definitions. Part $(b)$ is clear. To show (c), we note that any polynomial $f(t)$ is uniquely determined by its values at infinitely many points. Thus, $(c)$ follows from \cite[Corollary B.1]{BPW} by restricting to lines $\ell(x)|_{x \in \mathfrak{a}}$ of the form $\ell(x)=\{x+c\tilde{\chi}\,|\, c \in \mathbb{C}\}$, where $\tilde{\chi} \in \mathfrak{a}$ is a preimage of the first Chern class $\chi$ of an ample line bundle $\mathcal{L}$ (see the notations of Assumption \ref{ass_ample_chern_a}).
\end{proof}

Note that we are not claiming in Proposition \ref{hw_geom_vs_hw_alg} (b) that $\cC_\nu(\mathcal{A}_{\hbar,\mathfrak{a}})/(\hbar)$ is isomorphic to $\mathbb{C}[X^T_{\mathfrak{a}}]$. Although, it is clearly true that we have the identification $\mathcal{A}_{\hbar,\mathfrak{a},0}/(\hbar) = \mathbb{C}[X_{\mathfrak{a}}]_0$.

We always have a surjective homomorphism $\mathbb{C}[X^T_{\mathfrak{a}}] \twoheadrightarrow \cC_\nu(\mathcal{A}_{\hbar,\mathfrak{a}})/(\hbar)$. Moreover, 
in the following proposition 
we explain when this map is indeed an isomorphism. 
\begin{prop}\label{prop_when_cartan_hbar_commute} 
Assume that $\operatorname{dim}\mathbb{C}[X^{T}] \leqslant |Y^{T}|$. Then: 

(a) We have $\operatorname{dim}\mathbb{C}[X^{T}] = |Y^{T}|$.

(b) Cartan subquotient $\cC_\nu(\mathcal{A}_{\hbar,\mathfrak{a}})$ is a flat (hence, free) module over $\mathbb{C}[\mathfrak{a},\hbar]$ of rank $|Y^{T}|$. 

(c) The natural (surjective) morphism $\mathbb{C}[X_{\mathfrak{a}}^{T}]=\cC_\nu(\mathbb{C}[X_{\mathfrak{a}}]) \rightarrow \cC_\nu(\mathcal{A}_{\hbar,\mathfrak{a}})/(\hbar)$ is an isomorphism.
\end{prop}
\begin{proof}
It is straightforward to check that we have the surjective homomorphism $\mathbb{C}[X^{T}] \twoheadrightarrow \cC_{\nu}(\CA_{\hbar,\mathfrak{a}})/(\hbar,\mathfrak{a}^*)$. Using the inequality $\operatorname{dim}\mathbb{C}[X^{T}] \leqslant |Y^{T}|$ together with the graded Nakayama lemma, we see that it is enough to check that the fibers of $\cC_\nu(\mathcal{A}_{\hbar,\mathfrak{a}})$ over a generic point of $\mathfrak{a} \oplus \mathbb{C}$ have dimension equal to $|Y^{T}|$.

Recall that we have the homomorphism $\cC_\nu(\mathcal{A}_{\hbar,\mathfrak{a}}) \rightarrow \mathbb{C}[Y^T] \otimes \mathbb{C}[\mathfrak{a},\hbar]$. The same argument as the one in \cite[Proposition 5.3]{LosevCatO} shows that this map becomes an isomorphism for Zariski generic $(\la,\hbar_0) \in \mathfrak{a} \oplus \mathbb{C}$. This proves the claim.
\end{proof}

\section{Equivariant cohomology}\label{sec_equiv}
\subsection{Basic results} \label{basic eqcoh}
Consider a torus $T$ acting on a variety $X$ (not necessarily with finitely many fixed points). This section consists of several results on equivariant cohomology $H_{T}^*(X)$ that we often use in the paper. Our main reference will be \cite{Anderson2023EquivariantCI}.

By the definition, $H_{T}^*(X)$ is  the cohomology ring $H^*(\mathbb{E}\times^T X)$ where $\mathbb{E}$ is a certain universal principal $T$-bundle of $X$ (see \cite{Anderson2023EquivariantCI}). After the identification $T \simeq (\mathbb{C}^\times)^r$, we have $\mathbb{E} \simeq (\mathbb{A}^\infty \setminus \{0\})^r$). Consider a $T$-equivariant vector bundle $E$ over $X$, then $\mathbb{E}\times^T E$ is a vector bundle over $\mathbb{E}\times^T X$, and the equivariant Chern classes $c_i^{T}(E)$ are defined as the Chern classes of $\mathbb{E}\times^T E$.

When $X$ is a point, we have $H_{T}^*(\pt)$ is $\CC[\ft]$, the algebra of functions on the Lie algebra of $T$. The identification is constructed as follows. To every character $\lambda\colon T \rightarrow \mathbb{C}^\times$ we can associate the one dimensional vector bundle $\mathbb{C}_\la$ over $X=\on{pt}$. Then the identification $H^*_T(\on{pt}) \iso \mathbb{C}[\mathfrak{t}]$ is given by $c_1^T(\mathbb{C}_\la) \mapsto -\lambda$. More generally, every $T$-equivariant bundle over $X=\on{pt}$ is simply a $T$-module $E$. Then, $c_i^T(E) \mapsto e_i(-\chi_1,\ldots,-\chi_{\on{rk}E})$, where $\chi_1,...,\chi_{\rk E}$ is the list of characters of $E$.


In this paper, we will often consider cases with the following features.
\begin{enumerate}
    \item $T$ acts on $X$ with finitely many fixed points.
    \item $H^*_{T}(X)$ is a free module over $H^*_{T}(\pt)$.
\end{enumerate}
With these conditions, the localization theorem in equivariant cohomology (see, e.g., \cite[Section 5]{Anderson2023EquivariantCI}) implies that the pullback map $\iota^*: H_{T}^*(X)\rightarrow H_{T}^*(X^T)$ is an injection. Moreover, it becomes an isomorphism after localizing at certain multiplicative subset $S\subset \CC[\ft]$ that is generated by homogeneous polynomials. When $T\simeq \CC^\times$, we have $\CC[\ft]= \CC[t]$, the polynomial ring in one variable. In this case, we can always choose $S$ as the set generated by $t$.
\subsubsection{Specialization and pullback} \label{fixed point pullback}
Consider a torus $T$ that acts on a smooth variety $X$. Let $\ft$ be the Lie algebra of $T$, and $\lambda\in \ft$. The equivariant cohomology $H^*_{T}(X)$ is an algebra over $\CC[\ft]$. We keep the assumption that $H^*_{T}(X)$ is a free module over $H^*_{T}(\pt)$, this will always be the case in our paper.

Write $H^{*}_T(X)_\lambda$ for the specialization of $H^*_{T}(X)$ at $\lambda$. Torus $T$ acts on $X$, so we can regard $\lambda$ as a vector field on $X$ to be denoted $\la_X$. Write $X^{\lambda_X}\subset X$ for the subvariety of zeroes of this vector field. By the Berline-Vergne localization theorem (\cite[Proposition 2.1]{Berline1985}, \cite[Sections 4,5]{Atiyah1984}), we have the following isomorphism   
\begin{equation}\label{BV_localization}
H^{*}_T(X)_\lambda \iso H^*(X^{\lambda})
\end{equation}
induced by the natural map $H^*_T(X) \rightarrow H^*(X^\lambda)$. 
In general, it makes sense to consider the pullback map in equivariant cohomology. As a consequence, we have a natural map $h_X\colon H_{T}^*(\pt)$ $\rightarrow H_{T}^*(X)$, and $H_{T}^*(X)$ is a module over $H_{T}^*(\pt) = \CC[\ft]$. 
Note that the isomorphism (\ref{BV_localization}) commutes with the pullback map as follows. Consider a $T$-equivariant morphism $f\colon X\rightarrow Y$. Then the isomorphism above identifies $f_\lambda^{*}: H^{*}_T(Y)_\lambda \rightarrow H^{*}_T(X)_\lambda$ with the pullback for the fixed-point varieties $H^*(Y^{\lambda}) \rightarrow H^*(X^{\lambda})$.

A consequence is the following. Consider a $T$-equivariant morphism $f\colon X\rightarrow Y$. Assume that both $X$ and $Y$ are such that $H_T^*(X)$, $H_T^*(Y)$ are free over $H^*_T(\on{pt})$. Then $f^{*}\colon  H^{*}_T(Y) \rightarrow H^{*}_T(X)$ is surjective if and only if $f^{*}\colon H^{*}(Y) \rightarrow H^{*}(X)$ is surjective by graded Nakayama lemma. 

\subsection{Equivariant cohomology of partial flag varieties}\label{subsec_expl_descr_equiv_partial}
In this section and the next, we start with general results on flag and partial flag varieties. We then explain explicit results for the cases $G$ of classical type.

Let $G$ be a simple simply-connected Lie group of rank $n$. Let $T_G$ be a maximal torus of $G$ and let $\fh_G$ be its Lie algebra. Let $B \subset P\subset G$ be a Borel and a parabolic subgroup of $G$ that contains $T_G$. Let $M$ be the Levi subgroup of $P$. Write $W_G$ and $W_M$ for the Weyl groups of $G$ and $M$. Write $\cB$ and $\cP$ for the flag variety $G/B$ and the partial flag variety $G/P$, respectively. The torus $T_G$ naturally acts on $\cB$ and $\cP$ by multiplication from the left. 
\begin{prop}\cite[Section 5]{Kaji2015}\label{general equicoh} \leavevmode
    \begin{enumerate}
        \item There is a canonical identification $H^*_{T_G}(\cB)\simeq \CC[\ft_G]\otimes_{\CC[\ft_G]^{W_G}} \CC[\ft_G]$. In particular, the equivariant cohomology $H^*_{T_G}(\cB)$ is a free module over $H^*_{T_G}(\pt) = \CC[t_1,...,t_n]$ of rank $|W_G|$.
        \item The natural pullback $H^*_{T_G}(\cP)\rightarrow H^*_{T_G}(\cB)$ identifies $H^*_{T_G}(\cP)$ with the $W_M$-invariant subring of $H^*_{T_G}(\cB)$. In other words, $H^*_{T_G}(\cP)\simeq \CC[\fh_G]\otimes_{\CC[\fh_G]^{W_G}} \CC[\fh_G]^{W_M}$.
    \end{enumerate}
\end{prop}
A simple corollary is the following.
\begin{cor} \label{Polquotient}
    Consider a torus $T\subset T_G$ and its Lie algebra $\fh$. The equivariant cohomology ring $H^*_{T}(\cP)$ is the quotient $\CC[\fh]\otimes_{\CC[\fh_G]^{W_G}} \CC[\fh_G]^{W_M}$ of the polynomial algebra $\CC[\fh]\otimes \CC[\fh_G]^{W_M}= H^*_{T}(\pt)\otimes_\CC H^*_{M}(\pt)$.
\end{cor}

\subsubsection{}\label{descr_gen_rel_equiv_cohom_partial_via_chern}  Let us  describe the identification 
\begin{equation}\label{equiv_cohom_partial_flags_descr}
\CC[\fh]\otimes_{\CC[\fh_G]^{W_G}} \CC[\fh_G]^{W_M} \iso H^*_{T}(\cP)
\end{equation}
from Corollary \ref{Polquotient}.

For every finite dimensional representation $V$ of $M$, let $\mathcal{V}$ be the induced vector bundle $G \times^P V$ on $\mathcal{P}$ (the action of $P$ on $V$ is via the natural surjection $P \twoheadrightarrow M$). 
Let $\on{ch}_{\mathfrak{h}_G}V^*$ be the character of $V^*$ considered as an element of $\CC[\fh_G]^{W_M}$. Similarly, to every finite dimensional representation $S$ of $T$, we can associate the trivial vector bundle  $\mathcal{S}:=\mathcal{P} \times S$ with the $T$-equivariant structure induced by the action of $T$ on $S$. The character $\on{ch}_{\mathfrak{h}}S^*$ can be considered as an element of $\mathfrak{h}^* \subset \CC[\mathfrak{h}]$. The isomorphism (\ref{equiv_cohom_partial_flags_descr}) is uniquely determined as follows:
\begin{equation*}
[\on{ch}_{\mathfrak{h}}S^* \otimes \on{ch}_{\mathfrak{h}_G}V^*] \mapsto c_1^T(\mathcal{S})c^T_1(\mathcal{V})~\text{for every}~S \in \on{Rep}_{\mathrm{f.d.}}T,~V \in \on{Rep}_{\mathrm{f.d.}}M.
\end{equation*}

Note that $c_i^T(\mathcal{S}) \in H^*_T(\mathcal{P})$ is the image of the functional $x \mapsto e_i(\al_1(x),\ldots,\al_s(x))$ (considered as an element of $\CC[\mathfrak{h}] \otimes 1$), where $\al_i$ are eigenvalues of $T$ acting on $S^*$ (counted with multiplicities), we will denote this functional by $e_i(\on{ch}_{\mathfrak{h}}S^*) \otimes 1$. Similarly, $c_i^T(\mathcal{V})$ is the image of the functional $x \mapsto e_i(\beta_1,\ldots,\beta_{v})$ (considered as an element of $1 \otimes  \CC[\mathfrak{h}_{G}]^{W_M}$), where  $\al_i$ are eigenvalues of $T_G$ acting on $V^*$ (counted with multiplicities), this functional will be denoted by $1 \otimes e_i(\on{ch}_{\mathfrak{h}_G}V^*)$.

Let us describe another possible way to determine the isomorphism (\ref{equiv_cohom_partial_flags_descr}). Pick any  
collection $S_k$, $V_l$ of representations of  $T$, $M$  such that $e_i(\on{ch}S_k^*)$, $e_j(\on{ch}V_l^*)$ generate $\mathbb{C}[\mathfrak{h}]$ and $\mathbb{C}[\mathfrak{h}_G]^{W_M}$ respectively.
Then (\ref{equiv_cohom_partial_flags_descr}) is uniquely determined as follows:
\begin{equation*}
[e_i(\on{ch}_{\mathfrak{h}}S_k^*) \otimes e_j(\on{ch}_{\mathfrak{h}_G}V_l^*)] \mapsto c_i^T(\mathcal{S}_k)c_j^T(\mathcal{V}_l).
\end{equation*}

Explicitly, the kernel of the map 
\begin{equation}\label{characters_to_cherns_surj_map}
e_i(\on{ch}_{\mathfrak{h}}S_k^*) \otimes e_j(\on{ch}_{\mathfrak{h}_G}V_l^*) \mapsto c_i^T(\mathcal{S}_k)c_j^T(\mathcal{V}_l)
\end{equation} 
is generated by $e_i(\on{ch}_{\mathfrak{h}}V^*) \otimes 1 - 1 \otimes e_i(\on{ch}_{\mathfrak{h}_G}V^*)$, where $V$ runs through finite dimensional representations of $G$. Let's see that the difference as above maps to zero via the map (\ref{characters_to_cherns_surj_map}).
Indeed, for every $G$-module $V$, the corresponding induced vector bundle $\mathcal{V} = G \times^P V$ is trivial, the isomorphism $G \times^P V \iso \mathcal{P} \times V$ is given by $[(g,v)] \mapsto ([g],gv)$. This  isomorphism is $T$-equivariant w.r.t. the diagonal action of $T$ on $\mathcal{P} \times V$. It follows that the elements $e_i(\on{ch}_{\mathfrak{h}}V^*) \otimes 1$, $1 \otimes e_i(\on{ch}_{\mathfrak{h}_G}V^*)$ both map to $c_i^T(\mathcal{V})$.

Alternatively, the kernel is generated by $e_i(\on{ch}_{\mathfrak{h}}V_p^*) \otimes 1 - 1 \otimes e_i(\on{ch}_{\mathfrak{h}_G}V_p^*)$, where $\{V_p\}$ is any collection of representations of $G$ such that $e_i(\on{ch}_{\mathfrak{h}_G}V_p)$ generate $\mathbb{C}[\mathfrak{h}_G]^{W_G}$.
    
\subsection{Fixed points of partial flag varieties}
Let $T\subset G$ be a torus. The following is a general result for the $T$-fixed point varieties of partial flags. Fix a parabolic subgroup $P\subset G$. Let $\cP^{T}$ be the corresponding fixed point variety. Write $W_L$ for the Weyl group of $P$. Write $M$ for the connected component of $C_G(T)$ that contains $\Id$. Let $W_M$ and $W_G$ be the Weyl group of $M$ and $G$, respectively. 
\begin{prop}\label{fixed pt of partial flag}\cite[Proposition 3.3]{Steinberg1976}
    The variety $\cP^{T}$ parametrizes the parabolic subalgebras $\fp'\subset \fg$ that are conjugated to $\fp$ and $\fp'\cap \fl$ nonempty. It has $|W_L\setminus W_G/ W_M|$ connected components. For $[w]\in W_L\setminus W_G/ W_M$, the corresponding component is isomorphic to $M/(M\cap (w.P))$, a partial flag variety of $M$, via the map $\fp'\mapsto \fp'\cap \fl$.
    
\end{prop}
\begin{rmk}
    The intersection $(M\cap (w.P))$ depends on the choice of $w$, but for all $w$ in the same double $W_M-W_L$ coset, the corresponding varieties $M/(M\cap (w.P))$ are isomorphic.
\end{rmk}

Let $M$ be a Levi subgroup of $G$. Consider $T= Z_M(G)$ and $P= B$. The next corollary describes the variety $\cB^T$.

\begin{cor}\label{smL} \leavevmode

$(a)$  Each connected component $\cB_\alpha$ of $\cB^T$ is then isomorphic to the flag variety $\cB_M$ of $\fm$ via the map $\fb\mapsto \fb\cap \fm$.

$(b)$ $\fb_1, \fb_2 \in \cB^T$ lie in the same connected component iff they are conjugated by $M$. 

$(c)$ Group $W$ acts transitively on the set of connected components of $\cB^T$, realizing this set as $W/W_M$. 
\end{cor}
Consider $H^{*}_{T}(\cB^T)$ as a quotient of $\CC[\fh]\otimes_\CC \CC[\fh_G]$, then it makes sense to talk about the set-theoretic support of $\Spec(H^{*}_{T}(\cB^T))$ as a subset of $\fh\times \fh_G$. 
\begin{lemma} \label{fixed point flag variety}
    The set-theoretic support of $\Spec(H^{*}_{T}(\cB^T))$ consists of $|W_G|/|W_M|$ irreducible components, each isomorphic to $\fh$. 
\end{lemma}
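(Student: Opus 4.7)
The plan is to factor the natural map $\phi\colon \mathbb{C}[\mathfrak{h}]\otimes_{\mathbb{C}}\mathbb{C}[\mathfrak{h}_G]\to H^*_T(\mathcal{B}^T)$ through $H^*_T(\mathcal{B})$ and then to describe $\on{Spec}(H^*_T(\mathcal{B}))$ explicitly as a subset of $\mathfrak{h}\oplus\mathfrak{h}_G$. Concretely, $\phi$ will decompose as
\begin{equation*}
\mathbb{C}[\mathfrak{h}]\otimes\mathbb{C}[\mathfrak{h}_G]\twoheadrightarrow H^*_T(\mathcal{B})\hookrightarrow H^*_T(\mathcal{B}^T),
\end{equation*}
where the surjection is the Chern-class presentation of \cref{general equicoh} (applied to $\mathcal{P}=\mathcal{B}$, in which case $W_M$ is trivial), and the right-hand injection $\iota^*$ is the pullback along the closed embedding of the $T$-fixed locus. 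I will justify injectivity of $\iota^*$ using the fact that $H^*_T(\mathcal{B})$ is free over $H^*_T(\on{pt})=\mathbb{C}[\mathfrak{h}]$ (an immediate consequence of \cref{general equicoh}) together with the equivariant localization theorem recalled in \cref{basic eqcoh}. It will then follow that the set-theoretic support of $\on{Spec}(H^*_T(\mathcal{B}^T))$ in $\mathfrak{h}\oplus\mathfrak{h}_G$ coincides with that of $\on{Spec}(H^*_T(\mathcal{B}))$.

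Next, by \cref{general equicoh}(1),
\begin{equation*}
H^*_T(\mathcal{B})\simeq \mathbb{C}[\mathfrak{h}]\otimes_{\mathbb{C}[\mathfrak{h}_G]^{W_G}}\mathbb{C}[\mathfrak{h}_G],
\end{equation*}
so $\on{Spec}(H^*_T(\mathcal{B}))=\mathfrak{h}\times_{\mathfrak{h}_G/W_G}\mathfrak{h}_G$, where the left-hand map is the composition $\mathfrak{h}\hookrightarrow\mathfrak{h}_G\twoheadrightarrow\mathfrak{h}_G/W_G$. Writing $\iota\colon \mathfrak{h}\hookrightarrow\mathfrak{h}_G$ for the inclusion, a pair $(x,y)\in \mathfrak{h}\oplus\mathfrak{h}_G$ lies in this fiber product precisely when $y\in W_G\cdot\iota(x)$. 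Therefore the set-theoretic support of $\on{Spec}(H^*_T(\mathcal{B}^T))$ will decompose as
\begin{equation*}
\bigcup_{w\in W_G}\Gamma_w,\qquad \Gamma_w:=\{(x,w\iota(x))\,\vert\,x\in\mathfrak{h}\},
\end{equation*}
and each $\Gamma_w$ is a linear subspace of $\mathfrak{h}\oplus\mathfrak{h}_G$ isomorphic to $\mathfrak{h}$ via the first projection.

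Finally, I will check that $\Gamma_w=\Gamma_{w'}$ if and only if $w^{-1}w'\in W_M$, so that the distinct graphs are indexed by $W_G/W_M$. One direction is immediate: $W_M$ acts trivially on $\iota(\mathfrak{h})$ since $T$ is central in $M=C_G(T)$. For the converse, any $\sigma\in W_G$ fixing $\iota(\mathfrak{h})$ pointwise lifts to an element of $N_G(T_G)$ centralizing $T$; such an element must lie in $C_G(T)=M$, so $\sigma\in W_M$. This yields the required $|W_G|/|W_M|$ distinct irreducible components, each isomorphic to $\mathfrak{h}$. The only nontrivial input beyond \cref{general equicoh} and equivariant localization is this identification of the pointwise stabilizer of $\iota(\mathfrak{h})$ in $W_G$ with $W_M$; I expect no further obstacle.
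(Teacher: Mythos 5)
Your proof is correct, and it takes a recognizably different route from the paper's. The paper proves distinctness of the supports of the components by \emph{specializing} at a generic $\lambda\in\fh$: it computes that $H^*_T(\cB)_\lambda\simeq\CC[\fh_G]/I_{\fh_G}(\lambda)^{W_G}$ has set-theoretic support equal to the orbit $W_G.\lambda$, observes via \cref{smL}(c) that $\cB^T=\cB^\lambda$ has exactly $|W_G.\lambda|$ connected components, and concludes by a pigeonhole argument that each component sits over a single, distinct orbit point. You instead compute the support \emph{globally} by identifying it with that of $H^*_T(\cB)$ (both are faithful over the same quotient of $\CC[\fh]\otimes\CC[\fh_G]$), decomposing $\Spec(H^*_T(\cB))=\fh\times_{\fh_G/W_G}\fh_G$ as the union of graphs $\Gamma_w$, and then determining which graphs coincide by identifying the pointwise $W_G$-stabilizer of $\iota(\fh)$ with $W_M$. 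Both arguments ultimately rest on the same Lie-theoretic input — that the $W_G$-stabilizer of a generic point of $\fh$ is $W_M$ — but the paper inherits it implicitly through the $W/W_M$ parametrization of components in \cref{smL}, while you prove it directly by lifting to $N_G(T_G)\cap C_G(T)$. Your version is slightly more self-contained: it does not need to reconcile the component count of $\cB^\lambda$ from \cref{smL} with the orbit size $|W_G.\lambda|$ separately.

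One small remark on your last step: you write $C_G(T)=M$, whereas the paper defines $M$ as the \emph{identity component} of $C_G(T)$, phrasing that suggests disconnectedness is a live possibility. In fact, since $G$ is connected and $T$ is a torus, the centralizer $C_G(T)$ is automatically connected (Borel, \emph{Linear Algebraic Groups}, Cor.\ 11.12), so your identification is legitimate; but you should either cite this or, more robustly, replace the lifting argument by Chevalley's theorem that $\on{Stab}_{W_G}(\lambda)$ is generated by the reflections $s_\alpha$ with $\alpha(\lambda)=0$ — for generic $\lambda\in\fh$ those $\alpha$ are exactly the roots of $M$, giving $\on{Stab}_{W_G}(\lambda)=W_M$ without any connectedness assumption.
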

\begin{proof}
    From \cref{smL}, we know that each component of $\cB^T$ is a flag variety that is fixed by $T$, so the set-theoretic support of each component is isomorphic $\fh$ by the description in \cref{general equicoh}. To show that two distinct components have distinct supports, we specialize to a generic element $\lambda$ in $\fh$. We have $\cB^T= \cB^\lambda$ in the notation of Section 3.1.1.

    We choose a maximal Cartan subalgebra $\fh_G$ of $G$ so that $\fh\subset \fh_G$. View $\lambda$ as a point of $\fh_G$. Let $I_{\fh_G}(\lambda)$ be the maximal ideal that corresponds to $\lambda$ in $\fh_G$. Using the isomorphism (\ref{equiv_cohom_partial_flags_descr}), $H^*_T(\cB)_\lambda$ can be rewritten as $\CC_\lambda\otimes_{\CC[\fh_G]^{W_G}}\CC[\fh_G] /I_{\fh_G}(\lambda)^{W_G}= \CC[\fh_G]/I_{\fh_G}(\lambda)^{W_G}$. The set-theoretic support of $\Spec(\CC[\fh_G]/I_{\fh_G}(\lambda)^{W_G})$ as a subscheme of $\fh_G$ is precisely the orbit $W_G.\lambda\subset \fh_G$. Now the variety $\cB^\lambda$ has precisely $|W_G.\lambda|$ connected components. Therefore, each component is supported on a single point of the orbit $W_G.\lambda$.
\end{proof}



\subsection{Explicit descriptions for  $G$ of types $ABC$} 

We discuss the cases $G$ of classical type and make explicit some results from the previous section. Assume that $G=\on{SL}_n$ and $B$ is the subgroup of upperdiagonal matrices. Let $V$ be the standard representation of $G$. Then, there exists the natural identification $\mathcal{B} \iso \mathcal{F}$, where $\mathcal{F}$ is the variety of complete flags in $V$.  Let $T$ be a torus acting linearly on $V$. For a point of $\cF$, we use the notation $F^{\bullet}= F^{1}\subset\ldots\subset F^{n}$ where $\dim F^i= i$. On $\cF$, we have the tautological line bundles $\cF^{i}$ that parameterize the pair $(F^\bullet, F^i/F^{i-1})$. It is easy to see that $\mathcal{F}^i = G \times^B \mathbb{C}_{i}$, where $\mathbb{C}_{i}$ is the one-dimensional representation of $B \twoheadrightarrow T$, corresponding to the character $T \ni \on{diag}(t_1,\ldots,t_n) \mapsto t_i^{-1}$. Write $x_i$ for $c_1^{T}(\cF^i)$. 

The following proposition is a particular case of our discussion in Section \ref{descr_gen_rel_equiv_cohom_partial_via_chern} (applied to $G=\on{SL}_n$, $p=1$ and $V_1=\mathbb{C}^n$).
\begin{prop}\cite[Proposition 4.4.1]{Anderson2023EquivariantCI}
    We have 
    $$H^{*}_{T}(\cF)= \CC[\fh][x_1,...,x_n]/(e_i(x)- c^T_{i}(V))_{i=1,...,n}$$
    in which $e_i$ is the elementary symmetric polynomial of $n$ variables $x_1,...,x_n$. 
\end{prop}
From this description, it makes sense to consider $\on{Spec}(H^{*}_{T}(\cF))$ as a subscheme of $\fh
\\\times \CC^{n}$. The following corollary is about a simple case of the set-theoretic support of $\on{Spec}(H^{*}_{T}(\cF))$.
\begin{cor} \label{single point support}
    Assume that $T\simeq (\CC^\times)^k$ acts on $V$ with constant weight $w$ (so $T$ fixes $\cF$). Then the set-theoretic support of $\on{Spec}(H^{*}_{T}(\cF))$ is the image of the embedding
    $$\ft\rightarrow \ft\times \CC^n, \quad \lambda\mapsto (\lambda, w(\la),\ldots,w(\la))$$
    where $w(\la)$ is considered as a complex number.

    In particular, the specialization of $\on{Spec}(H^{*}_{T}(\cF))$ to any point $\la \in \ft$ is set-theoretically supported on a single point. 
\end{cor}

We have similar results for orthogonal and symplectic flag varieties. Consider a symplectic (resp. orthogonal) vector space $V$ of dimensions $m=2n$ (resp. $2n+1$). We assume that the action of $T$ preserves the corresponding bilinear form. Then $T$ acts on $\cF$, the variety of maximal isotropic flags of $V$. 

A maximal isotropic flag of $V$ is again given by $F^{\bullet}= F^{1}\subset\ldots\subset F^{n}$ where $\dim F^i= i$. Similarly, we have the line bundles $\cF^i$ and the first Chern classes $x_i$, $i=1,\ldots,m$. 
The following proposition is a particular case of our discussion in Section \ref{descr_gen_rel_equiv_cohom_partial_via_chern}. Namely, for $\mathfrak{g}=\mathfrak{sp}_{2n}$ or $\mathfrak{g}=\mathfrak{so}_{2n+1}$, we take $p=1$ and $V_1=V$.
Using the fact that 
\begin{equation*}
e_{2i}(\on{ch}V)=e_{2i}(x_1,\ldots,x_{n},-x_1,\ldots,-x_{n}))=
e_i(-x_1^2,\ldots,-x_n^2)
\end{equation*}
are the generators of $\mathbb{C}[\mathfrak{t}]^{W_G}$
we obtain the following proposition.
\begin{prop}\cite[Proposition 13.3.1]{Anderson2023EquivariantCI}
    For $\mathfrak{g}$ being equal to $\mathfrak{sp}_{2n}$ or $\mathfrak{so}_{2n+1}$ we have 
    $$
    H^{*}_{T}(\cF)= \CC[\ft][x_1,...,x_n]/(e_i(-x_{1}^{2},...,-x_{n}^{2})- c_{2i}(V))_{i=1,\ldots,n}.
    $$

\end{prop}
\begin{cor} \label{single point support 2}
    As a consequence, if $T$ fixes $V$, the set-theoretic support of $\on{Spec}(H^{*}_{T}(\cF))$ is $\ft\times \{0\}\subset \ft\times \CC^n$.
\end{cor}

Next, we state similar results for partial flag varieties of classical types. Here, we consider $\fg(V)$ of classical types where the dimension of $V$ is $n$, $2n$, or $2n+1$, respectively. Consider a composition of $\mu= (\mu_1,\ldots,\mu_l)$ of $n$ (so $\mu_i\geqslant 1$ and $\mu_1+\ldots+\mu_l= n$). Write $\cF_\mu$ for the partial (isotropic) flag variety of $V$ associated with $\mu$. In other words, $\cF_\mu$ parametrizes the chain of subspaces $F^\bullet= F^1\subset... \subset F^{l-1}\subset V$ where $d_i:= \dim F^i= \mu_1+...+\mu_i$. The variety $\cF_\mu$ is isomorphic to $G(V)/P_\mu$ where $P_\mu= GL({\mu_1})\times\ldots\times GL({\mu_{l-1}})\times G'$ for $G'= GL(\mu_l)$ or $G'$ is of type $B_{\mu_l}$, $C_{\mu_l}$, or $D_{\mu_l}$. Recall that we write $\cF$ for the full flag variety, then we have a natural map $\pi_\mu: \cF\rightarrow \cF_\mu$. Write $M$ and $W_{M}$ for the Levi and the Weyl group of $P_\mu$, respectively.

\begin{prop}\cite[Corollary 4.5.4]{Anderson2023EquivariantCI} \label{cohomology of partial flag}
    The natural pullback $\pi_{\mu}^{*}\colon H^{*}_{T}(\cF_\mu)\rightarrow H^{*}_{T}(\cF)$ is an injection of $\CC[\fh]$-modules. The image of $\pi_{\mu}^{*}$ is the $W_{M}$-invariant part of $H^{*}_{T}(\cF)$. In other words, we have the following explicit description.
    \begin{enumerate}
        \item Assume $\fg$ is of type A. Over $\CC[\fh]$, the image of this map is generated by $e_i(x_{{d_j}+1},\ldots, x_{d_{j+1}})$ for $0\leqslant j\leqslant l-1$.
        \item Assume $\fg$ is of type B, C. Over $\CC[\fh]$, the image of this map is generated by $e_i(x_{{d_j}+1},\ldots, x_{d_{j+1}})$ for $0\leqslant j\leqslant l-1$ and $e_i(x^{2}_{{d_{l-1}}+1},\ldots, x^{2}_{d_{l}})$.
    \end{enumerate}

\end{prop}
Similar to the case of full flag variety, the elements $e_i(x_{{d_j}+1},..., x_{d_{j+1}})$ come from the corresponding tautological vector bundles $(F^\bullet, F^{j+1}/F^{j})$.

\subsection{Fixed points of Slodowy varieties} For a nilpotent element $\chi\in \fg^*$, we have the corresponding Springer fiber $\cB_\chi$ and the Slodowy slice $S(\chi)$. Consider a torus $T\subset G$ so that $T$ centralizes the $\fs\fl_2$ triple that we use to construct $S(\chi)$. Then $T$ naturally acts on $S(\chi)$ and $\cB_\chi$. Write $\fl$ for the centralizer of $T$ in $\fg$. We have $\chi\in \fl$, and it makes sense to consider the Springer fiber $\cB_\chi(L)$. Recall that we write $\cB_L$ for the flag variety of $L$.

\begin{prop} \label{fixed point Springer fiber}
    The connected components of $(\cB_\chi)^T$ are isomorphic to $\cB_\chi(L)$. Each connected component of $\cB^T$ contains precisely one connected component of $(\cB_\chi)^T$. The inclusion $(\cB_\chi)^T\hookrightarrow \cB^T$ can be viewed as $|W_G/W_L|$ copies of the natural inclusion $\cB_\chi(L)\hookrightarrow\cB_L$.
\end{prop}
\begin{proof}
    The variety $(\cB_\chi)^T$ parametrizes the Borel subalgebras $\fb\subset \fg$ that contain $\chi$. Because $\chi\in \fl$, we have $\chi\in \fb$ if and only if $\chi \in \mathfrak{b} \cap \mathfrak{l}$. Therefore, the intersection of $(\cB_\chi)^T$ and each connected component of $\cB^T$ is isomorphic to $\cB_\chi(L)$ via the map $\fb\mapsto \fb\cap \fl$. 
    
    The Springer fiber $\cB_\chi(L)$ is connected (see \cite{Spaltenstein}). Therefore, we have a bijection between the connected components of $(\cB_\chi)^T$ and the connected $\cB^T$. The latter is parameterized by $W_G/W_L$ by \cref{smL}. 
\end{proof}
Next, we further assume that $\chi$ is regular in $\fl$. In this case, we have $S(\chi)^T= (\cB_\chi)^T$. 
\begin{cor} \label{easy fixed point Springer fiber}
    Each connected component of $(\cB_\chi)^T$ (or $S(\chi)^T$) is a single point, and they are naturally labeled by $[w]\in W_G/W_L$.
\end{cor}

A more general result for parabolic Slodowy variety $S(\chi, \fp)$ is as follows.
\begin{prop} \label{prop fixed point parabolic Slodowy}
    $S(\chi, \fp)^T$ is a discrete set of points, these points are labeled by the free $W_L$-$W_M$ double cosets of $W_G$.
\end{prop}
\begin{proof}
    See \cref{app_torus_fixed}.
\end{proof}

\section{Conical symplectic singularities and symplectic duality}\label{sec_sympldual}
 

\subsection{Symplectic duality: basic properties}\label{sect_sympl_duality_basic} We mostly follow \cite{kmp} in this section. 
 Let $X$ be a conical symplectic singularity, and assume that the Poisson bracket on $X$ has degree two. Let $\mathbb{C}[X]^2 \subset \mathbb{C}[X]$ be the degree two component, this is a Lie algebra w.r.t. $\{\,,\,\}$. Let $Z_X$ and $T_X$ be as in \cref{conical sympl sing}. 
    
    We assume the natural identification of Lie algebras $\on{Lie}Z_X \simeq (\mathbb{C}[X]^2,\{\,,\,\})$. Note that \cite[Proposition 2.14]{Losev4} implies the identification if $\CC[X]^2$ is a reductive Lie algebra. Let $\mathfrak{t}^*_{X,\mathbb{Z}}=\on{Hom}(T_X,\mathbb{C}^\times)$. The action of $T_X \times \mathbb{C}^\times$ on $X$ induces the bigrading $\mathbb{C}[X]=\bigoplus_{\mu \in \mathfrak{t}^*_{X,\mathbb{Z}},\, k \in \mathbb{Z}_{\geqslant 0}}\mathbb{C}[X]^k_\mu$. Note that $\mathbb{C}[X]^2_0$ identifies with $\mathfrak{t}_X$.

    Let $\widetilde{X}_{\mathfrak{h}_X}$ be the deformation of $\widetilde{X}$ over $\mathfrak{h}_X=H^2(\widetilde{X}^{\mathrm{reg}},\mathbb{C})$ defined in \cref{conical sympl sing}. Let $\mathcal{D}_{\hbar, \mathfrak{h}_X}$ be the canonical graded formal quantization of $\widetilde{X}_{\mathfrak{h}_X}$, $\mathcal{A}_{\hbar, \mathfrak{h}_X}$ be the algebra of global sections, and let $\mathcal{A}_{\hbar, \mathfrak{h}_X}^2$ be the degree two component. There is an exact sequence of Lie algebras:
    \begin{equation*}
    0 \rightarrow \mathfrak{h}_X \oplus \mathbb{C}\hbar \rightarrow \mathcal{A}_{\hbar, \mathfrak{h}_X}^2 \rightarrow \mathbb{C}[X]^2 \rightarrow 0.
    \end{equation*}
Passing to the zero weight space w.r.t. $T_X$ we obtain the exact sequence:
    \begin{equation}\label{exact_seq_alg}
    0 \rightarrow \mathfrak{h}_X \oplus \mathbb{C}\hbar \rightarrow \mathcal{A}_{\hbar, 
    \mathfrak{h}_X,0}^2 \rightarrow \mathfrak{t}_X \rightarrow 0.
    \end{equation}

Let us describe another exact sequence that one can associate to a symplectic singularity $X$. Let us assume that the $\mathbb{Q}$-factorial terminalization $\widetilde{X}$ is smooth (so we will denote it by $Y$). The odd cohomology of $Y$ vanishes (\cite[Proposition 2.5]{BPW}) so we have an exact sequence: 
\begin{equation}\label{exact_seq_cohom}
0 \rightarrow \mathfrak{t}_X \oplus \mathbb{C}\hbar \rightarrow H^2_{T_X \times \mathbb{C}^\times}(Y,\CC) \rightarrow \mathfrak{h}_X \ra 0.
\end{equation}
The first map is a composition of the pull back homomorphism $H^2_{T_X \times \mathbb{C}^\times}(\on{pt},\CC)\to  H^2_{T_X \times \mathbb{C}^\times}(Y,\CC)$ and the identification $\mathfrak{t}_X \oplus \mathbb{C}\hbar\simeq H^2_{T_X \times \mathbb{C}^\times}(\on{pt},\CC)$. The second map is the restriction homomorphism $ H^2_{T_X \times \mathbb{C}^\times}(Y,\CC)\to H^2(Y,\CC)=\fh_X$.

The basic idea of symplectic duality is that often conical symplectic singularities come in dual pairs $(X, X^{\vee})$. We refer the reader to \cite{BPWII} for details on symplectic duality. Let us recall some basic properties of it. 
One prediction of symplectic duality (formulated in \cite[Section 5.1]{kmp}) is that for the dual pairs, exact sequences (\ref{exact_seq_alg}), (\ref{exact_seq_cohom}) identify with each other:  there exists a natural identification $\mathcal{A}^2_{\hbar, \mathfrak{h}_X,0} \simeq H^2_{T_{X^\vee} \times \mathbb{C}^\times}(Y^\vee,\CC)$ that induces the identifications:
\begin{equation*}
\xymatrix{
0 \ar[r] & \mathfrak{h}_X \oplus \mathbb{C}\hbar \ar[r] \ar[d]_{\simeq} & \mathcal{A}^2_{\hbar, \mathfrak{h}_X,0} \ar[r] \ar[d]_{\simeq} & \mathfrak{t}_X \ar[r]  \ar[d]_{\simeq} & 0 \\
0 \ar[r] & \mathfrak{t}_{X^\vee} \oplus \mathbb{C}\hbar \ar[r] & H^2_{T_{X^\vee} \times \mathbb{C}^\times}(Y^\vee,\CC) \ar[r] & \mathfrak{h}_{X^\vee} \ar[r] & 0.
}
\end{equation*}
In particular, we should have canonical identifications
\begin{equation}\label{ident_vect_sp_sympl_dual}
\mathfrak{h}_X \simeq \mathfrak{t}_{X^\vee},~ \mathfrak{t}_X \simeq \mathfrak{h}_{X^\vee},
\end{equation}
as was conjectured in \cite{BPWII}.

Recall now that in Section \ref{subsub_chambers} we described  stratifications of $\mathfrak{h}_X$, $\mathfrak{t}_X$ into chambers separated by walls.
Another expectation of the symplectic duality is that the identifications (\ref{ident_vect_sp_sympl_dual}) are compatible with the stratifications described in Section \ref{subsub_chambers}. Namely, they induce the isomorphisms 
\begin{equation}\label{sing_ident_vect_sp_sympl_dual}
\mathfrak{h}^{\mathrm{sing}}_X \simeq \mathfrak{t}^{\mathrm{sing}}_{X^\vee},~ \mathfrak{t}^{\mathrm{sing}}_X \simeq \mathfrak{h}^{\mathrm{sing}}_{X^\vee}.
\end{equation}

In particular, a choice of a generic cocharacter $\nu\colon \mathbb{C}^\times \rightarrow T_{X}$ determines a chamber in $\mathfrak{h}_{X,\mathbb{R}} \setminus \mathfrak{t}_{X,\mathbb{R}}^{\mathrm{sing}}$ that by Proposition \ref{namikawa's result_descr_of_h_sing}  has the form $w(\on{Amp}(\widetilde{X}^{\vee}))$ for some $\mathbb{Q}$-factorial terminalization $\widetilde{X}^\vee \rightarrow X^\vee$ and $w \in W_{X^\vee}$.

In this way, the choice of $\nu\colon \mathbb{C}^\times \rightarrow T_{X}$ determines a choice of a $\mathbb{Q}$-factorial terminalization $\widetilde{X}^\vee$ on the dual side as well as the element $w \in W_{X^\vee}$. We will say that $\nu$ {\emph{corresponds}} to $\widetilde{X}^\vee$ and will use the identification $H^2(\widetilde{X}^{\vee,\mathrm{reg}},\mathbb{C}) \iso \mathfrak{h}_{Y^\vee}$ given by $\lambda \mapsto w(\lambda)$ (i.e., $\nu \in \on{Amp}(\widetilde{X}^\vee)$ w.r.t. the identification above).


Assume now that both $X$ and $X^\vee$ admit symplectic resolutions.

\begin{rmk}\label{symplectic resolution iff one fixed point}
It is expected that $X$ has a symplectic resolution if and only if $(X^\vee)^{T_{X^\vee}}$ consists of {\emph{one}} point.    
\end{rmk}

The symplectic duality predicts the existence of a canonical identification of finite sets:
\begin{equation}\label{canonical:)_ident_fixed_points}
Y^{T_X} \iso (Y^\vee)^{T_{X^\vee}},~p \mapsto p^\vee.
\end{equation}

\subsection{Hikita-Nakajima conjectures} \label{section Hikita general}
Let $\nu\colon \CC^\times \rightarrow T_X$ be a generic cocharacter meaning that the corresponding element of $\mathfrak{t}_X$ is {\emph{not}} contained in $\mathfrak{t}_X^{\mathrm{sing}}$. Let $Y^\vee \rightarrow X^\vee$ be the symplectic resolution that corresponds to $\nu$ in the sense of Section \ref{sect_sympl_duality_basic}.
We will always assume that $\nu$ is ``deep enough'' in the ample chamber $\on{Amp}(Y^\vee)$. Note that by \cite[Corollary B.1]{BPW}, this, in particular, implies that the abelian localization holds for $(\nu,Y^\vee)$.

The following conjectures are natural to make (part (i) goes back to Hikita \cite{Hikita}, and part (ii) goes back to Nakajima \cite[Section 5.6]{Kamnitzer_symplectic}). We will call (i) {\emph{Hikita conjecture}} and (ii) will be called {\emph{equivariant Hikita-Nakajima conjecture}}. Recall that $X^{\nu(\CC^\times)}$ is the {\em{schematic}} fixed points of $X$ (see Section \ref{sec_schem_fixed}).

  \begin{conj}\label{naiveHikita}
        \begin{itemize}
            \item[(i)] There is a graded algebra isomorphism $\CC[X^{\nu(\CC^\times)}]\simeq H^*(Y^\vee)$;
            \item[(ii)] There is a graded isomorphism of $\CC[\fh_X, \hbar]$-algebras $\cC_\nu(\cA_{\hbar, \fh_X})\simeq H^*_{T_{X^\vee}\times \CC^\times}(Y^\vee)$, where $\cC_\nu(\cA_{\hbar, \fh_X})$ is the Cartan subquotient with respect to a generic one-parameter subgroup $\nu\colon \CC^\times \to T_{X}$ (corresponding to the resolution $Y^\vee$).
        \end{itemize}
    \end{conj}
    We remark that (ii) implies (i) by specifying to the point $0\in \fh_X\oplus \CC$. Moreover, setting $\hbar=0$ in (i), we obtain the identification 
    \begin{equation}\label{classical_equiv_hikita}
    \mathbb{C}[X_{\mathfrak{h}_X}^{\nu(\mathbb{C}^\times)}] \simeq H^*_{T_{X^\vee}}(Y^\vee)
    \end{equation}
    that we will call {\emph{classical equivariant Hikita-Nakajima conjecture}}.
    In what follows, we show that the conjecture as stated is not true and propose a refined version that we can prove in some cases and which implies Conjecture \ref{naiveHikita} in some cases. 
    \begin{rmk}
        Note that $H^*_{T_{X^\vee} \times \mathbb{C}^\times}(\widetilde{X}^\vee)$ is a free $H^*_{T_{X^\vee} \times \mathbb{C}^\times}(\on{pt})$-module. In particular, $H^*_{T_{X^\vee} \times \mathbb{C}^\times}(\widetilde{X}^\vee)$ is flat over $H^*_{T_{X^\vee} \times \mathbb{C}^\times}(\on{pt})$. However, that is not true for the left-hand side of (ii), a counterexample is described in \cref{counterflat}. This is the first indication that the Hikita conjecture might be false. 
    \end{rmk}

    \begin{rmk}\label{rem_when_expect_hn_coulomb}
    It is still expected that Conjecture \ref{naiveHikita} holds as stated in the following case (this is the setting in which Nakajima formulated it in \cite[Section 1(viii)]{nakajima_coulombI}). Let $(G_{\mathrm{gauge}},N)$ be a pair of a reductive (gauge) group $G_{\mathrm{gauge}}$ and a finite-dimensional $G_{\mathrm{gauge}}$-module ${\bf{N}}$. We can associate to $(G_{\mathrm{gauge}},N)$ a pair of varieties 
    \begin{equation*}
    \mathcal{M}_C=\mathcal{M}_{C}(G_{\mathrm{gauge}},{\bf{N}}),~\mathcal{M}_H=\mathcal{M}_{H}(G_{\mathrm{gauge}},{\bf{N}})
    \end{equation*}
    called Coulomb and Higgs branches of the theory defined by $(G_{\mathrm{gauge}},{\bf{N}})$ (for the definition of $\mathcal{M}_C$ see \cite[Section 3]{BFNII}, $\mathcal{M}_H$ is the Hamiltonian reduction of $T^*{\bf{N}}$ by the action of $G_{\mathrm{gauge}}$). They are expected to be symplectic dual. We note that $\cM_C$ is always a symplectic singularity (see \cite{bellamy} and \cite{weekes_sympl_sing}). Assume that $\cM_H$ is a symplectic singularity.
    Let $\mu\colon T^*{\bf{N}} \rightarrow \mathfrak{g}_{\mathrm{gauge}}^*$ be a moment map for $G_{\mathrm{gauge}} \curvearrowright T^*{\bf{N}}$ and assume that for a generic character $\xi \in (\mathfrak{g}_{\mathrm{gauge}}/[\mathfrak{g}_{\mathrm{gauge}},\mathfrak{g}_{\mathrm{gauge}}])^* \subset \mathfrak{g}^*_{\mathrm{gauge}}$, the action of $G_{\mathrm{gauge}}$ on $\mu^{-1}(\xi)$ is free, and that $\mu^{-1}(\xi)$ is smooth. Then  Conjecture \ref{naiveHikita} should hold for $X=\mathcal{M}_{C}$, $X^\vee=\mathcal{M}_{H}$. 
    \end{rmk}

  \section{Refined BVLS duality and symplectic duality for parabolic Slodowy varieties}\label{sect_refined_BVLS}
    Recall that $\fg$ is a simple Lie algebra, and $G$ is the corresponding adjoint group. Let $\fg^\vee$, $G^\vee$ be the Langlands dual Lie algebra and group, respectively. Pick the Cartan subalgebras $\fh\subset \fg$ and $\fh^\vee\subset \fg^\vee$. We start with recalling a standard Barbasch-Vogan-Lusztig-Spaltenstein (BVLS) duality. 

    Let $\OO^\vee\subset (\fg^\vee)^*$ be a nilpotent $G^\vee$-orbit, let $\chi^\vee\in \OO^\vee$, and $e^\vee\in \fg^\vee$ be such that $\chi^\vee=(e^\vee, \bullet)$. Pick an $\fs\fl_2$-triple $(e^\vee, f^\vee, h^\vee)$ such that $h^\vee\in \fh^\vee\simeq \fh^*$. Recall that the maximal ideals in $\mathcal{U}(\fg)$ are in bijection with the central characters, and $Z(\mathcal{U}(\fg))\simeq S(\fh)^W\simeq \CC[\fh^*]^{W}$. Let $I(\OO^\vee)\subset \mathcal{U}(\fg)$ be the maximal ideal with the central character $\frac{1}{2}h^\vee$. The associated graded ideal $\gr I(\OO^\vee)\subset S(\fg)$ defines a closed subset $V\subset \fg^*$, and there is a unique open $G$-orbit $\OO\subset V$. Following \cite{BarbaschVogan1985} we define the BVLS duality map $D: \cN^\vee/G^\vee \to \cN/G$ by setting $D(\OO^\vee)=\OO$. The orbits in the image of the map $D$ are called \emph{special}. A key property of the map $D$ is that it sends the Bala-Carter saturation to the Lusztig-Spaltenstein induction (see Sections \ref{subsec_saturation}, \ref{sect_nilp_orbits_covers_and_bir_ind} for the definitions). Namely, for a Levi $L^\vee\subset G^\vee$ and a nilpotent orbit $\OO_{L^\vee}^\vee\subset (\fl^\vee)^*$ we have:
    \begin{equation*}
    D(\Sat_{L^\vee}^{G^\vee} \OO_{L^\vee}^\vee)=\Ind_L^G D(\OO_{L^\vee}^\vee).
    \end{equation*}

    We proceed with the refined version of the map $D$, namely a map $\widetilde{D}$ from the set of nilpotent orbits in $(\fg^\vee)^*$ to the set of covers of nilpotent orbits in $\fg^*$. Let $(L^\vee, \OO_{L^\vee}^\vee)$ be the Bala-Carter pair attached to the orbit $\OO^\vee$. The orbit $D(\OO^\vee_{L^\vee})$ is a special orbit $\OO_L$ in $\fl^*$, and let $\widetilde{\OO}_L$ be the universal $L$-equivariant cover of $\OO_L$. We define $\widetilde{D}(\OO^\vee)=\Bind_L^G \widetilde{\OO}_L$. The following are the key properties of this map.

    \begin{prop} \label{about cover} \cite[Proposition 9.2.1]{LMBM} 
        \begin{itemize}
            \item[(i)] $\widetilde{D}(\OO^\vee)$ is a $G$-equivariant cover of $D(\OO^\vee)$.
            \item[(ii)] The map $\widetilde{D}$ is injective.
            \item[(iii)] Let $\cA$ be the canonical filtered quantization of $\CC[\widetilde{D}(\OO^\vee)]$. Note that it is endowed with a quantum comoment map $\Phi\colon \mathcal{U}(\fg)\to \cA$. Then $\Ker(\Phi) = I(\OO^\vee)$.
            \item[(iv)] For $\OO^\vee$ distinguished, the cover $\widetilde{D}(\OO^\vee)$ is birationally rigid.
           \item[(v)] \cite[Remark 1.3]{MBMYu} The Galois group of the cover $\widetilde{D}(\OO^\vee)\to D(\OO^\vee)$ is isomorphic to the Lusztig quotient $\Bar{A}(\OO^\vee)$.
        \end{itemize}
    \end{prop}


    Recall that $S(\chi^\vee)$ is the Slodowy slice to $\OO^\vee$ at the point $\chi^\vee$. Let 
    \begin{equation*}
    X^\vee=S(\chi^\vee)\cap \cN^\vee,\, X=\Spec(\CC[\widetilde{D}(\OO^\vee)]).
    \end{equation*}
    For the orbit $\OO^\vee$ let $(L^\vee, \OO_{L^\vee}^\vee)$ be the Bala-Carter pair (see Proposition \ref{BC saturation}). 
    \begin{prop}\label{prop_spaces_for_orbits_slodowy}
        The following are true:
        \begin{itemize}
            \item(i) For most orbits $\OO^\vee$ we have $\fh_{X^\vee}\simeq \fh^\vee$;
            \item(ii) For any orbit $\OO^\vee$ we have $\fz_{X^\vee}\simeq \fz_{G^\vee}(e^\vee, f^\vee, h^\vee)$, and therefore $\fz_{X^\vee}\simeq \fz(\fl^\vee)$, $T_{X^\vee}=Z ({L^\vee})^\circ$ (connected component of $1$ in the center of $L^\vee$);
            \item(iii) For most orbits $\OO^\vee$ we have $\fz_{X}\simeq \fg$, and therefore $\fz_X\simeq \fh$;
            \item(iv) For any orbit $\OO^\vee$ we have $\fh_{X}\simeq \mathfrak{X}(\mathfrak{l})$.
        \end{itemize}
    \end{prop}
    \begin{proof}
         For (i), we refer to \cref{universal deformation of Slodowy slice}. (ii) is well-known.
         For (iii), we refer to \cref{automotphisms of partial flag}. (iv) is shown in \cite[Proposition 7.2.2(i)]{LMBM}.
    \end{proof}
    A remark about ``most" orbits is in order. For (i), we refer to \cite[Theorem 1.3]{Lehn_2011}. The dual phenomena for (iii) is the one of ``shared orbits" considered in \cite{BrylinskiKostant1994}. The full list of shared orbits and shared covers is known, thanks to the results of \cite{FJLS2023}. In particular, one can check that $\widetilde{D}(\OO^\vee)$ admits a shared cover for a group of a higher rank if and only if $\OO^\vee$ is one of the exceptions for (i). 
    
    We expect that $S(\chi^\vee)\cap \cN^\vee$ and $\Spec(\CC[\widetilde{D}(\OO^\vee)])$ form a symplectic dual pair. In this paper we focus on a special case of the duality above. From now on we impose an additional assumption that the nilpotent $\chi^\vee \in \mathcal{N}^\vee$ is regular for some Levi $\mathfrak{l}^\vee \subset \mathfrak{g}^\vee$. Equivalently, this means that the corresponding orbit $\mathbb{O}^\vee = G^\vee \chi^\vee$ is the saturation of the regular nilpotent orbit $\mathbb{O}^\vee_{L^{\vee}} \subset (\mathfrak{l}^{\vee})^*$. Note that $D(\mathbb{O}^\vee_{L^\vee})=\{0\}$ is the zero orbit. We conclude that $\widetilde{D}(\mathbb{O}^\vee)=\on{Bind}_{L}^{G}\{0\}$ can be described as follows. Recall that $P \subset G$ is a parabolic with Levi $L$. Consider the natural morphism $\mu\colon T^*\mathcal{P} \rightarrow \mathcal{N}$, the image of this morphism is the closure $\overline{\mathbb{O}}$ of some orbit $\mathbb{O}$ (recall that $\mathbb{O}=\on{Ind}_L^G \{0\}$). Now, $\on{Bind}_{L}^{G}\{0\}=\widetilde{\mathbb{O}}$ is nothing else but $\mu^{-1}(\mathbb{O})$.  We see that in this case $X=\Spec(\CC[\widetilde{D}(\OO^\vee)])$ identifies with $\on{Spec}\CC[T^*\mathcal{P}]$ and so $Y=T^*\mathcal{P}$. We conclude that 
\begin{equation*}
Y^\vee=\widetilde{S}(\chi^\vee),\quad Y=T^*\mathcal{P}
\end{equation*}
are symplectic dual. Note that both of them are particular cases of {\emph{parabolic}} Slodowy varieties, namely we see that $\widetilde{S}(\mathfrak{p}^\vee,\mathfrak{b}^\vee)=\widetilde{S}(\chi^\vee)$ is dual to $T^*\mathcal{P}=\widetilde{S}(\mathfrak{b},\mathfrak{p})$.

This suggests a candidate for a symplectic dual to $\widetilde{S}(\mathfrak{p}^\vee,\mathfrak{q}^\vee)$. It is natural to conjecture that 
\begin{equation*}
Y^\vee=\widetilde{S}(\mathfrak{p}^\vee,\mathfrak{q}^\vee),\, Y=\widetilde{S}(\mathfrak{q},\mathfrak{p})
\end{equation*}
are symplectic dual.

\begin{rmk}\label{disconnected together}
    Recall from \cref{disconnected} that the varieties $Y^\vee$, $Y$ may be disconnected. In fact, the two examples presented are exactly of form $\widetilde{S}(p^\vee, q^\vee)$ and $\widetilde{S}(\fq, \fp)$ respectively for $\fp$ and $\fq$ being parabolic subalgebras with Levi factors $\fl=\fg\fl(1)^2\times \fs\fp(2) \subset \fs\fp(6)$ and $\fm=\fg\fl(3)\subset \fs\fp(6)$ respectively. We expect that $\widetilde{S}(p^\vee, q^\vee)$ and $\widetilde{S}(\fq, \fp)$ always have the same number of connected components.
\end{rmk}

\begin{warning}
Strictly speaking,  not $Y^\vee$ and $Y$ are  dual but $X^\vee=\on{Spec}\CC[\widetilde{S}(\mathfrak{p}^\vee,\mathfrak{q}^\vee)]$ should be dual to $X=\on{Spec}\CC[\widetilde{S}(\mathfrak{q},\mathfrak{p})]$.    There is a freedom in the choice of a resolution. We will later see that the natural choices are 
\begin{equation*}
Y^\vee = \widetilde{S}(\mathfrak{p}^\vee_-,\mathfrak{q}^{\vee}_-),~Y=\widetilde{S}(\mathfrak{q},\mathfrak{p}).
\end{equation*}
\end{warning}

\begin{rmk}
If $\mathfrak{g}=\mathfrak{sl}(n)$, then the symplectic duality between $\widetilde{S}(\mathfrak{p}^\vee,\mathfrak{q}^\vee),\, \widetilde{S}(\mathfrak{q},\mathfrak{p})$ is well-studied, see for example \cite[Section 10.2.2]{BPWII} (note that in this case varieties above are type $A$ quiver varieties, see \cite{maffei}). On the other hand, not much is known outside of type $A$. For example, we do not know how to describe the space $\mathfrak{h}_{X^\vee}=H^2(\widetilde{S}(\mathfrak{p}^\vee,\mathfrak{q}^\vee),\CC)$ in general. Namely, recall that for $\mathfrak{p}^\vee=\mathfrak{b}^\vee$, we have $\widetilde{S}(\mathfrak{b}^\vee,\mathfrak{q}^\vee)=T^*(G^\vee/Q^\vee)$, so $H^2(\widetilde{S}(\mathfrak{p}^\vee,\mathfrak{q}^\vee),\CC)=\mathfrak{X}(\mathfrak{m}^\vee)$. For $\mathfrak{q}^\vee=\mathfrak{b}^\vee$, in most cases we have $\mathfrak{h}_{X^\vee}=\mathfrak{h}^\vee$ by Proposition \ref{prop_spaces_for_orbits_slodowy} (ii) so in general one might guess that in some cases the natural restriction homomorphism $H^2(T^*(G^\vee/Q^\vee),\CC) \rightarrow H^2(\widetilde{S}(\mathfrak{p}^\vee,\mathfrak{q}^\vee),\CC)$ is surjective. A case by case analysis similar to \cite{LehnNamikawaSorger} can be done, but we do not know any general result on when this is the case. 

For example, if $\mathfrak{m}^\vee\simeq \fs\fp(2)\times \fg\fl(1)\times \fg\fl(1)\subset \fs\fp(6)$ and $\fl^\vee\simeq \fg\fl(3)$, the map $H^2(T^*(G^\vee/Q^\vee),\CC) \rightarrow H^2(\widetilde{S}(\mathfrak{p}^\vee,\mathfrak{q}^\vee),\CC)$ is surjective, see \cref{appendixA}. However, if we consider $\fm^\vee\simeq \fg\fl(7)\times \fg\fl(1)\times \fs\fo(5)\subset \fs\fo(21)$ and $\fl^\vee\simeq \fg\fl(3)^{\times 3}\times \fs\fo(3)\subset \fs\fo(21)$, it is easy to show that the map $H^2(T^*(G^\vee/Q^\vee),\CC) \rightarrow H^2(\widetilde{S}(\mathfrak{p}^\vee,\mathfrak{q}^\vee),\CC)$ cannot be surjective by computing dimensions of both sides.

Finally, we note that the description of the Lie algebra $\fz_X$ seems to be even more complicated. We are not aware of any general results in this direction.
\end{rmk}

The goal of this paper is to investigate the Hikita-Nakajima conjecture for the pair $\widetilde{S}(\mathfrak{p}^\vee_-,\mathfrak{q}^\vee_-)$, $\widetilde{S}(\mathfrak{q},\mathfrak{p})$. We will see that it is {\emph{not}} correct as stated (outside of type $A$) but has a natural modification that we will prove. In type $A$, our modification will imply the actual Hikita-Nakajima conjecture (see Appendix \ref{app_HN_type_A}).

In general, it would be very interesting to study the symplectic duality for the pair $\widetilde{S}(\mathfrak{p}^\vee_-,\mathfrak{q}^\vee_-)$, $\widetilde{S}(\mathfrak{q},\mathfrak{p})$. Note that although many things are not known, these varieties are very explicit and closely related to the ``classical'' Lie theory. For example, by \cite{w}, categories $\mathcal{O}$ over the quantizations of $\widetilde{S}(\mathfrak{q},\mathfrak{p})$ are equivalent to {\emph{parabolic singular}} categories $\mathcal{O}$. One of the predictions of symplectic duality is that the categories $\mathcal{O}$ for dual varieties are {\emph{Koszul dual}} (see \cite[Section 10.1]{BPWII}). For the pair as above, this statement indeed follows from \cite{bak_koszul} and \cite{w}. 

Let us also emphasize that there is an interesting combinatorics associated with the varieties $\widetilde{S}(\mathfrak{q},\mathfrak{p})$. See, for example, the description of the torus fixed points of $\widetilde{S}(\mathfrak{q},\mathfrak{p})$ given in Appendix \ref{app_torus_fixed}. In type $A$, it gives an alternative (more ``Lie theoretic'') way of thinking about the fixed points of the type $A$ Nakajima quiver varieties (compare with \cite[Section 3.4]{dinkins_elliptic_of_type_A}). The explicit relation between two descriptions of fixed points is a part of the work in progress \cite{losev_krylov_wc_for_gieseker}. We discuss the expected combinatorial trace of the Hikita-Nakajima conjecture in \cref{combinatorics}.

\section{Hikita conjecture for nilpotent orbits: additional structures and counterexamples}\label{sect_counter}

In this section, we recall additional structures that we have on both sides of the classical Hikita-Nakajima conjecture for $X^\vee=S(\chi^\vee) \cap \mathcal{N}^\vee$ and $X=\Spec(\CC[\widetilde{D}(\OO^\vee)])$. While they share many similarities, we observe the differences that will subsequently provide counterexamples to the original Hikita conjecture and lead to its modification that we propose in \cref{equivariant conjecture section}. To simplify the notations, we denote by $H^*_{?}(\bullet)$, $H_*^{?}(\bullet)$ the equivariant cohomology and Borel-Moore homology with coefficients in $\CC$ respectively.

\subsection{Cohomological side}
Recall that  $T_{X^\vee}=T_{\chi^\vee}\simeq T_{e^\vee} \subset T^\vee$ is a maximal torus in $Z_{e^\vee}$, the pointwise centralizer of $\{e^\vee, h^\vee, f^{\vee}\}$ in $G^\vee$. Torus $T_{\chi^\vee}$ acts naturally on $\widetilde{S}(\chi^\vee)$ and $\cB_{\chi^\vee}$. Write $L^\vee$ for the connected component of the centralizer of $T_{\chi^\vee}$ in $G^\vee$. Let $\mathfrak{h}_{\chi^\vee}=\on{Lie}T_{\chi^\vee}$. At the cohomological side of the Hikita-Nakajima conjecture we deal with the $\CC[\mathfrak{h}_{\chi^\vee}]$-algebra $H^*_{T_{\chi^\vee}}(\widetilde{S}(\chi^\vee))$. The $\CC[\mathfrak{h}_{\chi^\vee}]$-module $H^*_{T_{e^\vee}}(\widetilde{S}(e^\vee))$ has the natural structure of a module over the Weyl group $W$ of $G^\vee$ (via the standard ``Springer'' action). Moreover, this action of $W$ can be extended to an action of $\widetilde{W}= S^\bullet\mathfrak{h}^{\vee} \# W$ as follows.

Consider the identification $H^2_{T_{\chi^\vee}}(T^*\mathcal{B}^\vee,\CC) \iso \mathfrak{h} \oplus \mathfrak{h}_{\chi^\vee}$, then the element $\xi \in \mathfrak{h}$ acts on $H^*_{T_{\chi^\vee}}(\widetilde{S}(\chi^\vee))$ via the multiplication by the image of $\xi \in H^2_{T_{\chi^\vee}}(T^*\mathcal{B}^\vee,\CC)$ under the restriction homomorphism $H^2_{T_{\chi^\vee}}(T^*\mathcal{B}^\vee) \rightarrow H^2_{T_{\chi^\vee}}(\widetilde{S}(\chi^\vee))$.

\begin{rmk}
More generally, one can consider the convolution algebra $H^{G^\vee \times \CC^\times}_{*}(\widetilde{\mathcal{N}}^\vee \times_{\mathcal{N}^\vee} \widetilde{\mathcal{N}}^\vee)$ that is isomorphic to the {\emph{degenerate affine Hecke algebra}} $H$. Recall that $H$ is the graded algebra over $\CC[\hbar]$, that specializes  to $\widetilde{W}$ for $\hbar=0$ (in particular, we have the identification $H^{G^\vee}_{*}(\widetilde{\mathcal{N}}^\vee \times_{\mathcal{N}^\vee} \widetilde{\mathcal{N}}^\vee) \simeq \widetilde{W}$). The convolution algebra  $H^{G^\vee \times \CC^\times}_{*}(\widetilde{\mathcal{N}}^\vee \times_{\mathcal{N}^\vee} \widetilde{\mathcal{N}}^\vee)$ acts naturally on 
\begin{equation*}
H_*^{T_{\chi^\vee} \times \CC^\times}(\widetilde{S}(\chi^\vee)) \simeq H^*_{T_{\chi^\vee}\times \CC^\times}(\widetilde{S}(\chi^\vee))=H^*_{T_{\chi^\vee}\times \CC^\times}(\mathcal{B}_{\chi^\vee}),
\end{equation*} 
where the first isomorphism is given by the Poincare duality.
Similarly, algebra $H$ acts naturally on $H_*^{T_{\chi^\vee} \times \CC^\times}(\mathcal{B}_{\chi^\vee})$, and modules $H^*_{T_{\chi^\vee}\times \CC^\times}(\mathcal{B}_{\chi^\vee})$, $H_*^{T_{\chi^\vee} \times \CC^\times}(\mathcal{B}_{\chi^\vee})$ are dual to each other.
\end{rmk}

Let us now describe the irreducible modules over $\widetilde{W}$ geometrically (more generally, the analogous description of irreducible $H$-modules is given in \cite[Theorem 1.15]{lusztig_cuspidalIII}). 


Irreducible $\widetilde{W}$-modules are parametrized by conjugacy classes of triples $(\chi^\vee,x,\rho)$, where $\chi^\vee \in \mathcal{N}^\vee$ is a nilpotent element, $x \in \mathfrak{z}_{\mathfrak{g}^{\vee}}(\chi^\vee)$ is semisimple, and $\rho$ is an irreducible representation of $A(e^\vee,x):=Z_{G^\vee}(e^\vee,x)/Z_{G^\vee}(e^\vee,x)^0$ that appears in $H^*(\cB_{\chi^\vee}^{x})$. Namely, we can assume that $x \in \mathfrak{h}_{\chi^\vee}$ and consider the specialization $H^*_{T_{\chi^\vee}}(\widetilde{S}_{\chi^\vee})_{x}=H^*(\widetilde{S}_{\chi^\vee}^{x})$ (see Section \ref{fixed point pullback}). Let $\mathfrak{l}_x^\vee=\mathfrak{z}_{\mathfrak{g}^\vee}(x)$. We note that if $x$ is generic then $\mathfrak{l}_x^\vee=Z_{\mathfrak{g}^\vee}(T_{\chi^\vee})=\mathfrak{l}^\vee$.

It is easy to see (compare with \cite[Proposition 6.2]{KATO1983193}),  that we have isomorphisms of $A(e^\vee,x) \times \widetilde{W}$-modules (induced by the natural pushforward maps):
\begin{equation*}
\on{Ind}_{\widetilde{W}_{L_x}}^{\widetilde{W}}H_*(\mathcal{B}_{L_{x}^\vee,e^\vee}) \iso H_*(\mathcal{B}_{e^\vee}^x),~ \on{Ind}_{\widetilde{W}_{L_x}}^{\widetilde{W}}H_*(\widetilde{S}_{L^\vee_x}(e^{\vee})) \iso H_*(\widetilde{S}({e^\vee})^{x}).
\end{equation*}
It induces the isomorphism of $\widetilde{W}$-modules:
\begin{equation}\label{iso_induction_Kato}
\on{Ind}_{\widetilde{W}_{L_x}}^{\widetilde{W}}H_*(\mathcal{B}_{L_{x}^\vee,e^\vee})_\rho \iso H_*(\mathcal{B}_{e^\vee}^x)_{\rho},~ \on{Ind}_{\widetilde{W}_{L_x}}^{\widetilde{W}}H_*(\widetilde{S}_{L^\vee_x}(e^{\vee}))_{\rho} \iso H_*(\widetilde{S}(e^\vee)^{x})_{\rho},
\end{equation}
where $\bullet_{\rho}$ is the multiplicity space of $\rho$.

Module  $H_*(\widetilde{S}^{x}(e^\vee))_{\rho}$ is called the {\emph{costandard}} module over $\widetilde{W}$ corresponding to $(e^\vee,x,\rho)$. The dual module $H_*(\mathcal{B}_{e^\vee}^{x})_{\rho}$ is called the {\emph{standard}} module. The image of the natural morphism $H_*(\mathcal{B}_{e^\vee}^{x})_{\rho} \rightarrow H_*(\widetilde{S}(e^\vee)^x)_{\rho}$ is the unique irreducible quotient of  $H_*(\mathcal{B}_{e^\vee}^{x})_{\rho}$ and is the unique irreducible submodule of $H_*(\widetilde{S}(e^\vee)^{x})_{\rho}$ (compare with \cite[Theorem 8.1.13]{Chriss-Ginzburg}).
Let us describe this irreducible module explicitly. It follows from (\ref{iso_induction_Kato}) that the module we are dealing with is nothing else but 
\begin{equation*}
\on{Ind}_{\widetilde{W}_{L_x}}^{\widetilde{W}}(\on{Im}(H_*(\mathcal{B}_{L_x^\vee,e^\vee})_{\rho} \rightarrow H_*(\widetilde{S}_{L_x^\vee}(e^\vee))_{\rho}))=\on{Ind}_{\widetilde{W}_{L_x}}^{\widetilde{W}}H^{\mathrm{top}}(\mathcal{B}_{L_x^\vee,e^\vee})_{\rho}.
\end{equation*}
To see that, recall that $H_{*}(\mathcal{B}_{L_x^\vee,e^\vee})$ lives in degrees $0,1,\ldots,2d$, where $d=\on{dim}\mathcal{B}_{L_x^\vee,e^\vee}$ while $H_{*}(\widetilde{S}_{L_x^\vee}(e^\vee))\simeq H^{4d-*}(\widetilde{S}_{L_x^\vee}(e^\vee))\simeq H^{4d-*}(\mathcal{B}_{L_x^\vee,e^\vee})$ lives in degrees $2d,2d+1,\ldots,4d$ so we see that the only term that survives is the degree $2d$ term i.e. the image of $H_{2d}(\mathcal{B}_{L_x^\vee,e^\vee}) \rightarrow H_{2d}(\widetilde{S}_{L_x^\vee}(e^\vee))\simeq H^{\mathrm{top}}(\mathcal{B}_{L_x^\vee,e^\vee})$. It remains to note that the latter map is an isomorphism.

\begin{rmk}
Note that the module $\on{Ind}_{\widetilde{W}_{L_x}}^{\widetilde{W}}H^{\mathrm{top}}(\mathcal{B}_{L_x^\vee,e^\vee})_{\rho}$ is indeed irreducible: it follows from the Springer theory that     $H^{\mathrm{top}}(\mathcal{B}_{L_x^\vee,e^\vee})_{\rho}$ is an irreducible $W_{L_x}$-module. Note also that the action of $\mathfrak{h}$ on $H^{\mathrm{top}}(\mathcal{B}_{L_x^\vee,e^\vee})_{\rho}$ is via $x$ (i.e., an element $\xi \in \mathfrak{h}$ acts via the multiplication by $\xi(x)$, here we identify $\mathfrak{h}$ with $(\mathfrak{h}^\vee)^*$ and use the embedding $\mathfrak{h}_{e^\vee} \subset \mathfrak{h}^{\vee}$). Note now that the stabilizer of $x$ in $W$ is precisely $W_{L_x}$, so we conclude from the Mackey theory that $\on{Ind}_{\widetilde{W}_{L_x}}^{\widetilde{W}}H^{\mathrm{top}}(\mathcal{B}_{L_x^\vee,e^\vee})_{\rho}$ is irreducible.
\end{rmk}




So, the cohomological side $H^*_{T_{e^\vee}}(\widetilde{S}(e^\vee))$  of the classical equivariant Hikita-Nakajima conjecture is a $\widetilde{W}$-module over $S^\bullet\mathfrak{h}_{e^\vee}$ that is free over $S^\bullet\mathfrak{h}_{e^\vee}$. Moreover, its specialization at every $x \in \mathfrak{h}_{e^\vee}$ decomposes as the direct sum $H^*(\widetilde{S}(e^\vee)^x) = \bigoplus_{\rho \in \on{Irrep}(A(e^\vee,x))} \rho \otimes H^*(\widetilde{S}(e^\vee)^x)_\rho$. 
Taking $x=0$, we see that that if the action of $A(e^\vee)$ on $H^*(\widetilde{S}(e^\vee))$ is {\em{nontrivial}} then module $H^*(\widetilde{S}(e^\vee))$ is {\emph{not}} cyclic.

Assume now that $e^\vee$ is {\em{regular}} in $\mathfrak{l}^\vee$, the Lie algebra of $L^\vee$. Let $\mathfrak{h}_{e^\vee}^{\mathrm{reg}} \subset \mathfrak{h}_{e^\vee}$ be the set of $\la \in \mathfrak{h}_{e^\vee}=\mathfrak{z}(\mathfrak{l})^*$ such that $Z_{\mathfrak{g}}(\la)=\mathfrak{l}$, i.e., $\langle \la,\beta^\vee \rangle \neq 0$ for any $\beta \in \Delta_{\mathfrak{p}} \setminus \Delta_{\mathfrak{l}}$. The module $H^*_{T_{e^\vee}}(\widetilde{S}(e^\vee))|_x=H^*(\widetilde{S}(e^\vee)^x)=H^{\mathrm{top}}(\widetilde{S}(e^\vee)^x)$ is irreducible for $x \in \mathfrak{h}_{e^\vee}^{\mathrm{reg}}$ and is  isomorphic to $\on{Ind}_{\widetilde{W}_{L}}^{\widetilde{W}} \mathbb{C}_x$. Moreover, it is cyclic over $S^\bullet\mathfrak{h}$ and is generated by $[1] \in H^*_{T_{e^\vee}}(\widetilde{S}(e^\vee))|_x$ in this case. In other words, we have an isomorphism  of $\widetilde{W}$-modules over $\mathfrak{h}_{e^\vee}^{\mathrm{reg}}$:
\begin{equation*}
H^*_{T_{e^\vee}}(\widetilde{S}(e^\vee))|_{\mathfrak{h}_{e^\vee}^{\mathrm{reg}}} \simeq \on{Ind}_{\widetilde{W}_L}^{\widetilde{W}}\CC[\mathfrak{h}_{e^\vee}^{\mathrm{reg}}],
\end{equation*}
and $H^*_{T_{e^\vee}}(\widetilde{S}(e^\vee))|_{\mathfrak{h}_{e^\vee}^{\mathrm{reg}}}$ is generated by $1$ over $S^\bullet\mathfrak{h} \otimes \CC[\mathfrak{h}_{e^\vee}^{\mathrm{reg}}]$.







\subsection{Fixed points side}\label{fixed points subsection}
Let $\widetilde{X}$ be a $\QQ$-factorial terminalization of $\operatorname{Spec}(\CC[\widetilde{D}(\mathbb{O}^\vee)])$. The action of the torus $T$ on $\operatorname{Spec}(\CC[\widetilde{D}(\mathbb{O}^\vee)])$ extends to an action on the deformation $\operatorname{Spec}(\CC[\widetilde{D}(\mathbb{O}^\vee)])_{\mathfrak{h}_{e^\vee}}$ over $\fh_{e^\vee}$. Moreover, the map $\operatorname{Spec}(\CC[\widetilde{D}(\mathbb{O}^\vee)])_{\mathfrak{h}_{e^\vee}}\to \fh_{e^\vee}$ is $T$-equivariant. Note that the action of $T$ on $\fh_{e^\vee}$ is identified with the action of $T$ on $H^2(\widetilde{X}^{reg}, \CC)$ that is trivial. Thus, we have a natural homomorphism \begin{equation*}
(\operatorname{Spec}(\CC[\widetilde{D}(\mathbb{O}^\vee)])_{\mathfrak{h}_{e^\vee}})^{T} \rightarrow \mathfrak{h}_{e^\vee}.
\end{equation*}

The pullback gives an action of the algebra $\CC[\fh^*]$ on $\mathbb{C}[(\operatorname{Spec}(\CC[\widetilde{D}(\mathbb{O}^\vee)])_{\mathfrak{h}_{e^\vee}})^{T}]$. It naturally extends to an action of the algebra $\widetilde{W}=S^\bullet\mathfrak{h} \# W$, the action of $W$ is induced by the adjoint action of $N(T) \subset G$.

Module $\mathbb{C}[(\operatorname{Spec}(\CC[\widetilde{D}(\mathbb{O}^\vee)])_{\mathfrak{h}_{e^\vee}})^{T}]$ may contain a torsion considered as a module over $\mathbb{C}[\mathfrak{h}_{e^\vee}]$ (in particular, it may not be flat over $\mathfrak{h}_{e^\vee}$ in general), see Example \ref{counterflat} below. To elaborate on that we want to explicitly state the "flatness conditions". 

\begin{flat*}
    The algebra $\CC[X_{\fh_X}^T]$ is flat over $\CC[\fh_X]$.
\end{flat*}

Assume that $X$ has a unique $T$-fixed point, and therefore the algebra $\CC[X_{\fh_X}^T]$ is finitely generated over $\CC[\fh_X]$. Then for any point $\eta\in \fh_X$ the algebra $\CC[X_{\eta}^T]$ is finite-dimensional. We can state a weaker version of the condition above that is easier to verify.

\begin{weakflatt*}
     Let $\eta_0\in \fh_X$ be a generic point. For any point $\eta\in \fh_X$, we have
     \begin{equation} \label{weakflat}
         \dim \CC[X_{\eta}^T] = \dim \CC[X_{\eta_0}^T].\tag{$\spadesuit$}
     \end{equation}
\end{weakflatt*}

We remark that if $X$ has a unique fixed point, the Flatness condition automatically implies the Weak flatness condition. In particular, this always happens when $X=\Spec({\CC}[\widetilde{\OO}^\vee])$. The unique fixed $T$-point in $\overline{\OO}^\vee$ is the point $0$, and therefore $X$ has finitely many fixed points. Arguing as in \cref{finite fixed points}, it implies that $X$ has a unique fixed point. And thus ${\CC}[\widetilde{\OO}^\vee]$ is a local algebra.

Moreover, when $\widetilde{D}({\mathbb{O}}^\vee)$ is an actual nilpotent orbit (not a cover) and its closure in $\mathfrak{g}^*$ is normal, ${\CC}[\widetilde{\OO}^\vee]$ is generated by $1$ as a $\CC[\fg^*]$-module, and thus $\CC[\Spec({\CC}[\widetilde{\OO}^\vee])^T]$ is generated by $1$ as a $S^\bullet\fh$-module. Note also that $\mathbb{C}[(\operatorname{Spec}(\CC[\widetilde{D}(\mathbb{O}^\vee))])^{T}]$ is indecomposable already over $S^\bullet \mathfrak{h}$ in this case (any local ring is indecomposable as a module over itself).

Assume now that $e^\vee$ is regular in some Levi $\mathfrak{l}^\vee$. As we discussed, $\operatorname{Spec}(\CC[\widetilde{D}(\mathbb{O}^\vee)])_{\mathfrak{h}_{e^\vee}}$ is nothing else but $\on{Spec}\CC[T^*_{\mathfrak{X}(\mathfrak{l})}\mathcal{P}]$ in this case (recall that we have the natural identification $\mathfrak{X}(\mathfrak{l})=\mathfrak{h}_{e^\vee}$). Moreover, the morphism $T^*_{\mathfrak{X}(\mathfrak{l})}\mathcal{P} \rightarrow \on{Spec}\CC[T^*_{\mathfrak{X}(\mathfrak{l})}\mathcal{P}]$ of schemes over $\mathfrak{X}(\mathfrak{l})$ becomes an isomorphism over $\mathfrak{X}(\mathfrak{l})^{\mathrm{reg}}=\mathfrak{h}_{e^\vee}^{\mathrm{reg}}$ (recal that $\mathfrak{h}_{e^\vee}^{\mathrm{reg}}$ consists of $\la$ s.t. $Z_{\mathfrak{g}}(\la)=\mathfrak{l}$). 
In other words, the restriction  $\operatorname{Spec}(\CC[\widetilde{D}(\mathbb{O}^\vee)])_{\mathfrak{h}_{e^\vee}^{\mathrm{reg}}}$ identifies with $G \times^L \mathfrak{h}_{e^\vee}^{\mathrm{reg}}$, see \cref{fiber of deformation of parabolic}, and is a closed subvariety of $\mathfrak{g}^* \times_{\mathfrak{h}^*/W} \mathfrak{h}_{e^\vee}^{\mathrm{reg}}$ via the map $[(g,x)] \mapsto gx$. It is then easy to see that after passing to $T$-fixed points, the corresponding $\widetilde{W}$-module is nothing else but $\on{Ind}_{\widetilde{W}_L}^{\widetilde{W}}\CC[\mathfrak{X}(\mathfrak{l})^{\mathrm{reg}}]$ (in this case this is a simple computation, but it will also follow from the results of Section \ref{Section_weak_HN_for_parab_lodowy_proof}).


\subsection{Cohomological vs fixed points side}
In the Hikita-Nakajima conjecture we are comparing two algebras over $\CC[\mathfrak{h}_{e^\vee}]$: $\mathbb{C}[(\operatorname{Spec}(\CC[\widetilde{D}(\mathbb{O}^\vee)])_{\mathfrak{h}_{e^\vee}})^{T}]$ and $H^*_{T_{e^\vee}}(\widetilde{S}(e^\vee))$. As we saw in the previous sections, both of them are $\widetilde{W}$-modules. It is natural to expect that the (conjectural) identification of these algebras should be compatible with the $\widetilde{W}$-module structures. However, from the discussion in previous sections it is clear that these modules are not isomorphic in general. 

The differences are: 
\begin{enumerate}
    \item\label{issue_1} Module $H^*_{T_{e^\vee}}(\widetilde{S}(e^\vee))$ is {\emph{always}} free over $\CC[\mathfrak{h}_{e^\vee}]$ while $\CC[(\operatorname{Spec}(\CC[\widetilde{D}(\mathbb{O}^\vee)])_{\mathfrak{h}_{e^\vee}})^{T}]$ might have torsion (see Example \ref{counterflat}).

   \item\label{issue_2} If $\widetilde{D}(\mathbb{O}^\vee)$ is an {\emph{orbit}} with {\emph{normal}} closure then the module $\CC[(\operatorname{Spec}(\CC[\widetilde{D}(\mathbb{O}^\vee)]))^{T}]$ is {\emph{cyclic}} indecomposable over $\CC[\mathfrak{h}^*]$ with generator $1$ but $H^*(\mathcal{B}_{e^\vee})$ is not indecomposable in general (see Example \ref{gl3 in B}).
\end{enumerate}

So, we see that one should not expect, in general, that the algebras above will be isomorphic. 
On the other hand, If $e^\vee$ is regular in $\mathfrak{l}^\vee$, then the restrictions of $\CC[\mathfrak{h}_{e^\vee}]$-modules $\mathbb{C}[(\operatorname{Spec}(\CC[\widetilde{D}(\mathbb{O}^\vee)])_{\mathfrak{h}_{e^\vee}})^{\mathbb{C}^\times}]$, $H^*_{T_{e^\vee}}(\widetilde{S}(e^\vee))$ to $\mathfrak{X}(\mathfrak{l})^{\mathrm{reg}}$ are {\emph{isomorphic}} as $\widetilde{W}$-modules and are moreover generated by $1$ over $\CC[\mathfrak{h}^*] \otimes \CC[\mathfrak{h}_{e^\vee}^{\mathrm{reg}}]$. Thus, they are isomorphic as {\emph{algebras}} and the $\widetilde{W}$-action on them is uniquely determined by the action of $\CC[\mathfrak{h}^*]$.
So, the natural guess for the modification of the original Hikita-Nakajima conjecture would be as follows.

Instead of considering the whole $\widetilde{W}$-modules as above, we can consider their $\widetilde{W} \otimes \CC[\mathfrak{h}_{e^\vee}]$-submodules generated by the unit elements $1$ (note that this does not change restrictions of our modules to $\mathfrak{h}_{e^\vee}^{\mathrm{reg}}$ but affects restrictions to other points of $\mathfrak{h}_{e^\vee}$). Since the action of $W$ on $1$ is trivial, this simply reduces to considering images:
\begin{equation*}
\on{Im}(\CC[\mathfrak{h}^*] \otimes \CC[\mathfrak{h}_{e^\vee}] \rightarrow H^*_{T_{e^\vee}}(\widetilde{S}(e^\vee))),~\on{Im}(\CC[\mathfrak{h}] \otimes \CC[\mathfrak{h}_{e^\vee}] \rightarrow \CC(\operatorname{Spec}(\CC[\widetilde{D}(\mathbb{O}^\vee)])_{\mathfrak{h}_{e^\vee}})^{T}).
\end{equation*}
\begin{warning}
Note that taking images does not commute with passing to fibers. 
\end{warning}

This resolves the issue (\ref{issue_2}) (both of the modules are now $\CC[\mathfrak{h}^*] \otimes \CC[\mathfrak{h}_{e^\vee}]$-cyclic by the definition) but may not resolve the issue $(\ref{issue_1})$ (namely, the second $\CC[\mathfrak{h}_{e^\vee}]$-module may not be torsion free over $\CC[\mathfrak{h}_{e^\vee}]$, see Example \ref{counterflat}). To resolve this, we use the natural pull back homomorphism $\CC[\operatorname{Spec}(\CC[\widetilde{D}(\mathbb{O}^\vee)])_{\mathfrak{h}_{e^\vee}}]^{T}) \rightarrow \CC[(T^*_{\mathfrak{X}(\mathfrak{l})}\mathcal{P})^{T}]$ and replace the second algebra by:
\begin{equation*}
\on{Im}(\CC[\mathfrak{h}^*] \otimes \CC[\mathfrak{h}_{e^\vee}] \rightarrow \CC[(T^*_{\mathfrak{X}(\mathfrak{l})}\mathcal{P})^{T}].
\end{equation*}
Concluding, our modification of the Hikita-Nakajima conjecture in this case claims that there exists the  isomorphism of $\CC[\mathfrak{h}^*] \otimes \CC[\mathfrak{h}_{e^\vee}]$-algebras:
\begin{equation*}
 \on{Im}(\CC[\mathfrak{h}^*] \otimes \CC[\mathfrak{h}_{e^\vee}] \rightarrow H^*_{T_{e^\vee}}(\widetilde{S}(e^\vee))) \simeq  \on{Im}(\CC[\mathfrak{h}^*] \otimes \CC[\mathfrak{h}_{e^\vee}] \rightarrow \CC[(T^*_{\mathfrak{X}(\mathfrak{l})}\mathcal{P})^{T}]
\end{equation*}
We note that if $\widetilde{D}(\mathbb{O}^\vee)$ is an orbit with normal closure and $\CC[\operatorname{Spec}(\CC[\widetilde{D}(\mathbb{O}^\vee)])_{\mathfrak{h}_{e^\vee}}^{T}]$ is flat over $\CC[\fh_{e^\vee}]$ (for example $\fg$ is of type A), then the algebra on the RHS is precisely $\CC[\operatorname{Spec}(\CC[\widetilde{D}(\mathbb{O}^\vee)])_{\mathfrak{h}_{e^\vee}}^{T}]$. In particular, if $\mathfrak{g}=\mathfrak{sl}_n$, then our statement recovers the actual Hikita-Nakajima conjecture, so it gives an alternative transparent ``Lie theoretic'' proof of this statement (proved before by Alex Weekes in his PhD thesis, see also \cite{KamnitzerTingleyWebsterWeeksYacobi} using realizations of the varieties above as slices in the affine Grassmannians). See Appendix \ref{app_HN_type_A} for details. 

\begin{rmk}
Note that we are not discussing the equivariant Hikita-Nakajima conjecture in this Section. Recall that the RHS of this conjecture is $H^*_{T_{e^\vee} \times \mathbb{C}^\times}(\widetilde{S}(e^\vee))$ which is a module over the degenerate affine Hecke algebra ${\bf{H}}$ (see \cite[Section 0.1]{lusztig_cuspidalI}) that is a deformation of $\widetilde{W}$ over $\mathbb{C}[\hbar]$. The LHS of the Hikita conjecture is $\mathsf{C}_\nu(\mathcal{A}_{\hbar,\mathfrak{h}_{e^\vee}}(\operatorname{Spec}(\CC[\widetilde{D}(\mathbb{O}^\vee)])))$. We {\emph{do not know}} how to construct the action of ${\bf{H}}$  on the latter.
\end{rmk}

\subsection{Counterexamples to the Hikita-Nakajima conjectures} The goal of this section is to give counterexamples to the original Hikita conjecture for a pair $\widetilde{S}(e^\vee)$, $\on{Spec}\mathbb{C}[\widetilde{D}(\mathbb{O}^\vee)]$ supporting our observations in the previous section. We will see that counterexamples already exist in the ``best possible setting'', namely, even if we put the following restrictions on $e^\vee$.
\begin{enumerate}
    \item\label{ass_1} The element $e^\vee$ is regular in the Lie algebra of some Levi subgroup $L^\vee \subset G^\vee$. 
    \item\label{ass_2}  The refined BVLS duality $\widetilde{D}$ sends the orbit ${\mathbb{O}}^\vee=G^\vee. e^\vee$ to the orbit of a nilpotent element $e$ (i.e,  $e$ is birationally induced from $0$ orbit in $L$). Furthermore, we assume that the closure $\overline{G.e}$ is normal.
\end{enumerate}

In this setting, the classical Hikita conjecture predicts an isomorphism of algebra
\begin{equation} 
    \CC[\overline{G.e}\cap \fh]\simeq H^*(\spr).
\end{equation}



Recall that we fixed the identification $\mathfrak{g}^* \simeq \mathfrak{g}$. Consider a generic element $x \in \mathfrak{z}(\mathfrak{l})$. Following \cite[Proposition 4]{conjugacyclass}, we describe a quotient of the algebra $\CC[\overline{G.e}\cap \fh]$ using $x$.

Let $J'$ be the defining ideal of $G.x$ in $\fg$. We have a natural grading of $J'$ coming from $\CC[\fg]$. Then $J_e= \gr J'$ is the defining ideal of $\overline{G.e}$ in $\fg$. Now let $I'$ be the ideal generated by $J'$ and the coordinate functions $\bigoplus_{\al \in \Delta} \mathfrak{g}_\al$ of the non-diagonal entries of $\fg$. And write $I_e$ for the defining ideal of $\overline{G.e}\cap \fh$ in the algebra $\CC[\fh]$. Then we have $I_e\subset \gr I'$. Thus, we get a surjective $\widetilde{W}$-equivariant map of algebras $\CC[\overline{G.e}\cap \fh] \twoheadrightarrow  \CC[\fh]/ \gr I'$.
 
We have the following proposition.
 \begin{prop}  \label{equivalent statements}
    In the setting above, the classical Hikita conjecture is equivalent to two statements.
 \begin{enumerate}
     \item The quotient morphism $\CC[\overline{G.e}\cap \fh] \twoheadrightarrow  \CC[\fh]/ \gr I'$ is an isomorphism. 
     \item We have an isomorphism of rings $H^*(\spr)\simeq \CC[\fh]/ \gr I'$.
 \end{enumerate}   
 \end{prop}
 
\begin{proof}
    If both conditions hold, then $\CC[\overline{G.e}\cap \fh]\simeq  \CC[\fh]/ \gr I'\simeq H^*(\spr)$. On the other hand, assuming that the classical Hikita conjecture is true, we have $\CC[\overline{G.e}\cap \fh]\simeq H^*(\spr)$. Then we must have $\dim \CC[\overline{G.e}\cap \fh]=$ $\dim H^*(\spr)$ $= |W/W_{L^\vee}|$. This is the dimension of $\CC[\fh]/\on{gr} I'$. Therefore, it follows that the surjection in (1) of \cref{equivalent statements} is an isomorphism.
\end{proof} 

We remark that the proof implies that (1) is equivalent to the weak flatness condition \ref{weakflat}.


Let us now give several examples in types B-C in which at least one of the statements in \cref{equivalent statements} fails to hold. In other words, the classical Hikita conjecture does not apply in these cases.

In these examples, one can check that the elements $e$ we choose are birationally Richardson by following \cite[Section 3]{Fu_2003}. The normality of orbit closures follows from the table at the end of \cite{Kraft-Procesi}.

The following example showcases a situation in which Statement 1 in \cref{equivalent statements} is true, but Statement 2 is not.
\begin{example} \label{gl3 in B}
Let $G= Sp(6)$ and $G^\vee= SO(7)$, they have the same Weyl group $W= S_3\rtimes (\cyclic{2})^{\oplus 3}$. Let $e^\vee\in G^\vee$ be a nilpotent element with the associated partition $(3,3,1)$. The refined BVLS duality sends the orbit of $e^\vee$ in $\fs\fo(7)$ to the orbit of $e$, a nilpotent element with the associated partition $(2,2,2)$ in $\fs\fp(6)$.

\textbf{The function side}. In this case, we have $L^\vee$ and $L$ are isomorphic to $GL(3)$. We choose an embedding $\fs\fp(6)\hookrightarrow \fg\fl(6)$ such that we can realize $\fh$ as the set of diagonal matrices diag$(x_1,x_2,x_3, -x_3,-x_2,-x_1)$.

A generic element $x\in Z_\fg(\fl)$ has the form diag$(t,t,t,-t,-t,-t)$ for $t\neq 0$. Fix such a $t$, identify $\CC[\fh]$ with $\CC[x_1,x_2,x_3]$. The defining ideal $I'$ of the orbit $W.x\subset \fh$ is generated by the elements $(x_i^2- t^2)_{i=1,2,3}$. As a consequence, $\gr I'$ is $(x_i^2)_{i=1,2,3}$. Furthermore, we claim that $I_e= \gr I'$ in this case. Since the partition of $e$ is $(2,2,2)$, any element $X$ in the closure $\overline{G.e}$ has the property $X^2=0$. So $x_i^2\in I_e$ for $1\leqslant i\leqslant 3$, and $I_e= \gr I'$; Statement 1 in \cref{equivalent statements} is verified. The algebra $\CC[\fh]/ I_e$ is isomorphic to $\CC[x_1,x_2, x_3]/(x_1^{2}, x_2^{2}, x_3^{2})$.

\textbf{The cohomological side}. We first compute the Betti numbers of $\spr$ using the algorithms provided in \cite[Section 8]{Kim2018}. The partition of $e^\vee$ is $(3,3,1)$, so the component group of $C_G(e)$ is isomorphic to $\cyclic{2}$. This group acts on each $H^{2i}(\spr)$ with two irreducible characters, the trivial character and the sign character. Let $n_i^{+}$ (resp. $n_i^{-}$) denote the multiplicity of the trivial (resp. sign) character in $H^{2i}(\spr)$. In \cite[Proposition 8.2]{Kim2018}, the closed formulas of $n_i$ for the partition $(2k+1, 2k+1, 1)$ are worked out as follows
$$n_i^+= {2k+ 1 \choose i}, n_i^{-}= {2k+1\choose i-2} \quad \text{for} \quad 0\leqslant 2i\leqslant 2k+3.$$
Write $d_i$ for the dimension of $H^{2i}(\spr)$. In our case, we obtain $d_0= {3\choose 0} = 1$, $d_1= {3\choose 1}= 3$, and $d_2= {3\choose 2}+ {3\choose 1}= 4$. Alternatively, these numbers can be obtained from a geometric description of the Springer fiber $\spr$ as follows.

Realize $G^\vee$ as $SO(V)$ where $V=$Span$(v_1,...,v_7)$, and the symmetric bilinear form is given by $\langle v_i, v_j\rangle= (-1)^{i+1}\delta_{i, 8-j}$. The action of $e^\vee$ on $V$ is given by $e^\vee(v_4)= 0$ and $e^\vee(v_i)= v_{i-1}$ if $i\neq 4$. An element of $\spr$ is a $e^\vee$-stable flag $V^1\subset V^2\subset V^3$ of isotropic subspaces of $V$. We will use a sequence of vectors $(u_1, u_2, u_3)$ to represent the flag with $V^i=$ Span $(v_1,...,v_i)$ for $1\leqslant i\leqslant 3$. The Springer fiber $\spr$ has four irreducible components $X_1, X_2, X_3$ and $X_4$, each $X_i$ is isomorphic to a $\PP^1$-bundle over $\PP^1$. In terms of vectors, they can be described as follows.

The component $X_1$ consists of the flags of types $V^2=$Span$(v_1,v_5)$ and $v_3$ is an isotropic vector of Span$(v_2, v_4, v_6)$. The component $X_2$ consists of the flags of types $(u_1, u_2, u_3)$ such that $u_2$ has the form $av_1+ bv_5+ u$ with $u\neq 0$ isotropic in Span$(v_2,v_4,v_6)$, and $u_1= eu_2$, while $u_3\in$ Span$(v_1,v_5)$ (we require $u_3\neq u_1$). The component $X_3$ consists of the flags of types $(v_1, av_2+ bv_5, x(av_3+ bv_6)+ yv_5)$, and the component $X_4$ consists of the flags of types $(v_5, bv_6+ av_1, x(av_2+ bv_7)+ yv_1)$. The component group $\cyclic{2}$ preserves $X_1$ and $X_2$, while permutes $X_3$ and $X_4$. For the intersection, $X_1$ and $X_2$ are disjoint, $X_3$ and $X_4$ are disjoint, and the intersections $X_1\cap X_3, X_1\cap X_4, X_2\cap X_3, X_2\cap X_4 $ are all isomorphic to $\PP^1$.

\textbf{Comparison of the two algebras}. The socle of $\CC[x_1,x_2, x_3]/(x_1^{2}, x_2^{2}, x_3^{2})$ is one-dimensional, generated by the element $x_1x_2x_3$. The socle of $H^*(\spr)$ contains $H^4(\spr)$, which has dimension $4$. Thus, the two algebras are not isomorphic; the second statement in \cref{equivalent statements} does not hold.
\end{example}

\begin{example}\label{counterflat}(Failure of flatness condition (\ref{weakflat})).
Let $G= Sp(6)$ and $G^\vee= SO(7)$, they have the same Weyl group $W= S_3\rtimes (\cyclic{2})^{\oplus 3}$. Let $e^\vee\in G^\vee$ be a nilpotent element with the associated partition $(3,2,2)$. The refined BVLS duality sends the orbit of $e^\vee$ in $\fs\fo(7)$ to the orbit of $e$, a nilpotent element with the associated partition $(3,3)$ in $\fs\fp(6)$.

In this case, $L^\vee$ is isomorphic to $SO(3)\times GL(2)$ and $L$ is isomorphic to $Sp(2)\times GL(2)$. Here $W_L\simeq W_{L^\vee}\simeq (\cyclic{2})^{\oplus 2}$. Hence, the dimension of the quotient $\CC[\fh]/ \gr I$ is $|W|/|W_L|= \frac{3!\times 2^3}{2^2}= 12$. However, by the table at the end of \cite{conjugacyclass}, we see that the dimension of $\CC[\overline{G.e}\cap \fh]$ is $13$. Thus, the flatness conjecture does not hold in this case.

We give the readers a quick explanation for this failure of flatness. Let $e'\in \fsp(6)$ be a nilpotent element with associated partition $(2,2,2)$, this is the case we have considered in \cref{gl3 in B}. Hence, we know that $I_{e'}$ is generated by $(x_i^{2})_{i=1,2,3}$ where $x_i$ are the coordinate functions of $\fh$. Now, it is clear that $e'\in \overline{G.e}$, so $I_{e'}\supset I_e$. In particular, we deduce that $x_1x_2x_3\notin I_e$ because $x_1x_2x_3\notin I_{e'}$. On the other hand, in the notation of \cref{gl3 in B}, a generic element $x\in \mathfrak{z}_\fg(\fl)$ has the form diag$(t,t,0,0,-t,-t)$. This means that in the orbit $W.x\subset \fh$, we always have $x_i=0$ for some $i\in \{1,2,3\}$. In other words, $x_1x_2x_3\in \gr I'$, so $\gr I'\supsetneq I_e$. Statement 1 of \cref{equivalent statements} fails here. 
\end{example}
\begin{rmk}
In work in progress, Finkelberg, Hanany, and Nakajima realize this example as Higgs and Coulomb branches $X^\vee=\mathcal{M}_{H}$, $X=\mathcal{M}_{C}$ of a certain orthosymplectic quiver gauge theory.  So, one should not expect in general that if $\widetilde{\mathcal{M}}_{H}$ is a resolution of the Higgs branch of some $3$-dimensional $\cN=4$ supersymmetric gauge theory, then the algebra $H^*(\widetilde{\mathcal{M}}_{H})$ will be isomorphic to the algebra of schematic fixed point $\CC[\mathcal{M}_{C}^{T_{\mathcal{M}_C}}]$ (see Remark \ref{rem_when_expect_hn_coulomb} above for the statement that might be true).
\end{rmk}

A straightforward generalization of this phenomenon for flatness condition is the following.
\begin{prop} \label{sp flatness}Consider $\fg= \fsp(2n)$. 
\begin{enumerate}
    \item If $\fl= \fgl(n)$, then (\ref{weakflat}) holds.
    \item If $\fl$ contains a nontrivial factor $\fsp_{2m}$, then (\ref{weakflat}) does not hold.
\end{enumerate}
\end{prop}
\begin{proof}
    Consider $\fl= \fgl(n)$. A generic element $x\in Z_\fg(\fl)$ has the form diag$(t,...,t,-t,...,-t)$ for $t\neq 0$. Fix such a $t$, identify $\CC[\fh]$ with $\CC[x_1,...,x_n]$. The defining ideal $I'$ of the orbit $W.x\subset \fh$ is generated by the elements $(x_i^2- t^2)_{1\leqslant i\leqslant n}$. As a consequence, $\gr I'$ is $(x_i^2)$. Furthermore, we claim that $I_e= \gr I'$ in this case. Since the partition of $e$ is $(2^{n})$, any element $X$ in the closure $\overline{G.e}$ has the property $X^2=0$. So $x_i^2\in I_e$ for $1\leqslant i\leqslant n$, and $I_e= \gr I'$. Statement 1 in \cref{equivalent statements} is verified. The algebra $\CC[\fh]/ I_e$ is isomorphic to $\CC[x_1,..., x_n]/(x_1^{2},..., x_n^{2})$.

    Consider $\fl= \fsp(2a)\times\prod_{i=1}^{k} \fgl(b_i)$ for some $a> 0$. A generic element $x\in Z_\fg(\fl)$ has the following property. In $b_1+...+b_k+ 1$ arbitrary coordinates of $x$, there is at least one $0$. Hence, we have $x_1x_2...x_{b_1+...+b_k+1}\in \gr I'$. Then, to prove the proposition, we just need to show that $x_1x_2...x_{b_1+...+b_k+1}$ does not belong to $I_e$. 
    
    Write $c_1$ for $b_1+...+b_k$. As $a>0$, we have $c_1+1\leqslant n$. Consider a matrix realization $\fsp({2n})= \fsp(V)$ where $V=\CC^{2n}$. We choose a basis $v_1,...,v_{2n}$ of $V$ such that the symplectic form is given by $(J_s,...,J_s)$ where $J_s= \begin{pmatrix}
0 & 1 \\
-1 & 0 
\end{pmatrix}$. Write $V'$ for Span$(v_1,...,v_{2c_1+2})$. We then have an embedding 
$$i_{c_1}: \fsp({2c_1+2})= \fsp(V')\hookrightarrow \fsp(V)= \fsp({2n}).$$
We write $\OO_\fp$ for the orbit with partition $\fp$. Consider the nilpotent orbit $\OO_{(2^{c_1+1})}$ in $\fsp({2c_1+2})$ and its image under $i_{c_1}$. We see that 
$$i_{c_1}(\overline{\OO_{(2^{c_1+1})}})\subset \overline{\OO_{(2^{c_1+1}, 1^{2n-2c_1-2})}}\subset \overline{G.e}.$$ 
This composition of embedding induces a surjection on the level of scheme theoretic intersection $$\CC[\fh\cap \overline{G.e}]\twoheadrightarrow \CC[\fh'\cap \overline{\OO_{(2^{c_1+1})}}].$$
We see that the element $x_1...x_{c_1+1}$ has a nonzero image in the quotient $\CC[\fh'\cap \overline{\OO_{(2^{c_1+1})}}]$ thanks to the result of part (1). Hence, it is nonzero in $\CC[\fh\cap \overline{G.e}]$. 
\end{proof}
\begin{rmk} \label{sp is bad}
    We have discussed the case $\fl\subset \fg= \fsp(2n)$ for $\fl$ contains a non-trivial factor $\fsp$. If $\fl$ is of the form $\prod_{i=1}^{k}\fgl({a_i})$, it was claimed in \cite[Theorem 3]{Tanisaki} that Condition 1 in \cref{equivalent statements} always holds. The statement of this theorem is correct, but the proof in \cite{Tanisaki} contains a mistake (\cite[Lemma 7]{Tanisaki} is not correct, e.g., for $\fl= \fgl(3)\times \fgl(1)\subset \fs\fp(8)$). The correct proof is given in \cite[Sections 2,3]{hoang2}. 
\end{rmk}

In the two examples above, we have considered $G^\vee$ of type $B$ and $G$ of type $C$. The next example demonstrates this phenomenon for $G^\vee$ of type $C$ and $G$ of type $B$. In particular, Condition 2 in \cref{equivalent statements} fails (Condition 1 is true, but we do not give proof here). The method to show that the two algebras are not isomorphic is similar to \cref{gl3 in B}. 

\begin{example} \label{gl3gl1}
Let $G= SO(9)$ and $G^\vee= Sp(8)$. Let $e^\vee\in G^\vee$ be a nilpotent element with the associated partition $(4,2,2)$. The refined BVLS duality sends the orbit of $e^\vee$ in $\fs\fp(8)$ to the orbit of $e$, a nilpotent element with the associated partition $(3,3,1,1,1)$ in $\fs\fo(9)$.

\textbf{The function side}. In this case, we have $L^\vee \simeq Sp(4)\times GL(2)$ and $L\simeq SO(5)\times GL(2)$. We choose an embedding $\fs\fo(9)\hookrightarrow \fg\fl(9)$ such that we can realize $\fh$ inside $\fh'$ as the set of diagonal matrices diag$(x_1,...,x_4,0,-x_4,...,-x_1)$.

A generic element $x\in Z_\fg(\fl)$ has the form diag$(t,t,0,0,0,0,0,-t,-t)$ for $t\neq 0$. Fix such a $t$, identify $\CC[\fh]$ with $\CC[x_1,x_2,x_3,x_4]$. The orbit $W.x$ in $\fh$ consists of $24$ points $(a,b,c,d)$ such that two of the coordinates are 0, and the other two are $t$ or $-t$. It is straightforward to see that the following elements belong to $I'$ 
$$\{(x_i^2- t^2)x_i\}_{i=1,2,3,4}; 2t^2- \sum_{i=1}^4 x_i^{2}; \{x_{ijk}\}_{1\leqslant i<j<k \leqslant 4};  $$
Let $I''$ be the ideal generated by these polynomials. Next, one can check that the dimension of $\CC[\fh]/I''$ (equals to the dimension of $\CC[\fh]/\gr I''$)  is $24=|W/W_L|$. A basis of $\CC[x_1,x_2,x_3,x_4]/\gr I''$ is given by $1, x_1, x_2, x_3, x_4, x_1^{2}, x_2^{2}, x_3^{2},$ $ \{x_ix_j\}_{1\leqslant i<j \leqslant 4},$ $ x_1x_2^{2}, x_1x_3^{2}, x_2x_3^{2}, x_2x_4^{2},$ $ x_3x_4^{2}, x_3x_1^{2},$ $ x_4x_1^{2}, x_4x_2^{2},$ $ x_1^{2}x_2^{2}, x_2^{2}x_3^{2}$. Therefore, $I'=I''$ and the socle of $\CC[x_1,x_2,x_3,x_4]/\gr I'$ has dimension $2$.

\textbf{The cohomological side}. The dimension of the Springer fiber $\spr$ is $3$, so the grading does not match the function side since we have elements of degree $4$ in $\CC[\fh]/I_e$. Let $d_i$ be the dimension of $H^{2i}(\spr)$. By the recursive formula in \cite[Theorem 6.1]{Kim2018}, we see that the Betti numbers of $\spr$ are $d_0= 1, d_1= 4, d_2=8 ,d_3= 11$. The socle of this algebra has dimension at least $11$. Hence, the two algebras are not isomorphic.

\end{example}

\section{Localizations, categories $\mathcal{O}$ and Cartan subquotients: case of parabolic $W$-algebras}\label{sec_hw_parab}

This section is rather technical and will be used in the proof of the main Theorem in Section \ref{Section_weak_HN_for_parab_lodowy_proof}.
In this section, we describe the homomorphism (\ref{comp_cart_alg_to_sh}) explicitly for $Y=\widetilde{S}(\mathfrak{q},\mathfrak{p})$ (see Lemma \ref{lemma_hw_compute_parabolicW}). Additionally, we will discuss the abelian localization theorem for parabolic $W$-algebras.


\subsection{Localization theorem for parabolic $W$-algebras}\label{sec_loc_thm_parab_W}
In this section we describe certain $\la \in \mathfrak{X}(\mathfrak{l})$ for which the abelian localization holds for $(p(\la),\widetilde{S}(\chi,\mathfrak{p}))$ (recall that $p\colon H^2(T^*\mathcal{P},\CC) \rightarrow H^2(\widetilde{S}(\chi,\mathfrak{p}),\CC)$ is the restriction map). This is well-known to experts, but we include the proof for completeness (note that we are not assuming that $e$ is regular in Levi in this section).

We start with a description of (some) parameters for which the abelian localization holds for $(\lambda,T^*\mathcal{P})$. We fix a Borel subalgebra $T \subset B \subset P$.
To $\la \in \mathfrak{X}(\mathfrak{l})$ we associate $\widetilde{\la}:=\la-\rho_{\mathfrak{b}}(\mathfrak{l})$.


	\begin{prop}\label{loc_flags} 
Abelian localization holds for $\la \in \mathfrak{X}(\mathfrak{l})=H^2(T^*\mathcal{P},\CC)$ if $\widetilde{\la}$ is 
$\mathfrak{p}$-antidominant and $\mathfrak{l}$-regular.
	\end{prop}
	
\begin{proof}
	The claim is proved completely analogically to~\cite[Th\'eor$\grave{\on{e}}$me principal]{bb} (c.f.~\cite[Theorem 5.1]{bak}). 
\end{proof}

\begin{warning}
We are {\emph{not}} claiming that the parameters as in Proposition \ref{loc_flags} are the only parameters for which the abelian localization holds. 
It also might be true that every $\mathfrak{l}$-regular $\widetilde{\la}$ is automatically $\mathfrak{p}$-antidominant (this is certainly the case for $\mathfrak{p}=\mathfrak{b}$).
\end{warning}

\begin{rmk}
Note that if we fix another $T \subset B' \subset P$, then the corresponding element $\widetilde{\la}' := \la -\rho_{\mathfrak{b}'}(\mathfrak{l})$ will be $\mathfrak{p}$-antidominant $\mathfrak{l}$-regular iff $\widetilde{\la}$ is $\mathfrak{p}$-antidominant $\mathfrak{l}$-regular. Indeed, $B'=wBw^{-1}$ for some $w \in W_L$ so $\widetilde{\la}'=w(\widetilde{\la})$. Now the claim follows from the fact that $w(\Delta_{\mathfrak{p}})=\Delta_{\mathfrak{p}}$.
\end{rmk}

	
\begin{rmk}
Note that the condition that $\widetilde{\la}=\la-\rho_{\mathfrak{b}}(\mathfrak{l})$ is $\mathfrak{p}$-antidominant is equivalent to the condition that  $\widetilde{\la}$ is $\mathfrak{b}$-antidominant (use that $\langle \la,\beta^\vee\rangle = 0$ for $\beta^\vee \in \Delta_{\mathfrak{l}}^\vee$). Similarly, $\widetilde{\la}$ is $\mathfrak{l}$-regular iff it is regular.   
\end{rmk}

    \begin{prop}\label{loc_slodowy} Pick $\la \in \mathfrak{X}(\mathfrak{l})$. Let $p\colon H^2(T^*\mathcal{P},\CC) \ra H^2(\widetilde{S}(\chi,\mathfrak{p}),\CC)$ be the restriction map. If the abelian localization holds for $(\la,T^*\mathcal{P})$ then the abelian localization holds for $(p(\la),\widetilde{S}(e,\mathfrak{p}))$. 
	\end{prop}

 \begin{proof}
    We use the same approach as in~\cite[Propositions~3.7,~3.8]{Losev_cacticells}, see also ~\cite{Gi} and~\cite{dk}. Recall (see Proposition \ref{prop_quant_W_alg} (b))
 that $\Gamma(\widetilde{S}(\chi,\mathfrak{p}),\mathcal{D}_{\la}(\widetilde{S}(e,\mathfrak{p})))$ identifies with the 
 parabolic $W$-algebra $\mathcal{W}_{\lambda}(\chi,\mathfrak{p})$. 

    Assume that the abelian localization holds for $(\la,T^*\mathcal{P})$. 
    Let us denote by $\on{Coh}^{\mathfrak{u}_{\ell},\chi}(\mathcal{D}_{\la}(T^*\mathcal{P}))$ the category of twisted $(\mathfrak{u}_{\ell},\chi)$-equivariant coherent 
    $\mathcal{D}_{\la}(T^*\mathcal{P})$-modules and by $\CU_{\la}(\mathfrak{g},\mathfrak{p})$-$\on{mod}^{\mathfrak{u}_{\ell},\chi}$ the category of twisted $(\mathfrak{u}_{\ell},\chi)$-equivariant finitely generated $\mathcal{U}_{\la}(\mathfrak{g},\mathfrak{p})$-modules, see \cite[Section 4]{Kashiwara_Dmod} for the definitions. 

    We have the following  equivalences (the second one is the parabolic version of the Skryabin equivalence): 
    \begin{equation}\label{G/Q_to_W_mod}
    \on{Coh}^{\mathfrak{u}_{\ell},\chi}(\mathcal{D}_\la(T^*\mathcal{P})) \iso \on{Coh}(\mathcal{D}_{\la}(\widetilde{S}(e,\mathfrak{p}))),\,  
    U_{\la}(\mathfrak{g},\mathfrak{p})\text{-}\on{mod}^{\mathfrak{u}_{\ell},\chi} \iso
    \mathcal{W}_{\la}(\chi,\mathfrak{p})\text{-}\on{mod}
    \end{equation}
    given by taking $\mathfrak{u}_\ell$-semiinvariants with character $\chi$. Both of these isomorphisms are consequences of the fact that the action of $U_\ell$ on the affine variety $\mu_{T^*\mathcal{P}}^{-1}(\chi)$ is free. For the first isomorphism, see~\cite[Lemma~5.3]{dk} or \cite[Proposition~2.8]{kr}. The second isomorphism can be proved analogously  to~\cite[Section 6.2]{GanGinzburg}, see also~\cite{sk}. 
    


   It is clear  that for $M \in \on{Coh}^{\mathfrak{u}_\ell,\chi}(\mathcal{D}_{\la}(T^*\mathcal{P}))$, we have
    \begin{equation}\label{commute_glob_invar}
    \Gamma(M)^{(\mathfrak{u}_\ell,\chi)}=\Gamma(M^{(\mathfrak{u}_\ell,\chi)}).   
    \end{equation} 
    
    We note that the global section functor $\Gamma: \on{Coh}(\mathcal{D}_{\la}(T^*\mathcal{P})) \to  \CU_{\la}(\mathfrak{g},\mathfrak{p})$ and localization functor $\on{Loc}: \CU_{\la}(\mathfrak{g},\mathfrak{p})\to \on{Coh}(\mathcal{D}_{\la}(T^*\mathcal{P}))$ naturally lift to the functors between categories of respective twisted equivariant modules. 
    
    Since the abelian localization holds for $(\la,T^*\mathcal{P})$, these functors are quasi-inverse on the categories of non-equivariant modules. One can see that the twisted equivariant module structure on $\cM\in \on{Coh}(\mathcal{D}_{\la}(T^*\mathcal{P}))$ coincides with the induced structure on $\Loc(\Gamma(\cM))$, and therefore the global section functor induces an equivalence 
    \begin{equation*}
    \Gamma\colon \on{Coh}^{\mathfrak{u}_\ell,\chi}(\mathcal{D}_{\la}(T^*\mathcal{P})) \iso  \CU_{\la}(\mathfrak{g},\mathfrak{p})\text{-}\on{mod}^{\mathfrak{u}_\ell,\chi}.    
    \end{equation*}
    It now follows from~(\ref{G/Q_to_W_mod}),~(\ref{commute_glob_invar}) that the localization holds for $(p(\la),\widetilde{S}(\chi,\mathfrak{p}))$.
\end{proof}





\subsection{Cartan subquotients and highest weights for finite $W$-algebras}
The results of this section follow from \cite{BGK}, the only difference is that the authors of loc. cit. work with filtered algebras although we work with algebras over $\mathbb{C}[\hbar]$. Another reference is \cite{LosevStructureO}.

Recall that $M \subset Q$ is the standard Levi, and let $e=e_{\mathfrak{m}} \in \mathfrak{m}$ be the regular nilpotent that lies in $\mathfrak{b}$. Recall that $\chi \in \mathfrak{g}^*$ the linear functional $(e,-)$.  Let $Z_M \subset M$ be the connected component of the center of $M$ and $\mathfrak{z}(\mathfrak{m})=\on{Lie}Z_M \subset \mathfrak{m}$. Pick a generic cocharacter $\nu\colon \CC^\times \rightarrow Z_M$ such that $\nu$ is dominant w.r.t. $Q$ (i.e., $\langle 
\al,\nu\rangle \geqslant 0$ for every $\alpha \in \Delta_{\mathfrak{q}}$).  Let $\widetilde{\nu}\colon \CC^\times \ra T$ be a generic $B$-dominant cocharacter.

\subsubsection{Definition of $\mathcal{W}_\hbar(\chi)$ via non-linear Lie algebras}
Results of this section can be found in \cite[Section 2.2]{BGK}. 

Recall that $(e,h,f)$ form an $\mathfrak{sl}_2$-triple. 
Consider the grading $\mathfrak{g}=\bigoplus_{i \in \mathbb{Z}}\mathfrak{g}(i)$ induced by the adjoint action of $h$ and set $\mathfrak{r}:=\bigoplus_{i \geqslant 0}\mathfrak{g}(i)$, this is a Lie subalgebra of $\mathfrak{g}$.

Set $\mathfrak{k} = \mathfrak{g}(-1)$ and let $\mathfrak{k}^{\mathrm{ne}}$ be a {\emph{non-linear Lie algebra}} (c.f. \cite[beginning of Section 2.2]{BGK}) with the non-linear Lie bracket given by $[x^{\mathrm{ne}},y^{\mathrm{ne}}]=\langle \chi,e_{\mathfrak{m}},[x,y]\rangle$, where we denote by $x^{\mathrm{ne}}, y^{\mathrm{ne}} \in \mathfrak{k}$ the elements corresponding to $x,y \in \mathfrak{k}$.
Consider the direct sums  $\widetilde{\mathfrak{r}} = \mathfrak{r} \oplus \mathfrak{k}^{\mathrm{ne}}$,  $\widetilde{\mathfrak{g}} = \mathfrak{g} \oplus \mathfrak{k}^{\mathrm{ne}}$ equipped with the natural structures of the non-linear Lie algebra.

For a vector space $V$, let $T_{\hbar}(V):=(\bigoplus_{k=0}^\infty V^{\otimes k}) \otimes \mathbb{C}[\hbar]$ be the corresponding tensor algebra over $\mathbb{C}[\hbar]$. 
Set 
\begin{equation*}
\mathcal{U}_\hbar(\widetilde{\mathfrak{g}}) := T_\hbar(\widetilde{\mathfrak{g}})/M_\hbar(\widetilde{\mathfrak{g}}),~
\mathcal{U}_\hbar(\widetilde{\mathfrak{r}}) := T_\hbar(\widetilde{\mathfrak{r}})/M_\hbar(\widetilde{\mathfrak{r}}),
\end{equation*}
where $M_\hbar(\widetilde{\mathfrak{g}})$ is the two-sided ideal in $T_\hbar(\widetilde{\mathfrak{g}})$ generated by $a \otimes b - b \otimes a -\hbar[a,b]$, $a,b \in \widetilde{\mathfrak{g}}$ and similarly for $M_\hbar(\widetilde{\mathfrak{r}})$.

We define the {\emph{Kazhdan grading}} on  $\mathcal{U}_\hbar(\widetilde{\mathfrak{g}})$, $\mathcal{U}_\hbar(\widetilde{\mathfrak{r}})$ as follows:
$
\on{deg}\mathfrak{g}(i)=i+2,~\on{deg}\mathfrak{k}^{\mathrm{ne}}=1,~\operatorname{deg}\hbar=2$.

Let $J_{\hbar} \subset \mathcal{U}_\hbar(\widetilde{\mathfrak{r}})$ be the left ideal generated by  the elements $\{x-\chi(x)-x^{\mathrm{ne}},\,|\, x \in \mathfrak{n}\}$ (note that this ideal is {\emph{homogeneous}} w.r.t. the Kazhdan grading). The PBW theorem for $\mathcal{U}_\hbar(\widetilde{\mathfrak{g}})$ gives the projection 
$\operatorname{Pr}_{\hbar}\colon \mathcal{U}_\hbar(\widetilde{\mathfrak{g}}) \twoheadrightarrow J_\hbar$. Let $I_\hbar \subset \mathcal{U}_{\hbar}(\mathfrak{g})$ be the left ideal generated by $\{x-\chi(x)\,|\, x \in \mathfrak{u}\}$.

Set 
\begin{equation*}
\mathcal{U}_{\hbar}(\mathfrak{g},\chi) \subset \mathcal{U}_\hbar(\widetilde{\mathfrak{r}}) = \{u \in \mathcal{U}_\hbar(\widetilde{\mathfrak{r}}),\,|\, \operatorname{Pr}([x-x^{\mathrm{ne}},u])=0,~\text{for all}~x \in \mathfrak{n}\}.
\end{equation*}

Set $Q_\hbar=\mathcal{U}_\hbar(\mathfrak{g})/I_\hbar$ and recall that $\mathcal{W}_\hbar(\chi)=Q_\hbar^{\mathfrak{n}}$, where $\mathfrak{n}=\bigoplus_{i \leqslant -1}\mathfrak{g}(i)$. The action of $\mathfrak{g}$ on $Q_\hbar$ extends to the action of $\widetilde{\mathfrak{g}}$ via the formula $x^{\mathrm{ne}}(u+I_\hbar)=ux+I_\hbar$. \cite[Theorem 2.4]{BGK} implies the following.

\begin{prop}\label{alter_def_W}
Subspace $\mathcal{U}_\hbar(\mathfrak{g},\chi) \subset \mathcal{U}_\hbar(\widetilde{\mathfrak{r}})$ is a subalgebra and the map 
$
\mathcal{U}_\hbar(\mathfrak{g},\chi) \rightarrow \mathcal{W}_\hbar(\chi),~x \mapsto x(1+I_\hbar)
$
is the isomorphism of algebras. 
\end{prop}

\subsubsection{Explicit description of the Cartan subquotient of $\mathcal{W}_{\hbar}(\chi)$}
 In \cite{BGK} authors described the Cartan subquotient $\cC_\nu(\mathcal{W}(\chi))$. Let us formulate the homogeneous version of their result. For simplicity, we assume that $e=e_{\mathfrak{m}}$ is regular in $\mathfrak{m}$ as this is the only case we care about in the paper.
 Let $\gamma \in \mathfrak{h}^*$ be as in \cite[Section 4.1]{BGK} it follows from \cite[Lemma 4.1]{BGK} that $\gamma$ extends (uniquely) to the character of $\mathfrak{r}_0$. Let $S_{-\gamma}\colon \mathcal{U}_\hbar(\mathfrak{r}_0) \iso \mathcal{U}_\hbar(\mathfrak{r}_0)$ be the automorphism given by $x \mapsto x-\hbar^{\frac{\operatorname{deg}x}{2}}\gamma(x)$, $x \in \mathfrak{r}$ (note that the Kazhdan grading becomes {\emph{even}} being restricted to $\mathfrak{r}_0 \subset \mathfrak{m}$).

 Cocharacter $\nu\colon \mathbb{C}^\times \rightarrow Z_M$ defines the grading on $\mathcal{U}_{\hbar}(\widetilde{\mathfrak{r}})$ and on its subalgebra $\mathcal{U}_{\hbar}(\mathfrak{g},\chi)$. Let $\mathcal{U}_{\hbar}(\mathfrak{g},\chi)_0 \subset \mathcal{U}_{\hbar}(\mathfrak{g},\chi)_0$ be the degree zero components. Note that $\mathfrak{g}_0 = \mathfrak{m}$ so $\mathfrak{r}_0 = \mathfrak{r} \cap \mathfrak{m}$. Moreover, note that $\mathfrak{k} \cap \mathfrak{m} = \mathfrak{m}(-1) = 0$ since the $h$-grading on $\mathfrak{m}$ is {\emph{even}}. It follows that $\widetilde{\mathfrak{r}}_0 = \mathfrak{r}_0$. The following lemma is a standard corollary.

\begin{lemma}
The natural embedding $\mathcal{U}_\hbar(\mathfrak{r}_0) \hookrightarrow \mathcal{U}_\hbar(\widetilde{\mathfrak{r}})_0$ induces the identification: 
\begin{equation}\label{Cartan_subq_descr}
\mathcal{U}_\hbar(\mathfrak{r}_0) \iso \cC_\nu(\mathcal{U}_\hbar(\widetilde{\mathfrak{r}})).
\end{equation}
\end{lemma}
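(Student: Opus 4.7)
The plan is to apply the PBW theorem for $\mathcal{U}_\hbar(\widetilde{\mathfrak{r}})$ (\cite[Theorem 2.3]{BGK}) together with the observation, already recorded in the excerpt, that $\widetilde{\mathfrak{r}}_0 = \mathfrak{r}_0$. A preliminary point is that the nonlinear bracket on $\widetilde{\mathfrak{r}}$ preserves the $\nu$-grading: the only component of the bracket that is not inherited from the linear Lie bracket on $\mathfrak{g}$ is the scalar $\langle \chi, [x,y] \rangle$ for $x, y \in \mathfrak{k}^{\mathrm{ne}}$, and this is $\nu$-homogeneous of degree $\deg x + \deg y$ because $\chi$ is $\nu$-invariant ($e \in \mathfrak{m}$, and $\nu$ is a cocharacter of $Z_M$). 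Consequently $\widetilde{\mathfrak{r}}_{<0}$, $\widetilde{\mathfrak{r}}_0 = \mathfrak{r}_0$, and $\widetilde{\mathfrak{r}}_{>0}$ are each nonlinear Lie subalgebras. Writing $A := \mathcal{U}_\hbar(\widetilde{\mathfrak{r}})$, $A^- := \mathcal{U}_\hbar(\widetilde{\mathfrak{r}}_{<0})$, $A^0 := \mathcal{U}_\hbar(\mathfrak{r}_0)$, and $A^+ := \mathcal{U}_\hbar(\widetilde{\mathfrak{r}}_{>0})$, the PBW theorem supplies a vector space isomorphism $A^- \otimes A^0 \otimes A^+ \iso A$ via multiplication.

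Restricting this decomposition to $\nu$-degree zero and using that $A^\pm$ have no degree zero component other than scalars (since the nonzero-degree part of $\widetilde{\mathfrak{r}}$ is entirely in strictly negative or strictly positive degrees), one obtains
\[
A_0 = A^0 \oplus M, \qquad M := \bigoplus_{n > 0} A^-_{-n} \otimes A^0 \otimes A^+_n.
\]
The lemma now reduces to identifying $M$ with the two-sided ideal $J := \sum_{i>0} A_{-i} A_i$ of $A_0$: once that is done, $\cC_\nu(A) = A_0/J = A_0/M \simeq A^0 = \mathcal{U}_\hbar(\mathfrak{r}_0)$, and by construction the composition with the embedding $\mathcal{U}_\hbar(\mathfrak{r}_0) \hookrightarrow A_0$ is the identity.

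The inclusion $M \subseteq J$ is immediate: any summand $a^- a^0 a^+ \in A^-_{-n} \otimes A^0 \otimes A^+_n$ with $n > 0$ factors as $(a^- a^0)\cdot a^+$ with $a^- a^0 \in A_{-n}$ and $a^+ \in A_n$, hence lies in $A_{-n} A_n \subseteq J$. For the reverse inclusion — the one substantive step — the key observation is that $A^-_{<0}$ is a two-sided ideal of $A^-$, so PBW yields an identification of left ideals $A^-_{<0} \cdot A = A^-_{<0} \otimes A^0 \otimes A^+$. Now any $a \in A_{-i}$ with $i > 0$ expands via PBW as $\sum_k a^-_k a^0_k a^+_k$ with $\deg a^-_k + \deg a^+_k = -i$; since $\deg a^+_k \geq 0$, necessarily $\deg a^-_k < 0$, so $a \in A^-_{<0} \cdot A$. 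Therefore $A_{-i} A_i \subseteq A^-_{<0} \cdot A$, and intersecting with $A_0$ produces exactly $M$. This establishes $J \subseteq M$ and completes the proof.

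The only substantive ingredient is the PBW decomposition adapted to the $\nu$-grading; the rest is a bookkeeping argument. I do not expect any serious obstacle, provided one is comfortable with PBW in the nonlinear setting of \cite{BGK}.
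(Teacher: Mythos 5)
Your proof is correct. The paper declares the lemma ``a standard corollary'' of $\widetilde{\mathfrak{r}}_0 = \mathfrak{r}_0$ and offers no argument; the PBW triangular decomposition you use, with the observation that the nonlinear bracket preserves the $\nu$-grading because $\chi$ is $Z_M$-invariant, is exactly the standard argument being invoked.
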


Let $\pi\colon \mathcal{U}_\hbar(\widetilde{\mathfrak{r}})_0 \twoheadrightarrow \mathcal{U}_\hbar(\mathfrak{r}_0)$ be the composition of the surjection $\mathcal{U}_\hbar(\widetilde{\mathfrak{r}})_0 \twoheadrightarrow \cC_\nu(\mathcal{U}_\hbar(\widetilde{\mathfrak{r}}))$ and the inverse to the isomorphism (\ref{Cartan_subq_descr}).

Recall that $Z_\hbar \subset \mathcal{U}_\hbar(\mathfrak{g})$ is the center. 
The natural embedding $Z_\hbar \hookrightarrow \mathcal{W}_\hbar(\chi)$ induces the embedding $Z_\hbar \hookrightarrow \cC_\nu(\mathcal{W}_\hbar(\chi))$. We want to understand the algebra $\cC_{\nu}(U_{\hbar}(\fg, \chi))$. For that, we need the following standard lemma. Let $M$, $N$ be nonnegatively graded modules over the graded ring $R=\mathbb{C}[x_1,\ldots,x_r]$,  $\operatorname{deg}x_i=2$. Assume  that $\operatorname{dim}M_i < \infty$ for every $i \in \mathbb{Z}_{\geqslant 0}$. We also assume that $N$ is flat ($=$ graded free) over $R$.
\begin{lemma}\label{lemma_iso_enough_for_hbar_zero}
 Let $\varphi\colon M \rightarrow N$ be a graded homomorphism. Then  $\varphi$ is an isomorphism iff its specialization to $0 \in \operatorname{Spec}R$ is an isomorphism.
\end{lemma}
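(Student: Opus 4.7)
The plan is to prove the lemma by the standard graded Nakayama toolkit, handling surjectivity and injectivity separately after the trivial forward direction. The key points are that $R = \mathbb{C}[x_1,\ldots,x_r]$ is nonnegatively graded with $R_0 = \mathbb{C}$ and $\mathfrak{m} = R_{>0} = (x_1,\ldots,x_r)$, so the statement ``$\varphi$ specializes to an isomorphism at $0 \in \Spec R$'' means that $\bar{\varphi}\colon M/\mathfrak{m}M \to N/\mathfrak{m}N$ is an isomorphism of graded vector spaces. Only the nontrivial direction requires work.

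For surjectivity of $\varphi$ I would argue by induction on the degree. In degree $0$ one has $(M/\mathfrak{m}M)_0 = M_0$ and $(N/\mathfrak{m}N)_0 = N_0$ since $\mathfrak{m}$ sits in positive degrees, so $\bar{\varphi}_0 = \varphi_0$ is surjective. For $d > 0$ and $n \in N_d$, choose $m \in M_d$ with $\bar{\varphi}(\bar{m}) = \bar{n}$; then $n - \varphi(m) \in \mathfrak{m}N \cap N_d$ can be written as a \emph{finite} sum $\sum_j r_j n_j$ with $r_j \in \mathfrak{m}$ homogeneous of positive degree and $n_j \in N$ homogeneous of degree strictly less than $d$. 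By the inductive hypothesis each $n_j$ lies in $\varphi(M)$, so $n \in \varphi(M)$. This inductive step uses only that $N$ is nonnegatively graded and that $\mathfrak{m}$ sits in strictly positive degrees; finiteness of $\dim M_i$ is not needed here.

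For injectivity set $K := \ker \varphi$; since $\varphi$ is now known to be surjective we have the short exact sequence $0 \to K \to M \to N \to 0$. Tensoring with $\mathbb{C} = R/\mathfrak{m}$ and using flatness of $N$ (which gives $\mathrm{Tor}_1^R(\mathbb{C}, N) = 0$) yields
\begin{equation*}
0 \to K/\mathfrak{m}K \to M/\mathfrak{m}M \xrightarrow{\bar{\varphi}} N/\mathfrak{m}N \to 0.
\end{equation*}
Since $\bar{\varphi}$ is an isomorphism, $K/\mathfrak{m}K = 0$. Now $K$ is a graded submodule of $M$, hence nonnegatively graded with $\dim K_i \leqslant \dim M_i < \infty$ for all $i$, so the graded Nakayama lemma applies: if $K \neq 0$ and $d$ is the smallest degree with $K_d \neq 0$, then $(\mathfrak{m}K)_d$ is built from $K_{d-i}$ with $i \geqslant 2$, all of which vanish by minimality of $d$, giving $(\mathfrak{m}K)_d = 0 \neq K_d$, a contradiction. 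Hence $K = 0$ and $\varphi$ is injective.

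There is no substantive obstacle; the only subtlety is the order in which one uses the two hypotheses. Flatness of $N$ is essential for injectivity (to control $\mathrm{Tor}_1$), while finiteness of $\dim M_i$ is essential only at the end, to apply graded Nakayama to the kernel (the graded free module $N$ may well have infinite-dimensional graded pieces, so one cannot apply graded Nakayama to cokernels directly).
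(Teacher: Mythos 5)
Your proof is correct, and the injectivity half is identical to the paper's: flatness of $N$ kills $\mathrm{Tor}_1$, so tensoring the short exact sequence $0\to K\to M\to N\to 0$ with $R/\mathfrak{m}$ stays exact, giving $K/\mathfrak{m}K=0$, and graded Nakayama finishes. For surjectivity you run a degree-by-degree induction, whereas the paper tensors $M\to N\to\operatorname{coker}\varphi\to 0$ with $R/\mathfrak{m}$ to get $\operatorname{coker}\varphi=\mathfrak{m}\operatorname{coker}\varphi$ and then applies the same minimal-degree Nakayama argument to the cokernel. These are essentially the same idea; your version is marginally more hands-on, the paper's is marginally more uniform (both halves end with ``nonnegatively graded and $\mathfrak{m}C=C$ forces $C=0$'').

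One factual slip in your final commentary: you say that graded Nakayama cannot be applied directly to the cokernel because $N$ (hence $\operatorname{coker}\varphi$) may have infinite-dimensional graded pieces, and that the hypothesis $\dim M_i<\infty$ is what rescues the kernel step. Both claims are off. The minimal-degree form of graded Nakayama --- if $C$ is nonnegatively graded and $\mathfrak{m}C=C$ with $\mathfrak{m}$ in strictly positive degrees, then $C=0$ --- uses only that $C$ is bounded below; finite-dimensionality of the graded pieces plays no role. Indeed, the paper does apply graded Nakayama directly to the cokernel, and your own kernel argument only invokes the minimal-degree principle, never the bound $\dim K_i\leqslant\dim M_i$. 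So the hypothesis $\dim M_i<\infty$ is not actually what makes either half of the proof go through; it just happens to hold in the situation where the lemma is applied.
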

\begin{proof}

Let us first of all show that the morphism $\varphi$ is surjective. Set $\mathfrak{m}=(x_1,\ldots,x_r)$. We have an exact sequence $M \rightarrow N \rightarrow \operatorname{coker}\varphi$. Tensoring it by $R/\mathfrak{m}$ we obtain an exact sequence $M/\mathfrak{m}M \rightarrow N/\mathfrak{m}N \rightarrow \operatorname{coker}\varphi/\mathfrak{m}\operatorname{coker}\varphi$. The first map in this sequence is an isomorphism that implies $\operatorname{coker}\varphi = \mathfrak{m}\operatorname{coker}\varphi$. Note now that $\operatorname{coker}\varphi$ is nonnegative graded and the generators of $\mathfrak{m}$ have positive degrees. This implies that $\operatorname{coker}\varphi=0$. 

Now, consider the exact sequence $0 \rightarrow \operatorname{ker}\varphi \rightarrow M \rightarrow N \rightarrow 0$. Module $N$ is flat so after tensoring by $R/\mathfrak{m}$ we obtain the exact sequence $0 \rightarrow \operatorname{ker}\varphi/\mathfrak{m}\operatorname{ker}\varphi \rightarrow  M/\mathfrak{m}M \rightarrow N/\mathfrak{m}N \rightarrow 0$ implying that $\operatorname{ker}\varphi/\mathfrak{m}\operatorname{ker}\varphi=0$ so $\operatorname{ker}\varphi=0$.
\end{proof}

\begin{prop}\label{prop_cartan_subq_W_in_U_realiz}
The map $S_{-\gamma}\circ \pi \colon \mathcal{U}_{\hbar}(\widetilde{\mathfrak{r}})_0 \rightarrow \mathcal{U}_{\hbar}(\mathfrak{r}_0) \subset \mathcal{U}_\hbar(\mathfrak{m})$ induces the surjective map $\mathcal{U}_\hbar(\mathfrak{g},\chi)_0 \twoheadrightarrow Z(\mathcal{U}_\hbar(\mathfrak{m}))$ that, in its turn, induces the isomorphism $\cC_\nu(\mathcal{U}_\hbar(\mathfrak{g},\chi)) \iso Z(\mathcal{U}_\hbar(\mathfrak{m}))$. 
\end{prop}
\begin{proof}
The homogeneous version of the argument used in the proof of \cite[Theorem 4.3]{BGK} shows that the map $S_{-\gamma} \circ \pi$ induces the homomorphism $\cC_\nu(\mathcal{U}_\hbar(\mathfrak{g},\chi)) \ra Z(\mathcal{U}_\hbar(\mathfrak{m}))$ of graded $\mathbb{C}[\hbar]$-algebras. 
Clearly, $Z(\mathcal{U}_\hbar(\mathfrak{m})) = \mathbb{C}[\mathfrak{h}^*,\hbar]^{W_M}$ is flat over $Z_\hbar = \mathbb{C}[\mathfrak{h}^*,\hbar]^{W}$. 
So, by Lemma \ref{lemma_iso_enough_for_hbar_zero}, in order to check that $\cC_\nu(\mathcal{U}_\hbar(\mathfrak{g},\chi)) \ra Z(\mathcal{U}_\hbar(\mathfrak{m}))$ is an isomorphism it is enough to show that it becomes an isomorphism after the specialization $\hbar=0$. 

Let us identify:
\begin{equation*}
\mathcal{U}_\hbar(\mathfrak{g},\chi) \iso \mathcal{W}_\hbar(\chi)~\text{(see Proposition \ref{alter_def_W})},~\widetilde{HC}_{\hbar}\colon Z(\mathcal{U}_\hbar(\mathfrak{m})) \iso \mathbb{C}[\mathfrak{h}^*]^{W_M} \otimes \mathbb{C}[\hbar].
\end{equation*}
It remains to check that the $\hbar=0$ specialization of the morphism 
\begin{equation}\label{local_our_morph}
\cC_\nu(\mathcal{W}_\hbar(\chi)) 
 \simeq \cC_\nu(\mathcal{U}_\hbar(\mathfrak{g},\chi)) \rightarrow Z(\mathcal{U}_\hbar(\mathfrak{m})) \simeq \mathbb{C}[\mathfrak{h}^*]^{W_M} \otimes \mathbb{C}[\hbar]
\end{equation}
is equal to the isomorphism $\CC[S(\chi)^{Z_M}] \simeq \CC[\mathfrak{h}^*]^{W_M}$ induced by the natural identification $S(\chi)^{Z_M}=S(\chi,\mathfrak{m}) \iso \mathfrak{m}^*/\!/M$.

Recall that the morphism (\ref{local_our_morph}) is induced by the collection of maps:
\begin{equation}\label{maps_that_induce}
\mathcal{W}_{\hbar}(\chi)_0 \leftarrow \mathcal{U}_{\hbar}(\mathfrak{g},\chi)_0 \hookrightarrow \mathcal{U}_{\hbar}(\widetilde{\mathfrak{r}})_0 \rightarrow \mathcal{U}_{\hbar}(\mathfrak{m}) \supset Z(\mathcal{U}_{\hbar}(\mathfrak{m})).
\end{equation}

 Recall also that 
 \begin{equation*}
 \mathcal{W}_{\hbar}(\chi)/(\hbar) = \CC[(\chi+(\mathfrak{g}/\mathfrak{u})^*)/\!/N],~\mathcal{U}_{\hbar}(\mathfrak{g},\chi)/(\hbar) = \CC[\widetilde{\mathfrak{r}}^*/\!/N],
 \end{equation*}
 \begin{equation*}
\mathcal{U}_\hbar(\widetilde{\mathfrak{r}})/(\hbar)=\CC[\widetilde{\mathfrak{r}}^*],~\mathcal{U}_{\hbar}(\mathfrak{m})/(\hbar)=\CC[\mathfrak{m}^*],~Z(\mathcal{U}_{\hbar}(\mathfrak{m}))/(\hbar)=\CC[\mathfrak{m}^*/\!/M].
\end{equation*} 
We will also use the identification $S(\chi) \iso (\chi+(\mathfrak{g}/\mathfrak{u})^*)/\!/N$ induced by the embedding $\chi+\mathfrak{z}_{\mathfrak{g}}(f) \subset \chi+(\mathfrak{g}/\mathfrak{u})^*$ (we identify $\mathfrak{z}_{\mathfrak{g}}(f)$ with its image in $\mathfrak{g}^*$).

After specializing to $\hbar=0$ the maps in (\ref{maps_that_induce})  become:
\begin{equation*}
\CC[\chi+\mathfrak{z}_{\mathfrak{g}}(f)]_0 \leftarrow \CC[(\chi+(\mathfrak{g}/\mathfrak{u})^*)/\!/N] \leftarrow \CC[\widetilde{\mathfrak{r}}^*/\!/N]_0 \hookrightarrow \CC[\widetilde{\mathfrak{r}}^*]_0 \rightarrow \CC[\mathfrak{m}^*] \supset \CC[\mathfrak{m}^*]^M, 
\end{equation*}
where the map $\CC[\widetilde{\mathfrak{r}}^*]_0 \twoheadrightarrow \CC[\mathfrak{m}^*]$ is induced by the projection $\widetilde{\mathfrak{r}} \twoheadrightarrow \widetilde{\mathfrak{r}}_0=\mathfrak{r}_0$ composed with the embedding $\mathfrak{r}_0 \hookrightarrow \mathfrak{m}$ and all other morphisms are obvious.
It follows that if $s \in \CC[\widetilde{\mathfrak{r}}^*]$ is invariant w.r.t. both $N$ and $Z_M$, then the induced surjection $\CC[\chi + \mathfrak{z}_{\mathfrak{g}}(f)] \twoheadrightarrow \CC[\mathfrak{m}^*]^M$ sends $s|_{\chi + \mathfrak{z}_{\mathfrak{g}}(f)}$ to $s|_{\mathfrak{m}^*}$. Being restricted to $\chi|_{\mathfrak{m}} + \mathfrak{z}_{\mathfrak{m}}(f)$, it then sends $s|_{(\chi|_{\mathfrak{m}} + \mathfrak{z}_{\mathfrak{m}}(f))}$ to $s|_{\mathfrak{m}^*}$. Since $s|_{(\chi|_{\mathfrak{m}} + \mathfrak{z}_{\mathfrak{m}}(f))}$ is the restriction of $s|_{\mathfrak{m}^*}$ to $\chi|_{\mathfrak{m}} + \mathfrak{z}_{\mathfrak{m}}(f)$, we are done.
\end{proof}

Combining Proposition \ref{prop_cartan_subq_W_in_U_realiz} with the identification $\mathcal{U}_\hbar(\mathfrak{g},\chi) \iso \mathcal{W}_\hbar(\chi)$ described in Proposition \ref{alter_def_W} together with the identification $\widetilde{HC}_{\hbar}\colon Z(\mathcal{U}_\hbar(\mathfrak{m})) \iso \mathbb{C}[\mathfrak{h}^*]^{W_M} \otimes \mathbb{C}[\hbar]$, we obtain the isomorphism: 
\begin{equation}\label{cartan_subq_of_W_eq}
\cC_\nu(\mathcal{W}_\hbar(\chi)) \simeq \mathbb{C}[\mathfrak{h}^*]^{W_M} \otimes \mathbb{C}[\hbar].
\end{equation}

\begin{example*}
Let's consider the example $\chi=0$.   Then, $\mathfrak{m}=\mathfrak{h}$, $\mathcal{W}_\hbar(0)=\mathcal{U}_\hbar(\mathfrak{g})$ but the identification $\mathbb{C}[\mathfrak{h}^*] \otimes \mathbb{C}[\hbar] \simeq \cC_\nu(\mathcal{U}_\hbar(\mathfrak{g}))$ is {\emph{not}} just induced by the natural embedding $\mathcal{U}_\hbar(\mathfrak{h}) \hookrightarrow \mathcal{U}_\hbar(\mathfrak{g})$. It is induced by this embedding composed with the twist $\mathbb{C}[\mathfrak{h}^*,\hbar] \iso \mathbb{C}[\mathfrak{h}^*,\hbar]$ given by a pull back w.r.t. the morphism $(\lambda,\hbar_0) \mapsto (\lambda-\hbar_0\rho_{\mathfrak{g}},\hbar_0)$. We need this shift because we want the center $Z_\hbar$ to identify with $\mathbb{C}[\mathfrak{h}^*]^{W_G} \otimes \mathbb{C}[\hbar]$ after the identifications above.  
\end{example*}


The following lemma follows from the proof of Proposition \ref{prop_cartan_subq_W_in_U_realiz} above.
\begin{lemma}\label{induced_ident_hbar_0}
For $\hbar=0$, the isomorphism (\ref{cartan_subq_of_W_eq}): $\CC[S(\chi)^{Z_M}] \simeq \CC[\mathfrak{h}^*]^{W_M}$ is induced by the natural isomorphism $S(\chi_{\mathfrak{m}})^{Z_M}=S(\chi,\mathfrak{m}) \iso \mathfrak{m}^*/\!/M$.
\end{lemma}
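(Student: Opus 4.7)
The plan is to extract the conclusion directly from the computation already carried out in the proof of Proposition \ref{prop_cartan_subq_W_in_U_realiz}: that proof verifies the $\hbar=0$ specialization of the target isomorphism as an intermediate step (via Lemma \ref{lemma_iso_enough_for_hbar_zero}), so only packaging is required here. First, I would recall that the isomorphism (\ref{cartan_subq_of_W_eq}) is by construction the composition of three isomorphisms: the PBW-type identification $\mathcal{U}_\hbar(\mathfrak{g},\chi) \iso \mathcal{W}_\hbar(\chi)$ of Proposition \ref{alter_def_W}, the isomorphism $\cC_\nu(\mathcal{U}_\hbar(\mathfrak{g},\chi)) \iso Z(\mathcal{U}_\hbar(\mathfrak{m}))$ induced by $S_{-\gamma}\circ \pi$, and the twisted Harish--Chandra isomorphism $\widetilde{HC}_\hbar\colon Z(\mathcal{U}_\hbar(\mathfrak{m})) \iso \CC[\mathfrak{h}^*]^{W_M}\otimes \CC[\hbar]$.

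Next I would specialize each factor at $\hbar=0$. The twist $S_{-\gamma}\colon x \mapsto x-\hbar^{\operatorname{deg}x/2}\gamma(x)$ reduces to the identity modulo $\hbar$, so it disappears. The homomorphism $\widetilde{HC}_\hbar$ specializes to the classical Chevalley isomorphism $\CC[\mathfrak{m}^*]^M \iso \CC[\mathfrak{h}^*]^{W_M}$. On the algebra side we have $\mathcal{W}_\hbar(\chi)/(\hbar) = \CC[(\chi+(\mathfrak{g}/\mathfrak{u})^*)/\!/N]$ and $\mathcal{U}_\hbar(\mathfrak{g},\chi)/(\hbar) = \CC[\widetilde{\mathfrak{r}}^*/\!/N]$, and the Cartan subquotient at $\hbar=0$ corresponds geometrically to taking $Z_M$-fixed points, yielding $\CC[S(\chi)^{Z_M}]$ and $\CC[(\widetilde{\mathfrak{r}}^*/\!/N)^{Z_M}]$ respectively. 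The projection $\pi$ at $\hbar=0$ is induced by the projection $\widetilde{\mathfrak{r}}\twoheadrightarrow \widetilde{\mathfrak{r}}_0$, which equals $\mathfrak{r}_0=\mathfrak{r}\cap\mathfrak{m}\hookrightarrow \mathfrak{m}$ since $\mathfrak{m}(-1)=0$ (the $h$-grading on $\mathfrak{m}$ is even).

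Finally I would assemble the pieces as done at the end of the proof of Proposition \ref{prop_cartan_subq_W_in_U_realiz}: for an $N\times Z_M$-invariant function $s \in \CC[\widetilde{\mathfrak{r}}^*]$, the composition sends $s|_{\chi+\mathfrak{z}_\mathfrak{g}(f)}\in \CC[S(\chi)^{Z_M}]$ to $s|_{\mathfrak{m}^*}\in \CC[\mathfrak{m}^*]^M$. Restricting further to $\chi|_\mathfrak{m}+\mathfrak{z}_\mathfrak{m}(f)$, this matches the Kostant isomorphism $S(\chi,\mathfrak{m}) = \chi|_\mathfrak{m}+\mathfrak{z}_\mathfrak{m}(f) \iso \mathfrak{m}^*/\!/M$ applied to the regular nilpotent $\chi|_\mathfrak{m}\in \mathfrak{m}^*$, so the induced algebra isomorphism $\CC[S(\chi)^{Z_M}]\simeq \CC[\mathfrak{h}^*]^{W_M}$ is precisely the pullback of the Kostant--Chevalley identification under $S(\chi)^{Z_M} = S(\chi,\mathfrak{m})$.

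The main (minor) obstacle is bookkeeping: verifying that the compatibilities of the degree-zero pieces and the graded identifications $\mathcal{W}_\hbar(\chi)_0 \leftarrow \mathcal{U}_\hbar(\mathfrak{g},\chi)_0 \hookrightarrow \mathcal{U}_\hbar(\widetilde{\mathfrak{r}})_0 \to \mathcal{U}_\hbar(\mathfrak{m}) \supset Z(\mathcal{U}_\hbar(\mathfrak{m}))$ survive the passage to the Cartan subquotient at $\hbar=0$ without introducing auxiliary sign or shift corrections. Since $S_{-\gamma}$ vanishes at $\hbar=0$, these are all straightforward restriction maps, so the obstacle is essentially notational; the substantive content was already performed in the proof of Proposition \ref{prop_cartan_subq_W_in_U_realiz}.
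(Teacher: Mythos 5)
Your proof is correct and follows essentially the same route as the paper: the paper's own proof of this lemma is a one-line reference to the computation already carried out inside the proof of Proposition \ref{prop_cartan_subq_W_in_U_realiz} (that proof reduces the main statement to $\hbar=0$ via Lemma \ref{lemma_iso_enough_for_hbar_zero} and then identifies the $\hbar=0$ specialization with the Kostant--Chevalley map), and your write-up simply unpacks that reference explicitly.
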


\subsubsection{Highest weights for $\mathcal{W}_\hbar(\chi)$}
In this section, we describe the homomorphism (\ref{comp_cart_alg_to_sh}) for $Y=\widetilde{S}(\mathfrak{q},\mathfrak{b})$.

\begin{prop}\label{BGK_hw_theory}
The induced embedding 
\begin{equation*}
\CC[\mathfrak{h}^*,\hbar]^{W_G} \xrightarrow{(HC_\hbar)^{-1}} Z_\hbar \hookrightarrow \cC_\nu(\mathcal{W}_\hbar(\chi)) \simeq \mathcal{W}_\hbar(\chi,\mathfrak{m}) \iso \CC[\mathfrak{h}^*]^{W_M} \otimes \mathbb{C}[\hbar]
\end{equation*}
is given by $s \mapsto \widetilde{s}$, where $\widetilde{s}(\la,\hbar_0)=s(\la-\hbar_0\rho_{\mathfrak{g}},\hbar_0)$ for $\la \in \mathfrak{h}^*$, $\hbar_0 \in \mathbb{C}$. In particular, the image of this embedding is equal to $\CC[\mathfrak{h}^*]^{W_G} \otimes \mathbb{C}[\hbar]$.
\end{prop}
\begin{proof}
Follows from \cite[Theorem 4.7 and Lemma 5.1]{BGK}: from their results, it follows that the composition $\CC[\mathfrak{h}^*]^{W_G} \otimes \CC[\hbar] \xrightarrow{(\widetilde{HC}_\hbar)^{-1}} Z_\hbar \hookrightarrow \cC_\nu(\mathcal{W}(\chi)) \simeq \mathcal{W}_\hbar(\chi,\mathfrak{m}) \iso \CC[\mathfrak{h}^*,\hbar]^{W_M}$ sends $s$ to $s$, 
it remains to note that the composition $\widetilde{HC}_{\hbar} \circ (HC_\hbar)^{-1}$ is given by $s \mapsto \widetilde{s}$. 
\end{proof}

We are now ready to describe the composition (\ref{comp_cart_alg_to_sh}) in the case of finite $W$-algebras.
Recall from the discussion in  \cref{quant_slod_var} the identification $\mathcal{A}_{\hbar,\mathfrak{h}^*}(\widetilde{S}(\chi_{\mathfrak{m}})) \simeq \mathcal{W}_\hbar(\chi_{\mathfrak{m}}) \otimes_{Z_\hbar} \CC[\mathfrak{h}^*,\hbar]$. 
Under this identification the $\rho_{\mathfrak{g}}$-twisted action of $\CC[\mathfrak{h}^*,\hbar]$ on $\mathcal{A}_{\hbar,\mathfrak{h}^*}(\widetilde{S}(\chi_{\mathfrak{m}}))$ corresponds to the {\emph{standard}} action of $\CC[\mathfrak{h}^*,\hbar]$ on the tensor product above.



\begin{prop}\label{cartan_subq_w}
After the identification (\ref{cartan_subq_of_W_eq}) combined with the bijection 
\begin{equation*}
\widetilde{S}(\chi_{\mathfrak{m}})^{Z_M}=\cB_{\chi_{\mathfrak{m}}}^{Z_M}= {}^{M}W
\end{equation*}
from \cref{easy fixed point Springer fiber}, the homomorphism 
\begin{equation}\label{cartan_to_fixed_slodowy}
\CC[\mathfrak{h}^*,\hbar]^{W_M} \otimes_{Z_\hbar} \CC[\mathfrak{h}^*,\hbar] =\cC_\nu(\mathcal{A}_{\hbar,\mathfrak{h}^*}(\widetilde{S}(\chi_{\mathfrak{m}}))) \rightarrow \Gamma(\widetilde{S}_{\mathfrak{h}^*}(\chi_{\mathfrak{m}}),\mathcal{D}_{\hbar,\mathfrak{h}^*}(\widetilde{S}(\chi_{\mathfrak{m}})))=\bigoplus_{w \in {}^{M}W} \CC[\mathfrak{h}^*,\hbar]
\end{equation}
is given by $f \otimes g \mapsto (f(w\la,\hbar) g)_{w \in {}^MW}$.
\end{prop}
\begin{proof}
Let us first of all note that by Remark \ref{iso_cart_generic} and Proposition \ref{BGK_hw_theory} together with \cref{hw_geom_vs_hw_alg} (a), for fixed $\hbar=\hbar_0$ and a Zariski generic $\la \in \mathfrak{h}^*$, morphism (\ref{cartan_to_fixed_slodowy}) is an isomorphism and should send $s \otimes g$ to the collection of functions $s(w\la,\hbar_0)g(\la,\hbar_0)$ parametrized by  $w \in W_M \backslash W$. From the continuity (see also Proposition \ref{hw_geom_vs_hw_alg} (c)), we conclude that this holds for every $\lambda$, so it remains to identify this labeling with the labeling by $\widetilde{S}(e_{\mathfrak{m}})^{Z_M}$.
This can be done after the specialization at $\hbar=0$. So, using \cref{induced_ident_hbar_0}, we see that it remains to describe the pull back homomorphism
\begin{equation*}
\CC[S(e_{\mathfrak{m}})^{Z_M} \times_{\mathfrak{g}^*/W_G} \mathfrak{h}^*] \rightarrow \CC[\widetilde{S}_{\mathfrak{h}^*}(e_{\mathfrak{m}})^{Z_M}].
\end{equation*}
The fact that it is indeed given by $s \otimes g \mapsto (s(w\la)g)_{w}$  follows from Proposition \ref{descr_fixed_parabolic}: recall that fixed points are pairs $(m_{w\la}w \cdot \mathfrak{p},x_{w\la})$ and the value of $s \in \CC[\mathfrak{h}^*]^{W_M}$ at the corresponding fixed point is equal to $s(x_{w\la})=s(w\la)$. 
\end{proof}

\subsection{Highest weights for finite parabolic $W$-algebras}
Recall from \cref{quant_slod_var} the isomorphism $\mathcal{A}_{\hbar,\mathfrak{h}^*}(\widetilde{S}(\chi_{\mathfrak{m}}))\simeq\mathcal{W}_{\hbar}(\chi_{\mathfrak{m}}) \otimes_{Z_\hbar} \CC[\mathfrak{h}^*,\hbar]$  and the homomorphism $\Phi_{\mathfrak{p}, \chi_\fm}\colon \mathcal{A}_{\hbar,\mathfrak{h}^*}(\widetilde{S}(\chi_{\mathfrak{m}}))\to \mathcal{A}_{\hbar,\mathfrak{h}^*}(\widetilde{S}(\fq, \fp))$ 
It induces the homomorphism: 
\begin{equation*}
\CC[\mathfrak{h}^*,\hbar]^{W_M} \otimes_{Z_\hbar} \CC[\mathfrak{h}^*,\hbar]\simeq\cC_\nu(\mathcal{W}_{\hbar}(\chi_{\mathfrak{m}})) \otimes_{Z_\hbar} \CC[\mathfrak{h}^*,\hbar]\simeq \cC_\nu(\mathcal{A}_{\hbar,\mathfrak{h}^*}(\widetilde{S}(\chi_{\mathfrak{m}}))) \rightarrow \cC_\nu(\mathcal{A}_{\hbar,\mathfrak{X}(\mathfrak{l})}(\widetilde{S}(\mathfrak{q},\mathfrak{p})))
\end{equation*}
that we call the \emph{Cartan comoment map}.

The goal of this section is to describe the composition: 
\begin{multline}\label{comp_parab}
\CC[\mathfrak{h}^*,\hbar]^{W_M} \otimes_{Z_\hbar} \CC[\mathfrak{h}^*,\hbar] \rightarrow \cC_\nu(\mathcal{A}_{\hbar,\mathfrak{X}(\mathfrak{l})}(\widetilde{S}(\mathfrak{q},\mathfrak{p}))) \rightarrow \\ \rightarrow \Gamma(\widetilde{S}_{\mathfrak{X}(\mathfrak{l})}(\mathfrak{q},\mathfrak{p}),\mathcal{D}_{\hbar,\mathfrak{X}(\mathfrak{l})}(\widetilde{S}(\mathfrak{q},\mathfrak{p})))=\bigoplus_{w \in {}^{M}(W/W_L)} \CC[\mathfrak{X}(\mathfrak{l}),\hbar].
\end{multline}


\begin{lemma}\label{lemma_hw_compute_parabolicW}
The homomorphism (\ref{comp_parab}) is given by:  
\begin{equation*}
s \otimes g \mapsto ((\la,\hbar_0) \mapsto s(w\la,\hbar_0) g(\la+\hbar_0\rho_{{\mathfrak{l}}},\hbar_0))_{w \in {}^M(W/W_L)},
\end{equation*}
where $(\lambda,\hbar_0) \in \mathfrak{X}(\mathfrak{l}) \oplus \mathbb{C}$.
\end{lemma}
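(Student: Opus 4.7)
The plan is to deduce the formula from Proposition \ref{cartan_subq_w} (the finite $W$-algebra case $\mathfrak{p}=\mathfrak{b}$) using the functoriality of Cartan subquotients under the quantum Hamiltonian reduction map $\Phi_{\mathfrak{p},\chi_{\mathfrak{m}}}$, together with the parameter shift by $\hbar_0\rho_{\mathfrak{l}}$ encoded in Warning \ref{warning_shift}.

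Concretely, I would fit the composition (\ref{comp_parab}) into the following commutative square of graded $\CC[\hbar]$-algebras:
\begin{equation*}
\xymatrix{
\CC[\mathfrak{h}^*,\hbar]^{W_M}\otimes_{Z_\hbar}\CC[\mathfrak{h}^*,\hbar] \ar[r] \ar[d] & \bigoplus_{w\in{}^{M}W}\CC[\mathfrak{h}^*,\hbar] \ar[d] \\
\mathcal{C}_\nu(\mathcal{A}_{\hbar,\mathfrak{X}(\mathfrak{l})}(\widetilde{S}(\mathfrak{q},\mathfrak{p}))) \ar[r] & \bigoplus_{v\in{}^{M}(W/W_L)}\CC[\mathfrak{X}(\mathfrak{l}),\hbar]
}
\end{equation*}
The top row is the map (\ref{cartan_to_fixed_slodowy}), described by Proposition \ref{cartan_subq_w} as $s\otimes g\mapsto(s(w\lambda,\hbar_0)\,g(\lambda,\hbar_0))_{w\in{}^{M}W}$; the left vertical arrow is $\mathcal{C}_\nu$ applied to $\Phi_{\mathfrak{p},\chi_{\mathfrak{m}}}$; the bottom row is the map (\ref{comp_parab}). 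The right vertical arrow will be identified with the operation that sends a tuple $(f_w)_w$ to the tuple indexed by $v\in{}^{M}(W/W_L)$ given by $f_{w(v)}(\lambda+\hbar_0\rho_{\mathfrak{l}},\hbar_0)$, where $w(v)\in{}^{M}W$ is any representative of the double coset $W_M v W_L$. This is well-defined on the image of the top arrow because for $\lambda\in\mathfrak{X}(\mathfrak{l})$ the scalar $s(w\lambda,\hbar_0)$ depends only on the double coset: $s\in\CC[\mathfrak{h}^*,\hbar]^{W_M}$ is $W_M$-invariant and $\lambda$ is fixed by $W_L$. Commutativity of the square is then essentially formal: $\Phi_{\mathfrak{p},\chi_{\mathfrak{m}}}$ is the quantum Hamiltonian reduction of $\Phi_\mathfrak{p}$ along $U_\ell$, and under the identification of sheaf Cartan subquotients with structure sheaves of torus-fixed points supplied by Lemma \ref{cartan_subq_univ_deform_sh}, the right vertical arrow is nothing but the pullback along the geometric map from $\widetilde{S}_{\mathfrak{X}(\mathfrak{l})}(\mathfrak{q},\mathfrak{p})^T$ to $S_{\mathfrak{h}^*}(\chi_{\mathfrak{m}})^T\simeq\widetilde{S}_{\mathfrak{h}^*}(\chi_{\mathfrak{m}})^T$ (through the explicit description of fixed points in Appendix \ref{app_torus_fixed}), composed with the shifted base change $\CC[\mathfrak{h}^*,\hbar]\to\CC[\mathfrak{X}(\mathfrak{l}),\hbar]$ of Warning \ref{warning_shift}.

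The reason the $\rho_{\mathfrak{l}}$-shift appears asymmetrically (affecting $g$ but not $s$) is as follows. The element $s$ sits in the intrinsic Cartan subquotient $\mathcal{C}_\nu(\mathcal{W}_\hbar(\chi_{\mathfrak{m}}))$ identified via $\widetilde{HC}_\hbar$ for $\mathcal{U}_\hbar(\mathfrak{m})$ as in (\ref{cartan_subq_of_W_eq}); this identification is purely algebraic and carries no deformation parameter, so $s$ is not subject to any shift. The element $g$ sits in the base-change factor of $\mathcal{A}_{\hbar,\mathfrak{h}^*}(\widetilde{S}(\chi_{\mathfrak{m}}))=\mathcal{W}_\hbar(\chi_{\mathfrak{m}})\otimes_{Z_\hbar}\CC[\mathfrak{h}^*,\hbar]$, and the $\CC[\mathfrak{h}^*,\hbar]$-module structure transported to $\mathcal{A}_{\hbar,\mathfrak{X}(\mathfrak{l})}(\widetilde{S}(\mathfrak{q},\mathfrak{p}))$ by $\Phi_{\mathfrak{p},\chi_{\mathfrak{m}}}$ is precisely the shifted one specified in Warning \ref{warning_shift}. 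Combining the top row of the diagram with this identification of the right vertical arrow produces the formula $s\otimes g\mapsto(s(v\lambda,\hbar_0)\,g(\lambda+\hbar_0\rho_{\mathfrak{l}},\hbar_0))_{v\in{}^{M}(W/W_L)}$ asserted by the lemma.

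The main technical obstacle will be the bookkeeping of the two competing $\CC[\mathfrak{h}^*,\hbar]$-module structures (the unshifted one on the source and the Warning-\ref{warning_shift}-shifted one on the target), particularly through the passage from algebraic to sheaf Cartan subquotients. If this direct approach proves too delicate, an alternative is to verify the formula at a Zariski generic $(\lambda,\hbar_0)\in\mathfrak{X}(\mathfrak{l})\oplus\CC$ with $\hbar_0\neq 0$, which suffices by Proposition \ref{hw_geom_vs_hw_alg}(c) together with the fact that Assumption \ref{ass_ample_chern_a} holds for $\widetilde{S}(\mathfrak{q},\mathfrak{p})$. At such points abelian localization holds by Proposition \ref{loc_slodowy} and $\Phi_{\mathfrak{p},\chi_{\mathfrak{m}}}$ is surjective by Remark \ref{surj_Phi_hbar_nonzero}, so the images at the fixed points coincide with the highest weights of the unique irreducible modules in $\mathcal{O}_\nu(\mathcal{A}_{\hbar_0,\hbar_0^{-2}\lambda}(\widetilde{S}(\mathfrak{q},\mathfrak{p})))$; these in turn are determined by the corresponding highest weights for the finite $W$-algebra $\mathcal{W}_{\hbar_0}(\chi_{\mathfrak{m}})$ at the $\rho_{\mathfrak{l}}$-shifted parameter via Proposition \ref{BGK_hw_theory} and Proposition \ref{cartan_subq_w}, yielding the claimed formula.
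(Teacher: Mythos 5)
Your proposal contains two approaches, and neither one is fully correct as written, although the second is close to the paper's argument.

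Your primary (commutative-diagram) approach has two problems. First, it is essentially circular: the description you give of the right vertical arrow, $(f_w)_w\mapsto\bigl(f_{w(v)}(\lambda+\hbar_0\rho_{\mathfrak{l}},\hbar_0)\bigr)_{v}$, together with the known top arrow, \emph{is} the assertion of the lemma. To make this into an actual proof you would need to identify that vertical arrow independently, and the geometric identification you suggest is wrong. There is no morphism $\widetilde{S}_{\mathfrak{X}(\mathfrak{l})}(\mathfrak{q},\mathfrak{p})\to\widetilde{S}_{\mathfrak{h}^*}(\chi_{\mathfrak{m}})$ to pull back along; the only geometric morphism available is the resolution map $\widetilde{S}_{\mathfrak{X}(\mathfrak{l})}(\mathfrak{q},\mathfrak{b})\to\widetilde{S}_{\mathfrak{X}(\mathfrak{l})}(\mathfrak{q},\mathfrak{p})$, whose induced map on fixed points goes in the direction opposite to the one your argument requires. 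More importantly, the map $\mathcal{C}_\nu(\Phi_{\mathfrak{p},\chi_{\mathfrak{m}}})$ is a quantum Hamiltonian reduction map, not a geometric pullback, and precisely which of the $|{}^{M}W|$ characters of $\mathcal{C}_\nu(\mathcal{A}_{\hbar,\mathfrak{h}^*}(\widetilde{S}(\chi_{\mathfrak{m}})))$ survive in $\mathcal{C}_\nu(\mathcal{A}_{\hbar,\mathfrak{X}(\mathfrak{l})}(\widetilde{S}(\mathfrak{q},\mathfrak{p})))$, yielding the $|{}^{M}(W/W_L)|$ characters on the target, is the real content of the lemma and cannot be dismissed as ``essentially formal.''

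Your alternative approach at the end is the one the paper actually uses, but you have omitted its crucial step. After invoking abelian localization at a suitable $\lambda$ with $\hbar_0\neq 0$, one knows that $\Phi_{\mathfrak{p},\chi_{\mathfrak{m}}}$ is surjective, so every simple in $\mathcal{O}_\nu(\mathcal{A}_{1,\lambda}(\widetilde{S}(\mathfrak{q},\mathfrak{p})))$ is a simple in $\mathcal{O}_\nu(\mathcal{A}_{1,\widetilde{\lambda}}(\widetilde{S}(\chi_{\mathfrak{m}})))$, and the highest weights of the latter are $w(\widetilde{\lambda})-\rho_{\mathfrak{g}}$ for $w\in{}^{M}W$. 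The nontrivial part — which you skip — is the identification of exactly which of these simples factor through the parabolic $W$-algebra: the paper uses Webster's equivalence of $\mathcal{O}_\nu$ for $\mathcal{W}_\lambda(\chi_{\mathfrak{m}})$ with a singular block of the BGG category $\mathcal{O}$, observes that $\mathcal{O}_\nu$ for the parabolic $W$-algebra is the corresponding parabolic subcategory, and invokes the classical criterion that the simple indexed by $wW_M$ is $\mathfrak{p}$-integrable iff $w$ is longest in $W_Lw$. This is what produces the labeling by ${}^{M}(W/W_L)\hookrightarrow{}^{M}W$ and hence the claimed formula. Without it, the phrase ``determined by the corresponding highest weights ... via Propositions \ref{BGK_hw_theory} and \ref{cartan_subq_w}, yielding the claimed formula'' does not follow. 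You should also note that after the formula is established as a set, the identification of \emph{which} entry corresponds to which fixed point is done by specializing to $\hbar=0$ and using Proposition \ref{descr_fixed_parabolic} together with Corollary \ref{separating_cor_parabolic}, a step that does not appear in your proposal.
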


\begin{proof}
For $\hbar=1$, the abelian localization holds for integral $\la$ such that $\widetilde{\lambda}$ is  $\mathfrak{p}$-antidominant regular (see Propositions \ref{loc_flags}, \ref{loc_slodowy}). Moreover, the homomorphism 
\begin{equation*}
\Phi_{\mathfrak{p},\chi_{\mathfrak{m}}}\colon \mathcal{A}_{1,\widetilde{\la}}(\widetilde{S}(\chi_{\mathfrak{m}})) \twoheadrightarrow \mathcal{A}_{1,\la}(\widetilde{S}(\mathfrak{q},\mathfrak{p}))
\end{equation*}
is surjective for such $\lambda$ (see \cite[Theorem 3.8]{BoBr}). It follows that every irreducible $\mathcal{A}_{1,\la}(\widetilde{S}(\mathfrak{q},\mathfrak{p}))$-module is also irreducible over $\mathcal{A}_{1,\widetilde{\la}}(\widetilde{S}(\chi_{\mathfrak{m}}))$.

Consider now the category $\CO_\nu(\mathcal{A}_{1,\widetilde{\la}}(\widetilde{S}(\chi_{\mathfrak{m}})))$ and recall that the irreducible object corresponding to $w \in {}^MW$ has the highest weight equal to $w(\widetilde{\la})-\rho_{\mathfrak{g}}$. It follows from \cite[Theorem 8]{w} that the category $\CO_\nu(\mathcal{A}_{1,\widetilde{\la}}(\widetilde{S}(\chi_{\mathfrak{m}})))$ is equivalent to the BGG category $\mathcal{O}$ for $\mathcal{U}_{\la_{\mathfrak{m}}}(\mathfrak{g})$, where $\la_{\mathfrak{m}} \in \mathfrak{h}^*$ is a $\mathfrak{b}$-antidominant weight with stabilizer $W_M$. Under this equivalence,
$\CO_\nu(\mathcal{A}_{1,\la}(\widetilde{S}(\mathfrak{q},\mathfrak{p}))) \subset \CO_\nu(\mathcal{A}_{1,\widetilde{\la}}(\widetilde{S}(\chi_{\mathfrak{m}})))$ identifies with the
subcategory consisting of objects that are locally finite for the action of $\mathfrak{p}$ (i.e., we consider the {\emph{singular}} block of the {\em{parabolic}} BGG category $\mathcal{O}$, see for example \cite[Section 9]{hum_bgg_O}).
It follows from the definitions that the equivalence above sends a simple object of $\CO_\nu(\mathcal{A}_{1,\widetilde{\la}}(\widetilde{S}(e_{\mathfrak{m}})))$ corresponding to $w \in {}^MW$ to the simple of the BGG category $\mathcal{O}$ for $\mathcal{U}_{\la}(\mathfrak{g})$ with the highest weight $\widetilde{\nu}=w^{-1}(\la) - \rho_{\mathfrak{g}}$ (note that the map $w \mapsto w^{-1}$ indeed defines the bijection ${}^MW \iso W/W_M$). It is a standard result (compare with \cite[Section 9.2]{hum_bgg_O} and \cite[Section 2]{gruber}) that a simple of $\mathcal{O}_{\widetilde{\nu}}(\mathcal{U}_\la)$ labeled by $wW_M$ is $\mathfrak{p}$-integrable iff $w$ is longest in $W_Lw$. We conclude that the simple objects in $\CO_\nu(\mathcal{A}_{1,\la}(\widetilde{S}(\mathfrak{q},\mathfrak{p}))) \subset \CO_\nu(\mathcal{A}_{1,\la}(\widetilde{S}(e_{\mathfrak{m}})))$ are labeled by ${}^M(W/W_L) \hookrightarrow {}^MW$, where the embedding sends $wW_L \in {}^M(W/W_L)$ to the longest representative in $wW_L$.

We then conclude from Proposition \ref{cartan_subq_w} 
that the homomorphism (\ref{comp_parab}) sends $s \otimes g$ to a collection of functions $(\lambda,\hbar_0) \mapsto s(w\la,\hbar_0) g(\la +\hbar_0\rho_{{\mathfrak{l}}},\hbar_0)$ parametrized by ${}^M(W/W_L)$ (shift by $\hbar_0\rho_{\mathfrak{l}}$ appears because the homomorphism $\Phi_{\mathfrak{p},e_{\mathfrak{m}}}$ is {\emph{not}} $\CC[\mathfrak{X}(\mathfrak{l}),\hbar]$-linear, see Warning \ref{warning_shift}). To determine the labeling, we restrict to $\hbar=0$. As in the proof of Proposition \ref{cartan_subq_w}, the claim then follows from Proposition  \ref{descr_fixed_parabolic} together with Corollary \ref{separating_cor_parabolic}.
\end{proof}


\section{Equivariant Hikita-Nakajima conjecture: general approach}\label{equivariant conjecture section}
In this section, we describe the general approach to the Hikita-Nakajima conjecture that we propose (following \cite{ksh}) as well as its relation to a certain statement proposed by Bullimore, Dimofte, Gaiotto, Hilburn, and Kim in \cite{votices_vermas}. We also briefly discuss a quantum version of this conjecture proposed by Kamnitzer, Proudfoot, and McBreen in \cite{kmp} and mention how it fits into the approach. Let us emphasize that the approach is highly based on the ideas of Bellamy, Braverman, Kamnitzer, Losev, Nakajima, Tingley, Webster, Weekes, Yacobi, and their co-authors.

\subsection{Large polynomial algebra} Let $X$, $X^\vee$ be a pair of symplectically dual varieties $X$, $X^\vee$, and assume that both $X$ and $X^\vee$ admit symplectic resolutions $Y$ and $Y^\vee$ respectively. 

\begin{warning}
Recall that in the statement of Hikita-Nakajima conjecture, we only assume that $X^\vee$ has a symplectic resolution.  
\end{warning}

	
Our goal is to compare the $\CC[\mathfrak{t}_{X^\vee},\hbar]$-algebras: 
\begin{equation}\label{our_alg}
H^*_{T_{X^\vee} \times \CC^\times}(Y^\vee), \qquad \cC_\nu(\mathcal{A}_{\hbar,\mathfrak{h}_{X}}(X)).
\end{equation}

Let us first note that both algebras should carry an additional structure. Namely, recall (see Section \ref{sect_sympl_duality_basic}) that the symplectic duality predicts identification of vector spaces $\mathcal{A}^2_{\hbar,\mathfrak{h}_{X},0} \simeq H^2_{T_{X^\vee}\times \CC^\times}(Y^\vee)$. 
Moreover, note that the elements of these vector spaces considered as elements of algebras (\ref{our_alg}) commute with each other. For $H^2_{T_{X^\vee}\times \CC^\times}(Y^\vee)$ the claim is clear, and for $\mathcal{A}^2_{\hbar,\mathfrak{h}_{X},0}$ it follows from the exact sequence (\ref{exact_seq_alg}). Thus, both algebras are not only $\CC[\mathfrak{t}_{X^\vee},\hbar]$-algebras but also modules over the bigger polynomial algebra $S^{\bullet}(\mathcal{A}^2_{\hbar,\mathfrak{h}_{X},0})=S^{\bullet}(H^2_{T_{X^\vee}\times \CC^\times}(Y^\vee))$.

In fact, in many cases, there is an even bigger polynomial algebras acting on both of the algebras above. For example, assume that $Y^\vee=\widetilde{\mathcal{M}}_H$ is a Nakajima quiver variety corresponding to some quiver $I$, and let $X=\mathcal{M}_C$ be the corresponding Coulomb branch, and $A$ be a flavor torus. Then $\mathcal{A}_{\hbar,\mathfrak{a}}=H_*^{(G_I[[t]] \rtimes \CC^\times) \times A}(\mathcal{R})$ is naturally a module over the polynomial algebra $B:=H^*_{G_I \times A \times \CC^{\times}}(\on{pt})$ (the subalgebra $B \cdot 1\subset \mathcal{A}_{\hbar,\mathfrak{a}}$ is sometimes called the ``Cartan'' subalgebra of $\mathcal{A}_{\hbar,\mathfrak{a}}$, see e.g. \cite[Definition 3.16]{BFNII}). The same algebra $B$ acts naturally on $H^*_{A \times \CC^\times}(\widetilde{\mathcal{M}}_H)=H^*_{G_I \times A \times \CC^\times}(\mu^{-1}(0)^{\mathrm{st}})$ via the natural restriction homomorphism (which is surjective by the results of McGerty and Nevins, see \cite{kirwan_surj_quiv}):
\begin{equation*}
B=H^*_{G_I \times A\times \CC^\times}(\mu^{-1}(0)) \twoheadrightarrow H^*_{G_I \times A \times \CC^\times}(\mu^{-1}(0)^{\mathrm{st}})=H^*_{A \times \CC^{\times}}(\widetilde{\mathcal{M}}_H).
\end{equation*}

\begin{rmk}
The appearance of the algebra $B$ on both sides of the symplectic duality and its importance for the Hikita-Nakajima conjecture was already observed in \cite[Section 5.7]{HKW}.  
\end{rmk}

Let us list another example which is relevant to the subject of the paper. Let  $Y^\vee=\widetilde{S}(\mathfrak{p}^\vee,\mathfrak{q}^\vee)$ and $X=\on{Spec}\CC[\widetilde{S}(\mathfrak{q},\mathfrak{p})]$. Then $H^*_{T_{e^\vee} \times \CC^\times}(\widetilde{S}(\mathfrak{p}^\vee,\mathfrak{q}^\vee))$ is a module over $H^*_{T_{e^\vee} \times \CC^\times}(T^*\mathcal{Q}^\vee)$ via the restriction map. The latter algebra is a quotient of the polynomial algebra $B=H^*_{T_{e^\vee} \times \CC^\times}(\on{pt}) \otimes H^*_{L}(\on{pt})$ (see \cref{Polquotient}). Now, the morphism $\Psi_{\chi, \fp}\colon \mathcal{A}_{\hbar,\mathfrak{X}(\mathfrak{l})}(\widetilde{S}(\mathfrak{q},\mathfrak{b})) \rightarrow \mathcal{A}_{\hbar,\mathfrak{X}(\mathfrak{l})}(\widetilde{S}(\mathfrak{q},\mathfrak{p}))$  constructed in Section \ref{quant_slod_var}  realizes $\cC_\nu(\mathcal{A}_{\hbar,\mathfrak{X}(\mathfrak{l})}(\widetilde{S}(\mathfrak{q},\mathfrak{p})))$ as a module over $\cC_\nu(\mathcal{A}_{\hbar,\mathfrak{X}(\mathfrak{l})}(\widetilde{S}(\mathfrak{q},\mathfrak{b})))=H^*_{T_{e^\vee}}(T^*\mathcal{Q}^\vee)$. So, the algebra $B$ indeed acts on both sides of the picture. 

\begin{rmk}
Note that in the Coulomb branch case, $B \cdot 1$  is a maximal commutative subalgebra of $\mathcal{A}_{\hbar,\mathfrak{h}_X}$ and it defines the integrable system on $\mathcal{M}_C$. We do not know if this should be a general phenomenon. In the case of Slodowy varieties, one might wonder if the algebra $B \cdot 1$ should correspond to some commutative subalgebra in the $W$-algebra. See also a recent preprint \cite{Tu_integr_syst_min_nilp}.
\end{rmk}

\subsection{Main conjecture} 
So, let $B$ be some polynomial algebra that acts naturally on both of our algebras. Acting on $1$, we obtain the homomorphisms: 
\begin{equation*}
B \xrightarrow{a} H^*_{T_{X^\vee} \times \CC^\times}(Y^\vee),~ B \xrightarrow{b} \cC_\nu(\mathcal{A}_{\hbar,\mathfrak{h}_X}(X)).
\end{equation*}
Recall that we have the {\emph{natural}} identification 
\begin{equation*}
H^*_{T_{X^\vee} \times \CC^\times}((Y^\vee)^{T_{X^\vee}}) \simeq \Gamma(Y_{\mathfrak{h}_X}^{T_X},\cC_\nu(\mathcal{D}_{\hbar,\mathfrak{h}_X}(Y)))).
\end{equation*}
induced by the bijection $(Y^\vee)^{T_{X^\vee}} \simeq Y^{T_X}$ together with the isomorphism $\mathfrak{t}_{X^\vee} \simeq \mathfrak{h}_X$ (see Section \ref{sect_sympl_duality_basic}). 
Note that both of the algebras above are simply $\bigoplus_{p \in Y^{T_X}}\CC[\mathfrak{h}_{X},\hbar]$.

\subsubsection{Statement of the main conjecture}
The following Conjecture should be compared with \cite[Conjecture 5.26]{HKW} and \cite[Conjecrture 8.10]{KamnitzerTingleyWebsterWeeksYacobi}.

\begin{conj}\label{conj_most_general_weak_Hikita}
The following diagram is commutative: 

\begin{equation}\label{min_comm_diag}
\xymatrix{ &  H^*_{T_{X^\vee} \times \CC^\times}(Y^\vee) \ar[r] &  H^*_{T_{X^\vee} \times \CC^\times}((Y^\vee)^{T_{X^\vee}}) \ar[d]_{\simeq} \\
B \ar[ru]^{a} \ar[rd]^{b} & &  \bigoplus_{p \in Y^{T_X}}\CC[\mathfrak{h}_{X},\hbar] \\
& \cC_\nu(\mathcal{A}_{\hbar,\mathfrak{h}_X}(X)) \ar[r] & \ar[u]^{\simeq}  \Gamma(Y_{\mathfrak{h}_X}^{T_X},\cC_\nu(\mathcal{D}_{\hbar,\mathfrak{h}_X}(Y))))
}
\end{equation}

In particular, there exists the isomorphism of $B$-algebras: 
\begin{equation} \label{main identification}
\on{Im}(B \rightarrow H^*_{T_{Y^\vee} \times \CC^\times}(Y^\vee)) \simeq \on{Im}(B \rightarrow \cC_\nu(\mathcal{A}_{\hbar,\mathfrak{h}_X}(X)) \rightarrow \Gamma(Y_{\mathfrak{h}_X}^{T_X},\cC_\nu(\mathcal{D}_{\hbar,\mathfrak{h}_X}(Y)))).
\end{equation}
\end{conj}

\begin{warning}
Let us mention an important technical detail: for (\ref{min_comm_diag}) to be commutative, one should {\emph{normalize}} the map $a$ by saying that $\hbar$ goes to $\frac{1}{2}\hbar$ (see, for example, \cite[Conjecture 8.9]{KamnitzerTingleyWebsterWeeksYacobi} or Theorem \ref{main_th_weak_hikita} below). This seemingly minor detail is crucial for applications to representation theory (see, for example, definition of $I({\mathbb{O}}^\vee)$ in Section \ref{sect_refined_BVLS}).
	
\end{warning}

\begin{rmk}\label{form_conj_as_solving}
Let us mention one possible way to think about Conjecture \ref{conj_most_general_weak_Hikita}. We can consider $H^*_{T_{Y^\vee} \times \CC^\times}(Y^\vee)$, $\cC_\nu(\mathcal{A}_{\hbar,\mathfrak{h}_X}(X)$ as $\CC[\mathfrak{h}_X,\hbar]$-modules of rank $|(Y^\vee)^{T_{X^\vee}}|=|Y^{T_X}|$.
The algebra $B$ acts on both of these modules by $\CC[\mathfrak{h}_X,\hbar]$-linear endomorphisms. Let $\mathbb{K}$ be the algebraic closure of the field of fractions $\on{Frac}(\CC[\mathfrak{h}_X,\hbar])$. Base changing to $\mathbb{K}$, our modules become $|Y^{T_X}|$-dimensional vector spaces over $\mathbb{K}$, and they decompose into the direct sum of one-dimensional modules over $B_{\mathbb{K}}:=B \otimes_{\CC[\mathfrak{h}_X,\hbar]} \mathbb{K}$ parametrized by certain characters of $B$ (in other words, we diagonalize the action of $B_{\mathbb{K}}$ on these modules). Conjecture \ref{conj_most_general_weak_Hikita} gives a concrete recipe to find these characters and claims that they should be the same.  
\end{rmk}

\subsubsection{Main conjecture in the context of localization algebras}

In \cite[Definition 2.1]{localiz_alg}, authors introduced a notion of a {\emph{localization algebra}}, let us recall their definition. 

\begin{definition} A 
localization algebra is a quadruple $\mathcal{Z}=(U,Z,\mathcal{I},h)$, where $U$ is a finite-dimensional complex vector space, $Z$ is a finitely generated graded $\on{Sym}U$-algebra, $\mathcal{I}$ is a finite set, and 
\begin{equation*}
h\colon Z \rightarrow \bigoplus_{p \in \mathcal{I}}\on{Sym}U
\end{equation*}
is a homomorphism of $\on{Sym}U$-algebras. If the kernel and cokernel are torsion $\on{Sym}U$-modules, then we call $\mathcal{Z}$ strong. If $Z$ is free of rank $|\mathcal{I}|$ as a $\on{Sym}U$-module, we call $\mathcal{Z}$ free.  
\end{definition}

We observe that algebras $H^*_{T_{X^\vee} \times \CC^\times}(Y^\vee)$, $\cC_\nu(\mathcal{A}_{\hbar,\mathfrak{h}_{X}}(X))$ have {\emph{natural}} structures of localization algebras. Indeed, for $Z=H^*_{T_{X^\vee} \times \CC^\times}(Y^\vee)$, as was already observed in \cite[Example 2.2]{localiz_alg}, we should take $U=\on{Lie}(T_{X^\vee} \times \mathbb{C}^\times)^*$, $\mathcal{I}=(Y^\vee)^{T_{X^\vee}}$ and $h=\iota^*$, where $\iota$ is the embedding $(Y^\vee)^{T_{X^\vee}} \hookrightarrow Y^\vee$. This is an example of a {\emph{strong}} and {\emph{free}} localization algebra. Now, for $Z=\cC_\nu(\mathcal{A}_{\hbar,\mathfrak{h}_{X}}(X))$, we take $U=(\mathfrak{h}_X \oplus \mathbb{C})^*$, $\mathcal{I}=Y^{T_{Y}}$, and $h$ being the natural map $\cC_\nu(\mathcal{A}_{\hbar,\mathfrak{h}_X}(X)) \rightarrow \Gamma(Y_{\mathfrak{h}_X}^{T_X},\cC_\nu(\mathcal{D}_{\hbar,\mathfrak{h}_X}(Y))))$. This is an example of a {\emph{strong}} localization algebra. 

Note now that our Conjecture \ref{conj_most_general_weak_Hikita} basically claims that the localization algebras structures above should be {\emph{compatible}}. 
In this sense, Conjecture \ref{conj_most_general_weak_Hikita} should be considered in the framework of the duality of  ``localization algebras'', see \cite[Section 10.6]{BPWII}.

Let us mention that in \cite[Section 10.6]{BPWII}, \cite{localiz_alg} authors study another interesting example of the localization algebra, namely the (universal) deformation $Z(\widetilde{E})$ of the center of the {\emph{geometric}} category $\mathcal{O}_\nu(\mathcal{D}_\la)$ for $Y^\vee$. In \cite[Conjecture 10.32]{BPWII} authors conjecture that for $\lambda$ such that $\mathcal{O}_\nu(\mathcal{D}_\la)$ is {\emph{indecomposable}}, localization algebras $Z(\widetilde{E})$, $H^*_{T_{X^\vee}}(Y^\vee)|_{\lambda}$ should be isomorphic. We {\emph{do not know}} how to generalize this statement to arbitrary $\lambda$.

\subsection{Relation to \cite{votices_vermas} and quantum Hikita-Nakajima conjecture}

\subsubsection{Relation to physics} Let us now explain the motivation for Conjecture \ref{conj_most_general_weak_Hikita} coming from the paper \cite{votices_vermas} by Bullimore, Dimofte, Gaiotto, Hilburn, and Kim.  We restrict ourselves to the case $Y^\vee=\widetilde{\mathcal{M}}_H$, $X=\mathcal{M}_C$ for a quiver gauge theory. Recall that $A$ is a flavor torus. 
Pick a $A$-fixed point $p \in \widetilde{\mathcal{M}}_H^{A}$.
In \cite{votices_vermas}, authors conjecture that (roughly speaking) the quantization $\mathcal{A}_{\hbar,\mathfrak{a}}(\mathcal{M}_C)$ should act on the equivariant  cohomology $H^*_{A \times \CC^\times}(\on{QMaps}_{p}(\mathbb{P}^1,\widetilde{\mathcal{M}}_H))$ of the space of based quasi-maps sending $\infty$ to $p$ and the corresponding module should be the universal {\emph{point}} module $\Theta_{\hbar,\mathfrak{a}}(p^{\vee})$ over the quantized Coulomb branch. In general, it is not clear how to define the action of $\mathcal{A}_{\hbar,\mathfrak{a}}(\mathcal{M}_C)$ on the space above (see \cite{tamagni} for the case of $\mathfrak{sl}_2$). On the other hand, the action of $B=H^*_{G_I \times A \times \CC^\times}(\on{pt})$ is easy to describe. Namely, consider the evaluation at $0$ morphism $\on{ev}_0\colon \on{QMaps}_{p}(\mathbb{P}^1,\widetilde{\mathcal{M}}_H) \rightarrow \mu^{-1}(0)/G_{I}$, then the element $\tau$ should act via the multiplication by $\on{ev}^*(\tau)$ (compare with \cite{l}).

Now, the natural grading on the point module should correspond to the decomposition of the space  $\on{QMaps}_{p}(\mathbb{P}^1,\widetilde{\mathcal{M}}_H)$ via the degree of the quasi-map, let us denote the degree $d$ component by $\on{QMaps}_{p}^d(\mathbb{P}^1,\widetilde{\mathcal{M}}_H)$. So, the highest weight component is nothing else but the cohomology of $\on{QMaps}_{p}^0(\mathbb{P}^1,\widetilde{\mathcal{M}}_H):=\{p\}$. We conclude that the action of $\tau \in B$ on the highest weight component $H^*_{A \times \CC^\times}(p)=\CC[\mathfrak{a},\hbar]$ is given by the multiplication by $\iota_{p}^*\tau$. This is precisely what Conjecture \ref{conj_most_general_weak_Hikita} claims.

\subsubsection{Relation to the quantum Hikita-Nakajima conjecture} \label{quantum Dmod}
As we see from the discussion above, Conjecture \ref{conj_most_general_weak_Hikita} is a shadow of the (conjectural) realization of the universal point module over the Coulomb branch via the space of based quasi-maps to the Higgs branch. This statement should, in particular, imply that characters of (universal) point modules (considered as functions on $B$) coincide with the $\hbar=q$ specializations of (normalized) vertex functions {\em{with descendants}} (introduced by Okounkov in \cite[Section 7.2]{O}) restricted to the corresponding fixed points.
It turns out that this statement is precisely the quantum analog of Conjecture \ref{conj_most_general_weak_Hikita} and can be used to prove the quantum Hikita conjecture in some cases (the quantum Hikita conjecture was formulated in \cite{kmp}). Namely, normalized vertex functions
with descendants are {\emph{solutions}} of the ``PSZ'' quantum $D$-module while characters of point modules are solutions of the $D$-module of graded traces (compare with Remark \ref{form_conj_as_solving}). By identifying solutions, we obtain identifications of the corresponding $D$-modules.
This is the joint work in progress of Hunter Dinkins, Ivan Karpov, and the second author. The relation of the quantum Hikita conjecture with the action of quantized Coulomb branches on the cohomology of the quasi-maps spaces was already observed in \cite[Remark 1.10]{HKW}. We are grateful to Joel Kamnitzer for pointing this out.

\subsection{Refined Hikita-Nakjima conjecture vs Hikita-Nakajima conjecture}\label{weak_vs_original} The natural question is: when Conjecture \ref{conj_most_general_weak_Hikita} implies the actual Hikita-Nakajima conjecture? Clearly, this is the case when morphisms $a,b$ are surjective and $\cC_\nu(\mathcal{A}_{\hbar,\mathfrak{h}_X}(X))$ is flat over $\CC[\mathfrak{h}_{X},\hbar]$ (note that $H^*_{T_{X^\vee} \times \CC^\times}(Y^\vee)$ is always free over $H^*_{T_{X^\vee} \times \CC^\times}(\on{pt})$). 
The following claims hold.

\begin{itemize}
    \item[(i)] Morphisms $a,b$ are surjective if $Y^\vee$ is a Nakajima quiver variety and $X$ is the corresponding Coulomb branch (surjectivity of $a$ follows from \cite[Corollary 1.5]{kirwan_surj_quiv}, for the surjectivity of $b$ see \cite[Proposition 8.7]{ksh} and references therein).
    \item[(ii)] Module $\cC_\nu(\mathcal{A}_{\hbar,\mathfrak{h}_X}(X))$ is flat over $\CC[\mathfrak{h}_{X},\hbar]$ when $X$ is a Coulomb branch of type $ADE$ quiver corresponding to a framing vector $(w_i)$  and a dimension vector $(v_i)$ such that $w_i \neq 0$ implies that $\omega_i$ is minuscule and $\mu=\sum_{i}w_i\omega_i - \sum_i v_i\al_i$ is dominant (follows from \cite[Section 8.2]{KamnitzerTingleyWebsterWeeksYacobi}). This also holds when $X$ is a Coulomb branch of the Jordan quiver (see \cite[Appendix A]{ksh}). Finally, we have flatness for $X=S(e^\vee)$ (see Proposition \ref{BGK_hw_theory}).
    \item[(iii)] Morphism $a$ is in general {\emph{not}} surjective for $Y^\vee=\widetilde{S}(e^\vee)$, see Example \ref{gl3 in B}
    \item[(iv)] Morphism $b$ is in general {\emph{not}} surjective. For example, consider $e^\vee\in \fsp({2n})$ that corresponds to a $2$-row partition. Then $\widetilde{D}(\OO^\vee)$ is a double cover of an orbit $\OO\subset \fsp_{2n+1}$ that has the form $(3,2,...,2,1,...,1)$. This double cover can be realized as another nilpotent orbit in $\fso_{2n+2}$ (\cite[Proposition 2.12]{FJLS2023}). One can check that the map $b$ is not surjective. In this case, it makes more sense to replace the polynomial algebra $B$ that comes from $\fso({2n+1})$ with the one that comes from $\fso_{2n+2}$.
    \item[(v)] Module $\cC_\nu(\mathcal{A}_{\hbar,\mathfrak{h}_X}(X))$ is in general {\emph{not}} flat over $\CC[\mathfrak{h}_{X},\hbar]$ for $X=\overline{\mathbb{O}}$, see Example \ref{counterflat}.
\end{itemize}

In general, we expect that $\cC_\nu(\mathcal{A}_{\hbar,\mathfrak{h}_X}(X))$ is flat over $\CC[\mathfrak{h}_{X},\hbar]$ for any {\emph{quiver gauge theory}} (see \cite[Conjecture 8.3]{ksh}).
From the above, we already know that Conjecture \ref{conj_most_general_weak_Hikita} implies the Hikita-Nakajima conjecture when $X$ is a Coulomb branch as in (ii). Proof of Conjecture \ref{conj_most_general_weak_Hikita} in this particular case can be deduced from the results of \cite{KamnitzerTingleyWebsterWeeksYacobi}, \cite{catO_yang}. 

Overall, Conjecture \ref{conj_most_general_weak_Hikita} is a replacement of the 
Hikita-Nakajima conjecture. 



\begin{rmk}\label{rem_general_conj}
Let us now drop the assumption that $X$ has a resolution of singularities. Recall that $\widetilde{X} \rightarrow X$ is the $\QQ$-factorial terminalization corresponding to the choice of $\nu\colon \CC^\times \rightarrow T_{X^\vee}$ on the dual side. We still conjecture that there exists an isomorphism of algebras 
\begin{multline*}
\on{Im}(S^\bullet(H^2_{T_{X^\vee} \times \CC^\times}(Y^\vee)) \rightarrow H^*_{T_{X^\vee} \times \CC^\times}(Y^\vee)) \simeq \\
 \simeq \on{Im}(S^\bullet(\mathcal{A}^2_{\hbar,\mathfrak{h}_X,0}(X)) \rightarrow \cC_\nu(\mathcal{A}_{\hbar,\mathfrak{h}_X}(X)) \rightarrow \Gamma(\widetilde{X}^{T_{X}},\mathcal{D}_{\hbar,\mathfrak{h}_X}(\widetilde{X}))).
\end{multline*}
A more refined version of this conjecture for $X^\vee=S(e^\vee)$, $X=\on{Spec}\CC[\widetilde{D}(\mathbb{O}^\vee)]$ is given in Section \ref{sec_more_gen_settings}.
\end{rmk}

\section{Refined equivariant Hikita-Nakajima conjecture for $\widetilde{S}(\mathfrak{q},\mathfrak{p})$}\label{Section_weak_HN_for_parab_lodowy_proof}  

The goal of this section is to prove Conjecture \ref{conj_most_general_weak_Hikita} for the pair 
\begin{equation*}
Y^\vee=\widetilde{S}(\mathfrak{p}^\vee,\mathfrak{q}^\vee),~Y=\widetilde{S}(\mathfrak{q},\mathfrak{p}),
\end{equation*}
where $\mathfrak{q}^\vee, \mathfrak{p}^\vee \subset \mathfrak{g}^\vee$ are determined by $\mathfrak{p}$, $\mathfrak{q}$ respectively using the natural bijection between roots of $\mathfrak{g}$ and roots of $\mathfrak{g}^\vee$.

\subsection{Restriction to fixed points  for $T^*\mathcal{Q}^\vee$ and $\widetilde{S}(\mathfrak{p}^{\vee},\mathfrak{q}^\vee)$}
Recall the restriction map $p\colon \mathfrak{X}(\mathfrak{m})=H^2(T^*\mathcal{Q}_-^\vee) \rightarrow H^2(\widetilde{S}(\mathfrak{p}_-^\vee,\mathfrak{q}_-^\vee))$.
By results of Section \ref{sec_loc_thm_parab_W}, abelian localization theorem holds for $(p(\la),\widetilde{S}(\mathfrak{p}^\vee,\mathfrak{q}^\vee))$ if $\la\in \fX(\fm)$ is $\mathfrak{q}^\vee$-antidominant.
So, on the dual side, we should pick generic $\nu\colon \CC^\times \rightarrow Z_M$ that is $Q$-antidominant.  By Proposition \ref{descr_fixed_parabolic}, torus fixed points of $Y$ are in the natural bijection with $(W_M\backslash W/W_L)^{\mathrm{free}}$, and the fixed points of $Y^\vee$ are in bijection with $(W_L\backslash W/W_M)^{\mathrm{free}}$. The bijection of (\ref{canonical:)_ident_fixed_points}) is given by: \begin{equation*}
{\bf{i}}\colon Y^{T_X} \iso (Y^\vee)^{T_{X^\vee}},\, [w] \mapsto [w^{-1}].
\end{equation*}

Our goal is to compare the algebras $H^*_{Z_{L^\vee} \times \CC^\times}(\widetilde{S}(\mathfrak{p}^\vee,\mathfrak{q}^\vee))$, $\cC_{\nu}(\mathcal{A}_{\hbar,\mathfrak{X}(\mathfrak{l})}(\widetilde{S}(\mathfrak{q},\mathfrak{p})))$.

Without losing the generality, we can assume that $G^\vee$ is simply connected. It follows that the derived subgroup of $M^\vee$ is simply connected. Recall that the action of $\CC^\times$ on $T^*\mathcal{Q}^\vee$ given by $t \cdot (\mathfrak{q}',x)=(\mathfrak{q}',t^{-2}x)$.

As in Section \ref{subsec_expl_descr_equiv_partial} for a finite dimensional $M^\vee$-module $V$, let $\mathcal{V}$ be the induced vector bundle $G^\vee \times^{Q^\vee} V \rightarrow \mathcal{Q}^{\vee}$. Slightly abusing notations, we will denote by the same symbol the pullback of $\mathcal{V}$ to $T^*\mathcal{Q}^\vee$.
Note that $\mathcal{V}$ has a natural $T^\vee \times \CC^\times$-equivariant structure (actually, even $G^\vee \times \CC^\times$-equivariant structure). 
Let us describe the algebra $H^*_{T^\vee \times \CC^\times}(T^*\mathcal{Q}^\vee)$. Recall that by the Thom isomorphism, the pullback induces the isomorphism $H^*_{T^\vee \times \CC^\times}(\mathcal{Q}^\vee) \iso H^*_{T^\vee \times \CC^\times}(T^*\mathcal{Q}^\vee)$.

It then follows from  \cref{cohomology of partial flag} that 
\begin{equation}\label{ident_cohom_parab_pol_alg}
H^*_{T^\vee \times \CC^\times}(T^*\mathcal{Q}^\vee) \iso  \CC[\mathfrak{h}^*,\hbar]^{W_M} \otimes_{\CC[\mathfrak{h}^*,\hbar]^W} \CC[\mathfrak{h}^*,\hbar], 
\end{equation}
sending $c_i(\mathcal{V})$ to $e_i(\la_1,\ldots,\la_n)$, where $\la_1,\ldots,\la_n \in \mathfrak{h}=(\mathfrak{h}^\vee)^*$ are $\mathfrak{h}^\vee$-weights (with multiplicities) that appear in $V^*$. Let 
\begin{equation*}
a\colon \CC[\mathfrak{h}^*,\hbar]^{W_M} \otimes \CC[\mathfrak{h}^*,\hbar] \twoheadrightarrow \CC[\mathfrak{h}^*,\hbar]^{W_M} \otimes_{\CC[\mathfrak{h}^*,\hbar]^W} \CC[\mathfrak{h}^*,\hbar]
\end{equation*}
be given by $s(\la,\hbar) \otimes g(\la,\hbar) \mapsto [s(\la,\frac{1}{2}\hbar) \otimes g(\la,\frac{1}{2}\hbar)]$.

\begin{lemma}\label{restriction_fixed_point_parabolic}
The restriction homomorphism 
\begin{equation}\label{restr_homom_formula}
\CC[\mathfrak{h}^*,\hbar]^{W_M} \otimes_{\CC[\mathfrak{h}^*,\hbar]^W} \CC[\mathfrak{h}^*,\hbar]=H^*_{T^\vee \times \CC^\times}(\mathcal{Q}^\vee) \rightarrow H^*_{T^{\vee} \times \CC^\times}((\mathcal{Q}^\vee)^{T^{\vee}})=\bigoplus_{[w] \in W/W_{M}}\CC[\mathfrak{h}^*,\hbar]
\end{equation}
is given by $s \otimes g \mapsto ((w \cdot s)g)_{[w] \in W/W_M}$.
\end{lemma}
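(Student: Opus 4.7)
The plan is to verify the formula by direct computation on a generating set of $H^*_{T^\vee \times \mathbb{C}^\times}(\mathcal{Q}^\vee)$, exploiting the fact that the restriction to fixed points is a homomorphism of $\mathbb{C}[\mathfrak{h}^*,\hbar]$-algebras. Under the identification (\ref{ident_cohom_parab_pol_alg}), the two tensor factors have very different geometric origins, so it will be convenient to compute the restriction separately on each factor and then combine.

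First, by Proposition \ref{fixed pt of partial flag} (applied to $G^\vee$, $T = T^\vee$, $P = Q^\vee$, so $M$ is identified with $M^\vee$ and $W_L$ is trivial), one identifies $(\mathcal{Q}^\vee)^{T^\vee}$ with $W/W_M$: a class $[w]$ corresponds to the point $\dot w Q^\vee$ for any representative $\dot w \in N_{G^\vee}(T^\vee)$. For the second tensor factor, elements come from the pullback $H^*_{T^\vee \times \mathbb{C}^\times}(\mathrm{pt}) \to H^*_{T^\vee \times \mathbb{C}^\times}(\mathcal{Q}^\vee)$; since restriction commutes with pull-back from a point, any such $g$ restricts to the constant tuple $(g)_{[w]}$.

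For the first tensor factor, recall from Section \ref{descr_gen_rel_equiv_cohom_partial_via_chern} that generators have the form $s = e_i(\mathrm{ch}_{\mathfrak{h}^\vee}V^*) \in \mathbb{C}[\mathfrak{h}^*,\hbar]^{W_M}$ corresponding to $c_i^{T^\vee}(\mathcal{V})$ for $V$ an $M^\vee$-module. The key calculation is to identify the $T^\vee$-representation on the fiber $\mathcal{V}|_{\dot w Q^\vee}$: from the definition of the induced bundle, $t\cdot[\dot w,v] = [t\dot w, v] = [\dot w, (\dot w^{-1}t\dot w)\cdot v]$, so a $T^\vee$-weight $\mu$ of $V$ becomes $w\mu$ on this fiber. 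Consequently the negatives of the Chern roots of $\mathcal{V}|_{\dot w Q^\vee}$ are $\{w\lambda_j\}$ where $\{\lambda_j\} = \mathrm{ch}_{\mathfrak{h}^\vee}V^*$, giving
\[
c_i^{T^\vee}(\mathcal{V})\bigl|_{\dot w Q^\vee} = e_i(w\lambda_1,\ldots,w\lambda_n) = w\cdot e_i(\lambda_1,\ldots,\lambda_n) = w\cdot s.
\]
Combining this with the previous paragraph, the restriction map sends $s\otimes g$ to $((w\cdot s)\,g)_{[w]\in W/W_M}$.

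The remaining checks are routine: one must verify that the proposed formula is well defined on the tensor product over $\mathbb{C}[\mathfrak{h}^*,\hbar]^W$, which is immediate since $w\cdot s = s$ for $s \in \mathbb{C}[\mathfrak{h}^*,\hbar]^W$, and that it does not depend on the choice of coset representative $w$, which holds because the first factor is $W_M$-invariant. Since the two tensor factors together generate $H^*_{T^\vee\times\mathbb{C}^\times}(\mathcal{Q}^\vee)$ as an algebra, the multiplicativity of the restriction map then determines the formula on all of $H^*_{T^\vee\times\mathbb{C}^\times}(\mathcal{Q}^\vee)$. The only potentially subtle point is the sign/twist convention in relating Chern roots to $\mathfrak{h}^\vee$-weights, but this is built into the identification (\ref{ident_cohom_parab_pol_alg}) and is the same convention used on both sides of the computation, so no additional shift is introduced.
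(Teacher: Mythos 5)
Your proof is correct and follows essentially the same strategy as the paper: compute the restriction at each torus fixed point by identifying the fiber of the induced bundle $\mathcal{V}$ as a $T^\vee$-representation, and then use multiplicativity and $\mathbb{C}[\mathfrak{h}^*,\hbar]$-linearity. The paper makes an additional pass through the full flag variety, pulling $\mathcal{V}$ back along $\mathcal{B}^\vee \to \mathcal{Q}^\vee$ and working with line bundles $\mathcal{O}_{\mathcal{B}^\vee}(-\lambda)$; you avoid this by computing the fiber representation directly on $\mathcal{Q}^\vee$, which is a harmless shortcut since the equivariant Chern class at a point is determined by the $T^\vee$-module structure of the fiber regardless of whether the bundle globally splits.

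One small slip in wording: having computed that a weight $\mu$ of $V$ becomes $w\mu$ on the fiber, and taking $\{\lambda_j\} = \mathrm{ch}_{\mathfrak{h}^\vee}V^*$ (so $\lambda_j = -\mu_j$), the Chern roots of $\mathcal{V}|_{\dot w Q^\vee}$ (in the paper's convention that $c_i^T(E) = e_i(-\chi_1,\ldots,-\chi_n)$ for characters $\chi_j$ of $E$) are $\{-w\mu_j\} = \{w\lambda_j\}$, not their negatives as your sentence says. The displayed formula $c_i^{T^\vee}(\mathcal{V})|_{\dot w Q^\vee} = e_i(w\lambda_1,\ldots,w\lambda_n)$ and everything that follows are nevertheless correct; only the parenthetical phrase ``the negatives of the Chern roots'' should read ``the Chern roots.''
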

\begin{proof}
The restriction homomorphism is clearly $\CC[\mathfrak{h}^*,\hbar]$-linear, so we only need to compute the image of $c_i(\mathcal{V}) \otimes 1$. Consider the natural morphism $\mathcal{B}^\vee \rightarrow \mathcal{Q}^\vee$,
the pull back of $\mathcal{V}$ to $\mathcal{B}^\vee$ decomposes into the direct sum of line bundles, so it is enough to describe the character by which $T^\vee$ acts on the fiber of a line bundle $\mathcal{O}_{\mathcal{B}^\vee}(-\la)=G^\vee \times_{B^{\vee}} \mathbb{C}_{-\la}$ ($\la\colon T^\vee \rightarrow \CC^\times$) restricted to a  $T^\vee$-fixed point $wB^\vee/B^\vee$.

Fiber of $\CO_{\mathcal{B}^\vee}(-\la)$ at $wB^\vee/B^\vee$ identifies with  $wB^\vee \times_{B^\vee} \CC_{-\la}$ so for $t \in T^\vee$ and $c \in \CC_{-\la}$ we have: 
\begin{equation*}
t \cdot [w,c]=[tw,c]=[w(w^{-1}(t)),c]=[w,w^{-1}(t^{-1})\cdot c]=\la(w^{-1}(t))[w,c].
\end{equation*}
So the $T^\vee$-character of $\CO_{\mathcal{B}^\vee}(-\la)|_{wB^\vee/B^\vee}$ , considered as an element of $(\mathfrak{h}^\vee)^*$ is given by $\xi \mapsto \langle w^{-1}(\xi),\la\rangle=\langle \xi,w(\la)\rangle$ so is equal to $w(\la)$. 

It follows that $c_i(\mathcal{V})$ maps to $e_i(w\la_1,\ldots,w\la_n)$, where $\la_i$ are weights of $V^*$. Recall also that the identification (\ref{ident_cohom_parab_pol_alg}) is given by $c_i(\mathcal{V}) \mapsto e_i(\la_1,\ldots,\la_n)$. So, we finally conclude that the map (\ref{restr_homom_formula}) sends $e_i(\la_1,\ldots,\la_n) \otimes 1$ to the collection of functions $(e_i(w\la_1,\ldots,w\la_n))_{[w] \in W/W_M}$. The claim follows.
\end{proof}

We have the embedding $\widetilde{S}(\mathfrak{p}^\vee,\mathfrak{q}^\vee) \subset T^*\mathcal{Q}^\vee$.
Recall that the $\CC^\times$-action on $\widetilde{S}(\mathfrak{p}^\vee,\mathfrak{q}^\vee)$ is induced by the cocharacter $\CC^\times \rightarrow T^\vee \times \CC^\times$, $t \mapsto (2\rho_{\mathfrak{l}}(t),t)$. 


It still makes sense to consider a restriction (and forgetting the equivariance) homomorphism 
\begin{equation*}
\on{res}\colon H^*_{T^\vee \times \CC^\times}(T^*\mathcal{Q}^\vee) \rightarrow H^*_{Z_{L^\vee} \times \CC^\times}(\widetilde{S}(\mathfrak{p}^\vee,\mathfrak{q}^\vee))\end{equation*} 
but it will {\emph{not}} be $\CC[\mathfrak{h}^*,\hbar]$-linear, the twist by $2\hbar\rho_{\mathfrak{l}}$ appears (compare with Warning \ref{warning_shift}).
Namely, consider the homomorphism $u\colon Z_{L^\vee} \times \mathbb{C}^\times \rightarrow T^\vee \times \mathbb{C}^\times$ given by $(g,t) \mapsto (2\rho_{\mathfrak{l}}(t)g,t)$. Let $\iota\colon \widetilde{S}(\mathfrak{p}^\vee,\mathfrak{q}^\vee) \hookrightarrow T^*\mathcal{Q}^\vee$ be the embedding. 
It follows from the definitions that  for $p \in Z_{L^\vee} \times \mathbb{C}^\times$ and $x \in \widetilde{S}(\mathfrak{p}^\vee,\mathfrak{q}^\vee) $ we have $\iota(p \cdot x)=a(p) \cdot \iota(x)$. It follows that the homomorphism $\on{res}$ is indeed well-defined.   
Let 
\begin{equation*}
j\colon \mathbb{C}[\mathfrak{h}^*,\hbar] \twoheadrightarrow  \mathbb{C}[\mathfrak{X}(\mathfrak{l}),\hbar]
\end{equation*}
be the map given by 
\begin{equation*}
f \mapsto ((\lambda,\hbar_0) \mapsto f(\lambda+2\hbar_0\rho_l,\hbar_0)).
\end{equation*}
The following lemma holds by the definitons.

\begin{lemma}\label{lemma_twisting_restriction}
For $x \in H^*_{T^\vee \times \mathbb{C}^\times}(T^*\mathcal{Q}^\vee)$     and $f \in \mathbb{C}[\mathfrak{h}^*,\hbar]=H^*_{T^\vee \times \mathbb{C}^*}(\on{pt})$, we have 
\begin{equation*}
\on{res}(fx)=j(f)\on{res}(x).
\end{equation*}
\end{lemma}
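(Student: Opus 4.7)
The plan is to factor $\on{res}$ as a composition of two maps and track the effect on coefficients. Concretely, the twisted equivariance identity $\iota(p\cdot x)=u(p)\cdot\iota(x)$ for $p\in Z_{L^\vee}\times\mathbb{C}^\times$ and $x\in\widetilde{S}(\mathfrak{p}^\vee,\mathfrak{q}^\vee)$ (stated in the paragraph just before the lemma) tells us that if we let $Z_{L^\vee}\times\mathbb{C}^\times$ act on $T^*\mathcal{Q}^\vee$ through $u$, then the embedding $\iota$ is genuinely $(Z_{L^\vee}\times\mathbb{C}^\times)$-equivariant. I would therefore write
\begin{equation*}
\on{res}\;=\;\iota^{*}\circ\phi_u,
\end{equation*}
where $\phi_u\colon H^{*}_{T^{\vee}\times\mathbb{C}^\times}(T^{*}\mathcal{Q}^\vee)\to H^{*}_{Z_{L^\vee}\times\mathbb{C}^\times}(T^{*}\mathcal{Q}^\vee)$ is the change-of-equivariance map induced by $u$, and $\iota^{*}$ is the honest equivariant pullback after the change of groups.

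Next I would observe that $\iota^{*}$ is a homomorphism of $H^{*}_{Z_{L^\vee}\times\mathbb{C}^\times}(\on{pt})=\mathbb{C}[\mathfrak{X}(\mathfrak{l}),\hbar]$-algebras, while $\phi_u$ is a ring homomorphism whose restriction to $H^{*}_{T^{\vee}\times\mathbb{C}^\times}(\on{pt})=\mathbb{C}[\mathfrak{h}^{*},\hbar]$ is precisely the pullback along the homomorphism of Lie algebras $u_{*}\colon\on{Lie}(Z_{L^\vee}\times\mathbb{C}^\times)\to\on{Lie}(T^{\vee}\times\mathbb{C}^\times)$ induced by $u$. Differentiating $(g,t)\mapsto(2\rho_{\mathfrak{l}}(t)\,g,t)$ at the identity gives $u_{*}(\lambda,\hbar_{0})=(\lambda+2\hbar_{0}\rho_{\mathfrak{l}},\hbar_{0})$. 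Hence on functions
\begin{equation*}
\phi_u(f)(\lambda,\hbar_{0})\;=\;f(\lambda+2\hbar_{0}\rho_{\mathfrak{l}},\hbar_{0})\;=\;j(f)(\lambda,\hbar_{0})
\end{equation*}
for every $f\in\mathbb{C}[\mathfrak{h}^{*},\hbar]$, which is exactly the map $j$ defined immediately before the statement.

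Combining these two remarks, for $f\in H^{*}_{T^{\vee}\times\mathbb{C}^\times}(\on{pt})$ and $x\in H^{*}_{T^{\vee}\times\mathbb{C}^\times}(T^{*}\mathcal{Q}^\vee)$ we get
\begin{equation*}
\on{res}(fx)\;=\;\iota^{*}\bigl(\phi_u(f)\,\phi_u(x)\bigr)\;=\;\iota^{*}(j(f))\cdot\iota^{*}(\phi_u(x))\;=\;j(f)\,\on{res}(x),
\end{equation*}
where in the last equality we used that $j(f)$ already lies in the scalar subring $\mathbb{C}[\mathfrak{X}(\mathfrak{l}),\hbar]$ and that $\iota^{*}$ is linear over this subring. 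This is the formula claimed in the lemma.

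There is really no difficulty in this argument; the only step that requires a little care is the identification of $\phi_u$ on $H^{*}_{T^{\vee}\times\mathbb{C}^\times}(\on{pt})$ with $j$. This amounts to a sign-free computation of the differential of $u$, together with the sign convention $c_{1}^{T}(\mathbb{C}_{\lambda})\mapsto-\lambda$ recalled in Section~\ref{basic eqcoh}, both of which are routine. In short, Lemma~\ref{lemma_twisting_restriction} is a direct consequence of the twisted equivariance of $\iota$ and the functoriality of equivariant cohomology under change of groups.
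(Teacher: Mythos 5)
Your proof is correct and takes essentially the same approach as the paper: both reduce the lemma to computing the differential of $u$, namely $(\lambda,\hbar_0)\mapsto(\lambda+2\hbar_0\rho_{\mathfrak{l}},\hbar_0)$, and observing that $j$ is exactly the induced pullback on equivariant cohomology of a point. Your explicit factorization $\on{res}=\iota^{*}\circ\phi_u$ with $\iota^{*}$ linear over $\mathbb{C}[\mathfrak{X}(\mathfrak{l}),\hbar]$ is a slightly more structured way of packaging the same observation the paper states in one line.
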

\begin{proof}
Passing to the Lie algebras level, the homomorphism $u$ induces the map 
\begin{equation}\label{map_derivative_of_u}
\mathfrak{z}(\mathfrak{l}^\vee) \oplus \mathbb{C} \rightarrow \mathfrak{h}^\vee \oplus \mathbb{C},\,(\la,\hbar_0) \mapsto (\la+2\hbar_0\rho_\mathfrak{l},\hbar_0).
\end{equation}
It remains to note that $j$ is nothing else but the pullback homomorphism induced by the morphism (\ref{map_derivative_of_u}). 
\end{proof}


\begin{cor}\label{cor_descr_comp_for_slodowy_side}
The composition
\begin{multline}\label{restr_fixed_parab_slodowy_formula}
\CC[\mathfrak{h}^*,\hbar]^{W_M} \otimes_{\CC[\mathfrak{h}^*,\hbar]^W} \CC[\mathfrak{h}^*,\hbar]=H^*_{T^\vee \times \CC^\times}(T^*\mathcal{Q}^\vee) \rightarrow H^*_{Z_{L^\vee} \times \CC^\times}(\widetilde{S}(\mathfrak{p}^\vee,\mathfrak{q}^\vee)) \rightarrow \\
\rightarrow H^*_{Z_{L^\vee} \times \CC^\times}(\widetilde{S}(\mathfrak{p}^\vee,\mathfrak{q}^\vee)^{Z_{L^\vee}})=\bigoplus_{[w] \in (W_L\backslash W/W_{M})^{\mathrm{free}}}\CC[\mathfrak{X}(\mathfrak{l}),\hbar]
\end{multline}
is given by $[s \otimes g] \mapsto ((\la,\hbar_0) \mapsto ((w \cdot s)g)(\la+2\hbar_0\rho_{\mathfrak{l}},\hbar_0))_{[w] \in (W_L\backslash W/W_{M})^{\mathrm{free}}}$. 
\end{cor}
\begin{proof}
Let us, first of all, compute the image of $[s \otimes 1]$ under (\ref{restr_fixed_parab_slodowy_formula}). It follows from \cref{descr_fixed_parabolic} that we have the natural embedding $\widetilde{S}(\mathfrak{p}^\vee,\mathfrak{q}^\vee)^{Z_{L^\vee}} \subset (\mathcal{Q}^\vee)^{T^\vee}$.
Recall also that the embedding $\widetilde{S}(\mathfrak{p}^\vee,\mathfrak{q}^\vee) \subset T^*\mathcal{Q}^\vee$ is $Z_{L^\vee} \times \mathbb{C}^\times$-equivariant w.r.t. the homomorphism $u$.
Then it from Lemma \ref{restriction_fixed_point_parabolic} that the image of $[s \otimes 1]$ is equal to 
\begin{equation*}
((\la,\hbar_0) \mapsto (w \cdot s)(\la+2\hbar_0\rho_{\mathfrak{l}},\hbar_0)_{[w] \in (W_L \backslash W/ W_M)^{\mathrm{free}}}.
\end{equation*}
Now, to determine the image of $[s \otimes g]$, it is enough to apply Lemma \ref{lemma_twisting_restriction}.
\end{proof}

\subsection{Main result}

\begin{prop}\label{main_prop_weak_hikita}
The following diagram is commutative (recall that the homomorphism $a$ sends $\hbar$ to $\frac{1}{2}\hbar$, and the identification of $\widetilde{S}(\mathfrak{p}^\vee,\mathfrak{q}^{\vee})^{Z_{L^\vee}}$ with $(W_M \backslash W/W_L)^{\mathrm{free}}$ is via ${\bf{i}}$):
\begin{equation*}
\xymatrix{ H^*_{T^\vee \times \CC^\times}(T^*\mathcal{Q}^\vee) \ar[r]^{\on{res}}  & H^*_{Z_{L^\vee} \times \CC^\times}(\widetilde{S}(\mathfrak{p}^\vee,\mathfrak{q}^\vee)) \ar[r]^{\on{res}} &  H^*_{Z_{L^\vee} \times \CC^\times}(\widetilde{S}(\mathfrak{p}^\vee,\mathfrak{q}^\vee)^{Z_{L^\vee}}) \ar[d]_{\simeq} \\
\CC[\mathfrak{h}^*,\hbar]^{W_M} \otimes \CC[\mathfrak{h}^*,\hbar] \ar[u]^{a} \ar[d]_{b} &  &  \bigoplus_{[w] \in (W_M \backslash W/W_L)^{\mathrm{free}}}\CC[\mathfrak{X}(\mathfrak{l}),\hbar] \\
\cC_\nu(\mathcal{A}_{\hbar,\mathfrak{X}(\mathfrak{l})}(\widetilde{S}(\mathfrak{q},\mathfrak{b}))) \ar[r]^{\cC_\nu(\Phi)} & \cC_\nu(\mathcal{A}_{\hbar,\mathfrak{X}(\mathfrak{l})}(\widetilde{S}(\mathfrak{q},\mathfrak{p}))) \ar[r] & \ar[u]^{\simeq}  \Gamma(\widetilde{S}_{\mathfrak{X}(\mathfrak{l})}(\mathfrak{q},\mathfrak{p})^{T_{e}},\cC_\nu(\mathcal{D}_{\hbar,\mathfrak{X}(\mathfrak{l})}(\widetilde{S}(\mathfrak{q},\mathfrak{p}))))
}
\end{equation*}
\end{prop}
\begin{proof}
Directly follows from Lemma \ref{lemma_hw_compute_parabolicW},  and Corollary \ref{cor_descr_comp_for_slodowy_side}.
\end{proof}

As an immediate corollary of Proposition \ref{main_prop_weak_hikita}, we finally conclude.
\begin{theorem}\label{main_th_weak_hikita}
We have an isomorphism of graded $\mathbb{C}[\mathfrak{h}^*,\hbar]^{W_M} \otimes \mathbb{C}[\mathfrak{h}^*,\hbar]$-algebras:
\begin{multline*}
\on{Im}(H^*_{T^\vee \times \mathbb{C}^\times}T^*\mathcal{Q}^\vee \rightarrow H^*_{Z_{L^\vee} \times \mathbb{C}^\times}(\widetilde{S}(\mathfrak{p}^\vee,\mathfrak{q}^\vee))) \simeq \\
\simeq \on{Im}(\mathbb{C}[\mathfrak{h}^*,\hbar]^{W_M} \otimes \mathbb{C}[\mathfrak{h}^*,\hbar] \rightarrow \cC_\nu(\mathcal{A}_{\hbar,\mathfrak{X}(\mathfrak{l})}(\widetilde{S}(\mathfrak{q},\mathfrak{p}))) \rightarrow \cC_\nu(\mathcal{D}_{\hbar,\mathfrak{X}(\mathfrak{l})}(\widetilde{S}(\mathfrak{q},\mathfrak{p}))))
\end{multline*}
\end{theorem}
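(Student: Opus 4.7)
The plan is to deduce Theorem \ref{main_th_weak_hikita} from the commutative diagram of Proposition \ref{main_prop_weak_hikita} by showing that both images embed into the common ``big'' algebra $\bigoplus_{[w] \in (W_M \backslash W/W_L)^{\mathrm{free}}}\mathbb{C}[\mathfrak{X}(\mathfrak{l}),\hbar]$ sitting in the right column of that diagram, and that after this embedding the two images literally coincide.

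First, I would handle the cohomological (LHS) side. The cohomological algebra $H^*_{Z_{L^\vee} \times \mathbb{C}^\times}(\widetilde{S}(\mathfrak{p}^\vee,\mathfrak{q}^\vee))$ is free over $H^*_{Z_{L^\vee} \times \mathbb{C}^\times}(\mathrm{pt})$ by Lemma \ref{lemma_equiv_cohom_resol_free} (applied to the symplectic resolution $\widetilde{S}(\mathfrak{p}^\vee,\mathfrak{q}^\vee)\to S(\mathfrak{p}^\vee,\mathfrak{q}^\vee)$, whose $Z_{L^\vee}$-fixed locus is finite by Proposition \ref{prop fixed point parabolic Slodowy}). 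Freeness plus the Berline--Vergne localization theorem then force the restriction map to the fixed locus
\begin{equation*}
H^*_{Z_{L^\vee} \times \mathbb{C}^\times}(\widetilde{S}(\mathfrak{p}^\vee,\mathfrak{q}^\vee))\hookrightarrow H^*_{Z_{L^\vee} \times \mathbb{C}^\times}(\widetilde{S}(\mathfrak{p}^\vee,\mathfrak{q}^\vee)^{Z_{L^\vee}})
\end{equation*}
to be injective. Consequently, the image of $H^*_{T^\vee \times \mathbb{C}^\times}(T^*\mathcal{Q}^\vee)$ inside $H^*_{Z_{L^\vee} \times \mathbb{C}^\times}(\widetilde{S}(\mathfrak{p}^\vee,\mathfrak{q}^\vee))$ is canonically identified with the image of the composition going all the way to $\bigoplus_{[w]}\mathbb{C}[\mathfrak{X}(\mathfrak{l}),\hbar]$ along the top row of the diagram of Proposition \ref{main_prop_weak_hikita}. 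Thus the LHS of Theorem \ref{main_th_weak_hikita} is just $\mathrm{Im}(a \circ \mathrm{res}\circ \mathrm{res})$, where $a$ is the surjection of that proposition.

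Next I would treat the RHS. By construction (and strictly speaking by the statement in Proposition \ref{main_prop_weak_hikita}), the target of the last arrow on the bottom row is $\Gamma(\widetilde{S}_{\mathfrak{X}(\mathfrak{l})}(\mathfrak{q},\mathfrak{p})^{T_{e}},\cC_\nu(\mathcal{D}_{\hbar,\mathfrak{X}(\mathfrak{l})}(\widetilde{S}(\mathfrak{q},\mathfrak{p}))))$, which by Lemmas \ref{fixed_pts_deform} and \ref{cartan_subq_univ_deform_sh} is canonically $\bigoplus_{[w]}\mathbb{C}[\mathfrak{X}(\mathfrak{l}),\hbar]$, where now the indexing set is $\widetilde{S}(\mathfrak{q},\mathfrak{p})^{T_e}\simeq (W_M \backslash W/W_L)^{\mathrm{free}}$ by Proposition \ref{prop fixed point parabolic Slodowy}. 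Hence the RHS of Theorem \ref{main_th_weak_hikita} is exactly $\mathrm{Im}(b)$ after composing with the bottom row of the diagram.

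The heart of the argument is then Proposition \ref{main_prop_weak_hikita}, which guarantees that the two compositions $\mathbb{C}[\mathfrak{h}^*,\hbar]^{W_M}\otimes \mathbb{C}[\mathfrak{h}^*,\hbar]\rightrightarrows \bigoplus_{[w]}\mathbb{C}[\mathfrak{X}(\mathfrak{l}),\hbar]$ agree, once the two parametrizations of fixed points are identified via the bijection $\mathbf{i}\colon [w]\mapsto [w^{-1}]$ (cf.\ the discussion preceding the proposition). The surjectivity of $a$ means that the LHS image equals the image of the combined map $\mathbb{C}[\mathfrak{h}^*,\hbar]^{W_M}\otimes \mathbb{C}[\mathfrak{h}^*,\hbar]\to \bigoplus_{[w]}\mathbb{C}[\mathfrak{X}(\mathfrak{l}),\hbar]$ along the top-right path of the diagram. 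By Proposition \ref{main_prop_weak_hikita} this equals the image along the bottom-right path, which, by the previous paragraph, is exactly the RHS of Theorem \ref{main_th_weak_hikita}. Both descriptions are manifestly compatible with the graded $\mathbb{C}[\mathfrak{h}^*,\hbar]^{W_M}\otimes \mathbb{C}[\mathfrak{h}^*,\hbar]$-algebra structure, so the resulting bijection is an isomorphism of graded algebras, completing the argument.

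I do not expect a serious obstacle here: the hard content has already been packaged into Proposition \ref{main_prop_weak_hikita} (which in turn rests on Lemma \ref{lemma_hw_compute_parabolicW} and Corollary \ref{cor_descr_comp_for_slodowy_side}). The only thing that must be done carefully is the bookkeeping of two small issues: first, the $\hbar \mapsto \tfrac{1}{2}\hbar$ rescaling in the map $a$, which is the source of the warning after Conjecture \ref{conj_most_general_weak_Hikita} and is already baked into Proposition \ref{main_prop_weak_hikita}; second, the twist by $2\hbar\rho_{\mathfrak{l}}$ tracked in Lemma \ref{lemma_twisting_restriction} and Warning \ref{warning_shift}, which makes the two rows of the diagram consistent as $\mathbb{C}[\mathfrak{X}(\mathfrak{l}),\hbar]$-algebras rather than as $\mathbb{C}[\mathfrak{h}^*,\hbar]$-algebras. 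Once those normalizations are matched, Theorem \ref{main_th_weak_hikita} falls out as an immediate corollary of Proposition \ref{main_prop_weak_hikita}.
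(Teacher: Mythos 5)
Your proposal is correct and takes essentially the same route as the paper: both deduce the theorem from the commutative diagram of Proposition \ref{main_prop_weak_hikita} together with injectivity of the restriction homomorphism $H^*_{Z_{L^\vee}\times\mathbb{C}^\times}(\widetilde{S}(\mathfrak{p}^\vee,\mathfrak{q}^\vee))\hookrightarrow H^*_{Z_{L^\vee}\times\mathbb{C}^\times}(\widetilde{S}(\mathfrak{p}^\vee,\mathfrak{q}^\vee)^{Z_{L^\vee}})$, which follows from Lemma \ref{lemma_equiv_cohom_resol_free} and localization. Your write-up simply spells out more of the bookkeeping (identification of targets, fixed-point bijection, normalization conventions) that the paper leaves implicit.
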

\begin{proof}
The claim follows from Proposition \ref{main_prop_weak_hikita} and the fact that the restriction homomorphism $H^*_{Z_{L^\vee} \times \mathbb{C}^\times}(\widetilde{S}(\mathfrak{p}^\vee,\mathfrak{q}^\vee)) \rightarrow H^*_{Z_{L^\vee} \times \mathbb{C}^\times}(\widetilde{S}(\mathfrak{p}^\vee,\mathfrak{q}^\vee)^{Z_{L^\vee}})$ is injective (use Lemma \ref{lemma_equiv_cohom_resol_free}).
\end{proof}


\section{Corollaries of refined Hikita, conjectures, and further directions}\label{sec_cor_conj_further}
One of the goals of this section is to discuss the Hikita-Nakajima conjecture for nilpotent orbits and Slodowy slices at various equivariant parameters. In other words, we consider $\fq^\vee= \fb^\vee$ in Section 8. Recall that we discussed when the actual Hikita-Nakajima conjecture follows from our result after \cref{quantum Dmod}. A set of sufficient conditions includes that two morphisms $a,b$ in \cref{conj_most_general_weak_Hikita} are surjective and $\cC_\nu(\mathcal{A}_{\hbar,\mathfrak{h}_X}(X))$ is flat over $\CC[\mathfrak{h}_{X},\hbar]$. Moreover, they are also necessary conditions for \cref{Hikita_intro} in the case $\fq^\vee= \fb^\vee$ (\cite[Proposition 1.5]{hoang2}). Hence, in this section, we focus on these conditions.


It is noted that $b$ is always surjective if we assume that the dual variety is a nilpotent orbit (instead of a nontrivial cover) with normal closure. 
All the algebras involved in the diagram in \cref{main_th_weak_hikita} are algebras over $\CC[\fh_X,\hbar]$. 
We say that an equivariant parameter $(\lambda, t)\in \fh_X\oplus \CC\hbar$ is \emph{good} if $a|_{\lambda, t}$, $b|_{\lambda,t}$ are surjective 
and a condition (\ref{weakflat}) holds for $\cC_\nu(\mathcal{A}_{\lambda, t}(X))$, i.e. $\dim \cC_\nu(\mathcal{A}_{\lambda, t}(X))=\dim \cC_\nu(\mathcal{A}_{\lambda_0, t_0}(X))$ for a generic $(\lambda_0, t_0)$. If $(\lambda,t)$ is good, the Hikita-Nakajima conjecture holds at this parameter.

Let $\fg^\vee$ be a classical simple Lie algebra. When $\fg$ is of type A, the Hikita-Nakajima conjecture holds for the pair $\widetilde{S}(\fq,\fp), \widetilde{S}(\fp,\fq)$. Details are given in \cref{app_HN_type_A}. In this section, we assume that $\fg$ is of type B, C or D. We focus on the equivariant parameters $(\lambda, t)= (0,0)$ and $(\lambda,t)= (0,1)$. If $(0,0)$ is good for a pair $(X^\vee, X)$, then the classical Hikita conjecture holds. Moreover, the Hikita-Nakajima conjecture will also hold for this pair thanks to the graded Nakayama lemma (\cref{basic eqcoh}). The parameter $(0,1)$ is related to the canonical quantization (see \cite[Section 5]{LMBM}) and has representation theoretic meaning. We also discuss a combinatorial application of our refined Hikita at this parameter in \cref{combinatorics}. In \cref{sec_more_gen_settings} we discuss more general settings and give a heuristic explanation why the formulation of the Hikita conjecture needs further modifications. 

\subsection{Classical Hikita conjecture for Slodowy slices and nilpotent orbits}
Recall that $e^\vee\in \fg^\vee$ is a nilpotent element such that $e^\vee$ is regular in some Levi $\fl^\vee\subset \fg^\vee$. Then $X^\vee$ is $S(\chi^\vee)$ and $X$ is the induced variety $\Bind_{L}^{G}\{0\}$ (see Section 2.2 for the definition). Then $T_{X^\vee}$ is the torus $T_{e^\vee}$. 

First, we focus on the case when $\widetilde{D}(\OO^\vee)$ is a nilpotent orbit (not a cover) with a normal closure. These conditions automatically imply that the map $b$ in \cref{main identification} is surjective. Now, we consider the two other sufficient conditions for the Hikita-Nakajima conjecture.

In this setting, the map $a$ in \cref{main_th_weak_hikita} factors through the pullback in equivariant cohomology
\begin{equation*} \label{equivariant surjectivity} \tag{*}
    i^*_{T_{e^\vee}\times \CC^\times}\colon H^*_{T_{e^\vee}\times \CC^\times}(\cB^\vee)\rightarrow H^*_{T_{e^\vee}\times \CC^\times}(\spr).
\end{equation*}
Hence, the surjectivity of $a|_{(0,0)}$ is equivalent to the surjectivity of the pullback $i^{*}\colon H^*(\cB^\vee)\rightarrow H^*(\spr)$. This condition often appears in the literature; see, e.g., \cite[Theorem 1.2]{Kumar2012}, \cite{Carrell_2017}. Unfortunately, it is rarely satisfied when $\fg^\vee$ is not of type A. 

Let $\mathbf{p}^\vee$ be the partition of $e^\vee$. Let $C_{G^\vee}(e^\vee)$ be the centralizer of $e^\vee$ in $G^\vee$. Let $A^\vee = C_{G^\vee}(e^\vee)/C^{o}_{G^\vee}(e^\vee)$ be the corresponding component group. The image of $i^*$ is in $H^*(\spr)^{A^\vee}$. Hence, if $A^\vee$ acts on $\spr$ nontrivially, $i^*$ is not surjective.

We first explain when the action of $A^\vee$ on $H^*(\spr)$ is trivial. More details on the surjectivity of $i^*$ are discussed in \cref{cohomological surjectivity}. Next, we restrict ourselves to the cases where $A^\vee$ acts trivially on $H^*(\spr)$ and discuss the flatness of $\cC_\nu(\mathcal{A}_{\hbar,\mathfrak{h}_X}(X))$. We conclude with \cref{cases for classical Hikita} that list certain cases where the classical Hikita conjecture (hence, the Nakajima-Hikita conjecture) is true.

\subsubsection{Surjectivity of the map $a$}
Let $\mathbf{p}^\vee$ be the partition of $e^\vee$. The criteria for $\mathbf{p}^\vee$ in classical types are specified in \cref{prop:orbitstopartitions}. We have the following lemma.
\begin{lemma} \label{when A acts trivially}
    Consider $\fg^\vee$ of types B,C, and D. Let $(G^\vee)^{ad}$ be the corresponding adjoint group. Then 
     $A^\vee$ acts trivially on $H^*(\spr)$ if only if $(A^\vee)^{ad}$ is trivial. In particular, the partitions $\mathbf{p}^\vee$ are as follows.
    \begin{enumerate}
        \item $\mathbf{p}^\vee \in \mathcal{P}_{C}(2n)$, $\mathbf{p}^\vee$ has at most one even member, and if it has one, the multiplicity of this member is odd.
        \item $\mathbf{p}^\vee\in \mathcal{P}_{B}(2n+1)$, $\mathbf{p}^\vee$ has exactly one odd member.
        \item $\mathbf{p}^\vee\in \mathcal{P}_{D}(2n)$, $\mathbf{p}^\vee$ has at most two distinct odd members, and if it has two distinct odd members, their multiplicities are odd.
    \end{enumerate}
\end{lemma}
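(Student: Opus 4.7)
The plan is to prove the lemma in two steps: first establish the equivalence of ``$A^\vee$ acts trivially on $H^*(\mathcal{B}_{\chi^\vee})$'' with ``$(A^\vee)^{\mathrm{ad}} = 1$'', and then translate this algebraic condition into the explicit partition conditions (1)--(3).

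For the equivalence, the implication ``$(A^\vee)^{\mathrm{ad}}= 1 \Rightarrow$ trivial action'' is the easy direction. The center $Z(G^\vee)$ is contained in every Borel, so $Z(G^\vee)$ acts trivially on $\mathcal{B}^\vee = G^\vee/B^\vee$ by conjugation, hence on the closed subvariety $\mathcal{B}_{\chi^\vee}$ and on its cohomology. Thus the image of $A^\vee \cap Z(G^\vee)$ in $\mathrm{Aut}(H^*(\mathcal{B}_{\chi^\vee}))$ is trivial, and the $A^\vee$-action factors through $(A^\vee)^{\mathrm{ad}} = A^\vee / \mathrm{im}(A^\vee \cap Z(G^\vee))$; if the latter vanishes we are done. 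For the converse, I would appeal to the Springer correspondence: $H^{\mathrm{top}}(\mathcal{B}_{\chi^\vee})$ decomposes as a $W \times A^\vee$-module into pieces of the form $V_{\chi^\vee,\rho} \otimes \rho$, where $\rho$ ranges over the irreducible characters of the Lusztig quotient $\overline{A}(\chi^\vee)$. In classical types one has $\overline{A}(\chi^\vee) = (A^\vee)^{\mathrm{ad}}$, so a nontrivial $(A^\vee)^{\mathrm{ad}}$ forces a nontrivial character of $A^\vee$ to appear in the decomposition, and the action on $H^{\mathrm{top}}(\mathcal{B}_{\chi^\vee})$ is nontrivial.

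For the translation to partition conditions, I would use the standard classification of component groups in classical types. For $G^\vee = \mathrm{Sp}(2n)$, $A^\vee \cong (\mathbb{Z}/2)^{e(\mathbf{p}^\vee)}$ where $e(\mathbf{p}^\vee)$ is the number of distinct even parts of $\mathbf{p}^\vee$, with one generator per distinct even part, and the central element $-I$ maps to the element of $A^\vee$ supported on exactly those even parts whose multiplicity is odd. Computing when the quotient $(A^\vee)^{\mathrm{ad}}$ is trivial yields precisely the cases listed in (1). The analogous analysis in type B uses that $\mathrm{SO}(2n+1)$ is already adjoint, so $(A^\vee)^{\mathrm{ad}} = A^\vee$, and one reads off condition (2) from the description of $A^\vee$ as a product of $\mathbb{Z}/2$ factors indexed (modulo the determinant relation) by distinct odd parts of $\mathbf{p}^\vee$. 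Type D requires tracking both factors of the $(\mathbb{Z}/2)$-center and yields (3) by the same bookkeeping.

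The main technical obstacle is the identification $\overline{A}(\chi^\vee) = (A^\vee)^{\mathrm{ad}}$ in the converse of the first step, since it requires matching Lusztig's combinatorial construction of $\overline{A}$ in each classical type with the explicit description of $(A^\vee)^{\mathrm{ad}}$ in terms of central elements. An alternative route that sidesteps this is to analyze the action of $A^\vee$ directly on the set of irreducible components of $\mathcal{B}_{\chi^\vee}$, parametrized in classical types by standard domino or bi-tableaux, and to show that any element of $A^\vee$ outside the image of $Z(G^\vee)$ permutes these components nontrivially, hence acts nontrivially on their fundamental classes, which are linearly independent in $H^{\mathrm{top}}(\mathcal{B}_{\chi^\vee})$.
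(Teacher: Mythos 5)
Your easy direction is fine: the center lands inside every Borel, so $Z(G^\vee)$ acts trivially on $\mathcal{B}^\vee$ and hence on $H^*(\mathcal{B}_{\chi^\vee})$, which shows the $A^\vee$-action factors through $(A^\vee)^{\mathrm{ad}}$. The gap is in the converse, and it is not small.

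You assert that $\overline{A}(\chi^\vee) = (A^\vee)^{\mathrm{ad}}$ in classical types and then conclude that a nontrivial $(A^\vee)^{\mathrm{ad}}$ forces a nontrivial character of $A^\vee$ to appear in $H^{\mathrm{top}}(\mathcal{B}_{\chi^\vee})$. Neither half of this is safe. Lusztig's canonical quotient $\overline{A}$ is in general a \emph{proper} quotient of $A_{G^{\vee,\mathrm{ad}}}(\mathbb{O}^\vee)$ in types $B$, $C$, $D$; the paper itself treats these as distinct objects (see the discussion around Proposition \ref{about cover}(v), and the phrase ``it follows that the Lusztig quotient $\overline{A}(\mathbb{O}^\vee)$ is trivial'' in the proof of Lemma \ref{best condition}, which only makes sense if $\overline{A}$ is a quotient rather than equal to $(A^\vee)^{\mathrm{ad}}$). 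Moreover, the set of characters of $A^\vee$ that actually occur in $H^{\mathrm{top}}(\mathcal{B}_{\chi^\vee})$ is \emph{not} all of $\widehat{(A^\vee)^{\mathrm{ad}}}$ nor $\widehat{\overline{A}}$: in classical types some pairs $(\mathbb{O}^\vee,\rho)$ fall outside the (ordinary) Springer correspondence and contribute instead to the generalized Springer correspondence. So ``$(A^\vee)^{\mathrm{ad}}$ nontrivial'' does not a priori imply ``some nontrivial character occurs in $H^{\mathrm{top}}$.'' This implication is precisely what has to be checked, and it cannot be shortcut by abstract nonsense about canonical quotients.

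The paper's route is necessarily computational. One first applies Shoji's theorem \cite[Theorem 2.5]{Shoji1983} to reduce ``trivial action on $H^*$'' to ``trivial action on $H^{\mathrm{top}}$.'' One then reads off, from the explicit Springer correspondence in \cite[Section 13.3]{carter1993finite}, that the characters of $A^\vee$ occurring in $H^{\mathrm{top}}$ are indexed by permutations of the symbol $S_{\mathbf{p}}$ satisfying the admissibility condition $(*)$, and checks by hand (via the maximal intervals of singleton entries) that the unique admissible permutation is the trivial one precisely when the stated partition conditions hold. The equivalence with ``$(A^\vee)^{\mathrm{ad}}=1$'' is then a separate bookkeeping match of partition data. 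Your alternative route via irreducible components is also not an independent argument: ``$A^\vee$ acts trivially on $H^{\mathrm{top}}$'' is literally equivalent to ``$A^\vee$ fixes each irreducible component of $\mathcal{B}_{\chi^\vee}$,'' so showing that every element of $A^\vee$ outside the image of the center moves some component is the same unproven faithfulness claim in different clothing, and one would again end up verifying it through the symbol/tableau combinatorics.
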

\begin{proof}
    This lemma is a direct consequence of the explicit Springer correspondence (see, e.g., \cite[Section 13.3]{carter1993finite}) and a result by Shoji. From \cite[Theorem 2.5]{Shoji1983}, a character of $A^\vee$ appears in $H^*(\spr)$ if and only if it appears in $H^{top}(\spr)$. We explain which characters of $A^\vee$ appears in $H^{top}(\spr)$ for $\fg^\vee$ of type C, other types are similar. 
    
    The following exposition follows from \cite[Section 13.3]{carter1993finite}. From $\mathbf{p}$, we obtain a certain symbol $S_\mathbf{p}$ that has two rows, each consisting of non-negative integers. This symbol $S_\mathbf{p}$ has the following property.
    \begin{itemize}
        \item[(*)] The difference between any two distinct entries in the same row is at least $2$.
    \end{itemize}
    
    The characters of $A^\vee$ in $H^{top}(\spr)$ correspond to the permutations of $S_\mathbf{p}$ that satisfy (*). Hence, for $A^\vee$ to act trivially on $H^{top}(\spr)$, we want $S_\mathbf{p}$ to have no other such permutation. The nonzero entries that only appear once in $S_\mathbf{p}$ form a set $S$. Write $S$ as the disjoint union of maximal intervals of the form $i,i+1,...,j$ for $i<j$. Because $S_\mathbf{p}$ has property (*), the odd entries of such an interval belong to one row of $S_\mathbf{p}$, and the even entries belong to the other row. As a consequence, the condition that $S_\mathbf{p}$ does not have nontrivial permutations that satisfy (*) is equivalent to the condition that $S$ itself is an interval of odd lengths or $S$ is empty. From \cite[Section 13.3]{carter1993finite}, these conditions are equivalent to $\mathbf{p}^\vee$ having precisely one even member of odd multiplicity or $\mathbf{p}^\vee$ having no even member.

\end{proof}
\begin{rmk} \label{trivial action does not mean surj} 
    Even when $A^\vee$ acts trivially on $H^*(\spr)$, one should not expect the pullback $i^*$ to be surjective in general. Details are given in \cref{cohomological surjectivity}.
\end{rmk}

\subsubsection{Flatness of $\cC_\nu(\mathcal{A}_{\hbar,\mathfrak{h}_X}(X))$}
In this subsection we restrict ourselves to the case where the partition $\mathbf{p}^\vee$ of $e^\vee$ satisfies the conditions in \cref{when A acts trivially}. The next lemma describes partitions $\mathbf{p}^\vee$ such that $\widetilde{\OO}=\widetilde{D}(G^\vee.e^\vee)$ is an orbit in $\fg^*$ with normal closure. Recall that we assume that $e^\vee$ is regular in $\fl^\vee$, and therefore $\widetilde{\OO}= \Bind_L^{G}(\{0\})$.

\begin{lemma} \label{best condition}
    For $\mathbf{p}^\vee$ in \cref{when A acts trivially}, the partitions of the following forms satisfy that $\widetilde{D}(\OO^\vee)$ is a nilpotent orbit with normal closure. 
    \begin{enumerate}
        \item $\mathbf{p}^\vee \in \mathcal{P}_{C}(2n)$, $\mathbf{p}^\vee$ is of the form $((2a)^{d_0}, 2b_1+1\geqslant...\geqslant 2b_k+1)$ where $d_0$ odd and $a\geqslant b_1$.
        \item $\mathbf{p}^\vee\in \mathcal{P}_{B}(2n+1)$, $\mathbf{p}^\vee$ is of the form $((2a+1)^{d_0}, 2b_1\geqslant ... \geqslant 2b_k)$ where $d_0$ odd and $a\leqslant b_k$.
        \item $\mathbf{p}^\vee\in \mathcal{P}_{D}(2n)$, we have two cases:
        \begin{enumerate}
            \item $\mathbf{p}^\vee$ is of the form $((2a+1)^{d_0}, 2b_1\geqslant...\geqslant 2b_k, 1^{d})$ where $d_0, d$ odd and $a+1 \geqslant b_1$.
            \item $\mathbf{p}^\vee$ consists only of even members and has at most two distinct members.
        \end{enumerate}        
    \end{enumerate}
    The partitions above may not be written in the standard order.
\end{lemma}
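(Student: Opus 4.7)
My proof plan proceeds by combining the combinatorial description of the BVLS duality $D$ in classical types with two verifications: (1) the cover $\widetilde{D}(\mathbb{O}^\vee) \to D(\mathbb{O}^\vee)$ is trivial (so the target is an honest orbit rather than a proper cover); and (2) the closure $\overline{D(\mathbb{O}^\vee)}$ is normal. By \cref{about cover}(v), the Galois group of the cover is the Lusztig quotient $\overline{A}(\mathbb{O}^\vee)$, so step (1) amounts to checking that this quotient is trivial. For step (2), I would invoke the Kraft--Procesi classification of normal closures of classical nilpotent orbits (as referenced in the paper after \cref{counterflat}): the table at the end of \cite{Kraft-Procesi} reduces normality to an elementary condition on the gap sequence of the partition.

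The main work, which I would organize by type, is an explicit computation of $D(\mathbf{p}^\vee)$ via transpose plus $X$-collapse ($X \in \{B,C,D\}$), followed by matching against (1) and (2). In type C (case 1), the condition that $\mathbf{p}^\vee$ has at most one even part (from \cref{when A acts trivially}) lets me write $\mathbf{p}^\vee = ((2a)^{d_0}, (2b_i+1))$ with $d_0$ odd and odd multiplicities of $2b_i+1$ even. Transposing and taking the $B$-collapse yields the partition of $D(\mathbb{O}^\vee)$; I would then check that the constraint $a \geq b_1$ (placing the even part first) is exactly what makes Sommers' description of $\overline{A}$ yield the trivial group, and simultaneously ensures the Kraft--Procesi normality gap condition. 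Cases (2) and (3)(a) are parallel, swapping the roles of odd/even and using the $C$- and $D$-collapse respectively; the tail $1^d$ with $d$ odd in (3)(a) together with $a+1 \geq b_1$ is precisely the combinatorial shape that produces trivial $\overline{A}$ after $D$-collapse. Case (3)(b), the very-even partitions with at most two distinct parts, reduces to a direct check: these correspond on the $D$-side to orbits of the form $((2a)^{2k})$ or $((2a)^{2k},(2c)^{2l})$ (with the appropriate sign decoration), both known to be normal with trivial Lusztig quotient.

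The main obstacle is not any single step but the careful combinatorial bookkeeping required to match the explicit partition forms in the statement to the intersection of the two conditions after the $X$-collapse. In particular, the ordering constraints $a \geq b_1$ in case (1), $a \leq b_k$ in case (2), and $a+1 \geq b_1$ in case (3)(a) look ad hoc but are precisely what ensures that after collapse the resulting partition has no "bad" adjacent pairs triggering either a nontrivial component in $\overline{A}$ or a Kraft--Procesi obstruction to normality. I would present the argument as three case analyses, each with a short combinatorial lemma on the effect of the collapse on the statistics that control $\overline{A}$ and normality; the bulk of the work appears already in the companion paper \cite{hoang2} (cf.\ \cite[Propositions 1.1, 1.5]{hoang2}), from which the precise statements of triviality of $\overline{A}$ and normality for partitions of the given shape can be extracted.
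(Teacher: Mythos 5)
Your overall strategy matches the paper's: use \cref{about cover}(v) to reduce triviality of the cover to triviality of the Lusztig quotient $\overline{A}(\mathbb{O}^\vee)$, compute $D(\mathbf{p}^\vee)$ by transpose plus $X$-collapse, and check normality against the Kraft--Procesi criterion. The paper also works out only the type C case explicitly and declares the others similar, so your case-by-case organization is in line with theirs.

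However, there is a conceptual misattribution in your plan that would complicate your argument unnecessarily and is simply incorrect as stated. You write that the constraint $a \geq b_1$ (and its analogues in the other cases) is ``exactly what makes Sommers' description of $\overline{A}$ yield the trivial group, and simultaneously ensures the Kraft--Procesi normality gap condition.'' This is not how the two conditions interact. Triviality of $\overline{A}(\mathbb{O}^\vee)$ already follows \emph{for every} $\mathbf{p}^\vee$ in \cref{when A acts trivially}, with no additional constraint on the relative sizes of $a$ and the $b_i$: the hypothesis is precisely that $(A^\vee)^{\mathrm{ad}}$ is trivial, and $\overline{A}(\mathbb{O}^\vee)$ is a quotient of $(A^\vee)^{\mathrm{ad}}$, so it is automatically trivial. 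There is no need to invoke Sommers' description here; the paper makes exactly this one-line observation. The inequality $a \geq b_1$ (resp.\ $a \leq b_k$, resp.\ $a+1 \geq b_1$) plays a role only in the normality check for $\overline{D(\mathbb{O}^\vee)}$. Concretely, in type C the paper shows that when $a \geq b_1$ the $B$-collapse $B(e(\mathbf{p}^\vee)^{t})$ has all parts odd (a sufficient condition for normality in type B), whereas when $a < b_1$ a bad column configuration appears in the collapse and normality fails. So if you pursued your plan as written you would be hunting for an $\overline{A}$-condition that the inequality does not in fact encode; you should decouple the two checks cleanly as the paper does.
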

\begin{proof}
    We prove the lemma for the case $\fg^\vee= \fsp({2n})$, the other cases are similar. First, for the partition $\mathbf{p}^\vee$ in (1) of \cref{when A acts trivially}, the group $(A^\vee)^{ad}$ is trivial. It follows that the Lusztig quotient $\Bar{A}(\OO^\vee)$ is trivial. Recall from \cref{about cover}(v) that $\bar{A}(\OO^\vee)$ is isomorphic to the Galois group of the cover $\widetilde{D}(\OO^\vee)\rightarrow D(\OO^\vee)$. Therefore, $\widetilde{D}(\OO^\vee)= D(\OO^\vee)$. 

    Next, we show that the closure of $D(\OO^\vee)$ is normal if and only if $a\geqslant b_1$. Let $\mathbf{p}$ be the partition of $D(\OO^\vee)$. Write $e(\mathbf{p}^\vee)$ for $((2a)^{d_0}, 2b_1+1\geqslant...\geqslant 2b_k+1,1)$. Then $\mathbf{p}$ is $B(e(\mathbf{p}^\vee)^t)$ in which B denotes the B-collapse (for more details, see, e.g., \cite[Section 3.2]{LMBM}). 
    
    If $a\geqslant b_1$, the above process gives $p=$ $(2a+1, (2a)^{d_0-1}, 2b_1+1\geqslant...\geqslant 2b_k+1)^t$. Then all members of $\mathbf{p}$ are odd, so the closure of $D(\OO^\vee)$ is normal (see, e.g., \cite[Section 3]{Wong}, \cite{Kraft-Procesi}).

    Consider $a< b_1$. Let $2l_1$ be the multiplicity of $b_1$ in $\mathbf{p}^\vee$. If $a=0$, then $e(\mathbf{p}^\vee)^t$ is already a partition of type B. If $a>0$, then $e(\mathbf{p}^\vee)^t$ is of the form $(y_1,y_2,...,y_{2b_1+1})$ in which $y_1=y_2+ 1$ is even, $y_{2j+1}=y_{2j+2}$ odd for $j\leqslant a-1$, $y_{2a+1} =y_{2a+2}+ 1$ is even. Hence, the B-collapse of $e(\mathbf{p}^\vee)^t$ becomes $(y_1-1,y_2,y_3,y_4,$$...,y_{2a+1}, y_{2a+2}+1, y_{2a+3},$$...,y_{2b_1+1})$. In either case, the first $2l_1$ columns of the Young diagram of $\mathbf{p}$ has length $2b_1+1$, and the $2l+1$-th column has length $max(2b_{l+1}+1, 2a+1)< 2b_1+1$. Consequently, the closure of $D(\OO^\vee)$ is not normal (see, e.g. \cite[Theorem 3.3]{Wong}, \cite{Kraft-Procesi}). 
\end{proof}
\subsubsection{Summary}
We have seen in \cref{sp flatness} that the flatness condition of \cref{fixed points subsection} fails in the case $\fl$ is a Levi subalgebra of $\fsp({2n})$ that contains a nontrivial $\fsp$ factor. Therefore, it fails for $\mathbf{p}^\vee$ in (2) of \cref{best condition} when $a>0$. When $a=0$, a necessary condition for $i^*$ to be surjective is that $\mathbf{p}^\vee$ takes the form $(2^{2l},1^{2n+1-4l})$ (\cref{surjectivity B C D}). The situation is slightly less restrictive for the other two cases. In the following, we list the cases where we expect the parameter $(0,0)$ to be good.
\begin{conj} \label{cases for classical Hikita}
    Let $e^\vee$ be a regular nilpotent element in a Levi $\fl^\vee\subset \fg^\vee$. The Hikita-Nakajima conjecture holds for the pair $(X^\vee= S(\chi^\vee)$, $X= \Spec(\CC[\Bind_{L}^{G}\{0\}]))$ in the following cases. 
    \begin{enumerate}
        \item $G= Sp(2n)$, $L$ takes the form $\prod_{i=1}^{l} GL({2})\times \prod_{j=1}^{n-l} GL({1})$.
        \item $G= SO(2n+1)$, $L$ takes the form $SO(3)\times \prod_{i=1}^{l} GL({2})\times \prod_{j=1}^{n-l-1} GL({1})$, $SO(5)\times \prod GL(3)$, or $SO(2l+1)\times \prod_{j=1}^{n-l} GL({1})$.
        \item $G= SO(2n)$, $L$ takes the form $SO(4)\times \prod_{i=1}^{l} GL({2})\times \prod_{j=1}^{n-l-2} GL({1})$, $SO(2)\times \prod_{i=1}^{l} GL({2})\times \prod_{j=1}^{n-l-1} GL({1})$, $SO(2l)\times \prod_{j=1}^{n-l} GL({1})$, of $L= GL(n)$ if $n$ is even.
    \end{enumerate}
\end{conj}
We do not claim that these are all the cases where the Hikita-Nakajima conjecture holds for this type of pairs. Let us briefly explain the situations for the cases in \cref{cases for classical Hikita}. The surjectivity of the pullback map is proved for $L= SO(2l+1)\times \prod_{j=1}^{n-l} GL(1)$ in \cref{b=1}, other cases are predicted in \cref{surj conjecture}. The weak flatness condition (\ref{weakflat}) holds for the case $G= Sp(2n)$, and $L= \prod_{i=1}^{l} GL(2)\times \prod_{j=1}^{n-l} GL(1)$ (see \cref{sp is bad}). The weak flatness condition (\ref{weakflat}) for $G$ is of type B, D and other statements related to this conjecture are studied in \cite{hoang2}.
 
\subsection{Other parameters}\label{canonical parameter section}
This section discusses Hikita-Nakajima conjecture for nilpotent orbits and Slodowy slices at $(\lambda, t)\neq (0,0)$. We recall the following basic result in equivariant cohomology.

Consider a torus $T$ and a $T$-equivariant morphism $f: X\rightarrow Y$. Let $\ft$ be the Lie algebra of $T$, and $\lambda\in \ft$, $\lambda\neq 0$. The map $f^*_{T}: H^{*}_T(Y)\rightarrow H^{*}_T(X)$ is a morphism of $\CC[\ft]$-algebras. From the discussion in \cref{fixed point pullback}, the specialization of $f_{T}^*$ at $\lambda$ can be identified with $H^*(Y^{\lambda}) \rightarrow H^*(X^{\lambda})$ where $X^\la$ and $Y^\la$ are the fixed point subvarieties of the vector fields induced by $\lambda$.

Let $(\lambda, t)\in \fh_X\oplus \CC\hbar$ be a general parameter. If $t\neq 0$, we can replace $(\lambda, t)$ by $(\lambda/t, 1)$, see the discussion before \cref{comp_cart_alg_to_sh}. Therefore, we can assume that $t$ is either $0$ or $1$. 

We first consider $(\lambda, 0)$ for $\lambda\in \fh_X$, $\lambda\neq 0$. We explain how to reduce this case to the parameter $(0,0)$ for a certain (smaller) pair $(X'^\vee\subset (\fg'^\vee)^*, X')$ where $\fg'^\vee$ is of the same type as $\fg^\vee$. Our reduction argument uses the result that the Nakajima-Hikita conjecture holds in type A (\cref{app_HN_type_A}).

Write $(\cB^\vee)^\la$ and $\spr^\la$ for the fixed point varieties of $\cB^\vee$ and $\spr$ under the action of the vector field induced by $\lambda\in \fh_X= \ft_{e^\vee}$. At $(\lambda,0)$, the pullback map in (\ref{equivariant surjectivity}) becomes
$$i^{*}_{(\lambda, 0)}: H^*((\cB^\vee)^{\lambda})\rightarrow H^*((\spr)^{\lambda}).$$

Let $L^{\vee}_\lambda\subset G^\vee$ be the centralizer of $\lambda$ and write $\fl^{\vee}_\lambda$ for its Lie algebra. Since $\la\in \ft_{e^\vee}$, we have $\fl^{\vee}_\lambda \supset \fl^{\vee}$. Consider $e^\vee$ as a nilpotent element of $\fl^{\vee}_\lambda$. By \cref{fixed point Springer fiber}, the question about the surjectivity of $i^*_{(\lambda, 0)}$ for $e^\vee\in \fg^\vee$ reduces to the question about surjectivity of $i^*_{(0,0)}$ for $e^\vee\in \fl^{\vee}_\lambda$. 

Next, the Levi subalgebra $\fl^{\vee}_\lambda\subset \fg^\vee$ has a decomposition of the form $\fg'^{\vee}\times \prod_{i=1}^{m} \fgl_{\lambda_i}$ where $\fg'^{\vee}$ is of the same type as $\fg^\vee$. Correspondingly, we can decompose $e^\vee$ as $(e'^{\vee},e^{\vee}_1,...,e^{\vee}_m)$ for $e'^{\vee}\subset \fg'^{\vee}$ and $e^{\vee}_i\subset \fgl_{\lambda_i}$. Note that $e'^{\vee}$ is regular in $\fl'^\vee = \fl^\vee \cap \fg'^\vee$. Let $\cB_i$ be the flag variety for $\fg\fl_i$. Note that all the pullback maps $H^*(\cB_i)\rightarrow H^*(\cB_{e^{\vee}_i})$ are surjective. Therefore, we have the following proposition.

\begin{prop} \label{non gl factor coh}
    The specialized map $i^{*}_{\lambda, 0}$ is surjective if and only if $(i')^*\colon H^*((\cB')^\vee)\rightarrow H^*(\cB_{(e^\vee)'})$ is surjective. 
\end{prop}
A similar phenomenon appears on the side of $X\subset \fg^*$. Write $\fl_\lambda$ and $\fg'$ for the Langlands dual of $\fl^{\vee}_\lambda$ and $\fg'^\vee$ respectively. Let $L_\lambda$ and $G'\subset L_\lambda$ be the corresponding subgroups. In the setting of Section 9.1, $X$ is a nilpotent orbit with normal closure in $\fg^*$. Recall that we have the deformation $\pi: X_{\fh_X}\rightarrow \fh_X\simeq \fh_{e^{\vee}}$. The algebra $\CC[\pi^{-1}(\lambda)]$ is isomorphic to the algebra of functions on a coadjoint orbit $\OO'_{\lambda}$ of $\fg^*$ (see proof of \cite[Proposition 4.5]{Losev4}). We describe the orbit $\OO'_{\lambda}$.  Under an identification $\ft_{e^\vee} =\fh_X\subset \fh^*$, we obtain an element $c_\lambda\in \fh^*$ that corresponds to $\lambda\in \ft_{e^\vee}$. Then the orbit $\OO'_{\lambda}$ can be realized as $G\times^{L_\lambda} (c_\lambda+ \Bind_L^{L_\lambda} \{0\})$. 

View the orbit $\OO_{L_\lambda}:= \Bind_L^{L_\lambda} \{0\}$ as $\Tilde{D}(L^\vee_\lambda. e^\vee)$. Write $\fl_\lambda$ as $\fg'\times \prod_{i=1}^{m} \fgl_{\lambda_i}$. This expression is compatible with the refined BVLS duality, so we can decompose the orbit $\OO_{L_\lambda}$ as $\Tilde{D}(e'^{\vee})\times \Tilde{D}(e^{\vee}_1)\times \ldots\times \Tilde{D}(e^{\vee}_m)$. Because orbit closures are normal in $\fgl_{\lambda_i}$, the normality of $\overline{\OO}_{L_\lambda}\subset \fl_\lambda^*$ depends only on the normality of $\overline{\Tilde{D}(e'^{\vee})}$ in $(\fg')^*$. 

Assume that $e^\vee$ has partition $\fp^\vee$ as in \cref{best condition}. 
Then the partition $\fp'^\vee$ of $e'^\vee$ is a subpartition of $\fp^\vee$, hence satisfies the condition in \cref{best condition}.
Hence, the closure of the orbit $\widetilde{D}(e'^{\vee})$ in $(\fg')^*$ is normal, and thus $\overline{\OO}_{L_\lambda}$ is normal.
This implies the normality of $\OO'_{\lambda}$ (see \cite[Section 1]{Hesselink1979}). Thus, the weak flatness condition (\ref{weakflat}) at $(\lambda, 0)$ becomes the weak flatness condition at $(0,0)$ for $e\in L_\lambda$. In conclusion, we have the following proposition.

\begin{prop} \label{reduce_to_0}
    The Hikita-Nakajima conjecture is true for the pair $(X^\vee= S({\chi^\vee}), X= \Bind_{L}^{G}(\{0\}))$ at a parameter $(\lambda, 0)$ if and only if it is true for the pair $(X^\vee= S{(\chi'^\vee)}, X= \Bind_{L'}^{G'}(\{0\}))$ at $(0,0)$.
\end{prop}
In particular, consider $\lambda\in \fh_{e^{\vee}}$ such that the projection of $\lambda$ to each factor $\fgl$ of $\fl^\vee$ is nonzero. Then $e'^\vee$ is regular in $\fg'^{\vee}$, and we obtain the following corollary. 
\begin{cor}
    For such $\lambda$, the Hikita-Nakajima conjecture for $(X^\vee= S_{\chi^\vee}, X= \Bind_{L}^{G}(\{0\}))$ holds at $(\lambda, 0)$. 
\end{cor}
\begin{rmk}
    For $\lambda\neq 0$, the validity of Hikita-Nakajima conjecture at $(\lambda, 0)$ does not imply its validity at $(\lambda, t)$, $t\neq 0$.
\end{rmk}

Next, we consider the Hikita-Nakajima conjecture at $(\lambda, t)= (0,1)\in \fh_X\oplus \CC\hbar$. At this parameter, the pullback map in (\ref{equivariant surjectivity}) becomes 
$$i^{*}_{(0,1)}: H^*((\cB^\vee)^{\CC^\times})\rightarrow H^*((\spr)^{\CC^\times}).$$
Regarding this map, we have the following lemma and a more general conjecture.
\begin{lemma} \label{sujective Ctimes}
    The triviality of the action of the component group is sufficient for the surjectivity of this pullback map. In other words, $i^{*}_{(0,1)}$ is surjective for $\mathbf{p}^\vee$ described in \cref{when A acts trivially}.
\end{lemma}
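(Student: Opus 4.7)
\textbf{Proof plan for Lemma \ref{sujective Ctimes}.} The plan is to deduce the surjectivity of $i^*_{(0,1)}$ from the surjectivity of the ordinary (non-equivariant) pullback $i^*\colon H^*(\cB^\vee) \to H^*(\spr)$ by combining equivariant formality with the Berline-Vergne localization theorem.

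First I would verify that, under the hypothesis on $A^\vee$, the pullback $i^*\colon H^*(\cB^\vee) \to H^*(\spr)$ is surjective. It is a classical result from Springer theory (going back to Hotta-Springer and refined by Borho-MacPherson and Lusztig) that the image of $i^*$ coincides with the $A^\vee$-invariant subspace $H^*(\spr)^{A^\vee}$; equivalently, the cokernel is the direct sum of the non-trivial $A^\vee$-isotypic components of $H^*(\spr)$. Since by hypothesis $A^\vee$ acts trivially on $H^*(\spr)$, the invariants exhaust $H^*(\spr)$ and $i^*$ is surjective.

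Next I would promote this surjectivity to the $T_{e^\vee} \times \CC^\times$-equivariant setting. Both $H^*_{T_{e^\vee} \times \CC^\times}(\cB^\vee)$ and $H^*_{T_{e^\vee} \times \CC^\times}(\spr)$ are free modules over $H^*_{T_{e^\vee} \times \CC^\times}(\on{pt})$: for the flag variety this is standard, and for the Springer fiber, which is homotopy equivalent to the resolution $\widetilde{S}(\chi^\vee)$, this follows from Lemma \ref{lemma_equiv_cohom_resol_free}, using that a generic cocharacter of $T_{e^\vee}$ combined with the contracting Kazhdan cocharacter produces a $T_{e^\vee} \times \CC^\times$-equivariant affine paving. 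Applying the graded Nakayama argument recorded in Section \ref{basic eqcoh}, the surjectivity of the non-equivariant $i^*$ therefore lifts to the surjectivity of the equivariant pullback $i^*_{T_{e^\vee} \times \CC^\times}$.

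Finally, I would specialize this surjective $H^*_{T_{e^\vee} \times \CC^\times}(\on{pt})$-linear map at the parameter $(\lambda, t) = (0,1) \in \mathfrak{t}_{e^\vee} \oplus \CC\hbar$. By the Berline-Vergne localization theorem (cf.\ the identification \eqref{BV_localization}), this specialization is canonically identified with the pullback $H^*((\cB^\vee)^{\CC^\times}) \to H^*((\spr)^{\CC^\times})$ induced by the inclusion of the Kazhdan fixed-point subvarieties, which is precisely the map $i^*_{(0,1)}$. Since surjectivity is preserved under specialization, we conclude that $i^*_{(0,1)}$ is surjective, as claimed. The only non-formal input in the argument is the first step — the description of $\on{Im}(i^*)$ as $H^*(\spr)^{A^\vee}$ — and I expect this to be the main point to quote carefully from Springer theory; everything thereafter is a formal consequence of equivariant formality and Berline-Vergne localization.
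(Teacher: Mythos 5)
Your argument contains a genuine gap at its first and most crucial step. You claim that the image of the non-equivariant pullback $i^*\colon H^*(\cB^\vee) \to H^*(\spr)$ coincides with $H^*(\spr)^{A^\vee}$, so that triviality of the $A^\vee$-action would force $i^*$ to be surjective. This is false. Springer theory only gives you the \emph{inclusion} $\on{Im}(i^*) \subset H^*(\spr)^{A^\vee}$; the reverse inclusion fails in general, and in particular fails for many partitions appearing in the hypothesis of the lemma. This is exactly the content of Remark \ref{trivial action does not mean surj} and Warning \ref{warning_nonsurj_A_inv}: even when $A^\vee$ acts trivially on $H^*(\spr)$, the map $i^*$ need not be onto, and Theorem \ref{surjectivity} produces explicit obstructions (e.g.\ any $\mathbf{p}^\vee=(2a,2b+1,2b+1)$ with $a-2\geqslant b\geqslant 1$) where $A^\vee$ acts trivially on $H^*(\spr)$ yet $i^*$ is not surjective.

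Once that step collapses, the rest of your chain — equivariant formality, graded Nakayama, Berline--Vergne — cannot save it, because these tools carry surjectivity forward from the fiber at $(0,0)$ to all other fibers, never backward. Indeed, this is precisely what makes the lemma nontrivial: passing to the Kazhdan $\CC^\times$-fixed locus genuinely \emph{enlarges} the set of partitions for which the pullback is surjective, relative to working with $\spr$ itself, so one cannot hope to deduce surjectivity of $i^*_{(0,1)}$ from surjectivity of $i^*_{(0,0)}$. The paper's proof therefore attacks $\spr^{\CC^\times}$ directly. By Proposition \ref{fixed point Springer fiber}, each connected component of $\spr^{\CC^\times}$ factors as a product over the parity classes of the parts of $\mathbf{p}^\vee$, reducing to constant parity. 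In that case $\spr^{\CC^\times}$ is an iterated tower of projective bundles and odd quadrics, whose cohomology rings are generated in degree two over the base; an induction on the number of distinct parts (modeled on Proposition \ref{inductive non-surjectivity}) then finishes the constant-parity case. This concrete geometric structure of $\spr^{\CC^\times}$, which has no analogue for $\spr$ itself, is the essential input you are missing.
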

\begin{proof}[Sketch of proof] The proof is technical and we are not going to use the result later on, so we give a sketch. Assume $\fg$ is of type C and $\mathbf{p}^\vee$ takes the form $((2a)^{d_0}, 2b_1+1\geqslant...\geqslant 2b_k+1)$ where $d_0$ odd if $a\neq 0$. Each connected component of $(\spr)^{\CC^\times}$ is then isomorphic to the product $(\cB_{e_0})^{\CC^\times}\times (\cB_{e_1})^{\CC^\times}$ where $e_0$ and $e_1$ have partitions $((2a)^{d_0})$ and $(2b_1+1\geqslant...\geqslant 2b_k+1)$, respectively (see \cref{fixed point Springer fiber}). Hence, it is enough to work with the case where the members of $\mathbf{p}^\vee$ have the same parity.

\textit{Step 1}. Consider the case where $\mathbf{p}^\vee$ has all parts equal. Let $k$ be the number of parts in $\mathbf{p}^\vee$. Each connected component of $(\spr)^{\CC^\times}$ is a tower of projective bundles over an orthogonal (or symplectic) partial flag variety of $\CC^k$ (\cite[Section 8.3]{hoang2024}). A symplectic partial flag variety is again a tower of projective bundles over a point. An orthogonal partial flag variety of $\CC^k$ can be viewed as a tower of odd quadric over a point. The result then follows from the fact that the cohomology rings (with complex coefficients) of odd quadrics and projective spaces are generated by their second degree components (\cite{EG94}).

\textit{Step 2}. Consider the case where $\mathbf{p}^\vee$ consists of only odd members. Let $\mathbf{p}^\vee$ be $(2b_1+1,...,2b_k+1)$ for $b_1\geqslant b_2\geqslant...\geqslant b_k\geqslant 0$. We prove the surjectivity of $i^{*}_{(0,1)}$ by induction on the number of distinct parts of $\mathbf{p}^\vee$. The base case that $\mathbf{p}^\vee$ has equal parts was proved in the previous step. Similarly to \cref{inductive non-surjectivity}, the surjectivity of $i^{*}_{(0,1)}$ for $\mathbf{p}^\vee$ would follow from the surjectivity of $i^{*}_{(0,1)}$ for $(\mathbf{p}^\vee)^i$ where $(\mathbf{p}^\vee)^i$ is obtained from $\mathbf{p}^\vee$ by replacing a pair $(2b_i+1= 2b_{i+1}+1)$ by $(2b_i+3= 2b_{i+1}+3)$ for some $1\leqslant i\leqslant k-1$. After a finite number of such steps, we will arrive at the case of a partition with fewer distinct parts, and the surjectivity follows from the induction hypothesis.
    
\end{proof}
    
 Moreover, we conjecture the following.
\begin{conj} \label{surject h=1}
   The image of $i^{*}_{\CC^\times}$ is the $A^\vee$-invariant part of $H^*((\spr)^{\CC^\times})$.
\end{conj}
We make some comments about the parameters $(\lambda, 1)$ for general $\lambda$. First, consider the vector field induced by $(\la,1)$ and the fixed point varieties $(\cB^\vee)^{(\la,1)}$, $(\spr)^{(\la,1)}$. At $(\lambda,1)$, the pullback map in (\ref{equivariant surjectivity}) becomes
$$i^{*}_{(\lambda, 1)}: H^*((\cB^\vee)^{(\la,1)})\rightarrow H^*((\spr)^{(\la,1)}).$$
Recall that we write $T_{\lambda}^\vee$ for the torus with Lie algebra $\CC(\lambda,0)$. Outside of a $\ZZ$-lattice $\fh_{{e}^\vee}$, the variety $H^*((\cB^\vee)^{(\la,1)})$ is the simultaneous fixed point of $T_{\lambda}^\vee$ and $\CC^\times$. In this case, we have a result similar to \cref{reduce_to_0}.   

At the parameters $\lambda\in \fh_{{e}^\vee}$ where $(\lambda, 1)$ is not generic in $\CC\lambda\oplus \CC\hbar$, the behavior of the map $i^{*}_{(\lambda, 1)}$ may exhibit interesting features. For example, $i^{*}_{(\lambda, 1)}$ may not be surjective even when both $i^{*}_{(0, 1)}$ and $i^{*}_{(\lambda, 0)}$ are surjective. As an application, in \cref{cohomological surjectivity}, we use a certain parameter $(\lambda, 1)$ to prove the failure of the surjectivity of $i^{*}_{(0,0)}$ (see \cref{3-row}). 

\subsection{A combinatorial application: Springer fibers and a left cell}\label{combinatorics}
In this section, we explain a combinatorial application of our refined Hikita conjecture. Our aim is to produce a bijection between two sets of somewhat different natures. The first set consists of the element in a left cell in a (integral) Weyl group. This set appears as the parametrization of simple modules in the BGG category $\cO$ that are annihilated by a certain primitive ideal. The second set describes the $A^\vee$-orbits of irreducible components of certain fixed point loci of the Springer fibers.  

We specialize all the algebras in the diagram of \cref{main_prop_weak_hikita} to $(\lambda, t)= (\lambda,1)\in \fh_X\oplus \CC\hbar$ and obtain two maps of algebras.

    $$\psi_\lambda: \CC[\fh^*]\rightarrow H_{T_{e^\vee}\times \CC^\times}^*(\cB^{\vee})_{(\lambda, 1)}\rightarrow H_{T_{e^\vee}\times \CC^\times}^*(\spr)_{(\lambda, 1)}\rightarrow \oplus_{[w]\in W/W_{L}} ,$$
    $$\Psi_\lambda: \CC[\fh^\vee]\rightarrow \cC_\nu(\cA_{1,\lambda}(T^*\cB^\vee))\xrightarrow{b_{(\lambda,1)}} \cC_\nu(\cA_{1,\lambda}(T^*\cP))\rightarrow \oplus_{[w]\in W/W_{L}}.$$
From \cref{main_prop_weak_hikita} we can identify the sources, targets, and the images of these two maps. Therefore, we obtain an isomorphism between the two algebras $B_{1,\lambda}= \on{Im}( \Psi_\lambda)$ and $B_{2,\lambda}= \on{Im}(\psi_\lambda)$.

The algebra $B_{1,\lambda}$ is determined by the image of $\fh\subset \CC[\fh^*]$. For each $[w]\in W/W_{L}$, let $\CC_{[w]}$ denote the corresponding summand in $\oplus_{[w]\in W/W_{L}} \CC)$. By the proof of \cref{lemma_hw_compute_parabolicW}, the map $\fh\rightarrow \CC_{[w]}$ records the highest weight of a simple $\cC_\nu(\cA_{1,\lambda}(T^*\cP))$-modules. View $\Spec(B_{1,\lambda})$ as a subscheme of $\fh^*$, then $\Spec(B_{1,\lambda})$ is supported on a finite set $\Supp_\lambda$.


Next, we relate $\Supp_0$ to a fixed point variety of the Springer fiber $\spr$. Let $Y_{(0,1)}$ be the set of $A^\vee$-orbits of the connected components of $(\spr)^{\CC^\times}$. 
\begin{prop} \label{support and component}
    The isomorphism $B_{2,0}\simeq B_{1,0}$ from \cref{main_prop_weak_hikita} induces a bijection between the two sets $\Supp_0$ and $Y_{(0,1)}$. 
\end{prop}
\begin{proof}
    First, we view $\Supp_0$ as the set-theoretic support of $\Spec(B_{2,0})$. Here, $\Spec(B_{2,0})$ is naturally a subscheme of $\fh^\vee$. Similarly to Section \ref{canonical parameter section}, we have 
$$B_{2,0}= \on{Im} \left (\CC[\fh^\vee]\rightarrow H^*((\cB^\vee)^{\CC^\times})\xrightarrow{i^*_{0, 1}} H^*((\spr)^{ \CC^\times})\xrightarrow{j^*_{0}} \oplus_{[w]\in W/W_{L}} \CC\right ).$$

    Let $X_{\alpha}$ be a connected component of $\spr^{\CC^\times}$. Let $\cF_\alpha$ be the corresponding connected component of $(\cB^\vee)^{\CC^\times}$ that contains $X_\alpha$. From \cref{fixed point flag variety}, we know that $\Spec(H^*(\cF_\alpha))$ is set-theoretically supported on a single point and for two components $\cF_\alpha\neq \cF_\beta$ of $(\cB^\vee)^{\CC^\times}$, the corresponding supports are different. 
    
    From \cite[Section 3.7]{DLP} and \cite[Section 2.1]{DLP}, we deduce that $\cF_\alpha= \cF_\beta$ if and only if $X_\alpha$ and $X_\beta$ are in the same $A^\vee$-orbit. Write $\Con((\cB^\vee)^{\CC^\times}))$ for the set of connected components of $(\cB^\vee)^{\CC^\times}$. We have an identification
    \begin{equation*} \label{components to component} \tag{**}
        Y_{(0,1)}= \{\cF^{\alpha}\in \Con((\cB^\vee)^{\CC^\times}), i^*_{0,1}(H^*(\cF^{\alpha}))\neq 0\}.
    \end{equation*}

    
   For any nonzero quotient $B_\alpha$ of $H^*(\cF_\alpha)$, the scheme $\Spec(B_\alpha)$ is supported at a single point. Hence, $\Supp_0$, the support of $B_{2,0}$ is identified with $\{\cF\in \Con((\cB^\vee)^{\CC^\times}), j_0^{*}\circ i^*_{0,1}(H^*(\cF))\neq 0\}$. Now, recall that the map $H^*((\spr)^{\CC^\times})\xrightarrow{j^*_{0}} \bigoplus_{[w]\in W/W_{L}} \CC)$ is the pullback $H^*((\spr)^{\CC^\times})\rightarrow H^*((\spr)^{T_{e^\vee}})$. Each connected component $X_\alpha$ of $(\spr)^{\CC^\times}$ is smooth and projective. Hence, $X_\alpha$ contains at least one point fixed by $T_e{^{\vee}}$. Therefore, $j_0^{*}\circ i^*_{0,1}(H^*(\cF))\neq 0$ is equivalent to $i^*_{0,1}(H^*(\cF))\neq 0$. Combining with (\ref{components to component}), we obtain the desired bijection.    
\end{proof}

\begin{rmk}
    For a general $\lambda\in \fh_X$, let $T^\vee_{\lambda,1}\subset T_{e^\vee}\times \CC^\times$ be the torus with Lie algebra $\CC(\lambda, 1)$. The action of $A^\vee$ and the action of $T^\vee_{\lambda,1}$ do not commute. Hence, we need to define the set $Y_{(\lambda, 1)}$ in another way.

    Consider an equivalence relation on the set $\Con((\spr)^{T^\vee_{\lambda,1}})$ defined as follows. We say $X_\alpha$ and $X_\beta$ belong to the same class if $X_\alpha$ and $X_\beta$ both lie in a connected component $\cF_\gamma$ of $\Con((\cB^\vee)^{T^\vee_{\lambda,1}})$. Let $Y_{(\lambda, 1)}$ be the set of equivalence class in $\Con((\spr)^{T^\vee_{\lambda,1}})$ with respect to this relation. Then the proof \cref{support and component} works similarly, and we have a bijection between $Y_{(\lambda,1)}$ and $\Supp_\lambda$.

\end{rmk}

\begin{rmk}
    Note that we have obtained the bijections between $\Supp_\lambda$ and $Y_{(\lambda,1)}$ without computing either set. This is a notable strength of our deformation approach since only explicit computation is needed at the generic $(\lambda,t)$ for the proof of \cref{main_prop_weak_hikita}. On the other hand, these bijections at certain special parameters $(\lambda, 1)$ have interesting nontrivial combinatorial interpretations. In the rest of this section, we discuss a corollary of \cref{support and component}.
\end{rmk}

Assume $\widetilde{D}(\OO^\vee)$ is an orbit in $\fg$ with normal closure. Then the map $\cC_\nu(\cA_{1,\lambda}(T^*\cB^\vee))\xrightarrow{b_{(\lambda,1)}} \cC_\nu(\cA_{1,\lambda}(T^*\cP))$ is surjective. Hence, the set $\Supp_{\lambda}$ is the set of the highest weights of the simple modules of $\cC_\nu(\cA_{1,\lambda}(T^*\cP))$ in the corresponding category $\cO_\nu$. 

Write $I_{\fp,\lambda}$ for the kernel of the map $b_{(\lambda,1)}$ above. From the proof of \cref{lemma_hw_compute_parabolicW}, the simples of $\cC_\nu(\cA_{1,\lambda}(T^*\cP))$ are in bijection with the simples annihilated by $I_{\fp,\lambda}$ of the BGG category $\cO$ for $\cU_\lambda(\fg)$. These simple modules are parameterized by a certain subset $c_{P,\lambda}\subset W$. In particular, each element $w\in c_{P,\lambda}$ corresponds to the simple module $L(w.\frac{1}{2}(\rho_\fl-\rho))$ of $\cU(\fg)$. Here, the action of $w$ is the $\rho$-shifted action. For $\lambda= 0$, we have the following corollary.

\begin{cor} \label{combi bijection}
    When $\widetilde{D}(\OO^\vee)$ is an orbit in $\fg$ with normal closure, we have a bijection $Y_{(0, 1)}\simeq c_{P,0}$.
\end{cor}

\begin{rmk}
    

    Assume $e^\vee$ is even. The bijection in \cref{combi bijection} can be made more precise as follows. The set $Y_{(0,1)}$ of $A^\vee$-orbits of connected components of $(\spr)^{\CC^\times}$ admits a parametrization by the cosets $W_{\fp^\vee} w$ such that $P^{\vee}wB^\vee/B^\vee\cap \spr$ nonempty. If we pick out the longest elements in those cosets, they are precisely the elements that belong to $c_{P,0}$. 

    Recall that \cref{combi bijection} requires the hypothesis that $\widetilde{\OO}= \widetilde{D}(\OO^\vee)$ is the orbit $\OO= D(\OO^\vee)$ of $\fg$. If we drop this assumption, the map $\cC_\nu(\cA_{1,0}(T^*\cB^\vee))\xrightarrow{b_{(0,1)}} \cC_\nu(\cA_{1,0}(T^*\cP))$ may not be surjective. Nevertheless, the map  $\cC_\nu(\cA_{1,0}(T^*\cB^\vee))\xrightarrow{b_{(0,1)}} \cC_\nu(\cA_{1,0}(T^*\cP))\rightarrow \oplus_{[w]\in W/W_{L}}\CC$ still records the highest weights of certain simple $(\cA_{1,0}(T^*\cP))^\Pi$-modules where $\Pi=\on{Aut}_\OO(\widetilde{\OO})$ (see \cref{surj_Phi_hbar_nonzero}). Therefore, we have an injection $Y_{(0,1)}= \Supp_0\hookrightarrow c_{P,0}$. 
    
    An explicit combinatorial construction of this injection (as well as the generalization of this story to the parabolic setting) will be presented in another work.
\end{rmk}

Let us finish this section by mentioning an interesting relation of the combinatorial bijection obtained in \ref{support and component} and the results and predictions of a recent preprint \cite{Peng_Xie_Yan_Mirror} (see also \cite{yi_ya}). In loc. cit, the authors propose a mirror symmetry between  vertex operator algebras arising (via the $4d/2d$ correspondence) from some $4d$ $\mathcal{N}=2$ superconformal field theories and the Coulomb branches of  $3d$ theories obtained by compactifying the $4d$ theories above on the circle. 
One of the main conjectures of the paper (see point 1 of the introduction in loc. cit.) claims that there should be a bijection between irreducible objects in category $\mathcal{O}$ for certain affine vertex algebras and $\mathbb{C}^\times$-fixed points of certain affine Springer fibers. This looks similar to the bijection \ref{support and component}. One might hope that
 there is an analog of the equivariant Hikita-Nakajima conjecture in this setting that would imply and give a conceptual explanation of the above conjecture.


\subsection{More general settings}\label{sec_more_gen_settings} 
\subsubsection{Distinguished case} So far, we have been considering the Nakajima-Hikita conjecture for the pair $X^\vee= S_{e^\vee}$ and $X=$ $\Spec(\CC[\widetilde{\OO}])$ for $e^\vee$ regular in some Levi and $\widetilde{\OO}=\widetilde{D}(\OO^\vee)$. In this section, we propose a conjecture for the ``opposite" setting. In particular, we assume that $e^\vee$ is a distinguished nilpotent element of $\fg^\vee$. In this setting, the torus $T_{X^\vee}$ is trivial. Recall that by \cref{about cover} (v) $\Aut_\OO (\widetilde{\OO})\simeq \bar{A}(\OO^\vee)$. Thus, the following two important conditions are relaxed.
\begin{enumerate}
    \item $X=\on{Spec}\CC[\widetilde{D}(\OO^\vee)]$ no longer admits a symplectic resolution.
    \item The Lusztig quotient $\Bar{A}(\OO^\vee)$ is rarely nontrivial. In other words, we no longer assume that $\widetilde{D}(\OO^\vee)= D(\OO^\vee)$.
\end{enumerate}
Recall that we write $A^\vee$ for the component group of $C_{G^\vee}(e^\vee)$. Let $A'$ be the kernel of the map $A^\vee\twoheadrightarrow \Bar{A}(\OO^\vee)$. We expect the following:
    \begin{conj}\label{target distinguished}
        Assume that $e^\vee\in \fg^\vee$ is a distinguished nilpotent element, and let $X=\Spec(\CC[\widetilde{D}(\OO^\vee)])$. Then there is an isomorphism of $\CC[\hbar^{\pm 1}]$-algebras
        
    \begin{equation*} 
        H^{*}_{\CC^\times}(\spr)^{A'} \otimes_{\mathbb{C}[\hbar]} \mathbb{C}[\hbar^{\pm 1}] \simeq \cC_\nu(\cA_\hbar(X)) \otimes_{\mathbb{C}[\hbar]} \mathbb{C}[\hbar^{\pm 1}] 
    \end{equation*}
that is compatible with the action of $\Bar{A}(\OO^\vee)$ on both sides.  
    \end{conj}

    \begin{warning}
    We only propose this statement for distinguished $e^\vee$. For example, the statement in \cref{target distinguished} is wrong when $e^\vee$ is regular in some Levi $\fl^\vee$, $\Bar{A}(\OO^\vee)$ trivial, and $A^\vee$ non-trivial. In this case, the right side of \cref{target distinguished} is a module over $\CC[\fh^\pm]$ of generic rank $|W/W_L|$ while the rank of the left side is strictly smaller.
\end{warning}

Note that Conjecture \ref{target distinguished} is equivalent to the existence of $\Bar{A}(\OO^\vee)$-equivariant isomorphism of algebras: 
\begin{equation}\label{identi_conj_dist_hbar_1}
    H^*(\mathcal{B}_{e^\vee}^{\mathbb{C}^\times})^{A'} \simeq \cC_\nu(\mathcal{A}_\hbar(X)).
    \end{equation}
    Passing to $\Bar{A}(\OO^\vee)$-invariants, we then expect the identification: 
    \begin{equation}\label{identi_Galois_fixed}
    H^*(\mathcal{B}_{e^\vee}^{\mathbb{C}^\times})^{A^\vee} \simeq \cC_\nu(\mathcal{A}(X))^{\Bar{A}(\OO^\vee)}.
    \end{equation}
    Actually, we expect that the identification (\ref{identi_Galois_fixed}) holds for {\emph{arbitrary}} nilpotent $e^\vee$, where on the RHS we consider the $\Bar{A}(\OO^\vee)$-invariants of the Cartan subquotient of the quantization of $X$ with period $0$. 
    Moreover, we expect that both of the algebras should be quotients of $\mathbb{C}[\mathfrak{h}^*]$, so the identification (\ref{identi_Galois_fixed}) would reduce to the equality kernels.
    It actually follows from \cite[Corollary 6.5.6]{LMBM} together with \cite[Theorem 8.5.1]{LMBM} and \cite[Theorem 5.0.1]{MBMat} that the natural homomorphism $\mathbb{C}[\mathfrak{h}^*] \twoheadrightarrow \cC_\nu(\mathcal{A}(X)^{\Bar{A}(\OO^\vee)})$ is {\emph{surjective}}. Moreover, we have the following lemma.
    \begin{lemma} \label{invariantC to Cinvariant}
        There is a canonical surjective homomorphism $\cC_\nu(\mathcal{A}(X)^{\Bar{A}(\OO^\vee)})\twoheadrightarrow \cC_\nu(\mathcal{A}(X))^{\Bar{A}(\OO^\vee)}$.
    \end{lemma}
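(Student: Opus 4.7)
The plan is to construct the homomorphism via the inclusion $\CA^{\Pi}\hookrightarrow \CA$ (where we abbreviate $\Pi:=\Bar{A}(\OO^\vee)$) and to establish surjectivity by averaging over the finite group $\Pi$. The key observation justifying both steps is that the action of $\Pi$ on $\CA:=\mathcal{A}(X)$ commutes with the grading induced by $\nu$: indeed, $\Pi$ arises as the deck transformation group of the cover $\widetilde{\OO}\to \OO$ and so commutes with the natural $G$-action on $X$, and in particular with the action of $T_X$ (which is built from a centralizer inside $G$) and with the contracting $\CC^\times$-action. Consequently $\Pi$ preserves each graded piece $\CA_i$, and both $\CA^{\Pi}$ and $\CA$ are $\BZ$-graded with $\Pi$-equivariant structure.

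The first step will be to check that the inclusion $\CA^{\Pi}\hookrightarrow \CA$ induces a well-defined graded homomorphism $\cC_\nu(\CA^{\Pi})\to \cC_\nu(\CA)$. For this one only needs the trivial containment $\sum_{i>0}\CA^{\Pi}_{-i}\CA^{\Pi}_{i}\subset \sum_{i>0}\CA_{-i}\CA_{i}$, so that the composition $\CA^{\Pi}_0\hookrightarrow \CA_0\twoheadrightarrow \cC_\nu(\CA)$ factors through $\cC_\nu(\CA^{\Pi})$. The image plainly lies in the $\Pi$-fixed subalgebra $\cC_\nu(\CA)^{\Pi}$, giving the desired map.

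The main point is surjectivity. Since $\Pi$ is a finite group (and we work over $\CC$), the Reynolds operator $R\colon \CA\twoheadrightarrow \CA^{\Pi}$, $R(a)=\tfrac{1}{|\Pi|}\sum_{g\in\Pi}g\cdot a$, is a $\CA^{\Pi}$-bilinear $\CC$-linear projection preserving the grading. Given a class $[x]\in \cC_\nu(\CA)^{\Pi}$ represented by some $x\in \CA_0$, the element $R(x)\in \CA^{\Pi}_0$ will be the required lift. Indeed, $R(x)-x=\tfrac{1}{|\Pi|}\sum_{g\in \Pi}(g\cdot x - x)$, and by $\Pi$-invariance of $[x]$ every difference $g\cdot x - x$ lies in $\sum_{i>0}\CA_{-i}\CA_{i}$, whence so does $R(x)-x$. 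Therefore $R(x)$ and $x$ have the same image in $\cC_\nu(\CA)$, proving surjectivity.

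There is no genuine obstacle to this argument; the only subtlety worth emphasizing is the compatibility of the $\Pi$-action with the $\nu$-grading (needed so that $\CA^{\Pi}$ inherits a $\BZ$-grading and the Cartan subquotient $\cC_\nu(\CA^{\Pi})$ makes sense), and the availability of averaging, which holds precisely because $\Pi$ is finite and we are in characteristic zero.
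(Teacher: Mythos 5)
Your proof is correct and follows essentially the same route as the paper's: both arguments rest on the fact that the (finite, hence locally finite) action of $\Bar{A}(\OO^\vee)$ commutes with the grading, and both use averaging over the finite group in characteristic zero — the paper invokes exactness of taking invariants to write $\cC_\nu(\mathcal{A}(X))^{\Bar{A}(\OO^\vee)} = \mathcal{A}(X)_0^{\Bar{A}(\OO^\vee)}/\big(\sum_{i>0}\mathcal{A}(X)_{-i}\mathcal{A}(X)_i\big)^{\Bar{A}(\OO^\vee)}$ and then observes the surjection is immediate, while you make the averaging explicit via the Reynolds operator to construct the lift directly. These are the same idea phrased two ways.
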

    \begin{proof}
    It follows from the definitions that the action of $\Bar{A}(\OO^\vee)$ on $\mathcal{A}(X)$ commutes with the $T_X$-action. Moreover, the action of $\Bar{A}(\OO^\vee)$ on $\mathcal{A}(X)$  is locally finite. 
    We conclude that: 
    \begin{equation}\label{fixed_cartan}
    \cC_\nu(\mathcal{A}(X))^{\Bar{A}(\OO^\vee)} = \Big(\mathcal{A}(X)_0/\sum_{i>0}\mathcal{A}(X)_{-i}\mathcal{A}(X)_{i}\Big)^{\Bar{A}(\OO^\vee)} = \mathcal{A}(X)_0^{\Bar{A}(\OO^\vee)}/\Big(\sum_{i>0}\mathcal{A}(X)_{-i}\mathcal{A}(X)_{i}\Big)^{\Bar{A}(\OO^\vee)} 
    \end{equation}
    \begin{equation}\label{cartan_of_fixed}
    \cC_\nu(\mathcal{A}(X)^{\Bar{A}(\OO^\vee)}) = \mathcal{A}(X)^{\Bar{A}(\OO^\vee)}_0/\sum_{i>0}\mathcal{A}(X)^{\Bar{A}(\OO^\vee)}_{-i}\mathcal{A}(X)^{\Bar{A}(\OO^\vee)}_{i}.
    \end{equation}
    Clearly, (\ref{cartan_of_fixed}) surjects onto (\ref{fixed_cartan}).
    \end{proof}
    
    As a consequence, we always have a surjection $\mathbb{C}[\mathfrak{h}^*] \twoheadrightarrow \cC_\nu(\mathcal{A}(X))^{\Bar{A}(\OO^\vee)}$. On the cohomological side, we also expect that the restriction homomorphism $H^*(\mathcal{B}^{\mathbb{C}^\times}) \rightarrow H^*(\spr^{\mathbb{C}^\times})^{A^\vee}$ is always surjective (\cref{surject h=1}). An evidence is given by \cref{sujective Ctimes}. 

\begin{warning}\label{warning_nonsurj_A_inv} 
It is crucial that we are passing to the $\mathbb{C}^\times$-fixed points. It is in general {\emph{not}} true that the restriction homomorphism $H^*(\mathcal{B}) \rightarrow H^*(\mathcal{B}_{e^\vee})^{A^\vee}$ is surjective. 

This phenomenon has already appeared in the case where $A^\vee$ acts trivially in $H^*(\mathcal{B}_{e^\vee})$ as follows. In \cref{best condition}, we have described the condition for $A^\vee$ to act trivially on $H^*(\mathcal{B}_{e^\vee})$ and $\Tilde{D}(\OO^\vee)$ is an orbit with normal closure. On the other hand, the necessary condition for the pullback $H^*(\mathcal{B}) \rightarrow H^*(\mathcal{B}_{e^\vee})$ to be surjective is much stricter, see \cref{surjectivity} and \cref{surjectivity B C D}. 


\end{warning}

\begin{rmk}\label{compare simples}
    \cref{target distinguished}, or equivalently the $\bar{A}(\OO^\vee)$-isomorphism (\ref{identi_conj_dist_hbar_1}), implies a bijection between the two $\bar{A}(\OO^\vee)$-sets $\Con(\spr^{\CC^{\times}})/A'$ and the set of irreducible modules of $\cA_\hbar(X)$. When $e^\vee$ is an even nilpotent element, we can identify $\Con(\spr^{\CC^{\times}})/A'$ with $\text{Irr}(\spr)/A'$ where $\text{Irr}(\spr)$ is the set of irreducible components of the Springer fiber $\spr$. This set $\text{Irr}(\spr)/A'$ is conjectured to be the $\bar{A}(\OO^\vee)$-set that parametrizes finite dimensional simple modules with integral central character of $\mathcal{W}_{e^\vee}$, the W-algebra attached to $e^\vee$ (\cite[Conjecture 1.2]{hoang2024actioncomponentgroupsirreducible}). In loc. cit., this conjecture has been checked for exceptional $\fg^\vee$. Therefore, it is desirable to establish a connection between the simple modules of $\cA_\hbar(X)$ and the modules of $\mathcal{W}_{e^\vee}$. This is a topic of ongoing work by the authors. 
\end{rmk}

We now return to the case where $e^\vee$ is distinguished. Note that in Conjecture \ref{target distinguished} we tensor both sides by $\mathbb{C}[\hbar^{\pm 1}]$. One still may expect that the isomorphism in this conjecture extends to $\hbar=0$. That would imply the identification 
\begin{equation}\label{wrong_identification}
H^*(\mathcal{B}_{e^\vee})^{A^\vee} \simeq \cC_\nu(\mathcal{A}_\hbar(X))^{\Bar{A}(\OO^\vee)}/(\hbar).
\end{equation}
The RHS is a quotient of $\mathbb{C}[X^{T}]^{\Bar{A}(\OO^\vee)}$ that, in its turn, is a quotient of $\mathbb{C}[(X/\Bar{A}(\OO^\vee))^T]$ (\cref{invariantC to Cinvariant}). Recall that $X=\operatorname{Spec}(\mathbb{C}[\widetilde{D}(\mathbb{O}^\vee)])$ and $\Bar{A}(\OO^\vee)=\operatorname{Aut}_{D(\mathbb{O}^\vee)}(\widetilde{D}(\mathbb{O}^\vee))$. Assume also that the closure $\overline{D(\mathbb{O}^\vee)}$ is normal.
We have:  
\begin{equation*}
(X/\Bar{A}(\OO^\vee))^T = (\operatorname{Spec}\mathbb{C}[\widetilde{D}(\mathbb{O}^\vee)]/{\Bar{A}(\OO^\vee)})^T = \operatorname{Spec}\mathbb{C}[D(\mathbb{O}^\vee)]^T = \overline{D(\mathbb{O}^\vee)} \cap \mathfrak{h}^*.
\end{equation*}

It follows that the natural morphism $\mathbb{C}[\mathfrak{h}^*] \rightarrow \cC_\nu(\mathcal{A}_\hbar(X))^{\Bar{A}(\OO^\vee)}/(\hbar)$ is surjective. So, the RHS of (\ref{wrong_identification}) is the quotient of $\mathbb{C}[\mathfrak{h}^*]$ (after we assume the normality of $\overline{D(\mathbb{O}^\vee)}$). On the cohomological side, we expect the restriction homomorphism $H^*(\mathcal{B}) \to H^*(\mathcal{B}_e)^{A^\vee}$ to be surjective for certain (but not all) $e^\vee$ distinguished. Two particular cases where (\ref{wrong_identification}) holds are mentioned in Section 10.4.3.


In general, even for distinguished $e^\vee$, we do not expect that the specialization of $ H^{*}_{\CC^\times}(\spr)^{A^\vee}$ to $\hbar=0$ is isomorphic to the specialization of $\cC_\nu(\mathcal{A}_\hbar(X))$. However, regardless of $e^\vee$ being distinguished or not, one still might expect that the {\emph{images}}
\begin{equation}\label{images_canonical_param}
\operatorname{Im}(H^*_{\mathbb{C}^\times}(\mathcal{B}) \rightarrow H^*_{\mathbb{C}^\times}(\mathcal{B}_e)),~\operatorname{Im}(\mathbb{C}[\mathfrak{h}^*,\hbar] \rightarrow \cC_\nu(\mathcal{A}_\hbar)^{\Bar{A}(\OO^\vee)})
\end{equation}
are isomorphic. We note that $\operatorname{Im}(H^*_{\mathbb{C}^\times}(\mathcal{B}) \rightarrow H^*_{\mathbb{C}^\times}(\mathcal{B}_e)) \subset H^*_{\mathbb{C}^\times}(\mathcal{B}_e)^{A^\vee})$. 

For $\hbar \neq 0$, homomorphisms in (\ref{images_canonical_param}) should be surjective (the second one is known to be true by \cite[Corollary 6.5.6]{LMBM}), 
so the identification of images should imply the conjectural identification (\ref{identi_Galois_fixed}) above.



\subsubsection{Case of arbitrary $e^\vee$}
Let us now assume that $e^\vee$ is arbitrary.
Assume that $Y^\vee=\widetilde{S}(e^\vee)$, $X=\on{Spec}\CC[\widetilde{D}(\mathbb{O}^\vee)]$ and let $\widetilde{X}$ be a $\mathbb{Q}$-factorial terminalization of $X$ (described in \cite[Section 7.2]{LMBM}, see also \cite{Namikawa2022}). 
Let $\widetilde{X}_{\mathfrak{h}_{e^\vee}}$ be the deformation of $\widetilde{X}$ over $\mathfrak{h}_{e^\vee}$.
Explicitly, we have:
\begin{equation*}
\widetilde{X} = G \times^P (\operatorname{Spec}\mathbb{C}[\widetilde{\mathbb{O}}_L] \times \mathfrak{p}^\perp),~\widetilde{X}_{\mathfrak{h}_{e^\vee}} = G \times^P (\operatorname{Spec}\mathbb{C}[\widetilde{\mathbb{O}}_L] \times [\mathfrak{p},\mathfrak{p}]^\perp).
\end{equation*}
    Set $X_L:=\operatorname{Spec}\mathbb{C}[\widetilde{\mathbb{O}}_L]$.

For a module $M$ over $\mathbb{C}[\mathfrak{h}_{e^\vee},\hbar]$, we will denote by $M_{\hbar}$ the  $\mathbb{C}[\mathfrak{h}_{e^\vee},\hbar^{\pm 1}]$ -module $M \otimes_{\mathbb{C}[\mathfrak{h}_{e^\vee},\hbar]} 
\mathbb{C}[\mathfrak{h}_{e^\vee},\hbar^{\pm 1}]$. Recall that $L^\vee$ is the connected component of $1 \in Z_{G^\vee}(T_{e^\vee})$ and recall that $e^\vee$ is {\emph{distinguished}} in $\mathfrak{l}^\vee=\on{Lie}L^\vee$. Let $A_{L^\vee}$ be the component group of $Z_{L^\vee}(e^\vee)$. The group $A_{L^\vee}$ acts  naturally on $H^*_{T_{e^\vee} \times \CC^\times}(\mathcal{B}_{e^\vee})$. 
We make the following general conjecture (compare with Remark \ref{rem_general_conj}). 
\begin{conj} \label{image distinguished}
    We have an isomorphism of algebras:
    \begin{multline} 
    \on{Im}\left(H^{*}_{T_{e^\vee} \times \CC^\times}(T^*\cB^\vee) \rightarrow H^{*}_{T_{e^\vee} \times \CC^\times}(\spr^{T_{e^\vee} \times \CC^\times})_{\hbar}^{A'_{L^\vee}}  \right) \simeq 
    \on{Im} \left( \CC[\fh, \hbar] \otimes \CC[\mathfrak{h}_{e^\vee}] \rightarrow \Gamma(\widetilde{X}_{\mathfrak{h}_{e^\vee}}^{T_X},\cC_\nu(\mathcal{D}_{\hbar,\mathfrak{h}_{e^\vee}}(\widetilde{X})))_{\hbar}\right).
    \end{multline}
\end{conj}

\begin{rmk}\label{lemma_reduce_im_distinguished}
    Let us explain how  Conjectures \ref{target distinguished} and \ref{image distinguished} are related. Assume that the Conjecture (\ref{target distinguished}) holds for $(e^\vee,L^\vee)$. We expect the following isomorphism:
\begin{equation}\label{version_conj_dist}
H^{*}_{T_{e^\vee} \times \CC^\times}(\spr^{T_{e^\vee} \times \CC^\times})^{A'_{L^\vee}}_{\hbar} \simeq \Gamma(\widetilde{X}^{T_X}_{\mathfrak{h}_{e^\vee}},\cC_\nu(\mathcal{D}_{\hbar,\mathfrak{h}_{e^\vee}}(\widetilde{X})))_{\hbar}.
\end{equation}
Assuming Conjecture  (\ref{target distinguished}) holds, we expect that the identification of Conjecture \ref{image distinguished} is compatible with the isomorphism (\ref{version_conj_dist}). 
\end{rmk}

Let us finish this section mentioning that we {\emph{do not}} have any conjecture describing $\cC_\nu(\mathcal{A}_{\hbar,\mathfrak{X}}(X))$ for $X=\operatorname{Spec}(\mathbb{C}[\widetilde{D}(\mathbb{O}^\vee)])$. Informally, the reason is the following.

As we have already seen, the specialization of $\cC_\nu(\mathcal{A}_{\hbar,\mathfrak{h}_X}(X))$ to $\hbar=1$, $\lambda=0$ should be isomorphic to $H^*(\mathcal{B}_{e^\vee}^{\mathbb{C}^\times})^{A'}$. Note also that for $\hbar=1$ and $\la$ being generic, the fiber $\cC_\nu(\mathcal{A}_{\hbar,\mathfrak{X}}(X))_{(\la,1)}$ should be isomorphic to $(H^*(\mathcal{B}_{e^\vee}^{T_{e} \times \mathbb{C}^\times}))^{A_L'}$.
So, to be able to describe the whole family $\cC_\nu(\mathcal{A}_{\hbar,\mathfrak{h}_X}(X))$ via $\mathcal{B}_{e^\vee}$, we need some ``cohomology theory'' that captures the $A^\vee$-action on $\mathcal{B}_{e^\vee}$. It is not clear to us what the right candidate should be.  

Note that Conjecture \ref{image distinguished} bypasses this problem by considering the image of the equivariant cohomology of $\mathcal{B}$, this allows to neglect the actions of $A^\vee$ and $\Bar{A}_{L}(\OO^\vee)$ on the targets.

\subsubsection{Evidence for the conjectures}
\cref{target distinguished} is closer to the original Nakajima-Hikita conjecture in the sense that we keep the algebra $\cC_\nu(\cA(\widetilde{D}(\OO^\vee)))$ on the right side. 
On the other hand, 
\cref{image distinguished} is a direct generalization of \cref{conj_most_general_weak_Hikita} for $X^\vee=S_{e^\vee}$, $X=\Spec(\CC[\widetilde{D}(\OO^\vee)])$. Next, we explain some evidence for \cref{target distinguished} and \cref{image distinguished} when $e^\vee$ is distinguished.

Consider the case where $\fg^\vee$ is of type D and the partition of $e^\vee$ has two distinct parts. In this case, both $A^\vee$ and $\Bar{A}(\OO^\vee)$ are trivial. The identification \ref{wrong_identification} becomes the classical Hikita conjecture, which is true (see \cite[Proposition 6.1]{hoang2}). A ``twin'' case is where $\fg$ is of type $C$ and the partition of $e^\vee$ has two distinct parts. We now have $A^\vee= \Bar{A}(\OO^\vee)= \cyclic{2}$, and $\widetilde{D}(\OO^\vee)$ is a double cover of ${D}(\OO^\vee)$. The identification \ref{wrong_identification} is true by \cite[Remark 5.5]{hoang2}. We also expect \cref{image distinguished} to hold in this setting, the case where $e^\vee$ is subregular in type D has been proved in \cite[Theorem 1.3]{chen2024}.

\begin{rmk}
    We note that by \cite{Henderson2014}, in this case, $X^\vee$ can be studied as a quiver variety. The Coulomb branch of the corresponding quiver gauge theory is a slice in affine Grassmannian. We expect it to be isomorphic to the closure of the corresponding type $D$ orbit, similar to \cite{Mirkovi2022}.
\end{rmk}





\appendix
\section{Concrete examples of weight computations}\label{appendixA}
Assume, for simplicity, that $\fq\subset \fg$ is a Borel subalgebra. The statement of \cref{main_prop_weak_hikita} implies the coincidence of certain weights of $\fh$. Namely, consider the two different compositions $\CC[\fh^*, \hbar]^{W_M}\otimes_{\CC[\hbar]} \CC[\fh^*, \hbar]\to \bigoplus_{[w] \in W/W_L}\CC[\mathfrak{X}(\mathfrak{l}),\hbar] $. Specializing to the parameters $(\lambda, \hbar_0)\in \fX(\fl)\oplus \CC$ we get a collection of one-dimensional representations of $\CC[\fh^*]$. Using the top path in the diagram, $\chi\in \fh^*$ goes to the equivariant Chern class of the restriction of the line bundle $\fL(\chi)$ on $T^*\cB$ to $(\widetilde{S}(\chi^\vee)^{Z_{L^\vee}}$. Using the bottom path, we compute the image of $\chi$ under the Cartan comoment map. In the example below we explicitly compute the sets of weights obtained both ways and confirm that they coincide.

    Let $G^\vee$ be $\on{PGL}_4$. Consider the nilpotent element $e^\vee$ of $\fg^\vee$ corresponding to the partition $\mathbf{p}^\vee=(3,1)$ and $L^\vee=(\on{GL}_1 \times \on{GL}_3)/\mathbb{C}^\times$. Consider the map 
    \begin{equation}\label{equation A2}
    H_{{T^\vee}\times \CC^\times}^*(\cB^\vee)\rightarrow H_{T_{e^\vee}\times \CC^\times}^*(\cB^\vee)\rightarrow H_{T_{e^\vee}\times \CC^\times}^{*}(\spr) \rightarrow H_{T_{e^\vee}\times \CC^\times}^{*}(\spr^{T_{e^\vee}})= \oplus_{W/W_L}\CC[a,\hbar].
    \end{equation} 
    Here $T_{e^\vee} \simeq \CC^\times$, via $\CC^\times \ni t \mapsto [\on{diag}(t,1,1,1)]$. Let $a$ be the natural coordinate of $\CC[\ft_{e^\vee}]$.
    Let us explicitly describe $\mathcal{B}_{e^\vee}$. It has three irreducible components $C_1, C_2, C_3$, each of them is isomorphic to $\mathbb{P}^1$.  
    Namely, let $v_1,v_2,v_3,v_4$ be the standard basis in $\mathbb{C}^4$. We have 
    \begin{equation*}
    e^\vee\colon v_1 \mapsto 0,~ v_4 \mapsto v_3 \mapsto v_2 \mapsto 0.
    \end{equation*}
    Then, 
    \begin{equation*}
    C_1 = \{\langle p_1v_2+q_1v_1\rangle \subset \langle v_1,v_2\rangle \subset \langle v_1,v_2,v_3\rangle \subset \mathbb{C}^4\,|\, [p_1:q_1] \in \mathbb{P}^1\},
    \end{equation*}
    \begin{equation*}
    C_2 = \{\langle v_2\rangle \subset  \langle v_2, p_2v_3+q_2v_1\rangle \subset \langle v_1,v_2,v_3\rangle \subset \mathbb{C}^4\,|\, [p_2:q_2] \in \mathbb{P}^1\},
    \end{equation*}
    \begin{equation*}
    C_3 = \{\langle v_2 \rangle \subset \langle v_2, v_3 \rangle \subset \langle v_2,v_3,p_3v_4+q_3v_1\rangle \subset \mathbb{C}^4\,|\, [p_3:q_3] \in \mathbb{P}^1\}
    \end{equation*}
    The action of $T_{e^\vee} \times \mathbb{C}^\times=\mathbb{C}^\times\times \mathbb{C}^\times$ is the following:
    \begin{equation}\label{act_on_B_e}
    (a,t) \cdot [p_1:q_1]=[t^2p_1:aq_1],~(a,t) \cdot [p_2:q_2] = [p_2:aq_2],~(a,t) \cdot [p_3:q_3] = [t^{-2}p_3:aq_3].
    \end{equation}

    We have four $T_{e^\vee} \times \mathbb{C}^\times$-fixed points:
\begin{equation*}
o_1=[0_1:1_1],~o_2=[1_1:0_1]=[0_2:1_2],~o_3=[1_2:0_2]=[0_3:1_3],~o_4=[1_3:0_3].
\end{equation*}
The identification with ${}^{L^\vee}W$ is as follows:
\begin{equation*}
o_1 \mapsto 1=:w_1,~o_2 \mapsto (12)=:w_2,~o_3 \mapsto (123)=:w_3,~o_4 \mapsto (1234)=:w_4. 
\end{equation*}

Consider the line bundle $\mathcal{L}_i$ over $\mathcal{B}$ which fiber  over a flag $\{0\}=F_0 \subset F_1 \subset F_2 \subset F_3 \subset F_4=\mathbb{C}^4$ is $F_i/F_{i-1}$. Note that $\mathcal{L}_i$ is {\emph{not}} $\on{PGL}_4$-equivariant. On the other hand, $\mathcal{L}_i \otimes \mathcal{L}_{i+1}^{-1}$ is $\on{PGL}_4$-equivariant. 

 Similar to above, let $\CC_i$ be the one-dimensional representation of $\CC^\times$, such that $t\cdot v = t^i v$. We see that as representations of $T_e\times \CC^{\times}$ we have the following. 
 \begin{equation*}
 \mathcal{L}_1 \otimes \mathcal{L}_2^{-1}|_{o_1} = \CC_{1} \otimes \CC_{-2}, \,  \mathcal{L}_1 \otimes \mathcal{L}_2^{-1}|_{o_2} = \CC_{-1} \otimes \CC_{2}, \, \mathcal{L}_1 \otimes \mathcal{L}_2^{-1}|_{o_3} = \CC_{0} \otimes \CC_{2}, \,\mathcal{L}_1 \otimes \mathcal{L}_2^{-1}|_{o_4} = \CC_{0} \otimes \CC_{2},
 \end{equation*} 
  \begin{equation*}
 \mathcal{L}_2 \otimes \mathcal{L}_3^{-1}|_{o_1} = \CC_{0} \otimes \CC_{2}, \,  \mathcal{L}_2 \otimes \mathcal{L}_3^{-1}|_{o_2} = \CC_{1} \otimes \CC_{0}, \, \mathcal{L}_2 \otimes \mathcal{L}_3^{-1}|_{o_3} = \CC_{-1} \otimes \CC_{0}, \,\mathcal{L}_2 \otimes \mathcal{L}_3^{-1}|_{o_4} = \CC_{0} \otimes \CC_{2},
 \end{equation*} 
  \begin{equation*}
 \mathcal{L}_3 \otimes \mathcal{L}_4^{-1}|_{o_1} = \CC_{0} \otimes \CC_{2}, \,  \mathcal{L}_3 \otimes \mathcal{L}_{4}^{-1}|_{o_2} = \CC_{0} \otimes \CC_{2}, \, \mathcal{L}_3 \otimes \mathcal{L}_4^{-1}|_{o_3} = \CC_{1} \otimes \CC_{2}, \,\mathcal{L}_3 \otimes \mathcal{L}_4^{-1}|_{o_4} = \CC_{-1} \otimes \CC_{-2},
 \end{equation*}

We conclude that 
\begin{equation*}
e_{1}=e_{11}-e_{22} \mapsto (a-2\hbar,-a+2\hbar,2\hbar,2\hbar),
\end{equation*}
\begin{equation*}
 e_{2}=e_{22}-e_{33} \mapsto (2\hbar,a,-a,2\hbar),
\end{equation*}
\begin{equation*}
e_{3}=e_{33}-e_{44}  \mapsto (2\hbar,2\hbar,a+2\hbar,-a-2\hbar).
\end{equation*}

    Let us now describe the Cartan comoment map. Recall that the dual variety is $T^*(\on{SL}_4/P)$, where $P$ is the parabolic corresponding to the Levi $L=(\on{GL}_1 \times \on{GL}_3) \cap \on{SL}_4$. Clearly, $\on{SL}_4/P \simeq \mathbb{P}^3$ and we can describe quantizations of $\mathbb{C}[T^*\mathbb{P}^3]$ explicitly. The quantization $\mathcal{A}_{\hbar,\mathfrak{X}(\mathfrak{l})}(T^*(\on{SL}_4/P))$ is $D(\on{SL}_4/[P,P])^{P/[P,P]}$. Explicitly, we have an isomorphism 
    \begin{equation*}
    \on{SL}_4/[P,P] \iso \mathbb{A}^4 \setminus \{0\}
    \end{equation*}
    given by the action on $v_1 \in \mathbb{C}^4$. Then $P/[P,P]$ identifies with $\mathbb{C}^\times$ acting on $\mathbb{A}^4 \setminus \{0\}$ on the right via $(r_1,r_2,r_3,r_4) \cdot t = (r_1t,r_2t,r_3t,r_4t)$,

    Vasya's version: the explicit identification $P/[P,P] \iso \mathbb{C}^\times$ is given by $[(a_0,b_0,b_0,b_0)] \mapsto a_0$ and so the identification $\mathfrak{z}_{\mathfrak{l}}=\mathfrak{p}/[\mathfrak{p},\mathfrak{p}] \simeq \on{Lie}\mathbb{C}^\times = \mathbb{C}$ is given by $(a_0,b_0,b_0,b_0) \mapsto a_0$. So, 
    \begin{equation*}
    \mathcal{A}_{\hbar,\mathfrak{X}(\mathfrak{l})}(T^*(\on{SL}_4/P)) = (D_{\hbar}(\mathbb{A}^4))_0,    \end{equation*}
    where the degree zero part is taken w.r.t. to the grading $\on{deg}r_i=-1$, $\on{deg}\partial_{r_i}=1$.

    The identification $H^2(\mathbb{P}^3,\mathbb{C}) \simeq (\mathfrak{p}/[\mathfrak{p},\mathfrak{p}])^*=\mathfrak{z}(\mathfrak{l})^*$ is given by 
    \begin{equation*}
    c_1(\mathcal{O}_{\mathbb{P}^3}(-1)) \mapsto ((a_0,b_0,b_0,b_0) \mapsto a_0)=a.
    \end{equation*}
    Note now that the period of the microlocalization of the sheaf of differential operators $\mathcal{D}(\mathbb{P}^3)$ to $T^*\mathbb{P}^3$ equals 
    \begin{equation*}
    -c_1(T^*\mathbb{P}^3)/2=-c_1(\mathcal{O}_{\mathbb{P}^3}(-4))/2=-2a.
    \end{equation*}
    Note that 
 \begin{equation*}
 \rho_{\mathfrak{sl}_4}=\Big\langle\Big(\frac{3}{2},\frac{1}{2},-\frac{1}{2},-\frac{3}{2}\Big),-\Big\rangle, \,\rho_{\mathfrak{l}}=\Big\langle\Big(0,1,0,-1\Big),-\Big\rangle,\,\text{so}\,\rho_{\mathfrak{sl}_4}-\rho_{\mathfrak{l}}=\Big\langle\Big(\frac{3}{2},-\frac{1}{2},-\frac{1}{2},-\frac{1}{2}\Big),-\Big\rangle,
 \end{equation*} 
 i.e., it sends $(a_0,b_0,b_0,b_0)$ to $\frac{3}{2}(a_0-b_0)=2a_0$ so is indeed equal to $2a$ as desired (c.f. Proposition \ref{quant_flag}).

    In particular, we see that the {\emph{twisted}} action of $\mathbb{C}[\mathfrak{X}(\mathfrak{l}),\hbar]=\mathbb{C}[a,b,\hbar]/(a+3b)$ is induced by the homomorphism 
    \begin{equation*}
    1 \otimes a \mapsto -r_1\partial_{r_1}-r_2\partial_{r_2}-r_{3}\partial_{r_3}-r_4\partial_{r_4}-2\hbar,
    \end{equation*}
    \begin{equation*}
    1 \otimes b \mapsto \frac{1}{3}(r_1\partial_{r_1}+r_2\partial_{r_2}+r_3\partial_{r_3}+r_4\partial_{r_4})+\frac{2}{3}\hbar.
    \end{equation*}
    So, for $\hbar=1$, the specialization of $(D_{1}(\mathbb{A}^4))_0$ to $a=a_0 \in \mathfrak{X}(\mathfrak{l})$ are global sections of the quantization of $\mathcal{O}_{T^*(\on{SL}_4/P)}$ with period equal to $a_0$. Note also that the abelian localization holds for $\mathcal{A}_{1,a_0}(T^*(\on{SL}_4/P))$ for $a_0$ being small enough negative integer number.

    The map $\mathcal{U}_{\hbar}(\mathfrak{sl}_4) \rightarrow (D_{\hbar}(\mathbb{A}^4))_0$ is induced by the map $\mathcal{U}_{\hbar}(\mathfrak{gl}_4) \rightarrow (D_{\hbar}(\mathbb{A}^4))_0$ given by:
    \begin{equation*}
    e_{ij} \mapsto -r_j\partial_{r_i}.
    \end{equation*}
    In particular, every module over $(D_{\hbar}(\mathbb{A}^4))_0$ can be considered as an $\mathcal{U}_{\hbar}(\mathfrak{gl}_4)$-module.

    We have four families of highest weight modules over $D_{\hbar}(\mathbb{A}^4)_0$:
\begin{equation*}
r_4^{\frac{c}{\hbar}}\Big(D_\hbar(\mathbb{A}^4)_{(r_4)}/D_\hbar(\mathbb{A}^4)_{(r_4)}(\partial_{r_1},\partial_{r_2},\partial_{r_3},\partial_{r_4})\Big)_0=r_4^{\frac{c}{\hbar}}\mathbb{C}[r_1/r_4,r_2/r_4,r_3/r_4,\hbar],
\end{equation*}
\begin{equation*}
r_3^{\frac{c}{\hbar}+1}\Big(D_\hbar(\mathbb{A}^4)_{(r_3)}/D_\hbar(\mathbb{A}^4)_{(r_3)}(\partial_{r_1},\partial_{r_2},\partial_{r_3},r_4)\Big)_0=r_3^{\frac{c}{\hbar}+1}\mathbb{C}[r_1/r_3,r_2/r_3,r_3\partial_{r_4},\hbar], 
\end{equation*}
\begin{equation*}
r_2^{\frac{c}{\hbar}+2}\Big(D_\hbar(\mathbb{A}^4)_{(r_2)}/D_\hbar(\mathbb{A}^4)_{(r_2)}(\partial_{r_1},\partial_{r_2},r_3,r_4)\Big)_0=r_2^{\frac{c}{\hbar}+2}\mathbb{C}[r_1/r_2,r_2\partial_{r_3},r_2\partial_{r_4},\hbar], 
\end{equation*}
\begin{equation*}
r_1^{\frac{c}{\hbar}+3}\Big(D_\hbar(\mathbb{A}^4)_{(r_1)}/D_\hbar(\mathbb{A}^4)_{(r_1)}(\partial_{r_1},r_2,r_3,r_4)\Big)_0=r_1^{\frac{c}{\hbar}+3}\mathbb{C}[r_1\partial_{r_2},r_1\partial_{r_3},r_1\partial_{r_4},\hbar].
\end{equation*}
The action of the Euler vector field $r_1\partial_{r_1}+r_2\partial_{r_2}+r_3\partial_{r_3}+r_4\partial_{r_4}$ on all of them is via the multiplication by $c$.

We see that the highest weights (w.r.t. $\mathfrak{h}_{\mathfrak{gl}_4} \subset \mathfrak{gl}_4$) are: 
\begin{equation*}
(0,0,0,-c),\, (0,0,-\hbar-c,\hbar),\, (0,-c-2\hbar,\hbar,\hbar),\, (-3\hbar-c,\hbar,\hbar,\hbar).
\end{equation*}

We have $\rho_{\mathfrak{sl}_4}=\Big(\frac{3}{2},\frac{1}{2},-\frac{1}{2},-\frac{3}{2}\Big)$.      The $\hbar\rho_{\mathfrak{sl}_4}$-shifts of the weights above are:
\begin{equation*}
\Big(\frac{3}{2}\hbar,\frac{1}{2}\hbar,-\frac{1}{2}\hbar,-\frac{3}{2}\hbar-c\Big),\, \Big(\frac{3}{2}\hbar,\frac{1}{2}\hbar,-\frac{3}{2}\hbar-c,-\frac{1}{2}\hbar\Big),\, \Big(\frac{3}{2}\hbar,-\frac{3}{2}\hbar-c,\frac{1}{2}\hbar,-\frac{1}{2}\hbar\Big),\, \Big(-\frac{3}{2}\hbar-c,\frac{3}{2}\hbar,\frac{1}{2}\hbar,-\frac{1}{2}\hbar\Big)
\end{equation*}
and they indeed lie in the same $S_4$-orbit. Moreover, after our identifications, the labels in $W/W_L=S_4/(S_1 \times S_3)$ for our modules are representatives of 
\begin{equation*}
(1432),\, (132),\,(12)\,,1
\end{equation*}
respectively.

After twisting, we see that the highest weights of the modules over the quantization with period $b=b_0$ (equivalently, $a=-3b_0$) labeled by $[1], [(12)], [(132)], [(1432)] \in W/W_L$ respectively correspond to $c=-a-2\hbar$ and are equal to:
\begin{equation*}
(a-\hbar,\hbar,\hbar,\hbar),\, (0,a,\hbar,\hbar),\, (0,0,a+\hbar,\hbar),\, (0,0,0,a+2\hbar).
\end{equation*}

After the $\rho_{\mathfrak{sl}_4}$-shift we get:
\begin{equation*}
\Big(a+\frac{1}{2}\hbar,\frac{3}{2}\hbar,\frac{1}{2}\hbar,-\frac{1}{2}\hbar\Big),\, \Big(\frac{3}{2}\hbar,a+\frac{1}{2}\hbar,\frac{1}{2}\hbar,-\frac{1}{2}\hbar\Big),\, \Big(\frac{3}{2}\hbar,\frac{1}{2}\hbar,a+\frac{1}{2}\hbar,-\frac{1}{2}\hbar\Big),\, \Big(\frac{3}{2}\hbar,\frac{1}{2}\hbar,-\frac{1}{2}\hbar,a+\frac{1}{2}\hbar\Big).
\end{equation*}

We conclude that the map
\begin{equation}\label{comp_RHS_HN}
\mathbb{C}[\mathfrak{h}^*_{\mathfrak{gl}_4},\hbar] \otimes_{\mathbb{C}[\hbar]} \mathbb{C}[\mathfrak{h}^*_{\mathfrak{gl}_4},\hbar] \rightarrow \cC_\nu(\mathcal{A}_{\mathfrak{X}(\mathfrak{l})}(T^*(\on{SL}_4/P))) \rightarrow \Big(\mathbb{C}[a,b,\hbar]/(a+3b)\Big)^{\oplus 4}
\end{equation}
is given by 
\begin{equation*}
e_{11} \otimes 1 \mapsto (a+\frac{1}{2}\hbar,\frac{3}{2}\hbar,\frac{3}{2}\hbar,\frac{3}{2}\hbar),
\end{equation*}
\begin{equation*}
e_{22} \otimes 1 \mapsto (\frac{3}{2}\hbar,a+\frac{1}{2}\hbar,\frac{1}{2}\hbar,\frac{1}{2}\hbar)
\end{equation*}
\begin{equation*}
e_{33} \otimes 1 \mapsto (\frac{1}{2}\hbar,\frac{1}{2}\hbar,a+\frac{1}{2}\hbar,-\frac{1}{2}\hbar),
\end{equation*}
\begin{equation*}
e_{44} \mapsto (a+\frac{1}{2}\hbar,-\frac{1}{2}\hbar,-\frac{1}{2}\hbar,-\frac{1}{2}\hbar).
\end{equation*}


Restricting to $\mathfrak{sl}_4$ and replacing $\hbar$ by $2\hbar$ we conclude that 
\begin{equation*}
e_1=e_{11}-e_{22} \mapsto (a-2\hbar,2\hbar-a,2\hbar,2\hbar),
\end{equation*}
\begin{equation*}
e_2=e_{22}-e_{33} \mapsto (2\hbar,a,-a,2\hbar),   
\end{equation*}
\begin{equation*}
e_3=e_{33}-e_{44} \mapsto (2\hbar,2\hbar,a+2\hbar,-a-2\hbar).
\end{equation*}
We see that the computations match.

Finally, note that for $\hbar=1$ and $a=0$, the category $\mathcal{O}$ for $\mathcal{A}_0(T^*(\on{SL}_4/P))$ contains {\emph{three}} simple objects, their twisted highest weights are equal to:
\begin{equation*}
\Big(\frac{1}{2},\frac{3}{2},\frac{1}{2},-\frac{1}{2}\Big),\,\Big(\frac{3}{2},\frac{1}{2},\frac{1}{2},-\frac{1}{2}\Big),\, \Big(\frac{3}{2},\frac{1}{2},-\frac{1}{2},\frac{1}{2}\Big).
\end{equation*}
Considering $\mathcal{A}_0(T^*(\on{SL}_4/P))$ as a quotient of $\mathcal{U}(\mathfrak{sl}_4)$ we see that the labels of the irreducibles above are 
\begin{equation*}
(142),\, (14),\, (134)
\end{equation*}
and these three elements indeed form one 
Kazhdan-Lusztig cell in $S_4$, let us denote this cell by $c$. On the dual side, we consider the set of connected components of $\mathcal{B}_{e^\vee}$. It follows from (\ref{act_on_B_e}) that 
\begin{equation*}
\mathcal{B}_{e^\vee}^{\mathbb{C}^\times} = \{o_1\} \sqcup C_2 \sqcup \{o_4\}
\end{equation*}
and the bijection $\mathcal{B}_{e^\vee}^{\mathbb{C}^\times} \simeq c$ is given by:
\begin{equation*}
\{o_1\}  \mapsto (142),\, C_2 \mapsto (14),\, \{o_4\} \mapsto (134).
\end{equation*}


\section{Surjectivity of the pullback map} \label{cohomological surjectivity}
Let $G^\vee =Sp({2n})$, $\fg^\vee = \fsp({2n})$. Let $e^\vee$ be a nilpotent element in $\fg^\vee$. Let $\pv$ be the associated partition of $e^\vee$. We write $\spr$ or $\cB_{\pv}$ for the Springer fiber of $e$, and write $\cB^\vee$ (or $\cB_{2n}$) for the flag variety of $\fg^\vee$.  Consider the pullback map $i_{\pv}^*:H^*(\cB^\vee)\rightarrow H^*(\spr)$. The goal of this appendix is to show that $i_{\pv}^*$ is surjective only if $\pv$ satisfies certain restrictive conditions. 

Let $Z^{\vee}_G(e^\vee)$ be the centralizer of $e^\vee$ in $G^\vee$. The map $i_{\pv}^*$ is $Z^{\vee}_G(e^\vee)$-equivariant, and the action of $Z^{\vee}_G(e^\vee)$ on $H^*(\spr)$ is trivial. Therefore, if $Z^{\vee}_G(e^\vee)$ acts nontrivially on $H^*(\spr)$, then $i_\pv^{*}$ is not surjective. Thus, we restrict our attention to elements $e^\vee$ such that $Z^{\vee}_G(e^\vee)$ acts trivially on $H^*(\spr)$. In particular, $\pv$ has at most one even part, and the multiplicity of this even part is odd (or $0$ if all parts of $\pv$ are odd), see \cref{when A acts trivially}. 

Let $\pv= (2a, 2b_1+1,...,2b_k+1)$, where the given partition may not be in standard order. We only require $b_1\geqslant b_2\geqslant... \geqslant b_k$ and $a\geqslant 0$. The main result of this appendix is as follows.
\begin{theorem}\label{surjectivity} \leavevmode
    \begin{enumerate}
        \item If $a<b_1$ or $a-2\geqslant b_i\geqslant 1$ for some $1\leqslant i\leqslant k$, then the map $i_{\pv}^*$ is not surjective.        
        \item If $i_\pv^{*}$ is surjective, the partition $\pv$ is of the form $((2a+1)^{2d_1},(2a)^{2d_2+1},(2a-1)^{2d_3}, 1^{2d_4})$ for some $a, d_1,...,d_4\geqslant 0$. 
    \end{enumerate}
\end{theorem}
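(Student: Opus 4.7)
The plan is to prove Theorem B.1 by reducing, via specialization of the equivariant pullback, to non-surjectivity checks on a small family of ``minimal test partitions'', and then to extract (2) as a purely combinatorial corollary of (1). Throughout I exploit the fact established in Section 9.2 that if $i_\pv^{\,*}$ is surjective then so is its $T_{e^\vee}\times\mathbb{C}^\times$-equivariant lift $i^{\,*}_{T_{e^\vee}\times\mathbb{C}^\times}$ (by the graded Nakayama lemma applied to a free module), and hence so is every further specialization at $(\lambda,t)\in\mathfrak{h}_{e^\vee}\oplus\mathbb{C}\hbar$.

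\emph{Step 1 (equivariant Levi reduction).} For each $\lambda\in\mathfrak{h}_{e^\vee}$, specialization at $(\lambda,0)$ followed by Proposition~9.4 identifies the specialized pullback with the pullback map for the Springer fiber of $e^\vee$ viewed inside the centralizer $\mathfrak{l}^\vee_\lambda=\mathfrak{z}_{\mathfrak{g}^\vee}(\lambda)$. Decomposing $\mathfrak{l}^\vee_\lambda\simeq\mathfrak{g}'^\vee\times\prod\mathfrak{gl}_{\lambda_i}$ (with $\mathfrak{g}'^\vee$ again of type $C$) and using that the pullback is automatically surjective on the type $A$ factors (Appendix C of the paper), surjectivity of $i_\pv^{\,*}$ implies surjectivity of $i^{\,*}_{\pv'}$ for the sub-partition $\pv'$ of $\pv$ cut out by the non-$\mathfrak{gl}$ factor. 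This establishes an inheritance principle: if $i^{\,*}_{\pv'}$ fails to be surjective for some sub-partition $\pv'$ obtained by such a Levi reduction, then $i_\pv^{\,*}$ is not surjective either.

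\emph{Step 2 (two minimal test cases).} I would then isolate the following two test configurations, both obtained from suitable choices of $\lambda$:
\begin{itemize}
\item[(i)] When $a<b_1$, choosing $\lambda$ that collapses every odd part except one $2b_i+1$ (with $b_i\geq a$) and keeps the even part $2a$ reduces to the partition $\pv'=(2a,(2b+1)^{2})$ of $\mathfrak{sp}_{2a+4b+2}$ with $b\geq a$.
\item[(ii)] When $a-2\geq b_i\geq 1$, an analogous $\lambda$ isolates $(2a,(2b+1)^{2})$ with $1\leq b\leq a-2$ (or, if $2a$ has multiplicity $>1$, $((2a)^{2d+1},(2b+1)^{2})$ which itself Levi-reduces to this case).
\end{itemize}
For each of these two test partitions I would prove non-surjectivity directly. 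The strategy is to pass to a second equivariant parameter $(\lambda',1)$ of the test partition for which $(\mathcal{B}_{\pv'})^{\mathbb{C}^\times\text{-torus}}$ splits into isolated points and smooth irreducible components whose $H^*$-ranks can be computed either from the Kim recursion (\cite{Kim2018}) or from an explicit fibration description as iterated projective/odd-quadric bundles. Comparing with the corresponding fixed locus of $\mathcal{B}^\vee$, whose $H^*$ is given by the type $C$ Schubert basis and whose restriction map is controlled by Lemma~\ref{restriction_fixed_point_parabolic}, one exhibits a class in $H^{\mathrm{top}}(\mathcal{B}_{\pv'}^{\mathbb{C}^\times})$ not in the image of the restriction, hence by Nakayama again an obstruction class in $H^*(\mathcal{B}_{\pv'})\setminus\mathrm{Im}(i^{\,*}_{\pv'})$.

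\emph{Step 3 (combinatorial deduction of (2)).} Assume $i_\pv^{\,*}$ is surjective. By Lemma~\ref{when A acts trivially}, $\pv$ has at most one even part, of odd multiplicity, so $\pv=(2a,2b_1+1,\ldots,2b_k+1)$ with $b_1\geq\cdots\geq b_k\geq 0$. By (1), $b_1\leq a$ and every $b_i$ lies in $\{0\}\cup\{b_i\geq a-1\}$, forcing $b_i\in\{0,a-1,a\}$. Hence the odd parts belong to $\{1,2a-1,2a+1\}$, and the parity conditions on multiplicities of odd parts in $\mathcal{P}_C(2n)$ produce exactly the claimed form $((2a+1)^{2d_1},(2a)^{2d_2+1},(2a-1)^{2d_3},1^{2d_4})$.

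The main obstacle is Step~2, i.e. proving non-surjectivity uniformly for the two minimal test families. The inductive arrangement of Step~1 reduces everything to these prototypes, so the entire theorem rests on a sharp comparison between the fixed-point cohomology of the small Springer fibers $\mathcal{B}_{(2a,(2b+1)^2)}$ and the corresponding Schubert-class images in $H^*(\mathcal{B}^\vee)$; the delicate point is that, although $A^\vee$ acts trivially on $H^*(\mathcal{B}_{\pv'})$ for these $\pv'$, the pullback can still miss classes that are ``virtual traces'' of components merged at non-generic equivariant parameters, and isolating a concrete such class requires a case analysis of the irreducible components of $\mathcal{B}_{\pv'}^{\mathbb{C}^\times}$ that does not appear to have a uniform treatment in the existing literature.
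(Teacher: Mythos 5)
Your overall architecture is aligned with the paper's: an equivariant Levi reduction to a small family of three-part test partitions, followed by non-surjectivity checks there, and then a purely combinatorial deduction of (2) from (1). Your Step~3 (the combinatorial deduction) is correct and matches the paper. But Step~2, which is where all the actual work lies, is not a proof: you name the two test families $(2a,(2b+1)^2)$ with $b\geq a$ and with $1\leq b\leq a-2$, propose to ``prove non-surjectivity directly'' via a fixed-point comparison at a parameter $(\lambda',1)$, and then in your closing paragraph you concede that isolating the obstructing class would require an unfinished case analysis. That is exactly the part that cannot be waved at.

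The concrete gaps relative to the paper's actual argument are two. First, for the case $a<b$ the paper does \emph{not} use a fixed-point comparison at all; since $A^\vee$ acts trivially on $H^*(\mathcal{B}_{(2a,(2b+1)^2)})$, the obstruction must come from somewhere else. Proposition~\ref{inductive non-surjectivity}(1) instead embeds $\mathcal{B}_{(2a,2b,2b)}\hookrightarrow\mathcal{B}_{(2a,2b+1,2b+1)}$ as the fiber of $\pi_{e^\vee}$ over a distinguished line in $\ker e^\vee$; because the complement is affinely paved, the induced map on cohomology is surjective, which forces surjectivity of $i^*_{(2a,2b,2b)}$ -- contradicting the non-triviality of the component group action on $H^*(\mathcal{B}_{(2a,2b,2b)})$ when $a<b$. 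Your sketch never exhibits this reduction to a smaller partition on which the obstruction becomes visible. Second, for the case $a\geq b+2$, your proposal jumps straight to a fixed-point comparison, but the paper needs a preliminary inductive descent in $a$ (Proposition~\ref{inductive non-surjectivity}(2), decreasing $2a$ to $2a-2$ by a parallel embedding argument) to land on the single explicit base case $(2a+4,2a+1,2a+1)$; only then does the paper carry out the fixed-point computation of Lemma~\ref{3-row}, which itself requires a careful choice of the diagonal torus $T\subset T_{e^\vee}\times T_{h^\vee}$ and of the component $\mathcal{F}_\alpha$ whose intersection with $\mathcal{B}_{e^\vee}^T$ consists of exactly two points. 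Without the descent and the explicit choice of $T$ and $\alpha$, the claim in your Step~2(ii) is not substantiated. In short: your reduction framework is correct and close in spirit to Lemma~\ref{reduce to 3}, but both halves of the crucial non-surjectivity verification are missing, and the first half would need a genuinely different mechanism than the one you sketch.
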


We note that part (2) is a direct implication of (1). We have analogous to part (2) results for $\fg^\vee$ of types $B$ and $D$.
\begin{theorem}\label{surjectivity B C D} \leavevmode
\begin{enumerate}
    \item Consider $\fg^\vee= \fso({2n+1})$. If $i_\pv^{*}$ is surjective, the partition $\pv$ is of the form $((2a+2)^{2d_1}, (2a+1)^{2d_2+1}, (2a)^{2d_3})$ for some $a, d_1, d_2, d_3\geqslant 0$.
    \item Consider $\fg^\vee= \fso({2n})$. If $i_\pv^{*}$ is surjective, the partition $\pv$ takes one of the following forms
    \begin{itemize}
        \item $((2a+2)^{2d_1}, (2a+1)^{2d_2}),$ 
        \item $(2a+1)^{2d_1}, (2a)^{2d_2})$,
        \item $((2a+3)^{2d_1+1}, (2a+2)^{2d_2}, (2a+1)^{2d_3+1}),$
        \item $((2a+1)^{2d_1+1},(2b+1)^{2d_2+1}),$
        \item $((2a+1)^{2d_1+1},(2a)^{2d_2}, 2^{2d_3},1^{2d_4+1}),$
    \end{itemize}
    for some $a, b, d_1, d_2, d_3, d_4\geqslant 0$.
\end{enumerate}
\end{theorem}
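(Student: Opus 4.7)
The plan is to mirror the argument for type C in Theorem B.1, where two independent obstructions combine to force $\pv$ into a narrow list of shapes. The first obstruction is purely equivariant: since $i^*_\pv$ is $Z^\vee_G(e^\vee)$-equivariant and $A^\vee = Z^\vee_G(e^\vee)/Z^\vee_G(e^\vee)^\circ$ acts trivially on $H^*(\cB^\vee)$, the image of $i^*_\pv$ lies in $H^*(\cB_\pv)^{A^\vee}$; so if $A^\vee$ acts nontrivially on $H^*(\cB_\pv)$, surjectivity fails. Invoking Lemma A.2, this immediately restricts $\pv$ to have exactly one odd part in type B and to have at most two distinct odd parts of odd multiplicity in type D, which is already most of the shape constraint.

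The second obstruction is a ``gap'' condition analogous to case (1) of Theorem B.1. Within the $A^\vee$-trivial class, I would show that if $\pv$ contains two parts whose sizes differ by more than what the listed forms allow, then the restriction map fails to be surjective in some graded piece. The method is to produce an explicit class in $H^*(\cB_\pv)$ not hit from $H^*(\cB^\vee)$ by localizing on a suitable subvariety $Z \subset \cB_\pv$ defined by prescribing the dimensions of certain $e^\vee$-stable isotropic subspaces in the flag, and to use the type C obstruction to detect a class in $H^*(Z)$ that is not pulled back from the full flag variety.

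The main difficulty will be executing the reduction to type C (or to a smaller classical type) cleanly. Spaltenstein's description of classical-type Springer fibers in terms of partial isotropic flags supplies fibrations whose fibers are Springer fibers for smaller classical groups, but the types appearing after removing a block from $\pv$ depend delicately on parities: removing pairs of equal odd parts in type D can leave a type D Springer fiber, whereas removing an even block or a single odd-multiplicity odd part changes the type. I expect this branching to be precisely what produces the five subcases in part (2), so tracking it carefully is essential; moreover one must verify that the $A^\vee$-action on the smaller Springer fiber matches the ``$A^\vee$-invariant'' cohomology we care about, since the component groups before and after reduction are related but not identical.

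Finally, once the gap obstructions are established for bad shapes, a combinatorial case analysis completes the argument: in type B the single odd part of odd multiplicity must lie between (or coincide with) the unique allowed even levels $2a$ and $2a+2$, yielding the three-block shape; in type D the analogous analysis splits according to whether the two distinguished odd parts coincide, are adjacent, or are separated by an even block, producing exactly the five listed configurations. No positive surjectivity statement is asserted, so the proof stops once all unlisted partitions have been ruled out.
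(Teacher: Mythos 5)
Your two-obstruction strategy---first discard partitions on which $A^\vee$ acts nontrivially on $H^*(\mathcal{B}_{e^\vee})$ using Lemma~\ref{when A acts trivially}, then establish a gap obstruction for the remaining partitions---matches the strategy the paper intends. The paper's own treatment of Theorem~\ref{surjectivity B C D} is a single sentence (``the proofs for type B and D are similar''), so what you are being asked to supply is a type B/D analogue of the three steps in the proof of Theorem~\ref{surjectivity}: reduce to two- or three-part partitions via torus fixed points (Lemma~\ref{reduce to 3}); rule out one range of $a$ by fibring over $\mathbb{P}(W)$ with $W=\operatorname{Ker}(e^\vee)$ and invoking the component group obstruction for a smaller partition (Proposition~\ref{inductive non-surjectivity}(1)); and rule out the other range by an inductive shift plus an explicit torus-fixed-point count (Proposition~\ref{inductive non-surjectivity}(2) and Lemma~\ref{3-row}).

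The concrete issue in your write-up is the phrase ``use the type C obstruction to detect a class in $H^*(Z)$.'' The reductions you actually get from torus fixed points or from the fibration over $\mathbb{P}(W)$ keep you inside the same classical type: removing $\mathfrak{gl}$ factors from $\mathfrak{so}$ leaves a smaller $\mathfrak{so}$-Springer fiber (see Proposition~\ref{non gl factor coh} and the proof of Lemma~\ref{reduce to 3}), and the one-box moves in the type B/D analogue of Proposition~\ref{inductive non-surjectivity} land on type B or type D partitions, not type C ones. So the obstruction at the bottom of the induction must be the same-type component group obstruction (Lemma~\ref{when A acts trivially} applied to $\mathfrak{so}$), together with a fixed-point count analogous to Lemma~\ref{3-row}; Theorem~\ref{surjectivity} cannot be imported as a black box. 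Your instinct about the parity bookkeeping is correct, though---the interaction between row removals and multiplicity parities (and, in type D, whether one has zero, one, or two odd parts of odd multiplicity to keep track of) is exactly what expands the list to five shapes in part (2), and that tracking is the main combinatorial labor to carry out.
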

We prove \cref{surjectivity}, the proofs for type B and D are similar. Our argument combines techniques in equivariant cohomology and a modified version of \cite[Section 4.4]{surjectiveargument}.
\begin{proof}[Proof of \cref{surjectivity}]
    The proof consists of several steps as follows.
    \begin{enumerate}
        \item Step 1: We first reduce the consideration to the case where $\fp$ has $2$ or $3$ parts. This is explained in \cref{reduce to 3}.
        \item Step 2: The map $i^{*}_{(2a,2b+1,2b+1)}: H^*(\cB^\vee)\rightarrow H^*(\cB_{(2a,2b+1,2b+1)})$ is not surjective for $0\leqslant a< b$. (Part (1) of \cref{inductive non-surjectivity} )
        \item Step 3: The map $i^{*}_{(2a,2b+1,2b+1)}: H^*(\cB^\vee)\rightarrow H^*(\cB_{(2a,2b+1,2b+1)})$ is not surjective for $a\geqslant b+2$. This is deduced from Part (2) of \cref{inductive non-surjectivity} and \cref{3-row}.
    \end{enumerate}
    These results mean that for $i_{\pv}^*$ to be surjective, we must have $a-b_i= 0$ or $a-b_i= 1$ for $b_i> 0$. In the following, we provide the proof of the lemmas that we have mentioned.
\end{proof}

\begin{lemma} \label{reduce to 3}
    Consider $\pv= (2a, 2b_1+1,...,2b_k+1)$. If the map $i_{\pv}^*:H^*(\cB^\vee)\rightarrow H^*(\spr)$ is surjective, then $i_{(2a, 2b_i+1,2b_i+1)}^*$ is surjective for $1\leqslant i\leqslant k$.
\end{lemma}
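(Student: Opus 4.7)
The plan is to reduce the surjectivity of $i_\pv^*$ to that of $i^*_{(2a,2b_i+1,2b_i+1)}$ by restricting to the fixed points of a cleverly chosen cocharacter $T = \CC^\times \subset Z_{Sp(2n)}(e^\vee)$. First, I would construct $T$ so that its centralizer Levi isolates $(2a, 2b_i+1, 2b_i+1)$ as a symplectic Jordan summand. Since $\pv \in \cP_C(2n)$ forces each odd part to have even multiplicity, the parts $\{2b_j+1 : j \neq i\}$ pair up. Choose a symplectic decomposition $V = V_0 \oplus \bigoplus_{(j,j')} (W_j \oplus W_{j'})$, where $V_0$ is a symplectic subspace on which $e^\vee$ acts with Jordan partition $(2a, 2b_i+1, 2b_i+1)$, and each $W_j \oplus W_{j'}$ is a symplectic pair of dual Lagrangian Jordan blocks of equal odd size. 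Let $T$ act trivially on $V_0$ and with weights $+1, -1$ on $W_j, W_{j'}$ respectively. Then $T$ commutes with $e^\vee$, and $L := Z_{Sp(2n)}(T) \cong Sp(V_0) \times \prod_j GL(W_j)$, with $e^\vee|_{V_0}$ of partition $(2a, 2b_i+1, 2b_i+1)$ and $e^\vee|_{W_j}$ regular nilpotent.

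Next, I would pass to $T$-equivariant cohomology. Both $H^*_T(\cB^\vee)$ and $H^*_T(\spr)$ are free over $H^*_T(\on{pt}) = \CC[\ft]$, by the Bruhat decomposition on the one hand and by $T$-stable affine pavings of Springer fibers in classical types on the other. The graded Nakayama lemma then promotes the assumed surjectivity of $i_\pv^*$ to surjectivity of the equivariant pullback $H^*_T(\cB^\vee) \twoheadrightarrow H^*_T(\spr)$. Specializing at a generic $\la \in \ft$ and invoking the Berline-Vergne localization isomorphism (\ref{BV_localization}), this becomes the surjection $H^*((\cB^\vee)^T) \twoheadrightarrow H^*(\spr^T)$. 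By \cref{smL} and \cref{fixed point Springer fiber}, each side splits into $|W_G/W_L|$ disjoint copies of $\cB_L$ and $\cB_{L,e^\vee}$ respectively, and the pullback acts componentwise, so $H^*(\cB_L) \twoheadrightarrow H^*(\cB_{L,e^\vee})$ is surjective.

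Finally, under the product decompositions $\cB_L \cong \cB_{Sp(V_0)} \times \prod_j \cB_{GL(W_j)}$ and $\cB_{L, e^\vee} \cong \cB_{Sp(V_0), (2a, 2b_i+1, 2b_i+1)} \times \prod_j \cB_{GL(W_j), (2b_j+1)}$, the surjection above becomes a tensor product of the analogous pullbacks on each factor. Each $GL$-factor of $\cB_{L, e^\vee}$ is a single point, since $e^\vee|_{W_j}$ is regular nilpotent and the regular-nilpotent Springer fiber in type $A$ is a point; hence the $GL$-pullbacks are automatically surjective. Because a tensor product of $\CC$-linear maps is surjective iff each factor is, this forces the remaining $Sp$-factor $i^*_{(2a, 2b_i+1, 2b_i+1)}: H^*(\cB_{Sp(V_0)}) \to H^*(\cB_{Sp(V_0), (2a, 2b_i+1, 2b_i+1)})$ to be surjective, as desired. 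The main technical point I anticipate will be the equivariant formality of $\spr$ for the chosen $T$; this reduces to the existence of a $T$-stable affine paving, which is standard in classical types.
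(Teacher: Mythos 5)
Your strategy — construct a rank-one torus in $Z_G(e^\vee)$ whose centralizer isolates the symplectic block $(2a,2b_i+1,2b_i+1)$, pass to equivariant cohomology, invoke graded Nakayama plus localization, and then decompose the fixed-point pullback as a tensor product over the Levi factors — is the same as the paper's. The paper uses a generic cocharacter $T_1$ of $Z(L')^\circ$ for $\fl' = \fsp(2a+2(2b_i+1))\times \prod \fgl$, and cites \cref{non gl factor coh} for the reduction to the $\fsp$-factor; you carry out that reduction by hand via the tensor-product argument, which is the same content.

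There is, however, a concrete slip in your construction of $T$. You let $T$ act with weights $+1$ on every $W_j$ and $-1$ on every $W_{j'}$. When there is more than one pair, these weights are not distinct, so $Z_{Sp(2n)}(T)$ is not $Sp(V_0)\times \prod_j GL(W_j)$ but the larger Levi $Sp(V_0)\times GL\bigl(\bigoplus_j W_j\bigr)$. Then the $GL$-Springer fiber in the fixed-point component is a nontrivial type-$A$ Springer fiber (for the partition formed by the $2b_j+1$ with $j \ne i, i'$), not a point, so the sentence ``each $GL$-factor of $\cB_{L,e^\vee}$ is a single point'' is false as stated. The fix is exactly what the paper does: take a \emph{generic} cocharacter in $Z(L')^\circ$, equivalently give each pair a distinct weight, so that the centralizer is the intended Levi with regular nilpotent on each $\fgl$ factor. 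Alternatively you could keep your torus and observe that in type $A$ the pullback $H^*(\cB_{GL})\to H^*(\cB_{GL,e^\vee})$ is always surjective, so the tensor-product step still forces surjectivity of the $\fsp$-factor; but then the justification must change. Also, a small wording issue: removing a single index $i$ does not leave an evenly-multiple collection of odd parts — you need to remove two copies of $2b_i+1$ (as your choice of $V_0$ indeed does), so the set of leftover parts should be $\{2b_j+1 : j\ne i, i'\}$ for an $i'$ with $b_{i'}=b_i$.
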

\begin{proof}
    Note that for $\pv= (2a, 2b_1+1,...,2b_k+1)$, $e$ is regular in a Levi subalgebra of $\fg$, $\fl= \fsp({2a})\times \prod_{i=1}^{k} \fgl({2b_i+1})$. We prove the lemma for $i=1$, the other cases are similar. 
    
    Consider $\fl'\subset \fg$ so that $\fl'\simeq \fsp({2a+2(2b_1+1)})\times \prod_{i=2}^{k} \fgl({2b_i+1})$ and $\fl\subset \fl'$. Let $T_1'\subset L\subset G$ be the connected centralizer of $\fl'$, then $T_1'$ is a torus. Consider a generic cocharacter $T_1=\CC^\times \hookrightarrow T_1'$. We have natural actions of $T_1$ on $\cB^\vee$ and $\spr$. The fixed-point locus of this $T_1$ action on $\spr$ is a disjoint union of isomorphic copies of $\cB_{(2a,2b_1+1,2b_1+1)}$ (see \cref{fixed point Springer fiber}).

    Consider the $T_1$-equivariant cohomologies of $\cB^\vee$ and $\spr$. They are modules over $H_{T_1}^*(\pt)=\CC[t_1]$. Write $i^{T_1, *}_{\pv}$ for the pullback map $H_{T_1}^*(\cB^\vee)\rightarrow H_{T_1}^*(\spr)$. If $i_{\pv}^*$ is surjective, then $i^{T_1, *}_{\pv}$ is surjective. Specializing $t_1$ to $1$, we get that $i^{*}_1: H({\cB^\vee}^{T_1})\rightarrow H(\spr^{T_1})$ is surjective. Restricted to each irreducible component of $\spr^{T_1}$, the map $i^{*}_1$ is $i^{*}_{(2a,2b_1+1, 2b_1+1)}$ (see \cref{non gl factor coh}); therefore, the lemma follows.
\end{proof}

For $a=0$, $\pv= (n,n)$ for $n>1$ odd, by \cite[Proposition 8.1]{Lehn_2011}, $\dim H^2(\spr)= n+1> \dim H^2(\cB^\vee)$, so the map $i_{\pv}^*$ is not surjective. Next, we consider the case where $\pv$ has $3$ parts. 

Let $\pv= (2a, 2b+1, 2b+1)$, we do not require $a>b$ here. We realize $Sp({2n})$ as $Sp(V_\pv)$ where $V_\pv$ is a symplectic vector space. Choose a basis $v_1, v_2,...v_{2n}$ of $V_\pv$ such that the symplectic form is given by 
\begin{itemize}
    \item For $1\leqslant i<j \leqslant 2a$, $\langle v_i, v_j \rangle= (-1)^{i}$ if $i+j= 2a+1$, and $0$ if $i+j\neq 2a+1$.
    \item For $2a+1\leqslant i< j \leqslant 2n$, $\langle v_i, v_j \rangle= (-1)^{i}$ if $i+j= 2a+1+ 2n$, and $0$ if $i+j\neq 2a+1+2n$.
\end{itemize}

 With respect to this basis, $e$ is a matrix having $3$ Jordan blocks of sizes $2a, 2b+1, 2b+1$. We have that the kernel of $e$ is $W=$Span$(v_1, v_{2a+1}, v_{2a+2b+2})$. We have a projection $\pi: \cB^\vee\rightarrow \PP(V_\pv)$ sending a flag $U^\bullet$ to $U^1$. The restriction of $\pi$ to $\spr$ gives a projection $\pi_{e^\vee}: \spr\rightarrow \PP(W)\subset \PP(V_\pv)$. 

\begin{prop} \label{inductive non-surjectivity}\leavevmode
\begin{enumerate}
    \item For $1\leqslant a<b$, the map $i^{*}_{(2a,2b+1,2b+1)}: H^*(\cB^\vee)\rightarrow H^*(\cB_{(2a,2b+1,2b+1)})$ is not surjective.
    \item Consider $a>b\geqslant 1$. Then $i^{*}_{(2a,2b+1,2b+1)}$ is surjective only if $i^{*}_{(2a-2,2b+1,2b+1)}: H^*(\cB_{2n-2})\rightarrow H^*(\cB_{(2a-2,2b+1,2b+1)})$ is surjective.
\end{enumerate}
   
\end{prop}
\begin{proof}
   \textbf{Part 1}. Write $W_1$ for the vector space Span$(v_{2a+1}, v_{2a+2b+2})$. The projection $\pi_{e^\vee}: \spr\rightarrow \PP(W)$ is a locally trivial fibration with the fibers as follows (see e.g. \cite[Section 2.1]{Shoji1983} or \cite[Section 5]{Kim2018})
   \begin{itemize}
       \item If $U^1\in \PP(W_1)$, then $\pi_{e}^{-1}(U^1)$ is isomorphic to $\cB_{(2a, 2b, 2b)}$.
       \item If $U^1\notin \PP(W_1)$, then $\pi_{e}^{-1}(U^1)$ is isomorphic to $\cB_{(2a-2, 2b+1, 2b+1)}$.
   \end{itemize}   
   Now from \cite[Theorem 3.9]{DLP}, the two varieties $\cB_{(2a+1, 2b-2, 2b-2)}$ and $\cB_{(2a-1, 2b, 2b)}$ admit affine pavings. Consider $\cB_{(2a, 2b, 2b)}$ as a subvariety of $\cB_{(2a,2b+1,2b+1)}$ parametrizing the flags having $U^1= \CC v_{2a+1}$. Write $s_{2a+1}$ for this embedding. It satisfies the condition that the complement $\cB_{(2a,2b+1,2b+1)}\setminus s_{2a+1}(\cB_{(2a, 2b, 2b)})$ admits an affine paving. Hence, the pullback map $s_{2a+1}^*: H^*(\cB_{(2a,2b+1,2b+1)})\rightarrow H^*(\cB_{(2a, 2b, 2b)})$ is surjective (see the argument in \cite{Hotta1977} for type A).
   Next, write $\cB_{2n-2}$ for the flag variety of $\fsp_{2n-2}$.  Similarly to the above, we have an embedding $s_{2a+1}: \cB_{2n-2}\rightarrow \cB_{2n}$. Then $s_{2a+1}$ induces a surjective map on the level of cohomology. Therefore, we have the following commutative diagram.
   \begin{center}
            \begin{tikzcd}
H^*(\cB_{2n}) \arrow{d}{i^{*}_{(2a,2b+1,2b+1)}} \arrow{r}{s_{2a+1}^*} & H^*(\cB_{2n-2}) \arrow{d}{i^{*}_{(2a,2b,2b)}} \\
H^*(\cB_{(2a,2b+1,2b+1)}) \arrow{r}{s_{2a+1}^*} & H^*(\cB_{(2a,2b,2b)})
            \end{tikzcd}
            \end{center}
    In the above diagram, the surjectivity of $i^{*}_{(2a,2b+1,2b+1)}$ would imply the surjectivity of $i^*_{(2a,2b,2b)}$. However, for $a< b$, the pullback map $i^*_{(2a,2b,2b)}$ is not surjective due to the non-triviality of the action of the component group on $H^*(\cB_{(2a,2b,2b)})$. Therefore, $i^{*}_{(2a,2b+1,2b+1)}$ is not surjective.

    \textbf{Part 2}. The details are similar to Part 1.  In this case, the projection $\pi_{e^\vee}: \spr\rightarrow \PP(W)$ is a locally trivial fibration with the fibers as follows.
   \begin{itemize}
       \item If $U^1=[\CC v_1]$, then $\pi^{-1}(U^1)$ is isomorphic to $\cB_{(2a-2, 2b+1, 2b+1)}$.
       \item If $U^1\in \PP(W)\setminus [\CC v_1]$, then $\pi^{-1}(U^1)$ is isomorphic to $\cB_{(2a, 2b, 2b)}$.
   \end{itemize} 
   Let $s_1$ be the embedding $\cB_{2n-2}\rightarrow \cB_{2n}$ parametrizing the flags with $U^1=\CC v_1$. Then the conclusion of Part 2 is immediate from the following commutative diagram, where both horizontal maps are surjective.
   \begin{center}
            \begin{tikzcd}
H^*(\cB_{2n}) \arrow{d}{i^{*}_{(2a,2b+1,2b+1)}} \arrow{r}{s_{1}^*} & H^*(\cB_{2n-2}) \arrow{d}{i^{*}_{(2a-2,2b+1,2b+1)}} \\
H^*(\cB_{(2a,2b+1,2b+1)}) \arrow{r}{s_{1}^*} & H^*(\cB_{(2a-2,2b+1,2b+1)})
            \end{tikzcd}
            \end{center}
\end{proof}
The next lemma will finish the proof of part (1) in \cref{surjectivity}. In particular, we prove that $i^*_{(2a+4, 2a+1, 2a+1)}$ is not surjective for $a\geqslant 1$. Combining with \cref{inductive non-surjectivity}, we get that $i^*_{(2a, 2b+1, 2b+1)}$ is not surjective when $a -2\geqslant b\geqslant 1$. 

\begin{lemma} \label{3-row}
    The map $i_{(2a+4,2a+1,2a+1)}^*: H^*(\cB^\vee)\rightarrow H^*(\spr)$ is not surjective for $a\geqslant 1$.
\end{lemma}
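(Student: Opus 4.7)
Our approach follows the equivariant cohomology strategy described in Section~\ref{canonical parameter section}: rather than working directly with $i^{*}=i^{*}_{(0,0)}$, we lift to the $T_{e^\vee}\times \CC^\times$-equivariant pullback and specialize at a carefully chosen non-generic parameter $(\lambda,1)\in\mathfrak{h}_{e^\vee}\oplus \CC\hbar$. Both $H^{*}_{T_{e^\vee}\times \CC^\times}(\cB^\vee)$ and $H^{*}_{T_{e^\vee}\times \CC^\times}(\spr)$ are free modules over $\CC[\mathfrak{h}_{e^\vee},\hbar]$ -- the former by the standard Bruhat paving, and the latter by Lemma~\ref{lemma_equiv_cohom_resol_free} applied to the contracting Kazhdan $\CC^\times$-action on the Slodowy variety -- so by the graded Nakayama argument recalled in Section~\ref{fixed point pullback}, surjectivity of $i^{*}_{\pv}$ would force surjectivity of every specialization
\[
i^{*}_{(\lambda,1)}\colon H^{*}((\cB^\vee)^{(\lambda,1)})\longrightarrow H^{*}(\spr^{(\lambda,1)}).
\]
It therefore suffices to exhibit a single value of $\lambda$ at which $i^{*}_{(\lambda,1)}$ fails to be surjective.

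Since $T_{e^\vee}\cong \CC^\times$ is the maximal torus of the $\mathfrak{sp}_{2}$-factor of the reductive centralizer of the $\mathfrak{sl}_2$-triple, it acts with opposite characters $\pm 1$ on the two $(2a+1)$-blocks of $V_{\pv}$ and trivially on the $(2a+4)$-block. The cocharacter $\mu=(\lambda,1)$ therefore acts on $V_{\pv}$ with weights equal to the Kazhdan weights of the three blocks, shifted by $0$, $+\lambda$, $-\lambda$ respectively; for integer $\lambda$ many weights coincide and $L^{\vee}_{\mu}=Z_{G^\vee}(\mu)$ acquires non-abelian factors. Choose $\lambda$ so that $L^{\vee}_{\mu}$ contains a non-trivial symplectic factor -- e.g.\ $\lambda=2$ produces a two-dimensional weight-$0$ subspace of $V_{\pv}$ (one vector from each $(2a+1)$-block) on which the symplectic form is non-degenerate, yielding an $Sp_{2}$-factor together with several $GL$-factors. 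Each connected component of $(\cB^\vee)^{\mu}$ is then isomorphic to the flag variety of $L^{\vee}_{\mu}$ (Proposition~\ref{fixed pt of partial flag}), and the attractor analysis identifies the intersection with $\spr$ as a product of a parabolic Slodowy variety for the $\mu$-weight-zero component of $e^\vee$ on the symplectic factor and type-A Springer fibers on the $GL$-factors.

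On a suitably chosen component the reduced pullback splits so that one factor is a pullback for a Springer fiber of the form $\cB_{(2c,2d+1,2d+1)}$ inside $\cB_{\mathfrak{sp}_{4d+2c+2}}$ with $c<d$, falling under Part~(1) of Proposition~\ref{inductive non-surjectivity} and yielding the required non-surjectivity. The main technical difficulty is precisely this last step: selecting $\lambda$ uniformly in $a$, identifying the induced nilpotent on the $Sp$-factor of $L^{\vee}_{\mu}$, and verifying that on at least one component of $(\cB^\vee)^{\mu}$ its partition takes the forbidden shape $(2c,2d+1,2d+1)$ with $c<d$. A less conceptual alternative would bypass this reduction entirely and compare $\dim H^{*}(\cB_{\pv})$, computed recursively via Shoji--Kim~\cite[Theorem~6.1]{Kim2018}, with an explicit upper bound on $\dim\on{Im}(i^{*}_{\pv})$ obtained from equivariant localization at a generic $(\lambda,1)$; establishing the strict inequality uniformly in $a$, however, seems to require its own combinatorial input.
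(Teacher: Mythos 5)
Your overall framework is the right one and matches the paper's: you lift to $(T_{e^\vee}\times\CC^\times)$-equivariant cohomology, observe that both sides are free over $H^*_{T_{e^\vee}\times\CC^\times}(\on{pt})$, and specialize at a well-chosen non-generic point to kill surjectivity. However, your execution diverges from the paper's and, as you yourself acknowledge, is not completed.

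The paper takes the diagonal cocharacter, i.e.\ $\lambda=1$. With this choice $0$ is never a $T$-weight of $V_\pv$ (the weight-$i$ spaces have dimensions $1,2,3,3,\ldots,3$ for $i=2a+3,2a+1,\ldots,1$), so $L^\vee_\mu=Z_{G^\vee}(\mu)$ has \emph{no} symplectic factor at all, contrary to what you are aiming for. The whole point of the paper's argument is then elementary: pick the explicit tuple $\alpha=(2a+1,2a-1,\ldots,1-2a,\;2a+3,2a+1,\ldots,1)$, take the corresponding connected component $\cF_\alpha\subset(\cB^\vee)^{T}$, and check by hand (it reduces to counting isotropic lines in the two-dimensional weight-$(1-2a)$ space that are killed by $e^{2a+1}$) that $\cF_\alpha\cap\spr^{T}$ consists of exactly two isolated points. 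Since $\cF_\alpha$ is connected, $H^0(\cF_\alpha)\to H^0(\cF_\alpha\cap\spr^T)$ cannot be surjective. No appeal to Proposition~\ref{inductive non-surjectivity}(1), to partitions $(2c,2d+1,2d+1)$ with $c<d$, or to symplectic factors of $L^\vee_\mu$ is needed.

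Your alternative plan -- take $\lambda=2$ to force an $Sp_2$-factor and hope to land on a component whose partition has the forbidden shape -- is not obviously wrong, but it introduces exactly the difficulties you flag and leave unresolved: you must identify the nilpotent induced on the $Sp$-factor, keep track of the $GL$-factors, and exhibit a component with the required bad partition, all uniformly in $a$. None of this is done, so what you have is a sketch of a different and harder route, not a proof. I would suggest you look for the \emph{simplest} specialization, not one with extra reductive structure; the disconnectedness of the fixed Springer fiber inside a connected component of the fixed flag variety is a cheaper obstruction than another inductive reduction.
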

\begin{proof}
    The idea of this proof is to find a torus $T$ that acts on $\cB^\vee$ and $\spr$ so that the map $H^*({\cB^\vee}^T)\rightarrow H^*(\spr^T)$ is not surjective. In particular, we first choose $T$ and then point out a connected component $\cF_\alpha$ of ${\cB^\vee}^T$ so that $\cF_\alpha\cap \spr^T$ consists of two isolated points. The proof is technical and the general idea is similar to the proof of \cref{inductive non-surjectivity}. All the key points can be seen in \cref{example proof} below.

    Recall that $e^\vee$ is regular in $\fl^\vee= \fsp({2a+4})\times \fgl({2a+1})$. Let $T_{e^\vee}\subset G^\vee$ be the connected centralizer of $\fl^\vee$, then $T_{e^\vee}\simeq \CC^\times$. Next, by the Jacobson-Morozov theorem, we have an $\fs\fl_2$-triple $(e^\vee,h^\vee,f^\vee)$. Let $T_{h^\vee}\subset G^\vee$ be the torus with Lie algebra $\CC h^\vee$, this is the $\CC^\times$-action we consider in the main text. Consider the diagonal embedding $\CC^\times\hookrightarrow T_{e^\vee}\times T_{h^\vee}$, denote its image be $T^\vee$.

    We consider a matrix realization of $e^\vee$ as follows. Let $V^0$ be a symplectic vector space of dimension $2a+4$ with a basis $v^{0}_{2i-1}$ for $-1-a\leqslant i\leqslant a+2$. The symplectic form $\langle, \rangle^0$ is as follows.
    \begin{itemize}
        \item For $-3-2a\leqslant i<j \leqslant 2a+3$, $i,j$ odd, $\langle v^{0}_i, v^{0}_j \rangle^0= \delta_{i,-j}(-1)^{i}$.
        \item For $-3-2a\leqslant i>j \leqslant 2a+3$, $i,j$ odd, $\langle v^{0}_i, v^{0}_j \rangle^0= -\delta_{i,-j}(-1)^{i}$.
    \end{itemize}
    For $j= \pm 1$, let $V^j$ be a vector space of dimension $2a+1$ with a basis $v^{j}_{2i}$, $-a\leqslant  i\leqslant a$. We give $V^1\oplus V^{-1}$ a symplectic form $\langle , \rangle^{|1|}$ as follows.
    \begin{itemize}
        \item For $-2a\leqslant i<j \leqslant 2a$, $i,j$ even, $\langle v^{a}_i, v^{b}_j \rangle^{1}= \delta_{i,-j}\delta_{a,-b}(-1)^{i}$.
        \item For $-2a\leqslant i>j \leqslant 2a$, $i,j$ even, $\langle v^{a}_i, v^{b}_j \rangle^{1}= -\delta_{i,-j}\delta_{a,-b}(-1)^{i}$.
    \end{itemize}
    Let $V_\pv= V^{1}\oplus{-1} \oplus{V^0}$. Then we can realize $\fg^\vee= \fsp(V_\pv)$ so that the restrictions $e^\vee|_{V^i}$ are of Jordan forms. Moreover, $T_{e^\vee}$ and $T_{h^\vee}$ naturally acts on $V_\pv$ so that each vector $v_{i}^{a}$ is an eigenvalue of $T_{e^\vee}\times T_{h^\vee}$. In particular, the weight of the vector $v_{i}^{a}$ is $(a, i)$. As a consequence, the $T$-weight of $v_{i}^{a}$ is $i+a$. Next, we give an example of how the proof works below.
    \begin{example} \label{example proof}
     Consider $a=1$, so $\pv= (6,3,3)$. We express the chosen basis in terms of diagrams so that each vector corresponds to one box. The action of $e$ moves a box to the left. The content of each box is the weight of a certain torus.  
     $$\ytableausetup{boxsize= 3em}
\begin{ytableau}
0 & 0 &  0 & 0 & 0 & 0 \\
1 & 1 &1 \\
-1 &-1 &-1 \\
\end{ytableau} \quad \quad \quad 
\begin{ytableau}
5 & 3 &  1 & -1 & -3 & -5 \\
2 & 0 &-2 \\
2 &0 &-2\\
\end{ytableau}
$$
Here, the diagram on the left (resp. right) records the weight of $T_{e^\vee}$ (resp. $T_{h^\vee}$) on the basis vectors. Next, we give the diagram for the weight of $T$. Because we care about $\spr^T$, we will arrange the box so that each column is a $T$-eigenspace.
 $$\ytableausetup{boxsize= 3em}
\begin{ytableau}
5 & 3 &  1 & -1 & -3 & -5 \\
\none & 3 &  1 & -1 & \none & \none \\
\none & \none &  1 & -1 & -3 & \none \\
\end{ytableau}$$
Then $V_\pv$ has a $T$-weight decomposition $V_{-5}\oplus...\oplus V_{-5}$. For each $T$-stable isotropic flag $U^\bullet$, we represent it as $(u_1,...,u_6)$ where $u_i\in V_j$ for some $j$. For $\alpha= (p_1,...,p_6)$, write $\cF_\alpha\subset \cB^T$ for the subvariety that parametrizes the flags $U^\bullet$ so that the $T$-weights of $u_i$ are $p_i$. Assume $\cF_\alpha$ is nonempty, then it is a connected component of $\cB^T$. Moreover, $\cF_\alpha$ is isomorphic to the flag variety of $\fgl(V_5)\times\fgl(V_3)\times \fgl(V_1)= \fgl(1)\times \fgl(2)\times \fgl(3)$. These descriptions of $\cF_\alpha$ follow from the discussion before \cref{fixed point flag variety}.

Now, consider $\alpha= (3,1,-1,5,3,1)$. Let $X_\alpha$ be the intersection $\cF_\alpha\cap \spr^T$. A flag $(u_1,...,u_6)\in X_\alpha$ must satisfy the following: 
\begin{enumerate}
    \item $u_1\in V_3\cap \Ker(e)$, so $\CC u_1= \CC v_{2}^{1}$.
    \item $e(\CC u_3)= \CC u_2$, $\langle u_3, u_2\rangle = 0$ and $u_2\in\Span (v^{-1}_{2}, v_{0}^{1})$. Consequently $\CC u_2$ is $\CC v^{-1}_{2}$ or $\CC v_{0}^{1}$.
    \item $\CC u_6= \CC v_{1}^{0}$, $\CC u_5= e \CC u_6$ and $\CC u_4= e \CC u_5$.
\end{enumerate}
This means that $X_\alpha$ consists of $2$ points, and the pullback $H^*(\cF_\alpha)\rightarrow H^*(X_\alpha)$ is not surjective.
    \end{example}
We return to the proof of the general case. Now $T$ acts on $V_\pv$ with weights $2a+3,...,-2a-3$. The dimensions of the weight spaces are $\dim V_{2a+3}= 1, \dim V_{2a+1}=2$ and $\dim V_{2a-1}=...=\dim V_1= 3$. We use the same notation $X_\alpha$ and $\cF_\alpha$ as in $\cref{example proof}$. Let $\alpha= (2a+1, 2a-1,...,1-2a, 2a+3,2a+1,...,1)$. Then $X_\alpha$ consists of two points that correspond to two isotropic lines in $V_{1-2a}$ that are annihilated by $e^{2a+1}$. The variety $\cF_\alpha$ is connected, so the map $H^*(\cF_\alpha)\rightarrow H^*(X_\alpha)$ is not surjective.
\end{proof}
Next, we state a conjecture about when $i_{\pv}^*$ is surjective. 
\begin{conj} \label{surj conjecture}
The necessary conditions in \cref{surjectivity} and \cref{surjectivity B C D} are sufficient. In other words, $i_{\pv}^*$ is surjective for the following cases.
\begin{enumerate}
    \item $\fg^\vee= \fsp(2n)$, the partition $\pv$ takes the form $((2a+1)^{2d_1},(2a)^{2d_2+1},(2a-1)^{2d_3}, 1^{2d_4})$ for some $a, d_1,...,d_4\geqslant 0$.
    \item $\fg^\vee= \fs\fo(2n+1)$, the partition $\pv$ takes the form $((2a+2)^{2d_1}, (2a+1)^{2d_2+1}, (2a)^{2d_3})$ for some $a, d_1, d_2, d_3\geqslant 0$.
    \item $\fg^\vee= \fs\fo(2n)$, the partition $\pv$ takes one of the following forms.
    \begin{itemize}
        \item $((2a+2)^{2d_1}, (2a+1)^{2d_2}),$ 
        \item $(2a+1)^{2d_1}, (2a)^{2d_2})$,
        \item $((2a+3)^{2d_1+1}, (2a+2)^{2d_2}, (2a+1)^{2d_3+1}),$
        \item $((2a+1)^{2d_1+1},(2b+1)^{2d_2+1}),$
        \item $((2a+1)^{2d_1+1},(2a)^{2d_2}, 2^{2d_3},1^{2d_4+1}),$
    \end{itemize}
    for some $a, b, d_1, d_2, d_3, d_4\geqslant 0$.
\end{enumerate}
\end{conj}
The next proposition gives some evidence for \cref{surj conjecture}. A more general approach to prove this conjecture is explained in \cite{hoang2}.
\begin{prop} \label{b=1} \leavevmode
\begin{enumerate}
    \item Consider $\fg^\vee= \fs\fp(2n)$. The map $i_{(2a,1^{2n-2a})}^*: H^*(\cB^\vee)\rightarrow H^*(\cB_{(2a,1^{2n-2a})})$ is surjective.
    \item Consider $\fg^\vee= \fs\fo(2n)$. The map $i_{(2a+1,1^{2n-2a-1})}^*: H^*(\cB^\vee)\rightarrow H^*(\cB_{(2a+1,1^{2n-2a-1})})$ is surjective.
\end{enumerate}
    
\end{prop}
\begin{proof} We give the proof of Part (1), the case $\fg^\vee$ of type D is similar. We use induction  on $n$. The statement is trivial for $n= 1$. Consider $n>1$. Similarly to the previous proofs, we have a vector space $V_\pv$ such that $e^\vee \in \fsp(V_\pv)$. Let $W$ be the kernel of $e^\vee\colon V_\pv\rightarrow V_\pv$. Let $W_1$ be the image of $(e^\vee)^{2a-1}$, it is a one-dimensional subspace of $W$. Write $[W_1]$ for the class of $W_1$ in $\PP(W)$. The fibers of the projection $\pi_{e^\vee}\colon \spr \rightarrow \PP(W)= \PP^{2n-2a}$ is as follows.
    \begin{itemize}
        \item If $U^1=[W_1]$ , then $\pi_{e^\vee}^{-1}(U^1)$ is isomorphic to $\cB_{(2a-2,1^{2n-2a})}$.
        \item If $U^1\neq[W_1]$ , then $\pi_{e^\vee}^{-1}(U^1)$ is isomorphic to $\cB_{(2a,1^{2n-2a-2})}$.
    \end{itemize}
    Write $\cB_{W,2n}$ for the subvariety $\pi^{-1}(W)$. We have an embedding $s_{1}\colon \cB_{2n-2}\hookrightarrow \cB_{2n}$ as the subvariety of the flags $U^\bullet$ that have $U^1=$ $[W_1]$. This embedding has the property $s_{1}(\cB_{(2a,1^{2n-2a})})\simeq \cB_{(2a-2,1^{2n-2a})}$. And we have a commutative diagram of compactly supported cohomology.
    \begin{equation*}
         \begin{tikzcd} 
            0\arrow{r}& H_{c}^*(\cB_{W,2n}\setminus s_1(\cB_{2n-2})) \arrow{d}{j^*} \arrow{r} & H_{c}^*(\cB_{W,2n}) \arrow{d}{i^{*}_{(2a,1^{2n-2a})}} \arrow{r}{s_1^{*}}&  H_{c}^*(\cB_{2n-2})\arrow{d}{i^*_{(2a-2,1^{2n-2a})}} \arrow{r}& 0\\
            0\arrow{r}& H^*_{c}(\cB_{(2a,1^{2n-2a})}\setminus s_1(\cB_{(2a-2,1^{2n-2a})})) \arrow{r} &  H_{c}^*(\cB_{(2a,1^{2n-2a})})\arrow{r}{s_1^{*}} &  H_{c}^*(\cB_{(2a-2,1^{2n-2a})}) \arrow{r} & 0
        \end{tikzcd}
    \end{equation*}
   The map $j^*$ in the above diagram can be described as follows. We have a stratification $\PP(W)\setminus [W_1]=\bA^{2n-2a}\sqcup \bA^{2n-2a-1}\sqcup\ldots$ $\sqcup \bA^1$ such that $\pi_{e^\vee}^{-1}(\bA^l)= \bA^l\times \cB_{(2a, 1^{2n-2a-2})}$ for $1\leqslant l\leqslant 2n-2a$. Hence, we have $$H^*_{c}(\cB_{(2a,1^{2n-2a})}\setminus s_1(\cB_{(2a-2,1^{2n-2a})}))= \oplus_{l=1}^{2n-2a} H_c^*(\bA^l\times \cB_{(2a, 1^{2n-2a-2})})$$
   We have a similar decomposition $H_{c}^*(\cB_{W,2n}\setminus s_1(\cB_{2n-2}))= \oplus_{l=1}^{2n-2a} H_c^*(\bA^l\times \cB_{2n-2})$. And the map $j^*$ on each direct summand is induced from $i^*_{(2a,1^{2n-2a-2})}\colon H^*(\cB_{2n-2})\rightarrow \cB_{(2a, 1^{2n-2a-2})}$. By the induction hypothesis, both $i^*_{(2a,1^{2n-2a-2})}$ and $i^*_{(2a-2,1^{2n-2a})}$ are surjective. Therefore, $j^*$ and $i^*_{(2a-2,1^{2n-2a})}$ are surjective, and we have $i^*_{(2a,1^{2n-2a})}$ surjective by the five lemma.   
\end{proof}
    

\section{Surjectivity of the Cartan comoment map}\label{app_surj_rhs}
In this section, we study the surjectivity of the morphism 
\begin{equation}\label{morph_that_study}
\mathbb{C}[\mathfrak{h}^*,\hbar] \otimes_{Z_\hbar} \mathbb{C}[\mathfrak{h}^*,\hbar] \rightarrow \cC_\nu(\mathcal{U}_\hbar(\mathfrak{g},\mathfrak{p}))
\end{equation} 
being specialized to particular points $(\la,\hbar_0) \in \mathfrak{h}^* \oplus \mathbb{C}$. We will restrict ourselves to the case $\hbar_0 \neq 0$, so, without losing the generality, we can assume that $\hbar_0=1$ (use that the morphism \ref{morph_that_study} is graded).  

\begin{rmk}
Note that the algebra $\mathcal{U}_\hbar(\mathfrak{g},\mathfrak{p})$ as well as the homomorphism (\ref{morph_that_study}) do not depend on the choice of the parabolic $\mathfrak{p} \supset \mathfrak{b}$. 
\end{rmk}


All of the results of this section follow directly from \cite[Section 3]{HolandPolo} and references therein. 

Pick $\lambda \in \mathfrak{X}(\mathfrak{l})$ that is  $\mathfrak{p}$-antidominant (note that for every $\la \in \mathfrak{X}(\mathfrak{l})$ there exists $\mathfrak{p}$ such that $\la$ is $\mathfrak{p}$-antidominant). Let $\mathfrak{h} \subset \mathfrak{b} \subset \mathfrak{p}$ be a Borel subalgebra (note that $\la$ is automatically $\mathfrak{b}$-antidominant). Recall the element $\widetilde{\la}=\la-\rho_{\mathfrak{b}}(\mathfrak{l})$ that first appears in Section \ref{quant_G_mod_P}.

The following proposition holds by \cite[Section 3.6]{HolandPolo} and references therein.
\begin{prop}
The natural morphism $\mathcal{U}(\mathfrak{g}) \rightarrow \mathcal{U}_\lambda(\mathfrak{g},\mathfrak{p})$ is surjective if $\widetilde{\la}$ is 
$\mathfrak{p}$-antidominant.  
\end{prop}

\begin{cor}
If both $\la \in \mathfrak{X}(\mathfrak{l})$ and $\widetilde{\la} \in \mathfrak{h}^*$ are $\mathfrak{p}$-antidominant, then the morphism (\ref{morph_that_study}) is surjective.  
\end{cor}

\begin{rmk}
Let us remark again that for every $\la$ we can find $\mathfrak{p}$ such that $\la$ is $\mathfrak{p}$-antidominant (moreover, for $\mathfrak{l}$-regular integral $\la$ such $\mathfrak{p}$ is unique and is given by $\mathfrak{p}=\bigoplus_{\langle \la,\beta^\vee\rangle \leqslant 0}\mathfrak{g}_\alpha$). Although the condition that $\widetilde{\la}$ is $\mathfrak{p}$-antidominant is not automatic and may not be satisfied in general. 
\end{rmk}

\begin{warning}
Note that despite the fact that we know that the surjectivity of the morphism $\mathcal{U}(\mathfrak{g}) \rightarrow \mathcal{U}_\lambda(\mathfrak{g},\mathfrak{p})$ sometime fails (see Example \ref{surj_Phi_hbar_nonzero}), we are still not sure if it fails after we pass to the Cartan subquotients. 
\end{warning}

Let us finish this section by making the following comment. Note that the condition that $\widetilde{\la}$ is $\mathfrak{p}$-antidominant is precisely one of two conditions that we impose in Proposition to have abelian localization for $(\la,T^*\mathcal{P})$. Moreover, note that the condition that $\widetilde{\la}$ is $\mathfrak{p}$-antidominant tells us that the global sections functors 
\begin{equation*}
\Gamma \colon \mathcal{D}_\la(G/P)\text{-mod} \rightarrow \mathcal{U}_\la(\mathfrak{g},\mathfrak{p})\text{-mod},~
\Gamma \colon \mathcal{D}_{\widetilde{\la}}(G/B)\text{-mod} \rightarrow \mathcal{U}_{\widetilde{\la}}(\mathfrak{g})\text{-mod}
\end{equation*}
are exact. It would be great if there is a direct argument deducing the surjectivity of $D_{\widetilde{\la}}(G/B) \rightarrow D_{\la}(G/P)$ from the exactness of $\Gamma$ (compare with \cite[Proposition 2.14]{HolandPolo}).

Note that it is always true that there exists {\emph{some}} parabolic $\mathfrak{p}$ with Levi $\mathfrak{l}$ such that $\la$ is 
$\mathfrak{p}$-antidominant. On the other hand, this $\mathfrak{p}$ might not be standard.



\section{Hikita-Nakajima conjecture for type $A$ parabolic Slodowy varieties}\label{app_HN_type_A}
Assume now that $\mathfrak{g}=\mathfrak{sl}_n$. We claim that in this case, Theorem \ref{main_th_weak_hikita} implies the (equivariant) Hikita-Nakajima conjecture for the pair $\widetilde{S}(\mathfrak{q},\mathfrak{p})$, $\widetilde{S}(\mathfrak{p},\mathfrak{q})$. In fact, the classical Hikita conjecture for parabolic Slodowy slices is already proved in \cite[Appendix A]{Hikita}. The proof by Hikita requires hard techniques from algebraic geometry (see the discussion below for more details). The main purpose of this appendix is to give a transparent ``Lie theoretic'' proof of the Hikita-Nakajima conjecture for type $A$ parabolic Slodowy varieties. To this end, we need to check that 
\begin{itemize}
    \item[(a)]  The homomorphism $\CC[\mathfrak{h}^*,\hbar]^{W_M} \otimes \CC[\mathfrak{h}^*,\hbar] \rightarrow \cC_\nu(\mathcal{A}_{\hbar,\mathfrak{X}(\mathfrak{l})}(\widetilde{S}(\mathfrak{q},\mathfrak{p})))$ is surjective. 
    \item[(b)] The restriction homomorphism $H^*_{T^\vee \times \CC^\times}(T^*\mathcal{Q}^\vee_-) \rightarrow H^*_{Z_L \times \CC^\times}(\widetilde{S}(\mathfrak{p}^\vee_-,\mathfrak{q}^\vee_-))$ is surjective. 
    \item[(c)] Algebra $\cC_\nu(\mathcal{A}_{\hbar,\mathfrak{X}(\mathfrak{l})}(\widetilde{S}(\mathfrak{q},\mathfrak{p})))$ is flat over $\CC[\mathfrak{X}(\mathfrak{l}),\hbar]$.
\end{itemize}

By graded Nakayama lemma, it is enough to check that:
\begin{itemize}
    \item[(a)]  The natural morphism $\on{Spec}\CC[\widetilde{S}(\mathfrak{q},\mathfrak{p})] \rightarrow \mathcal{N}$ is a closed embedding. 
    \item[(b)] The restriction homomorphism $H^*(T^*\mathcal{Q}^\vee_-) \rightarrow H^*(\widetilde{S}(\mathfrak{p}^\vee_-,\mathfrak{q}^\vee_-))$ is surjective. 
    \item[(c)] The dimension of the algebra of functions of $\on{Spec}\CC[\widetilde{S}(\mathfrak{q},\mathfrak{p})]^{Z_M}$ does not exceed $|\widetilde{S}(\mathfrak{q},\mathfrak{p})^{Z_M}|=|(W_M\backslash W/W_L)^{\mathrm{free}}|$ (see \cref{prop fixed point parabolic Slodowy}). This condition is equivalent to the weak flatness condition (\ref{weakflat}).
    
    Note that $\dim H^*(\widetilde{S}(\mathfrak{p}^\vee_-,\mathfrak{q}^\vee_-))$ is precisely $|(W_M\backslash W/W_L)^{\mathrm{free}}|$ because its fixed point locus under a torus action consists of $|(W_M\backslash W/W_L)^{\mathrm{free}}|$ points. So, we can substitute (c) with (c') below.
    \item[(c')]The dimension of the algebra of functions of $\on{Spec}\CC[\widetilde{S}(\mathfrak{q},\mathfrak{p})]^{Z_M}$ does not exceed $\dim H^*(\widetilde{S}(\mathfrak{p}^\vee_-,\mathfrak{q}^\vee_-))$.
\end{itemize}

All of these statements are well-known; we give arguments and references for completeness. 

Part (a) follows from \cite[Lemma 5.2.1 (5)]{Losev_isofquant}, namely, it follows from the proof of Lemma 5.2.1 (5) in loc. cit. that $\on{Spec}\CC[\widetilde{S}(\mathfrak{q},\mathfrak{p})]=S(e_{\mathfrak{q}}) \cap \overline{\mathbb{O}}_{\mathfrak{p}}$ that is clearly closed in $\mathcal{N}$ (recall that $\overline{\mathbb{O}}_{\mathfrak{p}}$ is the image of $T^*\mathcal{P} \rightarrow \mathcal{N}$). 

Part (b) follows from the classical result of Spaltenstein; see, for example, \cite[Corollary 2.5]{bo}. 

Finally, part (c') follows from a combination of \cite[Theorem 1.1]{bo} and \cite[Appendix A]{Hikita}. The highlight of our deformation approach is that we use only the elementary parts of the arguments in \cite{bo} and \cite{Hikita}. Details are explained below.

First, let us briefly recall the statements involved in the proof of the classical Hikita conjecture in \cite[Appendix A]{Hikita}. 

Let $\lambda$ (resp. $\mu$) be the partition (composition) that comes from $\fp$ (resp. $\fq$). In \cite{Brundan2008}, Brundan introduces a partial coinvariant algebra $C^{\mu}_{\lambda}$ attached to each pair $(\lambda, \mu)$. The algebra $C^{\mu}_{\lambda}$ is a quotient of $\CC[\fh]^{W_M}$ by an ideal $I_{\lambda}^{\mu}$ generated by certain sets of partial symmetric polynomials. 

One of the main results of \cite{bo} is an isomorphism of algebras $H^*(\widetilde{S}(\mathfrak{p}^\vee_-,\mathfrak{q}^\vee_-))\simeq C^{\mu}_{\lambda}$. In particular, they prove $\dim C^{\mu}_{\lambda}\leqslant \dim H^*(\widetilde{S}(\mathfrak{p}^\vee_-,\mathfrak{q}^\vee_-))$ in \cite[Sections 2,3]{bo}. The argument for this part is combinatorial. The authors construct a spanning set of $C^{\mu}_{\lambda}$ labeled by column strict tableaux that satisfy certain properties. Then they prove $H^*(\widetilde{S}(\mathfrak{p}^\vee_-,\mathfrak{q}^\vee_-))\simeq C^{\mu}_{\lambda}$ using the construction of the Springer representations by perverse sheaves.

The main result of \cite[Appendix A]{Hikita} is an isomorphism of algebras $C_{\lambda}^{\mu}\simeq \CC[\widetilde{S}(\mathfrak{q},\mathfrak{p})]^{Z_M}$. First, Hikita generalizes the argument in \cite[Lemma 1]{Tanisaki} and obtains a surjection $C_{\lambda}^{\mu}\twoheadrightarrow  \CC[\widetilde{S}(\mathfrak{q},\mathfrak{p})]^{Z_M}$. This step uses only techniques related to minors and elementary divisors of matrices. The fact that this surjection is an isomorphism is a consequence of \cite[Theorem 4.6]{Weyman} on the defining ideals of nilpotent orbits in $\fgl_n$. This result is a hard theorem that requires sophisticated geometric techniques in calculating syzygies. 

Recall that in our approach the last step is to show $\dim \CC[\widetilde{S}(\mathfrak{q},\mathfrak{p})]^{Z_M}\leqslant \dim H^*(\widetilde{S}(\mathfrak{p}^\vee_-,\mathfrak{q}^\vee_-))$. This inequality follows from the  ``easy" parts in \cite{bo} and \cite{Hikita} mentioned above. In other words, we only use
$$\dim \CC[\widetilde{S}(\mathfrak{q},\mathfrak{p})]^{Z_M} \leqslant \dim C_\lambda^{\mu} \leqslant \dim H^*(\widetilde{S}(\mathfrak{p}^\vee_-,\mathfrak{q}^\vee_-)).$$
Although our proof of the Hikita-Nakajima conjecture in this case is much less explicit, it proposes a more viable approach outside of type A. One main reason is that we do not have a complete generalization of \cite[Theorem 4.6]{Weyman} for nilpotent orbits of other types. The readers who are interested in some partial (conjectural) sets of generators for the defining ideals may look at \cite[Section 8.3]{Weymanbook}.

\begin{rmk}
Another approach to the proof of (c) is as follows. Recall that by \cite{maffei}, there exists the isomorphism between $\widetilde{S}(\mathfrak{q},\mathfrak{p})$ and a certain type $A$ Nakajima quiver variety $\widetilde{\mathcal{M}}_H$. This quiver variety, in turn, is isomorphic to the {\em{slice in the affine Grassmannian}} $\overline{\mathcal{W}}^\la_\mu$ corresponding to a pair of dominant coweights $\la$, $\mu$ (see \cite{MV}) and now the claim follows from \cite[Equation (20)]{KamnitzerTingleyWebsterWeeksYacobi} (note that the argument in loc. cit. is very elegant and does not use any combinatorics although it is based on some nontrivial representation theory).  
\end{rmk}




\section{Torus fixed points of parabolic Slodowy varieties}\label{app_torus_fixed}

In this section, we will describe explicitly the sets $Y^{T_X}$ and $Y_\la^{T_X}$ as well as the identification $Y_{\mathfrak{a}}^{T_X} \simeq Y^{T_X} \times \mathfrak{a}$ for $Y=\widetilde{S}(\mathfrak{q},\mathfrak{p})$ being the parabolic Slodowy variety, and $\mathfrak{a}=\mathfrak{X}(\mathfrak{l})$.

\subsection{When Slodowy variety has isolated torus fixed points} Let us start with arbitrary $\widetilde{S}(\chi,\mathfrak{p})$ (i.e., we are not assuming that $e$ is regular in some Levi). Recall that $S(\chi)=\chi+\mathfrak{z}_{\mathfrak{g}}(f)$.  
Recall  also the morphism $\widetilde{\pi}_{\mathfrak{p}} \colon T^*_{\mathfrak{X}(\mathfrak{l})}\mathcal{P} \rightarrow \mathfrak{g}^* \times_{\mathfrak{h}^*/W} \mathfrak{X}(\mathfrak{l})$. 
Then, $ \widetilde{S}_{\mathfrak{X}(\mathfrak{l})}(\chi,\mathfrak{p})$ is equal to $\widetilde{\pi}_{\mathfrak{p}}^{-1}(S(\chi) \times_{\mathfrak{h}^*/W} \mathfrak{X}(\mathfrak{l})) \subset T^*_{\mathfrak{X}(\mathfrak{l})}\mathcal{P}$. 
Using the closed embedding $T^*_{\mathfrak{X}(\mathfrak{l})}\mathcal{P} \hookrightarrow \mathcal{P} \times \mathfrak{g}^*$, we can consider an element of $T^*_{\mathfrak{X}(\mathfrak{l})}\mathcal{P}$ as a pair $(\mathfrak{p}',x)$.

\begin{lemma}
 The set $\widetilde{S}(\chi)^{T_e}$ is finite if and only if $e$ is regular in $\mathfrak{z}_{\mathfrak{g}}(\mathfrak{t}_e)$.
\end{lemma}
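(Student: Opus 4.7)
The plan is to reduce the description of $\widetilde{S}(\chi)^{T_e}$ to a disjoint union of Slodowy varieties attached to the Levi $\mathfrak{l}_e := \mathfrak{z}_{\mathfrak{g}}(\mathfrak{t}_e)$, and then observe that each such piece is finite precisely when $e$ is regular in $\mathfrak{l}_e$. This mirrors (and uses) the description of $T_e$-fixed points on Springer fibers already established in \cref{fixed point Springer fiber}.

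First I would identify the $T_e$-fixed locus of the ambient resolution. Since $T_e$ centralizes $(e,h,f)$, it fixes $\chi$, hence acts on $T^*\mathcal{B}$, on $S(\chi)$, and on $\widetilde{S}(\chi)=\pi_{\mathfrak{b}}^{-1}(S(\chi)\cap\mathcal{N})$. By the standard fact that $(T^*\mathcal{B})^{T_e}=T^*(\mathcal{B}^{T_e})$ together with \cref{smL}, we have $\mathcal{B}^{T_e}=\bigsqcup_{[w]\in W/W_{L_e}} \mathcal{B}_{L_e}$, where $L_e=Z_G(T_e)^{\circ}$ has Lie algebra $\mathfrak{l}_e$. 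On the other hand, the fixed part of $S(\chi)=\chi+\mathfrak{z}_{\mathfrak{g}}(f)$ is $\chi+\mathfrak{z}_{\mathfrak{l}_e}(f)=S_{\mathfrak{l}_e}(\chi)$, the Slodowy slice to $\chi\in\mathfrak{l}_e^*$ taken inside $\mathfrak{l}_e^*$. Since the Springer map is $T_e$-equivariant, passing to $T_e$-invariants in $\widetilde{S}(\chi)\subset T^*\mathcal{B}$ gives
\begin{equation*}
\widetilde{S}(\chi)^{T_e}\;\simeq\;\bigsqcup_{[w]\in W/W_{L_e}}\widetilde{S}_{L_e}(\chi),
\end{equation*}
where $\widetilde{S}_{L_e}(\chi)=\pi_{\mathfrak{b}\cap\mathfrak{l}_e}^{-1}(S_{\mathfrak{l}_e}(\chi)\cap\mathcal{N}_{\mathfrak{l}_e})$ is the Slodowy variety for $L_e$ at $\chi$. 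The argument for this is essentially the same Springer-fiber computation proving \cref{fixed point Springer fiber}: a $T_e$-fixed pair $(\mathfrak{b}',x)$ is determined by a Borel of $L_e$ together with $x\in (\mathfrak{b}')^{\perp}\cap\mathfrak{l}_e$ mapping to $S(\chi)^{T_e}=S_{\mathfrak{l}_e}(\chi)$.

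Because $W/W_{L_e}$ is finite, finiteness of $\widetilde{S}(\chi)^{T_e}$ is equivalent to finiteness of the single piece $\widetilde{S}_{L_e}(\chi)$. This is where the regularity condition enters. The contracting $\mathbb{C}^{\times}$-action makes $\widetilde{S}_{L_e}(\chi)$ deformation retract onto the Springer fiber $\mathcal{B}_{L_e,\chi}$ of $\chi$ in $L_e$ (both being equidimensional of the same dimension); in particular, $\widetilde{S}_{L_e}(\chi)$ is finite iff it is a single point iff $\mathcal{B}_{L_e,\chi}$ is a single point. By the classical theory of Springer fibers, $\mathcal{B}_{L_e,\chi}$ is a single point exactly when $\chi$ is regular nilpotent in $\mathfrak{l}_e$, i.e., $e$ is regular in $\mathfrak{z}_{\mathfrak{g}}(\mathfrak{t}_e)$.

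Combining the two steps yields the claim. The only mildly delicate point is the first identification, and specifically the verification that $T_e$-invariants of the Springer map's preimage coincide with the preimage under the $L_e$-Springer map of $S_{\mathfrak{l}_e}(\chi)\cap\mathcal{N}_{\mathfrak{l}_e}$. This is formal once one checks that $x\in\mathfrak{g}^*$ lies in $\mathcal{N}$ and is $T_e$-fixed iff it lies in $\mathcal{N}_{\mathfrak{l}_e}$, which follows from the fact that $\mathfrak{l}_e$ is a Levi and nilpotency is detected Levi-locally. No deeper input than \cref{smL}, \cref{fixed point Springer fiber}, and the standard contraction of Slodowy varieties onto Springer fibers is needed.
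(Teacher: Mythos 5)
Your proof is correct in substance but takes a genuinely different route from the paper's. The paper disposes of this lemma in one step: it invokes Lemma~\ref{finite fixed points} (for a conical symplectic resolution $Y\to X$, $Y^{T}$ is finite iff $X^{T}$ is a single point), thereby reducing everything to showing that $(S(\chi)\cap\mathcal{N})^{T_e}=S(\chi)\cap\mathcal{N}\cap\mathfrak{m}^{*}=S(\chi,\mathfrak{m})$ is a single point iff $e$ is regular in $\mathfrak{m}=\mathfrak{z}_{\mathfrak{g}}(\mathfrak{t}_e)$, which is Kostant's theorem about regular nilpotents and Slodowy slices. You instead work \emph{upstairs}: you decompose $\widetilde{S}(\chi)^{T_e}$ using $(T^*\mathcal{B})^{T_e}=T^*(\mathcal{B}^{T_e})$ and \cref{smL} into $|W/W_{L_e}|$ copies of $\widetilde{S}_{L_e}(\chi)$, then argue via the Springer fiber of $\chi$ in $L_e$. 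Both routes are sound; the paper's is shorter because the reduction to the base is a prepackaged lemma, while yours gives an explicit description of the fixed-point set (which the paper redoes in more generality in \cref{descr_fixed_parabolic}).

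One parenthetical in your argument is incorrect and needs to be repaired: you write that $\widetilde{S}_{L_e}(\chi)$ and $\mathcal{B}_{L_e,\chi}$ are ``equidimensional of the same dimension.'' They are not—the Slodowy variety is a symplectic resolution of $S_{\mathfrak{l}_e}(\chi)\cap\mathcal{N}_{\mathfrak{l}_e}$, its central fiber $\mathcal{B}_{L_e,\chi}$ is Lagrangian, so $\dim\widetilde{S}_{L_e}(\chi)=2\dim\mathcal{B}_{L_e,\chi}$. Moreover, the $\mathbb{C}^{\times}$-contraction onto $\mathcal{B}_{L_e,\chi}$ is a topological statement and does not by itself control dimension; it gives the easy implication (if $\widetilde{S}_{L_e}(\chi)$ is finite, so is $\mathcal{B}_{L_e,\chi}$), but for the converse you should invoke the dimension formula $\dim\widetilde{S}_{L_e}(\chi)=2\dim\mathcal{B}_{L_e,\chi}$ together with connectedness. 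With that fix, your argument goes through cleanly.
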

\begin{proof}
Since $\widetilde{S}(\chi)$ is a $\CC^\times \times T_e$-equivariant symplectic resolution of $S(\chi)\cap \cN$, \cref{finite fixed points} implies that we only need to show that $(S(\chi)\cap \cN)^{T_e}=\{\chi\}$ if and only if $e$ is regular in $\mathfrak{m}:=\mathfrak{z}_{\mathfrak{g}}(\mathfrak{t}_e)$. Indeed, by the definitions, $(S(\chi)\cap \cN)^{T_e}=S(\chi) \cap \cN\cap \mathfrak{m}^*=S(\chi,\mathfrak{m})$ is the nilpotent Slodowy slice to $\chi$ in $\mathfrak{m}^*$, and $S(\chi,\mathfrak{m})=\{\chi\}$ if and only if $e$ is regular in $\mathfrak{m}$.
\end{proof}

From now on, we will restrict ourselves to the pairs $\widetilde{S}(\chi,\mathfrak{p})$ such that $\widetilde{S}(\chi)^{T_e}$ is finite. 


\begin{rmk}
By \cref{symplectic resolution iff one fixed point} this assumption is natural from the perspective of symplectic duality, namely it corresponds to the fact that the dual variety has a symplectic resolution.
\end{rmk}

Fix $T \subset B \subset P \subset G$. Let $Q \subset G$ be another parabolic subgroup containing $B$. Let $M \subset Q$ be the standard Levi subgroup of $Q$, and let $e \in \mathfrak{m}$ be a regular nilpotent element contained in $\mathfrak{b}_0:=\mathfrak{m} \cap \mathfrak{b}$.  Let $\mathfrak{n}_0 \subset \mathfrak{b}_0$ be the nilpotent radical. Following the convention of \cref{parabolic Slodowy subsection}, we denote the corresponding parabolic Slodowy variety $\widetilde{S}(\chi,\mathfrak{p})$ by $\widetilde{S}(\mathfrak{q},\mathfrak{p})$.

\subsection{Explicit description of the torus fixed points on $\widetilde{S}_{\la}(\mathfrak{q},\mathfrak{p})$}\label{app_descr_fixed_points_parabolic}
\subsubsection{Combinatorial preliminaries: alternative descriptions of the set of free $W_M \times W_L$-orbits on $W$}
Let $(W_M\backslash W/W_{L})^{\mathrm{free}} \subset W_M\backslash W/W_{L}$ be the set of {\em{free}} $W_M \times W_L$-orbits on $W_G$. For $w \in W = N_G(T)/T$, we will denote by $\dot{w} \in N_G(T)$ any representative of $w$. Let ${}^M(W/W_{L}) \subset W/W_L$ be the subset of $W/W_L$ defined as follows: 
\begin{equation*}
{}^M(W/W_{L}) = \{wW_L \in W/W_L\,|\, \dot{w}P\dot{w}^{-1} \cap M = B_0\}.
\end{equation*}

\begin{lemma}
The natural map ${}^M(W/W_{L}) \to W_M\backslash W/W_{L}$ induces a bijection 
 $$
 {}^M(W/W_{L})\iso (W_M\backslash W/W_{L})^{\mathrm{free}}.
 $$
\end{lemma}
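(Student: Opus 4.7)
The plan is to set up the correspondence through the Levi subgroup $L' := \dot{w} L \dot{w}^{-1}$ and the parabolic $P' := \dot{w} P \dot{w}^{-1}$, both of which contain $T$ since $\dot{w} \in N_G(T)$. A direct computation of the stabilizer of $w \in W$ under the action $(w_M, w_L) \cdot w = w_M w w_L^{-1}$ gives $W_M \cap w W_L w^{-1} = W_M \cap W_{L'}$, which coincides with $W_{M \cap L'}$ by the standard identification of Weyl groups of Levi intersections. Hence the $W_M \times W_L$-orbit of $w$ is free if and only if $M \cap L' = T$, and the task reduces to comparing this Levi-theoretic condition with the parabolic condition $M \cap P' = B_0$ that defines ${}^M(W/W_L)$.

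For well-definedness, I would note that $L' \subset P'$ gives $M \cap L' \subset M \cap P' = B_0$. But $M \cap L'$ is a connected reductive subgroup of $G$ containing $T$ (being the intersection of two Levi subgroups of $G$ both containing $T$), and the only connected reductive subgroup of the solvable Borel $B_0$ is a torus; hence $M \cap L' = T$ and the orbit is free. For injectivity, if $wW_L, w'W_L \in {}^M(W/W_L)$ and $w' = w_M w w_L$, then since $\dot{w}_L \in L \subset P$ normalizes $P$, one has $\dot{w}' P \dot{w}'^{-1} = \dot{w}_M P' \dot{w}_M^{-1}$; intersecting with $M \ni \dot{w}_M$ gives $B_0 = \dot{w}_M B_0 \dot{w}_M^{-1}$. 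Using $N_M(B_0) = B_0$ and $B_0 \cap N_M(T) = T$ forces $w_M = 1$, so $wW_L = w'W_L$.

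For surjectivity, I would pass to the geometry of $M \curvearrowright G/P$. The double coset $W_M w W_L$ indexes the $M$-orbit $O_{[w]} = M \cdot \dot{w}P/P \simeq M/(M \cap P')$, whose stabilizer $M \cap P'$ is a parabolic of $M$ containing $T$ with Levi $L_S$. On one hand $O_{[w]}^T$ is in bijection with $W_M \cdot wW_L \subset W/W_L$, of size $|W_M|$ when the orbit is free; on the other hand it is in bijection with $W_M / W_{L_S}$. Comparing forces $W_{L_S} = \{1\}$, so $L_S = T$ and $M \cap P'$ is a Borel of $M$ containing $T$. Since $W_M$ acts transitively on such Borels, a suitable $w_M \in W_M$ conjugates $M \cap P'$ to $B_0$, and then $w' := w_M w$ satisfies $\dot{w}' P \dot{w}'^{-1} \cap M = B_0$, producing the desired preimage.

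The main conceptual step is the dictionary between the algebraic side (stabilizer in $W_M \times W_L$) and the geometric side (stabilizer in $M$ of a $T$-fixed point of the $M$-orbit on $G/P$), via the two standard facts that $W_M \cap W_{L'} = W_{M \cap L'}$ for Levi subgroups containing $T$, and that the $T$-fixed points of $M/(M \cap P')$ form a copy of $W_M/W_{L_S}$. Once these are in hand, the three verifications above are short and elementary, and no sophisticated machinery (e.g. minimal length representatives of double cosets) is needed.
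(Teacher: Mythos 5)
Your proof is correct, and it takes a genuinely different route from the paper's. The paper's argument is essentially a two-line reduction: it cites (from an external reference) the root-theoretic characterization $(W_M\backslash W/W_{L})^{\mathrm{free}}=\{[w] \mid w^{-1}(\Delta_M) \cap \Delta_{L} = \varnothing\}$, observes that this condition is exactly the statement that $\dot{w}P\dot{w}^{-1} \cap M$ is a Borel of $M$, and then notes that among all Borels of $M$ containing $T$ there is a unique $W_M$-conjugate equal to $B_0$, which simultaneously delivers existence and uniqueness of the distinguished representative. Your argument avoids the cited root characterization entirely: you compute the $(W_M \times W_L)$-stabilizer directly as $W_M \cap W_{L'} = W_{M \cap L'}$, then prove well-definedness by the soft observation that a connected reductive subgroup of a Borel is a torus, injectivity via $N_M(B_0) = B_0$ together with $N_{B_0}(T) = T$, and surjectivity by counting $T$-fixed points of the $M$-orbit $M/(M\cap P')$ in $G/P$ against the orbit $W_M \cdot wW_L$ in $W/W_L$. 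The fixed-point count in effect re-derives the equivalence ``free $\Leftrightarrow$ $M \cap P'$ is a Borel of $M$'' from scratch. What you gain is a self-contained argument that requires only standard facts about intersections of Levi/parabolic subgroups containing $T$ and $T$-fixed points of partial flag varieties, and no appeal to the auxiliary reference; what the paper gains is brevity. The one input you invoke without proof, that $M \cap L'$ is connected reductive with maximal torus $T$, is precisely the standard fact the paper's cited reference also ultimately rests on, so the two proofs have comparable foundations.
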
 
\begin{proof}
 Indeed, we have (see, for example, \cite[Appendix B]{losev_krylov_wc_for_gieseker})
 \begin{equation*}
 (W_M\backslash W/W_{L})^{\mathrm{free}}=\{[w] \in W_M\backslash W/W_{L}\,|\, w^{-1}(\Delta_M) \cap \Delta_{L} = \varnothing\},
 \end{equation*}
 and the condition $w^{-1}(\Delta_M) \cap \Delta_{L} = \varnothing$ is equivalent to the fact that the intersection $\dot{w}P \dot{w}^{-1} \cap M$ is a Borel subgroup of $M$. It remains to note that if $\dot{w}P \dot{w}^{-1} \cap M$ is Borel in $M$, then there exists the unique $w' \in W_Mw$ such that ${\dot{w}'}P({\dot{w}'})^{-1} \cap M = B_0$.
\end{proof}

Let us give another description of the set $(W_M\backslash W/W_{L})^{\mathrm{free}}$.  Borel subgroup $B$ defines the length function on $W$. Let ${}^MW^{\mathrm{sh}} \subset W$ be the subset of $w \in W$ such that $w \in W_Mw$ is the shortest (this is equivalent to $\dot{w}B\dot{w}^{-1} \cap M=B_0$). Let ${}^{\mathrm{lo}}W^{L} \subset W$ be the subset of $W$ such that $w \in wW_L$ is the longest  (equivalently, $\dot{w}^{-1}B\dot{w} \cap L = B_{L}^{-}$). Then the natural morphism ${}^MW^{\mathrm{sh}} \cap {}^{\mathrm{lo}}W^{L} \iso (W_M\backslash W/W_{L})^{\mathrm{free}}$ is an isomorphism.

\begin{rmk}
We can also naturally identify the sets above with $(W_M\backslash W)^L=\{W_Mw \in W_M \backslash W\,|\, \dot{w}^{-1}Q\dot{w} \cap L=B_L^{-}\}$.
\end{rmk}

\subsubsection{Description of the fixed points} Recall the identification $S(\chi,\mathfrak{m}) \iso \mathfrak{h}^*/W_M$.
For $\la \in \mathfrak{h}^*/W_M$, let $x_\la \in S(\chi,\mathfrak{m})$ be the corresponding element. Choose a preimage of $\lambda$ in $\fh^*$. Abusing the notations, we also denote it by $\lambda$. Using the identification $\mathfrak{m} \simeq \mathfrak{m}^*$, we can talk about the semisimple part $(x_\la)_{\mathrm{ss}} \in \mathfrak{m}^*$ of $x_\la$. It lies in the same $M$-orbit as $\la$. Let $m_\la \in M$ be such that $m_\la \cdot \la =(x_\la)_{\mathrm{ss}}$, and $m_\la^{-1} \cdot x_\la \in \mathfrak{n}_0^{\perp}$ (i.e., the corresponding element of $\mathfrak{m}$ lies in $\mathfrak{b}_0$).

\begin{rmk}
Note that the element $m_\la$ is {\emph{not}} unique. It is unique up to the right multiplication by $Z_{B_0}(\la)$.
\end{rmk}


Using the closed embedding $T^*_{\mathfrak{X}(\mathfrak{l})}\mathcal{P} \hookrightarrow \mathcal{P} \times \mathfrak{g}^*$, and the natural embedding $\widetilde{S}_{\la}(\mathfrak{q},\mathfrak{p})^{Z_M} \hookrightarrow T^*_{\la}\mathcal{P}$ we consider any point of $\widetilde{S}_{\la}(\mathfrak{q},\mathfrak{p})^{Z_M}$ as an element of $\cP\times \fg^*$.

We start with the following standard lemma to be used in the proof of Proposition \ref{descr_fixed_parabolic} below. 

\begin{lemma}\label{lemma_fixed_G_L}
We have  $(G/L)^{Z_M}=\bigsqcup_{w \in W_M \backslash W/W_L} MwL/L$.
\end{lemma}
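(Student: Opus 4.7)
The plan is to identify $(G/L)^{Z_M}$ via the condition $g^{-1}Z_Mg\subseteq L$ (equivalently, $Z_M\subseteq gLg^{-1}$) that characterises $Z_M$-fixed cosets. The inclusion $\supseteq$ will be immediate: for $g=m\dot{w}l$ with $m\in M$, $l\in L$, the element $m$ centralizes $Z_M$, and $\dot{w}^{-1}Z_M\dot{w}\subseteq T\subseteq L$ because $Z_M\subseteq T$ and $\dot{w}\in N_G(T)$.

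For the reverse inclusion, given $gL\in(G/L)^{Z_M}$, I would argue as follows. Since $Z_M\subseteq gLg^{-1}$, both $T$ and $gTg^{-1}$ are maximal tori of $G$ containing $Z_M$, hence both are maximal tori of the Levi $Z_G(Z_M)=M$. By $M$-conjugacy of maximal tori, one can find $m\in M$ with $mTm^{-1}=gTg^{-1}$, so that $m^{-1}g\in N_G(T)$, i.e.\ $m^{-1}g=\dot{w}t$ for some $w\in W$ and $t\in T\subseteq L$. This places $gL=m\dot{w}L$ in $M\dot{w}L/L$.

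It remains to show that the parametrization descends to $W_M\backslash W/W_L$. Well-definedness will follow from $N_M(T)\subseteq M$, $N_L(T)\subseteq L$, and $T\subseteq L$. For injectivity, I would let $P$ and $Q$ be the standard parabolics with Levi factors $L$ and $M$, and combine the ordinary Bruhat decomposition $G=\bigsqcup_{w\in W_M\backslash W/W_L}Q\dot{w}P$ with the trivial inclusions $M\dot{w}_iL\subseteq Q\dot{w}_iP$: these force $M\dot{w}_1L=M\dot{w}_2L\Rightarrow Q\dot{w}_1P=Q\dot{w}_2P\Rightarrow W_Mw_1W_L=W_Mw_2W_L$. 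The only substantive inputs are the identity $Z_G(Z_M)=M$ for a standard Levi, the $M$-conjugacy of maximal tori, and the Bruhat decomposition for the pair $(Q,P)$; none of these should pose any real obstacle.
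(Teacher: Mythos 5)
Your approach is genuinely different from the paper's. The paper exploits the $Z_M$-equivariant $P/L$-bundle $G/L\to G/P$, cites its earlier Steinberg-type result for $(G/P)^{Z_M}$, and then analyses the twisted $Z_M$-action on the fibre $P/L$. You instead argue directly from the coset characterization $gL\in (G/L)^{Z_M}\iff Z_M\subseteq gLg^{-1}$ together with conjugacy of maximal tori and the parabolic Bruhat decomposition $G=\bigsqcup_{w\in W_M\backslash W/W_L}Q\dot wP$. Your route is more self-contained and avoids invoking the fixed-point result for partial flag varieties, which is a genuine simplification.

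However, there is one unjustified step. In the reverse inclusion you write: ``Since $Z_M\subseteq gLg^{-1}$, both $T$ and $gTg^{-1}$ are maximal tori of $G$ containing $Z_M$.'' This does not follow: $Z_M\subseteq gLg^{-1}$ only puts $Z_M$ in \emph{some} maximal torus of $gLg^{-1}$, which need not be the particular one $gTg^{-1}$. The fix is short but must be made explicit: since $L$ is connected, maximal tori of $gLg^{-1}$ are exactly the subgroups $glT(gl)^{-1}$ for $l\in L$, so there exists $l\in L$ with $Z_M\subseteq glT(gl)^{-1}$; replacing $g$ by $gl$ (which does not change the coset $gL$) gives $Z_M\subseteq gTg^{-1}$. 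Only then does $gTg^{-1}$ centralize $Z_M$, hence lie in $M=Z_G(Z_M)$ (here the standard connectedness of torus centralizers is used), and the conjugacy-of-maximal-tori argument inside $M$ goes through. With this insertion the rest of your argument, including well-definedness and the Bruhat-based injectivity, is correct.
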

\begin{proof}
The natural morphism $f\colon G/L \rightarrow G/P$ is a $Z_M$-equivariant $P/L$-bundle, so it induces the map $(G/L)^{Z_M} \rightarrow (G/P)^{Z_M}$. It follows from Proposition \ref{fixed pt of partial flag} that $(G/P)^{Z_L}=\bigsqcup_{w \in W_M \backslash W/W_L}MwP/P$. 
Let us describe the action of $Z_M$ on the fiber of $f$ over $mwP/P \in G/P$. This fiber identifies with  $P/L$ via the map 
\begin{equation}\label{ident_fiber_P_mod_L}
P/L \ni [p] \mapsto m\dot{w}pL/L \in f^{-1}(m\dot{w}L/L),
\end{equation}
note that the identification (\ref{ident_fiber_P_mod_L}) depends on the choice of $m \in M$.

Now, for $t \in Z_M$, its action on $P/L$ is via the conjugation by ${\dot{w}}^{-1}t\dot{w}$. Our goal is to show that $(P/L)^{Z_M}=(P \cap {\dot{w}}^{-1}M\dot{w})L/L$. Let $U_P \subset P$ be the unipotent radical. Note that the natural isomorphism $U_P \iso P/L$ is $T$-equivariant. So, it remains to describe the $Z_M$-fixed points of $U_P$ (where the $Z_M$-action is ``twisted'' by $\dot{w}^{-1}$ as above).

Clearly, $U_P$ is a $T$-invariant subvariety of $G$ and the $Z_M$-fixed points of $G$ w.r.t. the $\dot{w}^{-1}$-twisted action are $\dot{w}^{-1}M\dot{w}$. We conclude that $U_P^{Z_M} = U_P \cap \dot{w}^{-1}M\dot{w}$ and the claim follows.
\end{proof}

\begin{prop}\label{descr_fixed_parabolic}
For $\la \in \mathfrak{X}(\mathfrak{l})$, the points of $\widetilde{S}_{\la}(\mathfrak{q},\mathfrak{p})^{Z_M}$ are in bijection with ${}^M(W/W_{L})$, and the bijection is given by: 
\begin{equation}\label{exp_param_fixed_parab_slodowy}
{}^M(W/W_{L}) \ni [w] \mapsto (m_{w\la}\dot{w} \cdot \mathfrak{p},x_{w\la})\in \cP\times \fg^*.
\end{equation}
\end{prop}
\begin{proof}
It follows from Lemma \ref{fixed_pts_deform} that the morphism $\widetilde{S}(\mathfrak{q},\mathfrak{p})^{Z_M} \rightarrow \mathfrak{X}(\mathfrak{l})$ is the trivial fibration. We claim that for every  $[w] \in {}^M(W/W_L)$, the map (\ref{exp_param_fixed_parab_slodowy}) is well-defined and gives a section $\iota_{[w]}\colon \mathfrak{X}(\mathfrak{l}) \rightarrow \widetilde{S}(\mathfrak{q},\mathfrak{p})^{Z_M}$ of this fibration.  
Indeed, let us first of all note that the point $(m_{w\la}\dot{w} \cdot \mathfrak{p},x_{w\la})$ lies in $T^*_{\mathfrak{X}(\mathfrak{l})}\mathcal{P} \subset \mathcal{P} \times \mathfrak{g}^*$, to see that recall that $\dot{w}[\mathfrak{p},\mathfrak{p}]\dot{w}^{-1} \cap \mathfrak{m} = \mathfrak{n}_0$ and $m_{w\la}^{-1}x_{w\la}$  lies in $\mathfrak{n}_0^{\perp}$.  It follows that $m_{w\la}^{-1}x_{w\la} \in (\dot{w}[\mathfrak{p},\mathfrak{p}]\dot{w}^{-1})^{\perp}$. 
It also follows that $m_{w\la}\dot{w} \cdot \mathfrak{p}$ does not depend on the choice of $m_{w\la}$ (use that $\mathfrak{b}_0 \subset \dot{w} \cdot \mathfrak{p}$) so the map (\ref{exp_param_fixed_parab_slodowy}) is well-defined.

Recall the map $\widetilde{\pi}_{\mathfrak{p}}\colon T^*_{\mathfrak{X}(\mathfrak{l})}\mathcal{P} \rightarrow \widetilde{S}(\mathfrak{q},\mathfrak{p}) \times_{\mathfrak{h}^*/W} \mathfrak{X}(\mathfrak{l})$.
We claim that $\widetilde{\pi}_{\mathfrak{p}}(m_{w\la}\dot{w} \cdot \mathfrak{p},x_{w\la}) \in S(\chi,\mathfrak{m}) \times_{\mathfrak{h}^*/W} \mathfrak{X}(\mathfrak{l})$.  Indeed, directly from the definitions, this image is equal to $[(x_{w\la},\la)]$ (use that $\dot{w}^{-1}m_{w\la}^{-1} \cdot (x_{w\la})_{\mathrm{ss}}=\la$). So, we see that $(m_{w\la}\dot{w} \cdot \mathfrak{p},x_{w\la}) \in \widetilde{S}_{\la}(\mathfrak{q},\mathfrak{p})$. Clearly, this point is fixed by $Z_M$. So, we obtain the well-defined map $\iota_{[w]}\colon \mathfrak{X}(\mathfrak{l}) \rightarrow \widetilde{S}(\mathfrak{q},\mathfrak{p})^{Z_M}$. 
It follows from the definitions that the map $\iota_{[w]}$ is continuous, hence, algebraic (use Lemma \ref{fixed_pts_deform}).

So, we have constructed a collection of sections $\iota_{[w]}$.
Clearly, it is then  enough to prove for {\emph{some}} $\la \in \mathfrak{X}(\mathfrak{l})$ that $\iota_{[w]}(\la)$ are all distinct and $\widetilde{S}(\mathfrak{q},\mathfrak{p})^{Z_M}=\{\iota_{[w]}(\la)\,|\, [w] \in {}^M(W/W_L)\}$.

Let $\lambda \in \mathfrak{X}(\mathfrak{l})$ be such that $Z_{\mathfrak{g}}(\la)=\mathfrak{l}$ (i.e., $\la$ is $\mathfrak{l}$-regular, see Section \ref{sec_parabolic_slodowy}).
We use identifications $\mathfrak{g}^* \simeq \mathfrak{g}, \mathfrak{h}^* \simeq \mathfrak{h}, \mathfrak{X}(\mathfrak{l}) \simeq \mathfrak{z}(\mathfrak{l})$ given by the Killing form on $\fg$.

The variety $\widetilde{S}_\la(\mathfrak{q},\mathfrak{p})$ is smooth, affine and isomorphic to the intersection $S(e) \cap \mathbb{O}_{\la}$, where $\mathbb{O}_\lambda=G.\lambda$ (see Lemma \ref{lem_fiber_parab_slod_gen}). We have $G/L \iso \mathbb{O}_{\la}$ given by $[g] \mapsto g\cdot \la$, so, by Lemma \ref{lemma_fixed_G_L}:
\begin{equation*}
\mathbb{O}_{\la}^{Z_M}=\bigsqcup_{w \in W_M \backslash W/W_L}Mw \cdot \la = \bigsqcup_{w \in W_M \backslash W/W_L} \mathbb{O}_{w\la}(\mathfrak{m}).
\end{equation*}
Recall also that $S(e_{\mathfrak{m}})^{Z_M}=S(e_{\mathfrak{m}},\mathfrak{m})$. We identify ${}^M(W/W_L) \iso (W_M \backslash W / W_L)^{\mathrm{free}}$. Then, $\la \in \mathfrak{l} \subset \mathfrak{h}$ has the following property that we will be using: for $\alpha \in \Delta_G$, $\langle \la,\alpha \rangle = 0$ iff $\alpha \in \Delta_{L}$.

Element $e_{\mathfrak{m}} \in \mathfrak{m}$ is regular, so $S(e_{\mathfrak{m}},\mathfrak{m}) \cap \mathbb{O}_{w\la}(\mathfrak{m})$ consists of one point if $w\la$ is $\mathfrak{m}$-regular and is empty otherwise (this is the classical result of Kostant).
Note now that $w\la$ is $\mathfrak{m}$-regular if and only if $\langle w\la, \al\rangle = \langle \la, w^{-1}\al\rangle \neq 0$ for every $\al \in \Delta_{M}$. Using that for $\al \in \Delta_G$, $\langle \la, \alpha \rangle = 0$ if and only if $\al \in \Delta_{L}$, we see that $w\la$ is $\mathfrak{m}$-regular if and only if $w^{-1}(\Delta_M) \cap \Delta_L = \varnothing$. This is equivalent to $\dot{w}P\dot{w}^{-1} \cap M = B_0$, i.e., $w \in  {}^M(W/W_{L})$. The intersection $S(e_{\mathfrak{m}},\mathfrak{m}) \cap \mathbb{O}_{w\la}(\mathfrak{m})$ is then equal to $x_{w\la}$ (abusing notations, we denote by the same symbol $x_{w\la}$ the corresponding element of $\mathfrak{g} \simeq \mathfrak{g}^*$). 
Our goal now is to describe the corresponding point $(\mathfrak{p}',x_{w\la})$ of $\widetilde{S}_\la(e_{\mathfrak{m}},\mathfrak{p})^{Z_M} \iso S(e_{\mathfrak{m}},\mathfrak{m}) \cap {\mathbb{O}}_{w\la}(\mathfrak{m})$.
We claim that $\mathfrak{p}':=m_{w\la} \dot{w} \cdot \mathfrak{p}$ works. 
Indeed, we already know that $(m_{w\la} \dot{w} \cdot \mathfrak{p},x_{w\la}) \in \widetilde{S}_\la(e_{\mathfrak{m}},\mathfrak{p})^{Z_M}$ and its image in $S(e_{\mathfrak{m}},\mathfrak{m}) \cap {\mathbb{O}}_{w\la}(\mathfrak{m})$ is $x_{w\la}$. The claim follows.
\end{proof}

\begin{cor}\label{dist_comp_fixed}
The embedding $\widetilde{S}(\mathfrak{q},\mathfrak{p})^{Z_M} \hookrightarrow (T^*\mathcal{P})^{Z_M}$ sends distinct points $p_1,p_2 \in \widetilde{S}(e,\mathfrak{p})^{Z_M}$ to distinct connected components of $(T^*\mathcal{P})^{Z_M}$. The connected components of the latter are affine fibrations over partial flag varieties of $L$ (see \cref{fixed pt of partial flag}).
\end{cor}

\begin{cor}\label{separating_cor_parabolic}
If $[w_1], [w_2] \in (W_M\backslash W/W_L)^{\mathrm{free}}$ are such that for every $s \in \CC[\mathfrak{h}]^{W_L}$, and $\la \in \mathfrak{z}(\mathfrak{m})$, we have $s(w_1^{-1}\la)=s(w_2^{-1}\la)$, then $[w_1] = [w_2]$.
\end{cor}
\begin{proof}
Consider the composition of pull back homomorphisms 
\begin{equation*}
H^*_{Z_M}(T^*\mathcal{P}) \rightarrow H^*_{Z_M}(\widetilde{S}(e,\mathfrak{p})) \rightarrow H^*_{Z_M}(\widetilde{S}(e,\mathfrak{p})^{Z_M})
\end{equation*}
of modules over $\CC[\mathfrak{t}_e]=H^*_{Z_M}(\on{pt})$.

It is enough to show that this homomorphism is generically surjective. The homomorphism $H^*_{Z_M}(\widetilde{S}(\mathfrak{q},\mathfrak{p})) \rightarrow H^*_{Z_M}(\widetilde{S}(\mathfrak{q},\mathfrak{p})^{Z_M})$ is an isomorphism generically  by the localization theorem. It remains to note that $H^*_{Z_M}(T^*\mathcal{P}) \rightarrow H^*_{Z_M}(\widetilde{S}(\mathfrak{q},\mathfrak{p}))$ is generically surjective by Corollary \ref{dist_comp_fixed} together with the localization theorem (\ref{BV_localization}).
\end{proof}

\begin{rmk}
We expect that Corollary \ref{separating_cor_parabolic} remains correct even if one only restricts to $s \in \mathfrak{z}(\mathfrak{l})^* \subset \CC[\mathfrak{h}]^{W_L}$. We know how to prove this in type $A$ using the realization of $\widetilde{S}(\mathfrak{q},\mathfrak{p})$ as a Nakajima quiver variety (but the argument is quite complicated and uses representation theory of Yangians, we are grateful to Leonid Rybnikov and Alexei Ilin for explaining it to us). We remark that this expected property is equivalent to the fact that $Z_M$-equivariant line bundles on $\widetilde{S}(\mathfrak{q},\mathfrak{p})$ coming as pullbacks of the $G$ equivariant line bundles on $G/Q$ {\emph{separate}} the fixed points of $\widetilde{S}(\mathfrak{q},\mathfrak{p})$. This property is very natural and appears in various places (see, for example, \cite[``Important Assumptions'' after Theorem 2.6]{quantum_K_theory}). It would be interesting and important for further applications to figure out if this holds for $\widetilde{S}(\mathfrak{q},\mathfrak{p})$ outside of type $A$ (let us emphasize that in our formulation this is basically the question about $W$ acting on $\mathfrak{h}$).
\end{rmk}









\section{Notations}

Here is a list of the notation used in the paper.

\subsection{Spaces of parameters and Lie theory}
   \begin{itemize}
       \item $G$, simple Lie group with Lie algebra $\mathfrak{g}$
       \item $T \subset G$, maximal torus;  $\mathfrak{h}=\on{Lie}T \subset \mathfrak{g}$, Cartan subalgebra
       \item $\mathfrak{l} \subset \mathfrak{p}$, standard Levi subalgebra of a parabolic $\mathfrak{p} \supset \mathfrak{b}$
       \item $\mathfrak{X}(\mathfrak{l})=(\mathfrak{l}/[\mathfrak{l},\mathfrak{l}])^*=(\mathfrak{p}/[\mathfrak{p},\mathfrak{p}])^*$ space of characters of $\mathfrak{p}$
       \item $Z_L \subset L$, the connected component of $1$ of the center of $L$
       \item $Z_e=Z_{G}(e,h,f)$, reductive part of the centralizer $Z_G(e)$
       \item $T_e \subset Z_e$, maximal torus
       \item $\mathfrak{h}_e=\on{Lie}T_e$
       \item $\mathfrak{u}_{\mathfrak{p}}$, unipotent radical of $\mathfrak{p}$
       \item $\mathfrak{p}^{\perp}=(\mathfrak{g}/\mathfrak{p})^* \subset \mathfrak{g}^*$, $[\mathfrak{p},\mathfrak{p}]^{\perp}=(\mathfrak{g}/[\mathfrak{p},\mathfrak{p}])^* \subset \mathfrak{g}^*$ 
       \item $\mathfrak{m} \subset \mathfrak{q}$, standard Levi subalgebra of a parabolic $\mathfrak{q} \supset \mathfrak{b}$ 
       \item $\mathfrak{z}(\mathfrak{m})$, center of $\mathfrak{m}$
       \item $\mathfrak{g}=\bigoplus_{i \in \mathbb{Z}}\mathfrak{g}(i)$, decomposition of $\mathfrak{g}$ into the weight spaces of $[h,\bullet]$
       \item $\ell \subset \mathfrak{g}(-1)$, Lagrangian subspace; $\mathfrak{u}_{\ell}=\ell \oplus \bigoplus_{i \leqslant -2} \mathfrak{g}(i)$; $U_\ell \subset G$, unipotent subgroup with the Lie algebra $\mathfrak{u}$
       \item $\mathfrak{n}=\bigoplus_{i \leqslant -1}\mathfrak{g}(i)$; $\mathfrak{u}=\bigoplus_{i \leqslant -2}\mathfrak{g}(i)$; $N, U \subset G$, unipotent subgroups with Lie algebras $\mathfrak{n}$, $\mathfrak{u}$; $\mathfrak{k}=\mathfrak{g}(-1)$ 
       \item $\mathfrak{h}_X=H^2(\widetilde{X}^{\mathrm{reg}},\CC)$, Namikawa space
       \item $Z_X$, reductive part of the group of Hamiltonian graded automorphisms of $X$; $T_X \subset Z_X$, maximal torus 
       \item $\Delta_{\mathfrak{p}}$ (resp. $\Delta_{\mathfrak{p}}^\vee$), the set of roots $\alpha \in \Delta$ (resp. coroots $\alpha^\vee \in \Delta^\vee$) such that $\mathfrak{g}_\alpha \subset \mathfrak{p}$ (resp. $\mathfrak{g}_{\alpha^\vee} \subset \mathfrak{p}^\vee$); $\Delta_{\mathfrak{l}}$ (resp. $\Delta_{\mathfrak{l}}^\vee$), the set of roots $\alpha \in \Delta$ (resp. coroots $\alpha^\vee \in \Delta^\vee$) such that $\mathfrak{g}_{\alpha} \subset \mathfrak{l}$ (resp. $\mathfrak{g}^\vee_{\alpha^\vee} \subset \mathfrak{l}^\vee$)
       \item $\mathfrak{X}(\mathfrak{l})^{\mathrm{reg}}=(\mathfrak{z}(\mathfrak{l})^*)^{\mathrm{reg}}$, the set of $\la \in \mathfrak{z}(\mathfrak{l})^*$ such that $Z_{\mathfrak{g}}(\la)=\mathfrak{l}$
   \end{itemize}

\subsection{Varieties}

\begin{itemize}
    \item $\mathcal{N} \subset \mathfrak{g}^*$, nilpotent cone in $\mathfrak{g}^*$; $\mathcal{N}^\vee \subset (\mathfrak{g}^\vee)^*$, nilpotent cone in $(\mathfrak{g}^\vee)^*$  
    \item $\pi_{\mathfrak{b}}\colon T^*(G/B) \rightarrow \mathcal{N}$, Springer resolution 
    \item $\mathcal{B}_e \subset T^*(G/B)$, fiber of $\pi_{\mathfrak{b}}$ over an  element $\chi \in \mathcal{N}$ corresponding to the nilpotent element $e \in \mathfrak{g}$ via the Killing form
    \item $\widetilde{\pi}_{\mathfrak{b}}\colon \widetilde{\mathfrak{g}}^* \rightarrow \mathfrak{g}^* \times_{\mathfrak{h}^*/W} \mathfrak{h}^*$, Grothendieck resolution (universal deformation of the Springer resolution)
    \item ${\mathbb{O}} \subset \mathfrak{g}^*$, nilpotent orbit; $\widetilde{\mathbb{O}}$, cover of ${\mathbb{O}}$ 
    \item $S(e)=e+\operatorname{Ker}(\operatorname{ad} f)$, Slodowy slice
    \item $S(\chi)\subset \fg^*$, the image of $S(e)$ under the identification $\mathfrak{g}^* \simeq \mathfrak{g}$
    \item $S_{\mathfrak{h}^*}(\chi)=S(\chi) \times_{\mathfrak{h}^*/W} \mathfrak{h}^*$,~$S_{\mathfrak{X}(\mathfrak{l})}(\chi)=S(\chi) \times_{\mathfrak{h}^*/W} \mathfrak{X}(\mathfrak{l})$
    \item $\widetilde{S}(\chi)=\pi_{\mathfrak{b}}^{-1}(S(\chi) \cap \mathcal{N})$, Slodowy variety
    \item $\mathcal{P}=G/P$, $\mathcal{Q}=G/Q$, parabolic flag varieties
    \item $\pi_{\mathfrak{p}}\colon T^*\mathcal{P} \rightarrow \mathcal{N}$, parabolic Springer map
    \item $\widetilde{S}(\chi,\mathfrak{p})=\pi_{\mathfrak{p}}^{-1}(S_\chi \cap \mathcal{N})$, parabolic Slodowy variety
    \item $\widetilde{S}(\fq, \fp)$, a special case of $\widetilde{S}(\chi,\mathfrak{p})$, when $\chi$ is a regular nilpotent element in $\fm$
    \item $T^*_{\mathfrak{X}(\mathfrak{l})}\mathcal{P}=G \times_{P} (\mathfrak{g}/[\mathfrak{p},\mathfrak{p}])^*$, universal deformation of $T^*\mathcal{P}$ 
    \item $S(\mathfrak{q},\mathfrak{p})=\Spec(\CC[\widetilde{S}(\fq, \fp)])$, affinization of parabolic Slodowy variety
    \item $\widetilde{S}_{\mathfrak{h}^*}(\chi)=\widetilde{\pi}^{-1}_{\mathfrak{b}}(S_{\mathfrak{h}^*}(\chi))=\widetilde{\mathfrak{g}}/\!/\!/_{\chi} U$  deformation of the Slodowy variety over $\mathfrak{h}^*$
    \item $\widetilde{S}_{\mathfrak{X}(\mathfrak{l})}(\chi,\mathfrak{p})=\widetilde{\pi}_{\mathfrak{p}}^{-1}(S_{\mathfrak{X}(\mathfrak{l})}(\chi))=T^*_{\mathfrak{X}(\mathfrak{l})}\mathcal{P}/\!/\!/_\chi U$, deformation of the parabolic Slodowy variety
    \item $\pi\colon Y \rightarrow X$, arbitrary symplectic resolution; $\pi\colon \widetilde{X} \rightarrow X$, arbitrary $\mathbb{Q}$-factorial terminalization of a symplectic singularity $X$
    \item $Y_{\mathfrak{h}_X} \rightarrow \mathfrak{h}_X$, universal deformation of $Y$, $X_{\mathfrak{h}_X}$, the corresponding deformation of $X$
    \item For a linear map $\mathfrak{a} \rightarrow \mathfrak{h}_X$, $Y_{\mathfrak{a}} \rightarrow X_{\mathfrak{a}}$, base change of $\pi_{\mathfrak{h}_X}\colon Y_{\mathfrak{h}_X} \rightarrow X_{\mathfrak{h}_X}$
        \item $X^T$, schematic fixed points of $T \curvearrowright X$
     \item $\operatorname{Sat}_{\mathbb{O}_L}^{\mathbb{O}_G}$, $\operatorname{Ind}_{\mathbb{O}_L}^{\mathbb{O}_G}$, saturation and induction of a nilpotent coadjoint orbit $\mathbb{O}_L \subset \mathfrak{l}^*$ to $\mathfrak{g}^*$    
\end{itemize}

\subsection{Explicit elements}
\begin{itemize}
\item $\rho_{\mathfrak{g}}$, half of the sum of all positive roots $\al$ of $\mathfrak{g}$;  $\rho_{\mathfrak{l}}$, half of the sum of all positive roots $\al$ of $\mathfrak{l}$
\item $(-,-)$, a nondegenerate $\mathfrak{g}$-invariant form on $\mathfrak{g}$
\item $\chi \in \mathfrak{g}^*$, functional such that $(e,-)=\chi$
\item $e_{\mathfrak{m}} \in \mathfrak{m}$, a regular nilpotent element of $\fm$ contained in $\mathfrak{b}$
\item $\mathcal{O}_{\mathcal{B}}(\la)$, line bundle on $\mathcal{B}$ corresponding to $\la\colon T \rightarrow \CC^\times$
\end{itemize}


 \subsection{Combinatorics}
 \begin{itemize}
     \item $(W_M\backslash W / W_L)^{\mathrm{free}}$, the set of free $W_M \times W_L$-orbits on $W$
    \item ${}^M(W/W_L) \subset W/W_L$, the set of $wW_L \in W/W_L$ such that $wPw^{-1} \cap M = B_0$
    \item $(W_M\backslash W)^L \subset W_M \backslash W$, the set of $W_Mw \in W_M \backslash W$ such that $w^{-1}Qw \cap L = B_L^-$
    \item $\mathcal{P}(n)$, the set of partitions of $n$; $\mathcal{P}_{B}(2n+1)$ (resp. $\mathcal{P}_{D}(2n)$), partitions of $2n+1$ (resp. $2n$) s.t. every even member comes with an even multiplicity; $\mathcal{P}_{C}(n)$, partitions of $n$ s.t. every odd member comes with an even multiplicity 
    \item ${\pv}$, partition
 \end{itemize}

\subsection{Algebras and sheaves of algebras; morphisms}
   \begin{itemize}
           \item $\cC(\mathcal{A})$, Cartan subquotient of a $\mathbb{Z}$-graded algebra $\mathcal{A}$; $\cC_\nu(\mathcal{A})$, Cartan subquotient of a $T$-graded algebra with a $\mathbb{Z}$-grading induced by a cocharacter $\nu\colon \CC^\times \rightarrow T$     
           \item $\mathcal{U}(\mathfrak{g})$, universal enveloping algebra of $\mathfrak{g}$; $\mathcal{U}_\hbar(\mathfrak{g})=\mathcal{U}_\hbar$, homogenized version over $\CC[\hbar]$
       \item $\mathcal{W}(\chi)=\mathcal{U}(\mathfrak{g})/\!/\!/_\chi U_{\ell}$, finite $W$-algebra quantizing $S(\chi)$, $\mathcal{W}_\hbar(\chi)=\mathcal{U}_\hbar(\mathfrak{g})/\!/\!/_\chi U_\ell$
       \item $Z_\hbar=Z_\hbar(\mathcal{U}_\hbar(\mathfrak{g})) \subset \mathcal{U}_\hbar(\mathfrak{g})$, the center of $\mathcal{U}_\hbar(\mathfrak{g})$; $HC_\hbar\colon Z_\hbar \iso \CC[\mathfrak{h}^*,\hbar]^{W}$, the Harish-Chandra isomorphism; $\widetilde{HC}_\hbar\colon Z_\hbar \iso \CC[\mathfrak{h}^*]^{W} \otimes \CC[\hbar]$, twisted Harish-Chandra isomorphism 
       \item  $\mathcal{U}_\la=\mathcal{U}_\la(\mathfrak{g})$ central reduction of $\mathcal{U}(\mathfrak{g})$ at $\la \in \mathfrak{h}^*$; $\mathcal{U}_{\hbar,\la}$, the homogenized version
       \item $\cW_{\la}(\chi,\mathfrak{p}) = D_{\la}(\mathcal{P})/\!/\!/_\chi U_{\ell}$; $
\cW_{\hbar,\la}(\chi,\mathfrak{p}) = D_{\hbar,\la}(\mathcal{P})/\!/\!/_\chi U_{\ell}$
        \item $I_{\la,\mathfrak{p}}$, kernel of the homomorphism $\mathcal{U}_{{\la}}(\mathfrak{g}) \rightarrow \mathcal{U}_{\la}(\mathfrak{g},\mathfrak{p})$
        \item $\mathcal{D}_{\hbar,\mathfrak{h}_X}(Y)$, universal graded polynomial quantization of $Y_{\mathfrak{h}_X}/\mathfrak{h}_X$
        \item $\mathcal{D}_{\hbar,\la}(Y)$, specialization of $\mathcal{D}_{\hbar,\mathfrak{h}_X}(Y)$ at $\la \in \mathfrak{h}_X$
         \item $\mathcal{D}_{\mathfrak{h}_X}(Y)$, specialization of $\mathcal{D}_{\hbar,\mathfrak{h}_X}(Y)$ at $\hbar=1$
        \item $\mathcal{D}_{\la}(Y)$, specialization of $\mathcal{D}_{\hbar,\la}(Y)$ at $\hbar=1$
        \item $\mathcal{A}_{\hbar,\mathfrak{h}_X}(X)=\Gamma(Y,\mathcal{D}_{\hbar,\mathfrak{h}_X}(Y))$,  graded polynomial quantization of $X_{\mathfrak{h}_X}/\mathfrak{h}_X$
        \item $\mathcal{A}_{\hbar,\la}(Y)=\Gamma(Y,\mathcal{D}_{\hbar,\la}(Y))$, graded polynomial quantization of $X$ with period $\la$
        \item $\mathcal{A}_{\mathfrak{h}_X}(Y)=\Gamma(Y,\mathcal{D}_{\mathfrak{h}_X}(Y))$, filtered quantization of $X_{\mathfrak{h}_X}/\mathfrak{h}_X$
        \item $\mathcal{A}_{\la}(Y)=\Gamma(Y,\mathcal{D}_{\la}(Y))$, filtered quantization of $X$ with period $\la$
         \item $\Phi_{\mathfrak{p}}\colon \mathcal{A}_{\hbar,\mathfrak{X}(\mathfrak{l})}(T^*\mathcal{B}) \rightarrow \mathcal{A}_{\hbar,\mathfrak{X}(\mathfrak{l})}(T^*\mathcal{P})$, natural morphism 
         \item $\Phi_{\chi,\mathfrak{p}}\colon \mathcal{A}_{\hbar,\mathfrak{X}(\mathfrak{l})}(\widetilde{S}(\chi)) \rightarrow \mathcal{A}_{\hbar,\mathfrak{X}(\mathfrak{l})}(\widetilde{S}(\chi,\mathfrak{p}))$, quantum Hamiltonian reduction of $\Phi_{\mathfrak{p}}$
   \end{itemize}

\begin{sloppypar} \printbibliography[title={References}] \end{sloppypar}

@article {BrylinskiKostant1994,
    AUTHOR = {Brylinski, R. and Kostant, B.},
     TITLE = {Nilpotent orbits, normality and {H}amiltonian group actions},
   JOURNAL = {J. Amer. Math. Soc.},
    VOLUME = {7},
      year = {1994},
      %pages={269--298},
    NUMBER = {2},
}

@article{kirwan_surj_quiv,
    author = {McGerty, Kevin and Nevins, Thomas}, 
    title = {Kirwan surjectivity for quiver varieties},
    journal = {Inventiones mathematicae},
    year = {2018}, 
    volume = {212}, 
    %pages = {161--187}
}

@article {BarbaschVogan1985,
    AUTHOR = {Barbasch, D. and Vogan, D.},
     TITLE = {Unipotent representations of complex semisimple groups},
   JOURNAL = {Ann. of Math.},
    VOLUME = {121},
 %   %pages={41--110},
      year = {1985},
    NUMBER = {1},
}

@article{BPW,
	year  = {2016},
	volume = {384},
	author = {Braden, T. and Proudfoot, N. and Webster, B.},
	%pages={1--67},
	title = {Quantizations of conical symplectic resolutions I: local and global structures},
	journal = {Ast\'erisque}
}

@article{nakajima_coulombI,
  title = {Towards a mathematical definition of Coulomb branches of $3$-dimensional $\mathcal{N}=4$ gauge theories,  I},
  volume = {20},
%  ISSN = {1095-0753},
% url = {http://dx.doi.org/10.4310/ATMP.2016.v20.n3.a4},
%  DOI = {10.4310/atmp.2016.v20.n3.a4},
  number = {3},
  journal = {Advances in Theoretical and Mathematical Physics},
%  publisher = {International Press of Boston},
  author = {Nakajima,  Hiraku},
  year = {2016},
%  pages = {595–669}
}

@article{maffei,
    author = {Maffei, Andrea},
    title = {Quiver varieties of type A},
    journal = {Commentarii Mathematici Helvetici},
    year = {2005}, 
    volume = {80}, 
    number = {1}, 
    %pages = {1--27}
}

@article{BPWII,
	year  = {2016},
	volume = {384},
	author = {Braden, T. and Licata, A. and Proudfoot, N. and Webster, B.},
	%pages={1--118},
	title = {Quantizations of conical symplectic resolutions II: Category O and symplectic duality},
	journal = {Ast\'erisque}
}

@article{BCHM,
	author = {Birkar, C. and Cascini, P. and Hacon, C. and McKernan, J.},
	journal = {J. Amer. Math. Soc.},
	number = {2},
	publisher = {American Mathematical Society},
	title = {Existence of minimal models for varieties of log general type},
	volume = {23},
	%pages = {405--468},
	year = {2010}
}

@article{BoBr, 
author={Borho W. and Brylinksi J.-L.}, 
title={Differential operators on homogeneous spaces. I. Irreducibility of the associated variety for annihilators of induced modules},
journal = {Invent. Math.},
volume = {69},
number = {3},
year = {1982}
}

@article{Beauville2000,
	year  = {2000},
	publisher = {Springer Nature},
	volume = {139},
	number = {3},
	author = {Beauville, A.},
	%%pages = {541--549},
	title = {Symplectic singularities},
	journal = {Invent.\ Math.}
}

@book{CM,
	Author = {Collingwood, D. and McGovern, W.},
	Title = {Nilpotent Orbits In Semisimple Lie Algebra: An Introduction},
	Publisher = {Chapman and Hall/CRC},
	year = {1993},
}

@article{HolandPolo,
	year = {1996},
	publisher = {Springer???},
	volume = {123},
	number = {1},
	%pages={377--414},
	author = {M. P. Holland and P. Polo},
	title = {$K$-theory of twisted differential operators on flag varieties},
	journal = {Inventiones mathematicae}
}

@article{Panyushev1991,
	year  = {1991},
	publisher = {Springer Nature},
	volume = {25},
	number = {3},
	author = {Panyushev, D.},
	title = {Rationality of singularities and the Gorenstein property for nilpotent orbits},
	journal = {Funct. Anal. Appl.}
}

@article{Kraft-Procesi,
	year  = {1982},
	publisher = {European Mathematical Publishing House},
	volume = {57},
	number = {1},
	%pages = {539--602},
	author = {Kraft, H. and Procesi, C.},
	title = {On the geometry of conjugacy classes in classical groups},
	journal = {Comment. Math. Helv.}
}

@article{Losev_cacticells,
  title = {Cacti and cells},
  volume = {21},
%  ISSN = {1435-9863},
%  url = {http://dx.doi.org/10.4171/JEMS/871},
%  DOI = {10.4171/jems/871},
  number = {6},
  journal = {Journal of the European Mathematical Society},
%  publisher = {European Mathematical Society - EMS - Publishing House GmbH},
  author = {Losev,  Ivan},
  year = {2019},
%  month = feb,
%  pages = {1729–1750}
}

@article{Intriligator1996,
  title = {Mirror symmetry in three dimensional gauge theories},
  volume = {387},
%  ISSN = {0370-2693},
%  url = {http://dx.doi.org/10.1016/0370-2693(96)01088-X},
%  DOI = {10.1016/0370-2693(96)01088-x},
  number = {3},
  journal = {Physics Letters B},
  publisher = {Elsevier BV},
  author = {Intriligator,  K. and Seiberg,  N.},
  year = {1996},
 % month = oct,
 % pages = {513–519}
}

@misc{Coulomb_noncotangent,
 eprint={2201.09475},
    archivePrefix={arXiv},
    primaryClass={math.RT},
year = {202},
author = {Braverman, A. and Dhillon, G. and Finkelberg, M. and Raskin, S. and Travkin, R.}, 
title = {Coulomb branches of noncotangent type (with appendices by Gurbir Dhillon and Theo Johnson-Freyd)}
}

@article{localiz_alg,
    author = {Braden, Tom and Licata, Anthony and Phan, Christopher and Proudfoot, Nicholas and Webster, Ben},
    title = {Localization algebras and deformations of Koszul algebras},
    journal = {Selecta Mathematica},
    year = {2011}, 
    volume = {17}, 
    number = {3}, 
    %pages = {533--572}
}

@misc{dk,
    eprint={0911.2210},
    archivePrefix={arXiv},
    primaryClass={math.RT},
    author = {Dodd, Christopher and Kobi Kremnizer},
    title = {A localization theorem for finite W-algebras},
    year = {2009}
}

@article{bak,
    author = {Backelin, Erik and Kobi, Kremnitzer},
    title = {Singular localization of $\mathfrak{g}$-modules and applications to representation theory},
    journal = {Journal of the European Mathematical Society},
    year = {2015},
    volume = {17}, 
    number = {11}
}

@article{Henderson2014,
  title = {Diagram automorphisms of quiver varieties},
  volume = {267},
  journal = {Advances in Mathematics},
  publisher = {Elsevier BV},
  author = {Henderson,  Anthony and Licata,  Anthony},
  year = {2014},
  %month = dec,
  %pages = {225–276}
}

@article{bak_koszul,
    author = {Backelin, Erik},
    title = {Koszul duality for parabolic and singular category $\mathcal{O}$},
    journal = {Representation Theory of the American Mathematical Society},
    year = {1999}, 
    volume = {3}, 
    number = {7} 
    % %pages = {139--152}
}

@article{kr,
    author = {Kashiwara, Masaki and Rouquier, Raphael},
    title = {Microlocalization of rational Cherednik algebras},
    journal = {Duke Math. J.},
    volume = {144}, 
    number = {3},
    year = {2008},
    %pages = {525--573}
}

@article{sk,
    author = {Soergel, Wolfgang},
    title = {\'Equivalences de certaines cat\'egories de $\mathfrak{g}$-modules},
    journal = {R. Acad. Sci. Paris
		S\'er. I Math.},
    year = {1986},
    volume = {303}, 
    number = {15}
}

@article{bb,
    author = {Beilinson, Alexander and Bernstein, Joseph},
    title = {Localisation de $\mathfrak{g}$-modules},
    journal = {C.R. Acad. Sci. Paris},
    year = {1981},
    volume = {292}, 
    number = {1} 
   % %pages = {15--18}
}

@article{Losev_isofquant,
	year = {2012},
	volume = {231},
	number = {3-4},
	author = {Losev, I.},
	title = {Isomorphisms of quantizations via quantization of resolutions},
	journal = {Adv. Math.}
%pages = {1216--1270},
}

@article{GanGinzburg,
  year = {2002},
  title={Quantizations of Slodowy Slices},
  publisher = {Oxford University Press ({OUP})},
  volume = {2002},
  %pages={243--255},
  number = {5},
  author = {Gan, W. and Ginzburg, V.},
  journal = {Int. Math. Res. Not. IMRN}
}

@misc{drinfeld,
    eprint={1308.2604},
    archivePrefix={arXiv},
    primaryClass={math.AG},
	year  = {2013},
	author = {Drinfeld, Vladimir},
	title = {On algebraic spaces with an action of $\mathbb{G}_m$}
}

@article{iversen,
    author = {Birger Iversen},
    title = {A fixed point formula for action of tori on algebraic varieties},
    journal = {Inventiones mathematicae},
    year = {1972}, 
    volume = {16}, 
    number = {3}, 
    %pages = {229--236}
}

@article{bellamy,
    author = {Gwyn Bellamy},
    title = {Coulomb branches have symplectic singularities},
    journal = {Letters in Mathematical Physics},
    year = {2023},
    volume = {113}, 
    number = {5}
}

@article{weekes_sympl_sing,
    author = {Alex Weekes},
    title = {Quiver gauge theories and symplectic singularities},
    journal = {Advances in Mathematics},
    year = {2022}, 
    volume = {396}
}

@article{fogarty,
    author = {Fogarty, John},
    title = {Fixed point schemes},
    journal = {American Journal of Mathematics},
    year = {1973}, 
    volume = {95}, 
    number = {1}, 
    %pages = {35--51}
}

@article{Namikawa2022,
  year = {2022},
  volume = {28},
  number = {4},
  author = {Yoshinori Namikawa},
  title = {Birational geometry for the covering of a nilpotent orbit closure},
  journal = {Selecta Mathematica}
}

@article{Bialynicki-Birula,
    author = {Andrzej Bialynicki-Birula},
    title = {Some theorems on actions of algebraic groups},
    journal = {Ann. of Math.},
    year = {1973},
    volume = {98},
    number = {3}
}

@article{LosevHC,
	year  = {2021},
	journal={Transform. Groups},
	volume={26},
	number={2},
	%pages={565--600},
	author = {Losev, I.},
	title = {Harish-Chandra bimodules over quantized symplectic singularities
	}
}

@article {Losev4,
	AUTHOR = {Losev, Ivan},
	TITLE = {Deformations of symplectic singularities and orbit method for
	semisimple {L}ie algebras},
	JOURNAL = {Selecta Math. (N.S.)},
	VOLUME = {28},
	YEAR = {2022},
	NUMBER = {2},
}

@article{Namikawa_bir,
	year  = {2015},
	%pages={339--359},
	author = {Namikawa, Y.},
	journal={J. Math. Sci. Univ. Tokyo},
	volume={22},
	number={1},
	title = {Poisson deformations and birational geometry}
}

@article{LosevSRA,
	year = {2019},
	publisher = {Oxford University Press ({OUP})},
	author = {Losev, I.},
	%pages={442--472},
	title = {Derived Equivalences for Symplectic Reflection Algebras},
	journal = {Int. Math. Res. Not. IMRN}
}

@book{Chriss-Ginzburg,
	%doi = {10.1007/978-0-8176-4938-8},
	%url = {https://%doi.org/10.1007/978-0-8176-4938-8},
	year  = {2010},
	publisher = {Birkh\"{a}user Boston},
	author = {Chriss, N. and Ginzburg, V.},
	title = {Representation Theory and Complex Geometry}
}

@article{LS,
	year  = {1979},
	publisher = {Oxford University Press ({OUP})},
	volume = {s2-19},
	number = {1},
	author = {Lusztig, G. and Spaltenstein, N.},
	title = {Induced Unipotent Classes},
	journal = {Journal of the London Mathematical Society}
}

@article{Fu2003,
  title = {Symplectic resolutions for coverings of nilpotent orbits},
  volume = {336},
  number = {2},
  journal = {Comptes Rendus. Mathématique},
  author = {Fu,  Baohua},
  year = {2003},
  %pages = {159–162}
}

@article{BK,
	year  = {2004},
	volume = {4},
	number = {3},
%	%pages = {559-592},
	author = {Bezrukavnikov, R. and Kaledin, D.},
	title = {Fedosov quantization in algebraic context},
	%pages = {559--592},
	journal = {Mosc. Math. J.}
}

@article{Premetstabilizer,
	year  = {2007},
	publisher = {European Mathematical Publishing House},
	%pages = {487--543},
	author = {Premet, A.},
	title = {Enveloping algebras of Slodowy slices and the Joseph ideal},
	journal = {Journal of the European Mathematical Society}
}

@article{carell,
    author = {Carrell, James B},
    title = {Orbits of the Weyl group and a theorem of De Concini and Procesi},
    journal = {Compositio Mathematica},
    year = {1986},
    volume = {60}, 
    number = {1}, 
    %pages = {45--52}
}

@article{MV,
    author = {Mirkovi\'c, Ivan and Vybornov, Maxim},
    title = {Comparison of quiver varieties, loop Grassmannians and nilpotent cones in type A},
    journal = {Advances in Mathematics},
    year = {2022}, 
    volume = {407}
}

@article{lusztig_cuspidalI,
    author = {Lusztig, George},
    title = {Cuspidal local systems and graded Hecke algebras},
    journal = {Publications Mathématiques de l'IH\'ES},
    year = {1988}, 
    volume = {67},
}

@article{lusztig_cuspidalIII,
    author = {Lusztig, George},
    title = {Cuspidal local systems and graded Hecke algebras, III},
    journal = {Representation Theory of the American Mathematical Society},
    year = {2002}, 
    volume = {6}, 
    number = {8}, 
    %pages = {202--242}
}

@article{Namikawa2,
	%doi = {10.1215/0023608x-2010-012},
	%url = {https://%doi.org/10.1215/0023608x-2010-012},
	year  = {2010},
	publisher = {Duke University Press},
	volume = {50},
	number = {4},
	%pages = {727--752},
	author = {Namikawa, Y.},
	title = {Poisson deformations of affine symplectic varieties,  {II}},
	journal = {Kyoto Journal of Mathematics}
}

@ARTICLE{Namikawa3,
	author = {Namikawa, Y.},
	title = {Flops and Poisson deformations of symplectic varieties},
	journal = {Publ. RIMS, Kyoto Univ},
	year = {2008},
%	%pages = {259--314}
}

@book{Spaltenstein,
	year  = {1982},
	publisher = {Springer Berlin Heidelberg},
	author = {Spaltenstein, N.},
	title = {Classes Unipotentes et Sous-groupes de Borel}
}

@book{shafarevich2,
    author = {Igor R. Shafarevich},
    title = {Basic algebraic geometry 2: Schemes and complex manifolds},
    publisher = {Springer Science \& Business Media},
    year = {2013}
}

@article{BalaCarter1976,
   author={Bala, P. and Carter, R.},
   title={Classes of unipotent elements in simple algebraic groups. I},
   journal={Math. Proc. Cambridge Philos. Soc.},
   volume={79},
   date={1976},
 %  %pages={401--425},
  number={3}
}

@article{Losevwallcrossing,
    title={Wall-crossing functors for quantized symplectic resolutions: perversity and partial Ringel dualities},
    author={Losev, I.},
    year={2017},
    %pages={247--289},
    journal={Pure Appl. Math. Q.},
    number={13}
    }

@article{LosevCatO, 
title={On categories ${\mathcal{O}}$ for quantized symplectic resolutions},
volume={153}, 
number={12}, 
%pages={2445--2481},
journal={Compos. Math.}, 
publisher={London Mathematical Society}, 
author={Losev, I.}, 
year={2017}, 
}

@misc{Tu_integr_syst_min_nilp,
    title = {Integrable system on minimal nilpotent orbit},
    author = {Xinyue Tu},
    eprint={2408.13020},
    archivePrefix={arXiv},
    primaryClass={math.AG}
}

@article{HKW,
    author = {Hilburn, Justin and Kamnitzer, Joel and Weekes, Alex},
    title = {BFN Springer theory},
    journal = {Communications in Mathematical Physics},
    year = {2023},
    volume = {402},
    number = {1}
}

@article{BFNII,
    author = {Braverman, Alexander and Finkelberg, Michael and Nakajima, Hiraku},
    title = {Towards a mathematical definition of Coulomb branches of $3 $-dimensional $\mathcal N= 4$ gauge theories, II},
    journal = {Adv. Theor. Math. Phys.},
    year = {2018}, 
    volume = {22}, 
    %pages = {1071--1147}
}

@article{KamnitzerTingleyWebsterWeeksYacobi,
author={Kamnitzer, J. and Tingley, P. and Webster, B. and Weekes, A. and Yacobi, O.},
title={Highest weights for truncated shifted Yangians and product monomial crystals},
journal={J. Comb. Algebra},
%pages={237--303},
volume={3},
year={2019}
}

@article{catO_yang,
    author = {Kamnitzer, Joel and Tingley, Peter and Webster, Ben and Weekes, Alex and Yacobi, Oded},
    title = {On category $\mathcal{O}$ for affine Grassmannian slices and categorified tensor products},
    journal = {Proceedings of the London Mathematical Society},
    year = {2019}, 
    volume = {119}, 
    number = {5}, 
    %pages = {1179--1233}
}

@article{O,
     author = {Okounkov, Andrei},
     title = {Lectures on $K$-theoretic computations in enumerative geometry},
     journal = {IAS/Park City Mathematics Series},
     year = {2017},
     volume = {24}
 }

@book{hum_bgg_O,
    author = {Humphreys, James E.},
    title = {Representations of Semisimple Lie Algebras in the BGG Category $\mathscr{O}$},
    publisher = {American Mathematical Soc.},
    year = {2008}, 
    volume = {94}
}

@article{z,
    author = {Zhou, Zijun},
    title = {Virtual Coulomb branch and vertex functions},
    journal = {Duke Mathematical Journal},
    year = {2023},
    volume = {172},
    number = {17},
    %pages = {3359--3428}
}

@phdthesis{l,
    author = {Liu, Huaxin},
    title = {Asymptotic representations of shifted quantum affine algebras from critical $K$-theory},
    school = {Columbia},
    year = {2021}
}

@article{gruber,
    author = {Gruber, Jonathan},
    title = {Cohomology in singular blocks of parabolic category $\mathcal{O}$},
    journal = {Glasgow Mathematical Journal},
    year = {2023}, 
    volume = {65}, 
    number = {3}, 
    %pages = {555--568}
}

@article{Kamnitzer_symplectic,
author={Kamnitzer, J.},
title={Symplectic resolutions, symplectic duality, and Coulomb branches},
journal={Bulletin of the London Mathematical Society},
%pages={1515--1551},
volume={5},
year={2022}
}

@article{Hikita,
title={An algebro-geometric realization of the cohomology ring of Hilbert scheme of points in the affine plane},
author={Hikita, T.},
%pages={2538--2561},
journal={Int. Math. Res. Not.},
year={2017},
volume={8}
}

@article{LehnNamikawaSorger,
    title={Slodowy Slices and Universal Poisson Deformations},
    %pages={121--144},
    author={Lehn, M. and Namikawa, Y. and Sorger, C.},
    year={2012},
    journal={Compos. Math.},
}

@misc{MBMYu,
	author = {Mason-Brown, L. and Matvieievskyi, D. and Yu, S.},
	title = {Extended Sommers duality and unipotent characters},
	note={In preparation}
}

@misc{MBMat,
%	author = {Mason-Brown, L. and Matvieievskyi, D.},
	%title = {Unipotent ideals in exceptional types},
%	note={In preparation}
%}

@article{Ginzburg1986,
    title={Characteristic varieties and vanishing cycles},
    %pages={327--402},
    author={Ginzburg, V.},
    journal={Invent.\ Math.},
    year={1986}
}

@article{Kaledin2008,
    author = {Kaledin, D.},
    title = {Derived equivalences by quantization},
    journal = { Geom. Funct. Anal.},
    volume = {17},
    number = {6},
    year = 2008
}

@article{BGK,
    title={Highest weight theory
for finite $W$-algebras},
    author={Brundan, J. and Goodwin, S. and Kleschev, A.},
    journal={Int. Math. Res. Not. IMRN},
    year={2008},
    number={15},
    %pages={1--53}
}

@article{LosevStructureO,
author={Losev, I.},
title={On the structure of the category $\mathcal{O}$ for $W$-algebras},
journal={S\'{e}minaires et Congr\`{e}s},
volume={24},
date={2012},
%pages={351--368},
}

@article{Lehn_2011,
	%doi = {10.1112/s0010437x11005550},
  
	%url = {https://%doi.org/10.1112%2Fs0010437x11005550},
  
	year = 2011,
	%month = {nov},
  
	publisher = {Wiley},
  
	volume = {148},
  
	number = {1},
  
	%pages = {121--144},
  
	author = {M. Lehn and Y. Namikawa and Ch. Sorger},
  
	title = {Slodowy slices and universal Poisson deformations},
  
	journal = {Compositio Mathematica},
}

@article{Tanisaki,
author = {Toshiyuki Tanisaki},
title = {{Defining ideals of the closures of the conjugacy classes and representations of the Weyl groups}},
volume = {34},
journal = {Tohoku Mathematical Journal},
number = {4},
year = {1982},
%%doi = {10.2748/tmj/1178229158},
%%url = {https://%doi.org/10.2748/tmj/1178229158},
}

@article{dinkins_elliptic_of_type_A,
    author = {Dinkins, Hunter},
    title = {Elliptic stable envelopes of affine type A quiver varieties},
    journal = {International Mathematics Research Notices},
    year = {2022}
}

@misc{losev_krylov_wc_for_gieseker,
    author = {Krylov, Vasily and Losev, Ivan}, 
    title = {Representation theory of quantized Gieseker varieties, II: cross-walling bijections},
    note={In preparation}
}

@misc{yi_ya,
    author = {Pan, Yiwen and Yan, Wenbin},
    title = {Mirror symmetry for circle compactified 4d $A_1$ class-$S$ theories},
    eprint={2410.15695},
    archivePrefix={arXiv},
    primaryClass={hep-th},
}

@misc{ksh,
    author = {Krylov, Vasily and Shlykov, Pavel},
    title = {Hikita-Nakajima conjecture for the Gieseker variety},
    eprint={2202.09934},
    archivePrefix={arXiv},
    primaryClass={math.RT},
}

@misc{tamagni,
    author={Tamagni, Spencer},
    title={Stable Envelopes, Vortex Moduli Spaces, and Verma Modules},
    eprint={2311.03462},
    archivePrefix={arXiv},
    primaryClass={hep-th},
    year={2023}
}

@article{Shlykov2020,
  title = {Hikita conjecture for the minimal nilpotent orbit},
 % ISSN = {0002-9939},
 % url = {http://dx.doi.org/10.1090/proc/15281},
 % DOI = {10.1090/proc/15281},
  journal = {Proceedings of the American Mathematical Society},
  publisher = {American Mathematical Society (AMS)},
  author = {Shlykov,  Pavel},
  year = {2020},
 % month = aug,
 % pages = {1}
}

@article{votices_vermas,
  title = {Vortices and Vermas},
  volume = {22},
%  ISSN = {1095-0753},
%  url = {http://dx.doi.org/10.4310/ATMP.2018.v22.n4.a1},
%  DOI = {10.4310/atmp.2018.v22.n4.a1},
  number = {4},
  journal = {Advances in Theoretical and Mathematical Physics},
  publisher = {International Press of Boston},
  author = {Bullimore,  Mathew and Dimofte,  Tudor and Gaiotto,  Davide and Hilburn,  Justin and Kim,  Hee-Cheol},
  year = {2018},
 % pages = {803–917}
}

@article{KATO1983193,
title = {A realization of irreducible representations of affine Weyl groups},
journal = {Indagationes Mathematicae (Proceedings)},
volume = {86},
number = {2},
%pages = {193-201},
year = {1983},
%ISSN = {1385-7258},
%doi = {https://%doi.org/10.1016/1385-7258(83)90056-2},
%url = {https://www.sciencedirect.com/science/article/pii/1385725883900562},
author = {Shin-ichi Kato},
}

@misc{linus_hikita,
    author = {Setiabrata, Linus},
    title = {Hikita surjectivity for $\mathcal N///T$},
    eprint={2410.16217},
    archivePrefix={arXiv},
    primaryClass={math.RT},
    year={2024}
}

@misc{Peng_Xie_Yan_Mirror,
    author = {Shan, Peng and Xie, Dan and Yan, Wenbin},
    title = {Mirror symmetry for circle compactified 4d $\mathcal{N}=2$ SCFTs},
     eprint={2306.15214},
    archivePrefix={arXiv},
    primaryClass={hep-th},
    year={2023}
}

@article{Kim2018,
  title = {Euler Characteristic of Springer fibers},
  volume = {24},
 % ISSN = {1531-586X},
 % url = {http://dx.doi.org/10.1007/s00031-018-9487-4},
 % DOI = {10.1007/s00031-018-9487-4},
  number = {2},
  journal = {Transformation Groups},
 % publisher = {Springer Science and Business Media LLC},
  author = {Kim,  Dongkwan},
  year = {2018},
  %%month = aug,
  %pages = {403–428}
}

@article{DLP,
    author = "C. De Concini, G. Lusztig, AND C. Procesi",
    shortauthor= {DLP},
    title = "{Homology of the zero-set of a nilpotent 
vector field on a flag manifold}. ",
    journal = "Journal of the American Mathematical Society",
    %pages = "15-34",
    year = "1988",
    
}

@article{Fu_2003,
	year = 2003,
  
	volume = {151},
  
	number = {1},
  
	%pages = {167--186},
  
	author = {Baohua Fu},
  
	title = {Symplectic resolutions for nilpotent orbits},
  
	journal = {Inventiones mathematicae}
}

@book{carter1993finite,
  title={Finite Groups of Lie Type: Conjugacy Classes and Complex Characters},
  author={Carter, R.W.},
 % isbn={9780471941095},
%  lccn={93017758},
  series={Wiley Classics Library},
  %url={https://books.google.com/books?id=LvvuAAAAMAAJ},
  year={1993},
%  publisher={Wiley}
}

@article{
conjugacyclass,
  title={Conjugacy classes and Weyl group representations},
  author={Kraft, Hanspeter.},
  series={Tableaux de Young et foncteurs de Schur en algèbre et géométrie},
 % %url={ http://www.numdam.org/item/AST_1981__87-88__191_0/},
  year={1980},
}

@article{quantum_K_theory,
    author = {Koroteev, Peter and Pushkar, Petr and Smirnov, Andrey and Zeitlin, Anton},
    title = {Quantum $K$-theory of quiver varieties and many-body systems},
    journal = {Selecta Mathematica},
    year = {2021}, 
    volume = {27}, 
    number = {5}
}

@article{kmp,
year={2021},
author={Kamnitzer Joel, Michael McBreen and Nicholas Proudfoot},
title={The quantum Hikita conjecture},
journal={Advances in Mathematics},
volume={390}
}

@article{bo,
    author = {Brundan, Jonathan and Ostrik, Victor},
    title = {Cohomology of Spaltenstein varieties},
    journal = {Transformation Groups},
    year = {2011},
    volume = {16}, 
    number = {3}
}

@article{Gi,
    author = {Ginzburg, Vladimir},
    title = {Harish-Chandra bimodules for quantized Slodowy slices},
    journal = {Repres. Theory},
    year = {2009},
    volume = {13}
}

@article{w,
year = {2011},
publisher = {Journal of Pure and Applied Algebra},
volume = {215},
number = {12},
author = {Webster, Ben},
title={Singular blocks of parabolic category O and finite W-algebras}
}

@book{Anderson2023EquivariantCI,
  title = {Equivariant Cohomology in Algebraic Geometry},
  publisher = {Cambridge University Press},
  author = {Anderson,  David and Fulton,  William},
  year = {2023}
}

@article{Hotta1977,
author = {R. Hotta and T.A. Springer},
journal = {Inventiones mathematicae},
%pages = {113-128},
title = {A Specialization Theorem for Certain Weyl Group Representations and an Application to the Green Polynomials of Unitary Groups.},
%url = {http://eudml.org/doc/142488},
volume = {41},
year = {1977},
}

@article{Kumar2012,
  title = {An algebro-geometric realization of equivariant cohomology of some Springer fibers},
  volume = {368},
%  ISSN = {0021-8693},
 % url = {http://dx.doi.org/10.1016/j.jalgebra.2012.06.019},
 % DOI = {10.1016/j.jalgebra.2012.06.019},
  journal = {Journal of Algebra},
 % publisher = {Elsevier BV},
  author = {Kumar,  Shrawan and Procesi,  Claudio},
  year = {2012},
 % month = oct,
 % pages = {70–74}
}

@article{Carrell_2017,
   title={Springer’s Weyl Group Representation via Localization},
   volume={60},
   %ISSN={1496-4287},
   %url={http://dx.%doi.org/10.4153/CMB-2017-016-9},
   %doi={10.4153/cmb-2017-016-9},
   number={3},
   journal={Canadian Mathematical Bulletin},
   publisher={Canadian Mathematical Society},
   author={Jim Carrell and Kiumars Kaveh},
   year={2017},
   %month=sep, %pages={478–483} 
}

@phdthesis{Wong,
  author       = {Kayue Wong}, 
  title        = {Dixmier Algebras On Complex Classical Nilpotent Orbits And Their Representation Theories},
  school       = {Cornell University},
  year         = 2013,
}

@article{Barbasch1982,
author = {Barbasch, Dan and Vogan, David},
journal = {Mathematische Annalen},
keywords = {primitive ideal; complex semisimple Lie group; Fourier inversion; orbital integral; character; maximal ideal; asymptotic support; wavefront},
%%pages = {153-200},
title = {Primitive Ideals and Orbital Integrals in Complex Classical Groups.},
%%url = {http://eudml.org/doc/163626},
volume = {259},
year = {1982},
}

@misc{FJLS2023,
      title={Local geometry of special pieces of nilpotent orbits}, 
      author={Baohua Fu and Daniel Juteau and Paul Levy and Eric Sommers},
      year={2023},
      eprint={2308.07398},
      archivePrefix={arXiv},
      primaryClass={math.RT}
}

@article{Shoji1983,
author = {Shoji, T.},
journal = {Inventiones mathematicae},
keywords = {connected reductive group; Frobenius morphism; Lie algebra; variety of Borel subgroups; representation of Weyl group; l-adic cohomology; irreducible representation; nilpotent element; maximal torus; Green functions; top cohomology},
title = {On the Green polynomial of classical groups.},
volume = {74},
year = {1983},
}

@article{surjectiveargument,
author = {Claudio Procesi and Corrado de Concini},
journal = {Inventiones mathematicae},
keywords = {irreducible representations of symmetric group; conjugacy classes of nilpotent matrices; flag variety},
title = {Symmetric Functions, Conjugacy Classes and the Flag Variety.},
volume = {64},
year = {1981},
}

@article{Brundan2008,
  title = {Symmetric functions,  parabolic category O,  and the Springer fiber},
  volume = {143},
 % ISSN = {0012-7094},
 % url = {http://dx.doi.org/10.1215/00127094-2008-015},
 % DOI = {10.1215/00127094-2008-015},
  number = {1},
  journal = {Duke Mathematical Journal},
 % publisher = {Duke University Press},
  author = {Brundan,  Jonathan},
  year = {2008},
 % month = may 
}

@article{Weyman,
author = {Weyman, Jerzy},
year = {1989},
title = {The equations of conjugacy classes of nilpotent matrices},
volume = {98},
journal = {Inventiones mathematicae}
}

@book{Weymanbook,
author = {Weyman, Jerzy},
year = {2003},
title = {Cohomology of Vector Bundles and Syzygies},
}

@article{Steinberg1976,
author = {Steinberg, Robert},
journal = {Inventiones mathematicae},
pages = {209-224},
title = {On the Desingularization of the Unipotent Variety.},
%url = {http://eudml.org/doc/142419},
volume = {36},
year = {1976},
}

@misc{hoang2024,
      title={Geometry of the fixed points loci and discretization of Springer fibers in classical types}, 
      author={Do Kien Hoang},
      shortauthor={H},
      year={2024},
      eprint={2405.10105},
      archivePrefix={arXiv},
      primaryClass={math.RT},
     % url={https://arxiv.org/abs/2405.10105}, 
}

@article{EG94,
  title = {Characteristic classes and quadric bundles},
  volume = {78},
%  ISSN = {0012-7094},
%  url = {http://dx.doi.org/10.1215/S0012-7094-95-07812-0},
%  DOI = {10.1215/s0012-7094-95-07812-0},
  number = {2},
  journal = {Duke Mathematical Journal},
%  publisher = {Duke University Press},
  author = {Edidin,  Dan and Graham,  William},
  year = {1995},
%  month = may 
}

@article{Hesselink1979,
  title = {The normality of closures of orbits in a Lie algebra},
  volume = {54},
%  ISSN = {1420-8946},
 %  DOI = {10.1007/bf02566258},
  number = {1},
  journal = {Commentarii Mathematici Helvetici},
  publisher = {European Mathematical Society - EMS - Publishing House GmbH},
  author = {Hesselink,  Wim},
  year = {1979},
  %month = dec,
  %pages = {105–110}
}

@misc{hoang2,
      title={Hikita conjecture for classical Lie algebras}, 
      author={Do Kien Hoang},
      shortauthor= {H},
      year={2024},
      eprint={2409.13914},
      archivePrefix={arXiv},
      primaryClass={math.RT}
}

@misc{chen2024,
      title={Quantization of the minimal nilpotent orbits and the quantum Hikita conjecture}, 
      author={Xiaojun Chen and Weiqiang He and Sirui Yu},
      year={2024},
      eprint={2302.13249},
      archivePrefix={arXiv},
      primaryClass={math.RT},
     % url={https://arxiv.org/abs/2302.13249}, 
}

@article{Kaji2015,
title = "Schubert calculus, seen from torus equivariant topology",
author = "Shizuo Kaji",
year = "2010",
language = "English",
volume = "12",
pages = "71--89",
journal = "Trends in Mathematics - New Series",
number = "1",
}

@article{Atiyah1984,
  title = {The moment map and equivariant cohomology},
  volume = {23},
 % ISSN = {0040-9383},
 % DOI = {10.1016/0040-9383(84)90021-1},
  number = {1},
  journal = {Topology},
  publisher = {Elsevier BV},
  author = {Atiyah,  M.F. and Bott,  R.},
  year = {1984},
  pages = {1–28}
}

@article{Berline1985,
  title = {The Equivariant Index and Kirillov’s Character Formula},
  volume = {107},
%  ISSN = {0002-9327},
 % DOI = {10.2307/2374350},
  number = {5},
  journal = {American Journal of Mathematics},
  publisher = {JSTOR},
  author = {Berline,  Nicole and Vergne,  Michele},
  year = {1985},
%  month = oct,
%  pages = {1159}
}

@misc{LMBM,
      title={Unipotent Ideals and Harish-Chandra Bimodules}, 
      author={Losev, I. and Mason-Brown, L. and Matvieievskyi, D.},
      year={2021},
      eprint={2108.03453},
      archivePrefix={arXiv},
      primaryClass={math.RT}
}

@incollection{Kashiwara_Dmod,
     author = {Kashiwara, Masaki},
     title = {Representation theory and $D$-modules on flag varieties},
     booktitle = {Orbites unipotentes et repr\'esentations III. Orbites et faisceaux pervers},
     series = {Ast\'erisque},
     pages = {55--109},
   %  publisher = {Soci\'et\'e math\'ematique de France},
     number = {173-174},
     year = {1989},
    % mrnumber = {1021510},
    % zbl = {0705.22010},
     language = {en},
     %url = {http://www.numdam.org/item/AST_1989__173-174__55_0/}
}

@article{deConcini1981,
  title = {Symmetric functions,  conjugacy classes and the flag variety},
  volume = {64},
  ISSN = {1432-1297},
  DOI = {10.1007/bf01389168},
  number = {2},
  journal = {Inventiones mathematicae},
  publisher = {Springer Science and Business Media LLC},
  author = {de Concini,  Corrado and Procesi,  Claudio},
  shortauthor= {DP},
  year = {1981},
  month = jun,
  pages = {203–219}
}

@misc{hoang2024actioncomponentgroupsirreducible,
      title={The action of component groups on irreducible components of Springer fibers}, 
      author={Do Kien Hoang},
    shortauthor={H},
      year={2024},
      eprint={2409.04076},
      archivePrefix={arXiv},
      primaryClass={math.RT},
    %  url={https://arxiv.org/abs/2409.04076}, 
}

@article{Mirkovi2022,
  title = {Comparison of quiver varieties,  loop Grassmannians and nilpotent cones in type A},
%  volume = {407},
 % ISSN = {0001-8708},
 % url = {http://dx.doi.org/10.1016/j.aim.2022.108397},
 % DOI = {10.1016/j.aim.2022.108397},
  journal = {Advances in Mathematics},
 % publisher = {Elsevier BV},
  author = {Mirković,  Ivan and Vybornov,  Maxim and Krylov,  Vasily},
  year = {2022},
%  month = oct,
%  pages = {108397}
}

\end{document}